\documentclass[11pt]{amsart}
\usepackage[margin=1in]{geometry}
\usepackage{amsmath,amssymb,amsthm,mathtools,booktabs,enumitem,array}
\usepackage{xcolor}
\usepackage[colorlinks,linkcolor=blue!50!black,citecolor=blue!50!black,urlcolor=blue!50!black]{hyperref}

\theoremstyle{plain}
\newtheorem{theorem}{Theorem}[section]
\newtheorem{proposition}[theorem]{Proposition}
\newtheorem{lemma}[theorem]{Lemma}
\newtheorem{corollary}[theorem]{Corollary}
\newtheorem{fact}[theorem]{Fact}
\newtheorem{question}[theorem]{Question}

\theoremstyle{definition}
\newtheorem{definition}[theorem]{Definition}
\newtheorem{remark}[theorem]{Remark}

\newtheorem{setup}[theorem]{Setup}

\newcommand{\F}{\mathbb{F}}
\newcommand{\Z}{\mathbb{Z}}

\newcommand{\PP}{\mathbb{P}}
\newcommand{\A}{\mathbb{A}}
\newcommand{\W}{W}
\newcommand{\Ubar}{\overline{U}}
\newcommand{\tw}[2]{{}^{#1}\!#2}
\newcommand{\gen}{\mathrm{gen}}
\newcommand{\tgen}{\tau_{\gen}}
\newcommand{\free}{\mathrm{free}}

\DeclareMathOperator{\ed}{ed}
\newcommand{\edd}[1]{\ed^{[#1]}}
\DeclareMathOperator{\rdim}{rdim}
\DeclareMathOperator{\trdeg}{trdeg}

\DeclareMathOperator{\im}{im}

\DeclareMathOperator{\Aut}{Aut}
\DeclareMathOperator{\Hom}{Hom}
\DeclareMathOperator{\Gal}{Gal}
\DeclareMathOperator{\Ind}{Ind}
\DeclareMathOperator{\Spec}{Spec}
\DeclareMathOperator{\ord}{ord}
\DeclareMathOperator{\PGL}{PGL}

\DeclareMathOperator{\Pic}{Pic}
\DeclareMathOperator{\Div}{Div}
\DeclareMathOperator{\Supp}{Supp}
\DeclareMathOperator{\divv}{div}

\DeclareMathOperator{\id}{id}

\newcommand{\Aref}[1]{[A,\,\ref{A-#1}]}
\newcommand{\Bref}[1]{[A,\,\S\ref{A-ss:ed},\,(B#1)]}

\title[The finite-degree profile of essential dimension]{The finite-degree profile of essential dimension II: cyclic $p$-groups in characteristic $p$}
\author{Abhishek Shukla}
\address{Department of Statistics \& Actuarial Science, Simon Fraser University, Burnaby, BC, Canada}
\email{shukla@sfu.ca}
\subjclass[2020]{Primary 14L30, 12G05; Secondary 11R32, 14H37}
\keywords{essential dimension, torsors, generic torsor, jump degrees, Artin--Schreier--Witt, ramification breaks, Ledet's conjecture, Abel--Jacobi}
\date{\today}

\begin{document}

\begin{abstract}
Let $k$ be algebraically closed of characteristic $p$. Reichstein and Vistoli showed that every $\Z/p^n$-torsor becomes at most one-dimensional over a field extension of degree prime to $p$, and Ledet conjectured $\ed_k(\Z/p^n)=n$. Using the finite-degree profile $\edd d_k(\tau)$ and its jump degrees introduced in \cite{PA}, we study the generic $\Z/p^n$-torsor $\tgen$, whose profile records how many Artin--Schreier--Witt layers an extension of degree $\le d$ can absorb. An Abel--Jacobi rigidity argument, in which the purely transcendental total space of $\tgen$ forces divisor classes on twisted curves to be constant, shows that a $G$-stable linear system on a faithful $\Z/p^n$-curve has degree $\ge p^{n-1}$. Hence $\edd d_k(\tgen)\ge2$ for $d<p^{n-1}$ and $d_1(\tgen)=p^{n-1}$ for all $p$ and $n\ge2$; in particular the truncation tower is optimal in degree $p^{n-2}$. We prove a quantitative Reichstein--Vistoli theorem, $d_1^{(p')}(\tgen)\le l_n(p)=1+(p-1)\sum_{j=1}^{n-1}p^{2j-1}$, and show that it is attained: $d_1^{(p')}(\tgen)=l_n(p)$ for all $p$ and $n$. We determine the profile at $(p,n)=(2,2)$ and every slot but the first at $(2,3)$, where the remaining slot is $\ed_k(\Z/8)\in\{2,3\}$, that is, Ledet's conjecture at level $3$. We show that the subgroup formula $\edd{p}_k(\tau_n)=n-1$ is equivalent to Ledet's conjecture at level $n-1$ together with a statement about extensions of degree $\le p$ disjoint from $L$, so that the profile relocates, but does not decide, Ledet's conjecture.
\end{abstract}
\maketitle
\tableofcontents

\section{Introduction}\label{sec:intro}

Let $k$ be an algebraically closed field of characteristic $p>0$ and $G=\Z/p^n$. Reichstein and Vistoli proved that every $G$-torsor over every field containing $k$ becomes at most one-dimensional over a field extension of degree prime to $p$, that is $\ed_{k,p}(G)\le1$ \cite{RV18}; the same holds over any large field \cite[Lemma~9.1]{RS22}. Ledet proved $\ed_k(\Z/p^n)\le n$ and conjectured equality \cite{Ledet04}. Since $\ed_k(\Z/p^n)\ge2$ for $n\ge2$ \cite{Ledet07}, the conjecture holds for $n\le2$ and is open for $n\ge3$ \cite[Rem.~4.2]{BRV18}, \cite[Rem.~9.4]{RS22}. Whatever the truth, the content of the conjecture lies in the passage from the base field $K$ of the generic torsor to a large field, which is an infinite union of finite extensions. In the companion paper \cite{PA} we introduced the \emph{finite-degree profile}
\[
\edd d_k(\tau)=\min\{\ed_k(\tau\otimes_FF'):F'/F\text{ a field extension},\ [F':F]\le d\}
\]
of a torsor $\tau$ over $F$, with jump degrees $d_j(\tau)=\min\{d:\edd d_k(\tau)\le j\}$ and prime-to-$q$ variants $d_j^{(q')}$. Here we study it for the generic $\Z/p^n$-torsor $\tgen$ over $K=k(s_0,\dots,s_{n-1})$, where it records how many Artin--Schreier--Witt layers of the class of $\tgen$ an extension of degree $\le d$ can absorb. In characteristic prime to $|G|$, Sylow divisibility constrains the profile at every prime \cite{PA}; here it says only $p\mid d_0=p^n$ and gives no information on $d_1,\dots,d_{n-1}$. Cohomological invariants and large-field arguments are likewise blind to the $p$-power part: $\mathrm{cd}_p(K)\le1$, and over every large field the whole profile collapses to $\le1$. This is the part of the profile that carries new information.

\subsection*{Results}
The first jump degree. The truncation tower gives $\edd{p^j}_k(\tgen)\le n-j$ (Proposition~\ref{prop:trunc}). Our first main result shows that it is optimal in degree $p^{n-2}$, and indeed that no extension of degree $<p^{n-1}$ makes $\tgen$ one-dimensional.

\begin{theorem}[Theorem~\ref{thm:N1}]\label{thm:introN1}
Let $n\ge2$ and let $F'/K$ be a finite extension with $\ed_k(\tgen\otimes_KF')\le1$. Then $[F':K]\ge p^{n-1}$. Hence $\edd d_k(\tgen)\ge2$ for $d<p^{n-1}$, $d_1(\tgen)=p^{n-1}$ and $d_1^{(p')}(\tgen)\ge p^{n-1}+1$, for all $p$ and $n\ge2$.
\end{theorem}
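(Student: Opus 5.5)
Assume $F'/K$ is finite with $\ed_k(\tgen\otimes_KF')\le1$. We aim to show $[F':K]\ge p^{n-1}$, following the strategy announced in the abstract: turn the compression into a $G$-stable linear system on a faithful $G$-curve, and bound its degree with an Abel--Jacobi rigidity argument.

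\emph{Geometric setup.} Since $\ed_k(\tgen\otimes_KF')\le1$ and $\ed_k(G)\ge2$ for $n\ge2$, the torsor $\tgen\otimes_KF'$ descends to a field $F_0\subset F'$ containing $k$ with $\trdeg_k F_0=1$. Its twisted form is a $G$-torsor over $F_0$, so it is the function field of a smooth projective curve $C$ with a faithful $G$-action and $C/G$ of genus determined by $F_0$. Write $L=K(\tgen)$ for the total space of $\tgen$; it is purely transcendental over $k$, being the field of the generic Artin--Schreier--Witt vector. Then $L'=L\otimes_KF'$ contains $k(C)$ compatibly with $G$. The inclusion $k(C)\hookrightarrow L'$ gives a $G$-equivariant dominant rational map $\phi\colon Y\dashrightarrow C$, where $Y$ is a model of $L$ (rational). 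The extension $F'/K$ corresponds to a generically finite cover $Y'\to Y$ of degree $d=[F':K]$, and $\phi$ is defined on $Y'$. Pushing points forward along $Y'\to Y$ then attaches to a general point of $Y$ a $G$-stable effective divisor of degree $d$ on $C$. So we get a $G$-equivariant rational map $Y\dashrightarrow C^{(d)}$.

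\emph{Rigidity.} The key step is to compose with Abel--Jacobi, $Y\dashrightarrow C^{(d)}\to \Pic^d(C)$. Since $Y$ is rational and abelian varieties receive no nonconstant rational maps from rational varieties, the composite is constant. Hence all these divisors are linearly equivalent and sweep out a $G$-stable linear system $|D|$ of degree $d$ on $C$. This system is positive-dimensional, since otherwise $\phi$ would not be dominant. The care needed here is that the twisted curve and $\Pic$ are defined over $F_0$ rather than $k$. I would pass to the generic point and use that the torsor over $K$ is generic, so that the rigidity applies fibrewise over $k$. This is exactly where the purely transcendental total space is used.

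\emph{Degree bound (main obstacle).} It remains to prove that a $G$-stable linear system of positive dimension on a faithful $\Z/p^n$-curve has degree $\ge p^{n-1}$. I would argue as follows. The system gives a $G$-equivariant map $C\to\PP^r$ whose $G$-action is linear. A unipotent $\Z/p^n$ acting linearly on $\PP^r$ with $r$ small forces the subgroup $\Z/p^{n-1}$, or the element of order $p^{n-1}$, to act with large Jordan blocks. Concretely, $\Z/p^n$ acts faithfully on $H^0(D)$, a $k$-space of dimension $r+1$. In characteristic $p$ a faithful action needs dimension $>p^{n-1}$, because a Jordan block of size $m$ has order $p^{\lceil\log_p m\rceil}$. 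So $r+1\ge p^{n-1}+1$, which gives $r\ge p^{n-1}$.

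Riemann--Roch, or Clifford together with the genus bound coming from ramification, should then convert this into $\deg D\ge p^{n-1}$. Faithfulness on $H^0$ requires that $G$ not act trivially on the image curve; if it did, the system would be pulled back from $C/H$ for a subgroup $H$, and I would handle that case by induction on $n$. Once $[F':K]\ge p^{n-1}$ is established, the statements about the profile follow formally. Proposition~\ref{prop:trunc} gives $\edd{p^{n-1}}_k(\tgen)\le1$, so $d_1(\tgen)=p^{n-1}$. Since $p^{n-1}$ is divisible by $p$, the lower bound gives $d_1^{(p')}(\tgen)\ge p^{n-1}+1$.
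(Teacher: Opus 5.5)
Your outline follows the paper's proof closely. The rational map $Y\dashrightarrow C^{(d)}\to\Pic^d(C)$ is the geometric form of the paper's observation that the class of $\Delta$ on $C_L$ is an $L$-point of $\Pic^d_{C/k}$, hence constant because $J(L)=J(k)$. Your degree bound is Proposition~\ref{prop:rigidN}.

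\textbf{The gap is in the setup.} You assert that the descended torsor over $F_0$ is the function field of a curve with a faithful $G$-action. This only follows when $L\otimes_KF'$ is a field, i.e.\ when $F'\not\supseteq K_1$. If $F'\supseteq K_1$, then $\tgen\otimes_KF'\cong\Ind_H^G(N_0/F')$ with $N_0=LF'$ and $H=\Gal(N_0/F')$ a proper subgroup. A descent of this torsor to a field of transcendence degree $1$ may itself be induced, hence disconnected. For instance, at $n=2$ the class over $K_1$ is $s-\wp[z_0]=V(c)$ with $c\in K_1$, and its evident descent to $k(c)$ is $\Ind_{\Z/p}^{\Z/p^2}$ of an Artin--Schreier torsor. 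In that case there is no irreducible curve with a faithful $G$-action carrying your degree-$d$ divisor.

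The paper closes this by passing to the stabiliser. For separable $F'$ one has $L\cap F'=K_i$ and $[F':K]=p^i[F':K_i]$. If $i\ge n-1$ there is nothing to prove. Otherwise the whole argument is applied to the $H\cong\Z/p^{n-i}$-torsor $L/K_i$. For this torsor $L\otimes_{K_i}F'$ is a field, and its total space is still the purely transcendental $L$, so one gets $[F':K_i]\ge p^{n-i-1}$ and hence $[F':K]\ge p^{n-1}$. You need this step, or an equivalent restriction to one component and its stabiliser while keeping track of the index, to cover all $F'$.

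A smaller point concerns the degree bound. Only the class of $D$ is $G$-invariant, so $G$ acts on $\PP(H^0(D))$ through $\rho\colon G\to\PGL_\ell(k)$, not linearly on $H^0(D)$. This is harmless for cyclic $G$, since a lift can be rescaled to be unipotent. However, $\rho$ is in general not faithful, so the sentence ``$\Z/p^n$ acts faithfully on $H^0(D)$'' covers only a special case. Your pull-back-and-induct sketch for $\ker\rho\ne1$ does work: the moving part is $\pi^*$ of a divisor on $C/\ker\rho$, so its degree picks up the factor $|\ker\rho|$. It is equivalent to the paper's direct count. There $|\rho(G)|\le p^a$ with $p^{a-1}<\ell$, and $|\ker\rho|$ divides $[k(C):k(\Gamma)]$ for the image curve $\Gamma$, so $\deg D\ge|\ker\rho|\,(\ell-1)\ge p^{n-1}$. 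No Clifford or genus bound is needed, only $\ell(E)\le\deg E+1$ on the normalised image.
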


The case $d=1$, $\ed_k(\Z/p^n)\ge2$ for $n\ge2$, is not new: it follows from the classification of finite groups of essential dimension one \cite{Ledet07,CHKZ}. The content of Theorem~\ref{thm:introN1} is the range $2\le d<p^{n-1}$, where no extension of that degree simplifies the generic torsor to one parameter.

The proof is an Abel--Jacobi rigidity argument. A witness for $\ed\le1$ over $F'$ is a free point of degree $[F':K]$ on the twist of a faithful $G$-curve $C$; over $L=k(z_0,\dots,z_{n-1})$ the twist becomes constant, the point becomes a divisor class on $C$ over $k$, and since $L$ is purely transcendental over $k$ that class is constant \cite[Section~6]{PA}. Hence it is $G$-invariant. If the divisor moves in a linear system of dimension $\ell-1$, $G$ acts on it through $\PGL_\ell(k)$, in which a cyclic $p$-group has order $\le p^{\lceil\log_p\ell\rceil}$ (unipotent matrices); the kernel acts trivially on the image curve, and counting degrees gives $\deg D\ge p^{n-1}$ (Proposition~\ref{prop:rigidN}). When $L\otimes_KF'$ is not a field the argument is applied to the stabiliser of a factor. At $n=2$ the theorem gives $d_1(\tau_2)=p$ for every $p$ (Theorem~\ref{thm:n2} for $p=2$, Corollary~\ref{cor:n2odd} for $p$ odd); at $(2,3)$, $d_1=4$.

A quantitative Reichstein--Vistoli theorem. Riemann--Roch on the twisted Artin--Schreier--Witt curve gives $d_1^{(p')}(\tau)\le l_n(p)=1+(p-1)\sum_{j=1}^{n-1}p^{2j-1}$ for every $\Z/p^n$-torsor $\tau$ over every field containing $k$ (Theorem~\ref{thm:qRV}); here $l_n(p)$ is the last lower ramification break at infinity of the curve. On every smooth projective curve with a faithful $\Z/p^n$-action fixing a point $\infty$, every function regular away from $\infty$ with pole order prime to $p$ has pole order $\ge l_n(p)$ (Proposition~\ref{thm:lambda}). We show that it is sharp for the generic torsor:

\begin{theorem}[Theorem~\ref{thm:ptp}]\label{thm:introptp}
Let $n\ge2$ and let $F'/K$ be a finite extension of degree prime to $p$ with $\ed_k(\tgen\otimes_KF')\le1$. Then $[F':K]\ge l_n(p)$. Hence $d_1^{(p')}(\tgen)=l_n(p)=(p^{2n-1}+1)/(p+1)$ for all $p$ and $n\ge2$.
\end{theorem}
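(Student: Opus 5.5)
The upper bound $d_1^{(p')}(\tgen)\le l_n(p)$ is Theorem~\ref{thm:qRV}, so only the lower bound $[F':K]\ge l_n(p)$ for prime-to-$p$ extensions needs proof. The identity $l_n(p)=(p^{2n-1}+1)/(p+1)$ is a geometric-series computation: $(p-1)\sum_{j=1}^{n-1}p^{2j-1}=p(p^{2n-2}-1)/(p+1)$. I will run the Abel--Jacobi rigidity argument of Theorem~\ref{thm:N1}, and replace its final counting step by the pole-order bound of Proposition~\ref{thm:lambda}, using the fact that the degree is prime to $p$.

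\textbf{Setup.} Suppose $\ed_k(\tgen\otimes_KF')\le1$ with $d=[F':K]$ prime to $p$. As in the proof of Theorem~\ref{thm:N1}, this gives a faithful $G$-curve $C$ over $k$ (smooth projective after normalisation) and a free point of degree $d$ on the twist ${}^{\tgen}C$ over $K$. Base change to $L=k(z_0,\dots,z_{n-1})$ trivialises the twist, so the point becomes an effective divisor $D$ on $C_L$. When $L\otimes_KF'$ is not a field, I pass to the stabiliser of a factor as before. This is where primality to $p$ helps: the factors of $L\otimes_KF'$ have degrees summing to $d$ and are permuted by $G$, so some orbit has length prime to $p$, hence length $1$ since $G$ is a $p$-group. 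So some factor is $G$-stable and has degree $d'\le d$ with $p\nmid d'$, and I work with that factor. By purely-transcendental rigidity \cite[Section~6]{PA}, the class of $D$ is constant, i.e.\ comes from $\Pic(C)$, and is $G$-invariant.

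\textbf{Producing a $G$-fixed point and a function.} The $G$-invariant class has a representative $D_0$ over $k$ with $\deg D_0=d'$ prime to $p$. Every $G$-orbit on $C$ has $p$-power size, so a degree prime to $p$ should force a fixed point. To make this precise, I would take a $G$-invariant divisor in the class: $G$ acts on the linear system $|D_0|$, which is a projective space, and a $p$-group acting through unipotent elements of $\PGL$ in characteristic $p$ fixes a point. So we may take $D_0$ itself $G$-invariant. It is then a sum of orbits, and since $p\nmid d'$ it contains a fixed point $\infty$ with multiplicity prime to $p$. Faithfulness plus a fixed point gives total ramification at $\infty$, so every point of $C$ other than $\infty$ has an orbit of size divisible by $p$. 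Thus $D_0=m\infty+E$ with $p\mid\deg E$ and $p\nmid m$. Next I want a function $f$ regular away from $\infty$ with pole order prime to $p$ and at most $d'$. Taking the $G$-norm of a section cutting out $E$, I would try to move $E$ into a multiple of $\infty$ in the class $[D_0]$. Equivalently, I show that $d'\infty\sim D_0$ within the rigid family, using that $D$ itself is a point of the moving family parametrised by $L$: nonconstancy of the point while its class is constant gives $h^0(D_0)\ge2$. Then, as in Theorem~\ref{thm:qRV} read backwards, the $G$-fixed pencil yields a nonconstant function in $L(d'\infty)$ whose pole order is prime to $p$. Proposition~\ref{thm:lambda} then gives $d\ge d'\ge l_n(p)$.

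\textbf{Main obstacle.} The hard step is the last one: converting ``constant $G$-invariant class of degree prime to $p$ with $h^0\ge2$'' into an actual function with a single pole at a fixed point and pole order prime to $p$. The Weierstrass semigroup at $\infty$ only counts functions whose divisor of poles is supported at $\infty$. I would first try choosing $D_0$ in $|D_0|^G$ to maximise its multiplicity at $\infty$, and argue via the filtration by unipotent fixed flags that the quotient of two suitable $G$-semi-invariant sections has poles only at $\infty$. If that fails, I would fall back on the ramification filtration at $\infty$: the ramification breaks, which are indexed in the same way as the terms of $l_n(p)$, control which pole orders prime to $p$ are realised by eigenfunctions.
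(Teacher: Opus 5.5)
You have a genuine gap, at exactly the step you flag as the main obstacle. Your plan is to produce a function regular off $\infty$ with pole order prime to $p$ and at most $d$. That would prove $\lambda(C,\infty)\le d$, which is stronger than the needed $d\ge l(C,\infty)$, since Proposition~\ref{thm:lambda} only gives $\lambda(C,\infty)\ge l(C,\infty)$. Nothing in your setup supports it. In particular, $\ell(D_0)\ge2$ gives no reason for $D_0\sim d\infty$: two $G$-invariant classes of degree $d$ differ by an element of $J(k)^G$, which need not vanish, and taking $G$-norms does not change this. The fallback ideas (semi-invariant sections, eigenfunctions) are not carried out.

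There is also an independent omission. Even with an invariant $D_0$ and a fixed point $\infty$ of multiplicity $m$ prime to $p$, you never show that some section of $\mathcal L(D_0)$ has pole order exactly $m$ at $\infty$. If $\infty$ is a base point of multiplicity $b$, the largest attainable pole order is $m-b$, which need not be prime to $p$.

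Two minor points. Since $p\nmid d$, every $K_i\subseteq F'$ has $p^i\mid d$, so $L\otimes_KF'$ is automatically a field and no stabiliser is needed. Also, a faithful action can have several fixed points, so not every other orbit has size divisible by $p$; this is harmless here.

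The paper never looks for single-pole functions; it works inside $\mathcal L(D_0)$ in three steps.

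\emph{Base-point-freeness.} The fixed part $B$ of $|D_0|$ satisfies $\Delta\ge B\otimes L$, and $\Supp(B\otimes L)$ consists of $L$-rational points. But $\Delta$ is a multiple of a single closed point of degree $e\ge2$ over $L$, because $e=1$ would be a free $K$-point, forcing $\ed_k(\tgen)\le1$. Hence $B=0$.

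\emph{An invariant divisor with a good fixed point.} Take a $G$-invariant divisor in the class. Your unipotent fixed-point argument on $|D_0|$ is a valid replacement for the paper's Hilbert~90 and $H^2(G,k^\times)=0$ argument, and even gives an effective one. It has a fixed point $\infty$ of multiplicity $m$ with $p\nmid m$, and base-point-freeness yields $f\in\mathcal L(D_0)$ with $v_\infty(f)=-m$.

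\emph{Applying $\sigma-1$.} Let $\sigma=g^{p^{n-1}}$. Invariance of $D_0$ gives $h:=\sigma f-f\in\mathcal L(D_0)$ with $h\ne0$, and Lemma~\ref{lem:ASlayer}(a) gives $v_\infty(h)=-m+l(C,\infty)$. So $\divv h+D_0$ is an effective divisor of degree $d$ with coefficient $l(C,\infty)$ at $\infty$. Hence $d\ge l(C,\infty)\ge l_n(p)$, the last inequality being the ramification half of Proposition~\ref{thm:lambda}.

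This $\sigma f-f$ step, fed by base-point-freeness, is the missing idea: it bounds $d$ by the lower break directly, without any function whose only pole is at $\infty$.
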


\begin{remark}\label{rem:dependsramif}
The proof of Theorem~\ref{thm:introptp} uses the ramification theory of cyclic covers of $k((x))$ in exactly two places, both quoted: (i) the upper breaks of the Artin--Schreier--Witt cover $\Ubar_n\to\PP^1_t$ of class $[t]$ are $1,p,\dots,p^{n-1}$, which gives the value of $l_n(p)$, the genus of $\Ubar_{n-1}$ and the upper bound (the cover is defined over $\F_p$ and lower breaks do not change under the constant field extension $\F_p((1/t))\subset k((1/t))$, so the formula for a finite residue field suffices); (ii) $u_{j+1}\ge p\,u_j$ for every cyclic $p^n$-extension of $k((x))$, which gives the lower bound for all faithful curves. For $n\le2$ and $(p,n)=(2,3)$, (i) is verified by hand (Theorem~\ref{thm:B}, Appendix~\ref{app:genus}). Nothing else in the proof of the Reichstein--Vistoli bound or of its sharpness uses ramification formulas.
\end{remark}

The proof combines the base-point-freeness of the witness system with the existence, for cyclic $G$ over an algebraically closed field, of a $G$-invariant divisor in an invariant class (Hilbert~90 and $H^2(G,k^\times)=0$). When the degree is prime to $p$ this invariant divisor has a $G$-fixed point of multiplicity prime to $p$, where the last Artin--Schreier layer produces a function with a controlled pole.

The cases $n=2$ and $(p,n)=(2,3)$. At $n=2$ the profile is $(2,1,1,0)$ for $p=2$ (Theorem~\ref{thm:n2}), and $d_1(\tau_2)=p$ and $d_1^{(p')}(\tau_2)=p^2-p+1$ for $p$ odd (Corollary~\ref{cor:n2odd}). At $(p,n)=(2,3)$ every slot of the profile except the first is determined (Theorems~\ref{thm:23main} and~\ref{thm:status}):
\[
\edd1\in\{2,3\},\qquad\edd2=\edd3=2,\qquad\edd4=\dots=\edd7=1,\qquad\edd8=0,
\]
so $d_1=4$ and the truncation tower is optimal in degree $2$; moreover $d_1^{(p')}=11$, and no faithful $\Z/8$-curve has a free point of degree $5$, $7$ or $9$ on its twist. No smooth projective curve with a faithful $\Z/8$-action has a free closed point of degree $\le3$ on its twist. The quartic tower $K(v_1,v_2)$, over which both $s_1$ and $s_2$ become $\wp$-exact, is not a witness for $\edd4=1$ (Theorem~\ref{thm:fam}).

\begin{table}[tbp]
\centering\small
\renewcommand{\arraystretch}{1.25}
\begin{tabular}{@{}l>{\raggedright\arraybackslash}p{0.33\textwidth}>{\raggedright\arraybackslash}p{0.31\textwidth}>{\raggedright\arraybackslash}p{0.17\textwidth}@{}}
\toprule
$(p,n)$ & profile $\edd1,\edd2,\dots,\edd{p^n}$ & jump degrees & $d_1^{(p')}$\\
\midrule
$(2,1)$ & $1,\ 0$ & $d_0=2$ & $1$\\
$(2,2)$ & $2,\ 1,\ 1,\ 0$ & $d_1=2$, $d_0=4$ & $3$\\
$(2,3)$ & $\{2,3\},\ 2,\ 2,\ 1_{[4,7]},\ 0$ & $d_2\in\{1,2\}$, $d_2^{(p')}\in\{1,3\}$, $d_1=4$, $d_0=8$ & $11$\\
$(3,1)$ & $1,\ 1,\ 0$ & $d_0=3$ & $1$\\
$(3,2)$ & $2,\ 2,\ 1_{[3,8]},\ 0$ & $d_1=3$, $d_0=9$ & $7$\\
$(3,3)$ & $\{2,3\},\ \{2,3\},\ 2_{[3,8]},\ 1_{[9,26]},\ 0$ & $d_2\in\{1,2,3\}$, $d_2^{(p')}\le7$, $d_1=9$, $d_0=27$ & $61$\\
$(p,n)$, $n\ge2$ & $\edd d\ge2$ for $d<p^{n-1}$; $\edd{p^j}\le n-j$ & $d_1=p^{n-1}$, $d_0=p^n$ & $l_n(p)$\\
\bottomrule
\end{tabular}
\caption{The profile $\edd d_k(\tgen)$, $d=1,\dots,p^n$, of the generic $\Z/p^n$-torsor in characteristic $p$. A subscript $c_{[a,b]}$ means $\edd d_k=c$ for all $a\le d\le b$; a set such as $\{2,3\}$ means that the value is only known to lie in that set. The jump degrees $d_j=\min\{d:\edd d_k\le j\}$ are taken relative to $k$, and $d_1^{(p')}$ is the first drop of the prime-to-$p$ profile. At $(3,3)$ the only undetermined slots are $\edd1\ge\edd2$.}\label{tab:preview}
\end{table}

Ledet's conjecture. (For the infinitesimal analogue see \cite{TV}, where $\ed\le n$ is shown for trigonalizable group schemes of length $p^n$, and \cite[Conj.~1.4]{To}, which generalises Ledet's conjecture to finite commutative unipotent group schemes: $\ed_kG\ge n_V(G)$.) The first slot $\edd1_k(\tgen)=\ed_k(\Z/8)\in\{2,3\}$ at $(2,3)$ is Ledet's conjecture at level $3$; the profile does not decide it, but it locates it. Unconditionally, faithful curves detect nothing in degrees $\le3$ at $(2,3)$ (Theorem~\ref{thm:23main}(c)). By Proposition~\ref{prop:DX}, $\ed_k(\Z/8)\le2$ iff some smooth projective surface with a faithful $\Z/8$-action has a free $K$-point on its twist, and the profile records this as the single bit $d_2\in\{1,2\}$. For general $n$ we show that the subgroup formula $\edd{p}_k(\tau_n)=n-1$ is equivalent to Ledet's conjecture at level $n-1$ together with a statement about extensions of degree $\le p$ disjoint from $L$ (Theorem~\ref{thm:Pn1}); in particular it implies $\ed_k(\Z/p^{n-1})=n-1$. Hence Theorem~\ref{thm:introN1} settles exactly the slots $\edd{p^j}=n-j$ with $n-j\le2$, and the next open slot is $\ed_k(\Z/p^3)$.

Roadmap. Section~\ref{sec:wittbg} recalls Witt vectors, shows that $\W_n$ with the Witt translation is versal for $\Z/p^n$, and describes the intermediate fields $K_j$. Section~\ref{sec:witt} proves the truncation bound and records that minima over all generically free $G$-varieties of dimension $\le n-1$ carry only the number $d_{n-1}\le p$. Section~\ref{ss:rigid} proves Theorem~\ref{thm:introN1}. Section~\ref{ss:tail} proves the quantitative Reichstein--Vistoli theorem, its optimality among curves with a fixed point, and that it is sharp for the generic torsor (Theorem~\ref{thm:introptp}). Sections~\ref{sec:n2} and~\ref{sec:p2n3} treat $n=2$ and $(2,3)$. Section~\ref{sec:ledet} places the profile relative to Ledet's conjecture: it proves a layer inequality for the profile, the equivalence of Theorem~\ref{thm:Pn1}, and which surfaces would decide the case $n=3$. Section~\ref{sec:questions} lists open questions. The appendices contain Witt-vector formulas at $p=2$, the ramification, genus and Weierstrass semigroup of $\Ubar_n$, and a supplementary script (in the addendum) that checks the explicit identities.


\section{Witt vectors and the generic torsor}\label{sec:wittbg}

Throughout this paper $k$ is algebraically closed of characteristic $p>0$ and $G=\Z/p^n=\langle g\rangle$. Definitions and results on the finite-degree profile are quoted from \cite{PA}; references of the form [A,\,x.y] point to numbered items of \cite{PA}, and the conventions on torsors, twists and large fields are those of \cite[Section~2]{PA}.

\subsection*{Results used from Part~I}
Notation as in \cite{PA}: for a torsor $\tau$ over $F\supseteq k$, $\edd d_k(\tau)=\min\{\ed_k(\tau\otimes F'):[F':F]\le d\}$ and $d_j(\tau)=\min\{d:\edd d_k(\tau)\le j\}$, with prime-to-$p$ variants $\edd d_k{}'$ and $d_j^{(p')}$ (\Aref{def:profile}). We use the following, always with $k$ algebraically closed.
\begin{enumerate}[label=\textup{(P\arabic*)}]
\item $\ed_k(\tau\otimes F')\le\ed_k(\tau)$; $\ed_k(\tau)=0$ iff $\tau$ is split; $\ed_k$ is unchanged by algebraic extension of $k$ (\Bref{1}--\Bref{3}). Purely transcendental base change over an infinite field does not change $\ed_k$, and $A(F')=A(F)$ for an abelian variety $A$ and $F'/F$ purely transcendental (\Aref{lem:constant}).
\item $\edd d_k$ is non-increasing and $\edd1=\ed_k(\tau)$ (\Aref{prop:shape}); $d_0(\tgen)=|G|$, and $\ed_k(\tgen\otimes F')=0$ forces $|G|\mid[F':K]$ (\Aref{prop:d0}(c)).
\item $X$ is $\tau$-weakly versal iff $\tw{\tau}{X}(F)\ne\emptyset$, and $\tw{\tau}{X}(F)=\Hom_G(T,X_F)$ (\Aref{fact:rs}).
\item A free $F$-point of $\tw{\tau}{Z}$ gives $\ed_k(\tau)\le\dim Z$ (\Aref{lem:freepoint}); $\edd d_k(\tau)\le m$ iff some generically free primitive $G$-variety $Z$ with $\dim Z\le m$ has a free closed point of degree $\le d$ on $\tw{\tau}{Z}$ (\Aref{thm:geom}); Section~\ref{ss:recalled} below records the form of this statement used for curves.
\item $\edd d_k$ may be computed over separable extensions (\Aref{lem:pi}); $\ed_k(\Ind_H^G\tau_H)=\ed_k(\tau_H)$ (\Aref{lem:ind}); $\ed_{k,p}(\tau)=\ed_k(\tau\otimes F^{(p)})$ and prime-to-$p$ extensions embed in $F^{(p)}$ (\Aref{lem:qcl}).
\item $\edd d_k(\tau)\le\edd d_k(\tgen)$ for every torsor over every field $F\supseteq k$ (\Aref{prop:vers}); its proof applies to $V=\W_n$ below because only the existence of rational points on twists of dense opens is used.
\item Sylow divisibility (\Aref{thm:div}), the subgroup formula of \cite[Section~\ref{A-sec:sylow}]{PA} and the Karpenko--Merkurjev plateau (\Aref{prop:KMplateau}), quoted for comparison only.
\end{enumerate}

\subsection{Witt vectors and Artin--Schreier--Witt theory}\label{ss:witt}
For an $\F_p$-algebra $A$, $\W_n(A)=A^n$ with the ring structure defined over $\Z$ by the ghost components $w_m(x)=\sum_{i\le m}p^ix_i^{p^{m-i}}$. We use the Frobenius $F(x)=(x_i^p)$, the Verschiebung $V(x)=(0,x_0,\dots,x_{n-2})$, the Teichm\"uller lift $[a]=(a,0,\dots,0)$ and $\wp=F-\id$, an additive endomorphism. Standard facts: $V$ is additive; $FV=VF=p$ on $\W(A)$ when $pA=0$, hence $\wp V=V\wp$; and $x+V(y)=(x_0,\,(x_1,\dots,x_{n-1})+y)$, whence by iteration
\begin{equation}\label{eq:Vsplit}
(x_0,\dots,x_{r-1},0,\dots,0)+V^r(y)=(x_0,\dots,x_{r-1},y_0,\dots,y_{n-r-1}),\qquad y\in\W_{n-r}(A),
\end{equation}
so that every $x=(x_0,\dots,x_{n-1})$ equals $(x_0,\dots,x_{r-1},0,\dots,0)+V^r(x_r,\dots,x_{n-1})$; more generally $x+V^r(y)=(x_0,\dots,x_{r-1},(x_r,\dots,x_{n-1})+y)$, the last sum taken in $\W_{n-r}(A)$. On $\W_1(A)=A$ the Teichm\"uller map is the identity and $\wp(a)=a^p-a$. For $p=2$, $n=3$ the addition and $\wp$ read as follows (these are the Witt polynomials for $p=2$: from the ghost components $w_0=x_0$, $w_1=x_0^2+2x_1$, $w_2=x_0^4+2x_1^2+4x_2$ one gets $(a+b)_1=a_1+b_1-a_0b_0$ over $\Z$, which is the first formula modulo $2$, and the second is obtained in the same way from $w_2$; the identities are checked by expanding, see Appendix~\ref{app:witt}):
\begin{gather}
(a+b)_1=a_1+b_1+a_0b_0,\qquad (a+b)_2=a_2+b_2+a_1b_1+a_0b_0(a_1+b_1)+a_0^3b_0+a_0b_0^3,\label{eq:add2}\\
\wp(a)=\bigl(a_0^2+a_0,\ a_1^2+a_1+a_0^3+a_0^2,\ a_2^2+a_2+C(a_0,a_1)\bigr),\label{eq:wp2}\\
C(a_0,a_1)=a_0^7+a_0^4+a_0^3a_1^2+a_0^3a_1+a_0^2a_1^2+a_0^2a_1+a_1^3+a_1^2.\notag
\end{gather}
Artin--Schreier--Witt theory \cite{Witt37}: for every field $F$ of characteristic $p$,
\[
H^1(F,\Z/p^n)\cong\W_n(F)/\wp\W_n(F),
\]
the class of $a$ being the $G$-torsor $\tau_a=\Spec F[x]/(\wp x=a)$ with $g\cdot x=x+1$, $1=(1,0,\dots,0)$. The truncation $\W_n\to\W_m$ is a ring map commuting with $\wp$ and corresponds to $\Z/p^n\to\Z/p^m$; the map $V^{n-m}\colon\W_m\to\W_n$ is additive, commutes with $\wp$ and corresponds to the inclusion $\Z/p^m\hookrightarrow\Z/p^n$ on cohomology. We shall use repeatedly:
\begin{equation}\label{eq:Vker}
V^{n-1}(c)=(0,\dots,0,c)\in\wp\W_n(F)\iff c\in\wp F .
\end{equation}
(If $(0,\dots,0,c)=\wp(x)$ then $x_0^p=x_0$, so $x=[x_0]+V(x')$ with $\wp[x_0]=0$, hence $(0,\dots,0,c)=V\wp(x')$; induct.) Also $a^{p^r}\equiv a\bmod\wp F$ for $a\in F$.

\subsection{The generic torsor of \texorpdfstring{$\Z/p^n$}{Z/p\^{}n} in characteristic \texorpdfstring{$p$}{p}}\label{ss:gen}
Let $k$ be algebraically closed of characteristic $p$ and $G=\Z/p^n=\langle g\rangle$. $G$ acts on $\W_n=\A^n_k$ (coordinates $z=(z_0,\dots,z_{n-1})$) by the Witt translation $g(z)=z+1$. For $0\le j\le n$,
\begin{equation}\label{eq:gpj}
g^{p^j}(z)=z+p^j\cdot1=z+V^j(1)=\bigl(z_0,\dots,z_{j-1},\,(z_j,\dots,z_{n-1})+1\bigr),
\end{equation}
because $p\cdot1=VF(1)=V(1)$ in $\W_n(\F_p)$ and by the last identity of Section~\ref{ss:witt}; so $g^{p^j}$ fixes $z_0,\dots,z_{j-1}$ and acts on $(z_j,\dots,z_{n-1})$ as the Witt translation of $\W_{n-j}$. The map $\wp\colon\W_n\to\W_n$ is finite \'etale Galois with group $\langle g\rangle$; writing $s=\wp(z)$, $L:=k(z)$, $K:=L^G=k(s_0,\dots,s_{n-1})$, the generic fibre $\tgen\colon\Spec L\to\Spec K$ is the $G$-torsor of class $s\in\W_n(K)$, and $L/K$ is a field extension of degree $p^n$ whose intermediate fields are $K_j:=L^{\langle g^{p^j}\rangle}=k(z_0,\dots,z_{j-1},s_j,\dots,s_{n-1})$, $[K_j:K]=p^j$. When several $n$ are in play we write $\tau_n$, $K_{(n)}$, $L_{(n)}$. The action of $g^{p^{n-1}}$, $z\mapsto(z_0,\dots,z_{n-2},z_{n-1}+1)$, has no fixed points, so $G$ acts freely on $\W_n$.

We let $G$ act on $k[\W_n]=k[z_0,\dots,z_{n-1}]$, and on $L$, by pull-back along the translation, $g(f)=f\circ(z\mapsto z+1)$ (since $G$ is abelian it does not matter whether this is regarded as a left or a right action). Regarding $z$ as an element of $\W_n(k[\W_n])$, and of $\W_n(L)$, we have $g(z)=z+1$.

\begin{lemma}\label{lem:versal}
Let $\tau\colon T=\Spec P\to\Spec F$ be a $G$-torsor over a field $F\supseteq k$.
\begin{enumerate}[label=\textup{(\alph*)}]
\item There is $x\in\W_n(P)$ with $g(x)-x=1$, and for any such $x$ the twist $\tw{\tau}{\W_n}$ is isomorphic to $\A^n_F$: its coordinate ring is $F[h_0,\dots,h_{n-1}]$, where $h=z-x\in\W_n(P\otimes_kk[\W_n])$. On points: for a field $F'\supseteq F$, an $F'$-point of $\tw{\tau}{\W_n}$ is a $G$-equivariant morphism $T_{F'}\to\W_n$, i.e.\ an element $y\in\W_n(P\otimes_FF')$ with $g(y)-y=1$, and it corresponds to $h=y-x\in\W_n(F')=F'^{\,n}$. For $\tau=\tgen$ one may take $x=z$, so that $h=y-z$.
\item $\W_n$ is $\tau$-versal for every $G$-torsor $\tau$ over every field $F\supseteq k$.
\item $\ed_k(G)=\ed_k(\W_n)=\ed_k(\tgen)\le n$.
\end{enumerate}
\end{lemma}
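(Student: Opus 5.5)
For (a) I will use Artin--Schreier--Witt theory together with additive Hilbert~90. First I write $P$ as a Galois algebra: since $\tau$ is a $G$-torsor, $P\cong F[x]/(\wp x=a)$ for some class $a\in\W_n(F)$, with $g\cdot x=x+1$. This is the isomorphism $H^1(F,\Z/p^n)\cong\W_n(F)/\wp\W_n(F)$ of Section~\ref{ss:witt}, including the split case where $P=F^{G}$. That gives an $x\in\W_n(P)$ with $g(x)-x=1$.

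Now take any such $x$ and set $h=z-x\in\W_n(P\otimes_kk[\W_n])$, with $G$ acting diagonally. Then $g(h)=(z+1)-(x+1)=h$, so the components $h_i$ are $G$-invariant. The twist $\tw{\tau}{\W_n}$ is the quotient $(T\times\W_n)/G$, whose coordinate ring is $(P\otimes_kk[\W_n])^G$. I then have to show this ring equals $F[h_0,\dots,h_{n-1}]$ and that it is a polynomial ring. By Witt subtraction, $z=x+h$ as Witt vectors, so each $z_i$ is a polynomial in the $x_j$ and $h_j$. Hence $P\otimes_kk[\W_n]=P[h_0,\dots,h_{n-1}]$, and the $h_i$ are algebraically independent over $P$ because $z\mapsto h$ is a triangular change of variables with $h_i=z_i-x_i+(\text{polynomial in }z_{<i},x_{<i})$. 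Taking $G$-invariants coefficientwise and using $P^G=F$ gives $F[h]$, so the twist is $\A^n_F$.

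For the description on points I will use (P3): $\tw{\tau}{\W_n}(F')=\Hom_G(T_{F'},\W_n)$. A $G$-map $T_{F'}\to\W_n$ is a $y\in\W_n(P\otimes_FF')$ with $g(y)=y+1$. Evaluating the $G$-invariant coordinates $h$ at such a point gives $y-x$, which is $G$-invariant and therefore lies in $\W_n(F')$. Conversely, any $h\in\W_n(F')$ gives $y=x+h$. For $\tgen$ we have $P=L$, and $z\in\W_n(L)$ itself satisfies $g(z)=z+1$, so we may take $x=z$.

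For (b), by (a) the twist of any dense open $U\subseteq\W_n$ is a dense open of $\A^n_F$. When $F$ is infinite it therefore has $F$-points; $F$ is infinite since it contains the algebraically closed $k$. This is versality in the sense of \cite{PA}, namely rational points on twists of every dense open. The action is also free, as noted in Section~\ref{ss:gen}. For (c), $\W_n$ is a generically free versal $G$-variety, so by the standard characterisation $\ed_k(G)=\ed_k(\W_n)$, and this equals $\ed_k$ of its generic fibre $\tgen$. The bound $\ed_k\le\dim\W_n=n$ is immediate.

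I expect the main obstacle to be the identification $(P\otimes k[\W_n])^G=F[h]$ with the $h_i$ free. It needs Galois descent for $P/F$ (the invariants of $P\otimes_F M$ equal $M$) together with the triangularity of Witt subtraction. The rest is formal.
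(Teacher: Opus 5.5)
Your proposal is correct and follows essentially the same route as the paper. Both arguments use Artin--Schreier--Witt theory to produce $x$, then the invariant Witt difference $h=z-x$ with the triangular change of variables, giving $P[z]=P\otimes_FF[h]$ and hence $(P\otimes_kk[\W_n])^G=F[h]$. Then rational points on dense opens of $\A^n_F$ over the infinite field $F$ give versality, and the standard identification $\ed_k(G)=\ed_k(\W_n)=\ed_k(\tgen)\le n$ finishes (c).

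The one step you assert too quickly is that the twist of a dense $G$-stable open $U\subseteq\W_n$ is a nonempty open of $\tw{\tau}{\W_n}$. This does not follow from (a) alone; the paper quotes \cite[Cor.~3.4]{DR15} for it.
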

\begin{proof}
(a) By Artin--Schreier--Witt theory (Section~\ref{ss:witt}) $\tau\cong\tau_a$ for some $a\in\W_n(F)$, so $P$ contains $x'\in\W_n(P)$ with $g(x')=x'+1$; put $x=x'$. Since $\W_n$ is affine, $\tw{\tau}{\W_n}=\Spec R$ with $R=(P\otimes_kk[\W_n])^G=P[z_0,\dots,z_{n-1}]^G$, $G$ acting diagonally. In $\W_n(P[z])$ put $h:=z-x$. As $G$ acts on $P[z]$ by ring automorphisms, it acts on $\W_n(P[z])$ by ring automorphisms, and $g(h)=(z+1)-(x+1)=h$; so the coordinates $h_0,\dots,h_{n-1}$ lie in $R$. Since Witt addition and subtraction are given by polynomials with integer coefficients, of the form $(a\pm b)_i=a_i\pm b_i+(\text{polynomial in }a_0,\dots,a_{i-1},b_0,\dots,b_{i-1})$, we have $z_i=h_i+(\text{polynomial in }h_0,\dots,h_{i-1},x_0,\dots,x_i)$ and $h_i=z_i+(\text{polynomial in }z_0,\dots,z_{i-1},x_0,\dots,x_i)$. Hence $P[z]=P[h_0,\dots,h_{n-1}]$ with $h_0,\dots,h_{n-1}$ algebraically independent over $P$, i.e.\ $P[z]=P\otimes_FF[h]$, and $G$ acts on it through the factor $P$ only. Choosing an $F$-basis of $F[h]$ shows $(P\otimes_FF[h])^G=P^G\otimes_FF[h]=F[h]$. So $R=F[h_0,\dots,h_{n-1}]$ and $\tw{\tau}{\W_n}\cong\A^n_F$. An $F'$-point of $\Spec R$ is the specialisation $h\mapsto h(y)\in F'^{\,n}$; via $P[z]=P\otimes_FF[h]$ it corresponds to the $G$-equivariant $P$-algebra map $P[z]\to P\otimes_FF'$, i.e.\ to the equivariant morphism $T_{F'}\to\W_n$ with $y=x+h$, which is the identification of Fact~\Aref{fact:rs}(a). For $\tau=\tgen$, $P=L$ and $x=z\in\W_n(L)$ satisfies $g(x)-x=\varepsilon$.

(b) Let $U\subseteq\W_n$ be a dense $G$-invariant open subset. Then $\tw{\tau}{U}$ is a nonempty open subscheme of $\tw{\tau}{\W_n}\cong\A^n_F$ \cite[Cor.~3.4]{DR15}, and $F\supseteq k$ is infinite, so $\tw{\tau}{U}(F)\ne\emptyset$, and $U$ is $\tau$-weakly versal by Fact~\Aref{fact:rs}(a).

(c) $G$ acts freely on the irreducible variety $\W_n$, so $\W_n$ is generically free and primitive, and by (b) and Fact~\Aref{fact:rs}(c), $\ed_k(\W_n)\ge\ed_k(\tau)$ for every $G$-torsor $\tau$ over every $F\supseteq k$, i.e.\ $\ed_k(\W_n)\ge\ed_k(G)$. By the identity $\ed_k(Y)=\ed_k(\tau_Y)$ of \cite[Section~\ref{A-ss:ed}]{PA}, $\ed_k(\W_n)=\ed_k(\tau_{\W_n})=\ed_k(\tgen)$, since the torsor $\W_n\to\W_n/G$ has generic fibre $\tgen$; and $\ed_k(\tgen)\le\ed_k(G)$ by definition and $\ed_k(\tgen)\le\trdeg_kK=n$ (the bound $\ed_k(\Z/p^n)\le n$ is also in \cite{Ledet04}, and for trigonalizable group schemes in \cite[Thm.~1.4]{TV}).
\end{proof}

\begin{corollary}\label{cor:insep}
For $G=\Z/p^n$ over an algebraically closed $k$ of characteristic $p$, the profile only sees separable extensions: $\edd d_k(\tau)=\min\{\ed_k(\tau\otimes F'):[F':F]\le d,\ F'/F\text{ separable}\}$, and every cyclic extension of degree $p$ of $F$ (for $p=2$: every separable quadratic extension) is an Artin--Schreier extension $F(v_f)$, $v_f^p-v_f=f\in F\smallsetminus\wp F$.
\end{corollary}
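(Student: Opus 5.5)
The corollary has two parts, and I would prove them separately: first that inseparable extensions never help, then the Artin--Schreier description of cyclic degree-$p$ extensions.

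\emph{The profile only sees separable extensions.} The first assertion is essentially (P5), since $\edd d_k$ may be computed over separable extensions by \Aref{lem:pi}; I would quote it and check that the hypotheses there are met for $G=\Z/p^n$. To make this self-contained I would argue directly. Let $F'/F$ be finite and let $F'_s$ be the separable closure of $F$ in $F'$, so $[F'_s:F]\le[F':F]$ and $F'/F'_s$ is purely inseparable. It then suffices to show $\ed_k(\tau\otimes F'_s)\le\ed_k(\tau\otimes F')$.

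Here the key point is that the restriction $H^1(F'_s,G)\to H^1(F',G)$ is a bijection, because purely inseparable extensions induce an equivalence of étale sites. A compression of $\tau\otimes F'$ is a torsor $\tau_0$ over a subfield $F_0\subseteq F'$ with $\trdeg_k F_0=\ed_k(\tau\otimes F')$ and $\tau_0\otimes F'\cong\tau\otimes F'$. I would descend it as follows. Some power $q=p^r$ satisfies $F'^{\,q}\subseteq F'_s$. Replace $F_0$ by $F_0^{q}\subseteq F'_s$ and $\tau_0$ by its Frobenius pullback, i.e.\ apply $\W_n$-Frobenius to an Artin--Schreier--Witt class $a$ of $\tau_0$. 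This does not change the transcendence degree, and since $a^{q}\equiv a\bmod\wp$ componentwise (the congruence noted in Section~\ref{ss:witt}, applied in $\W_n$ via $F\equiv\id$ mod $\wp$), the class is unchanged. Hence $\tau\otimes F'_s$ and $F(\tau_0)\otimes F'_s$ have the same image in $H^1(F',G)$ and so agree by injectivity. This gives the inequality, and the minimum is attained over separable extensions.

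\emph{Cyclic degree-$p$ extensions.} The second assertion is Artin--Schreier theory, the case $n=1$ of the isomorphism $H^1(F,\Z/p^n)\cong\W_n(F)/\wp\W_n(F)$ recalled in Section~\ref{ss:witt}. A cyclic degree-$p$ extension $E/F$ is a nontrivial $\Z/p$-torsor, so $E\cong F[v]/(v^p-v-f)$ with $f\notin\wp F$. For $p=2$ a separable quadratic extension is automatically Galois with group $\Z/2$, which gives the parenthetical claim.

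\emph{Main obstacle.} The only delicate step is the Frobenius descent in the first part. It needs the identity $F(a)\equiv a$ in $\W_n(F)/\wp$, which is immediate from $\wp=F-\id$. It also needs a check that the Frobenius image of $F_0$ still contains $k$; this holds because $k$ is perfect, so $k^{q}=k$.
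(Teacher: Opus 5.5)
Your proof is correct and follows the paper's route. The first part is (P5), i.e.\ Lemma~\Aref{lem:pi}, whose characteristic-$p$ input is exactly the congruence $F^m(a)\equiv a\bmod\wp\W_n$ recorded in Remark~\ref{rem:insepwitt}, and the second part is Artin--Schreier theory. The only difference is one of packaging: the paper transports the whole base change along the Frobenius isomorphism $\sigma\colon F^{1/p^m}\to F$ and then uses monotonicity (P1), whereas you apply Frobenius to the descent field $F_0$ and therefore also need injectivity of $H^1(F'_s,G)\to H^1(F',G)$, which you justify correctly.
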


\begin{remark}[the Witt-vector proof]\label{rem:insepwitt}
For $G=\Z/p^n$ the isomorphism $\sigma_*(\tau\otimes F^{1/p^m})\cong\tau$ can be seen on Witt classes (Section~\ref{ss:witt}): if $a\in\W_n(F)$ is the class of $\tau$, the class of $\tau\otimes F^{1/p^m}$ is $a$ viewed in $\W_n(F^{1/p^m})$, and by functoriality of the Artin--Schreier--Witt isomorphism the class of $\sigma_*(\tau\otimes F^{1/p^m})$ is $\W_n(\sigma)(a)=F^m(a)$. Since $F(a)-a=\wp(a)$, $F^m(a)\equiv a\bmod\wp\W_n(F)$.
\end{remark}

\begin{proposition}[the intermediate fields of $L/K$ for $G=\Z/p^n$]\label{prop:intermediate}
Let $k$ be algebraically closed of characteristic $p$, $G=\Z/p^n$, $K_j=L^{\langle g^{p^j}\rangle}$, $1\le j\le n-1$. Then $\tgen\otimes K_j$ is induced from the $\Z/p^{n-j}$-torsor $L/K_j$, which is the generic $\Z/p^{n-j}$-torsor over the base field $k':=k(z_0,\dots,z_{j-1})$, and
\[
\ed_k(\tgen\otimes_KK_j)=\ed_k(\Z/p^{n-j})\le n-j .
\]
In particular $\ed_k(\tgen\otimes K_1)\ge\min(n-1,2)$, so for $n=3$ the subextension $K_1=K(z_0)$ is never a witness for $d_1=p$.
\end{proposition}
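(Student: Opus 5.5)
Three steps: describe $L\otimes_K K_j$ to get the induction statement, identify $L/K_j$ as a generic torsor over the enlarged base $k'$, and then control $\ed$ under the change of base from $k$ to $k'$.

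\emph{Step 1 (induction).} Put $H=\langle g^{p^j}\rangle\cong\Z/p^{n-j}$. Since $L/K$ is Galois with group $G$ and $K_j=L^H$, we have
\[
L\otimes_K K_j\cong\prod_{G/H}L,
\]
and this is the induced torsor $\Ind_H^G(L/K_j)$, a standard fact about Galois algebras. Hence, by (P5),
\[
\ed_k(\tgen\otimes K_j)=\ed_k(L/K_j),
\]
where $L/K_j$ is viewed as an $H$-torsor.

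\emph{Step 2 (identifying $L/K_j$).} By \eqref{eq:gpj}, the generator $g^{p^j}$ fixes $z_0,\dots,z_{j-1}$ and acts on $z'=(z_j,\dots,z_{n-1})$ by the Witt translation of $\W_{n-j}$. Set $k'=k(z_0,\dots,z_{j-1})$. Then $L=k'(z')$ and $K_j=k'(\wp(z'))$. The last equality holds because, by the last identity of Section~\ref{ss:witt}, the coordinates $s_j,\dots,s_{n-1}$ of $\wp(z)$ differ from $\wp(z')$ by polynomials in $z_0,\dots,z_{j-1}$; alternatively it follows from the degree count $[L:k'(\wp z')]=p^{n-j}$. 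So $L/K_j$ is the generic $\Z/p^{n-j}$-torsor of Section~\ref{ss:gen}, built over the base field $k'$ in place of $k$.

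\emph{Step 3 (changing the base from $k$ to $k'$).} This step needs the most care, because $\ed$ is measured over $k$ but the torsor is naturally defined over $k'$. Let $\tau'_{k}$ denote the generic $\Z/p^{n-j}$-torsor over $k(\wp z')$. Then $L/K_j$ is $\tau'_k\otimes_{k(\wp z')}K_j$, and $K_j=k(\wp z')(z_0,\dots,z_{j-1})$ is a purely transcendental extension of $k(\wp z')$. By (P1), purely transcendental base change does not change $\ed_k$, so
\[
\ed_k(L/K_j)=\ed_k(\tau'_k)=\ed_k(\Z/p^{n-j}),
\]
where the last equality is Lemma~\ref{lem:versal}(c). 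The same lemma gives the bound $\le n-j$.

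\emph{Final claim.} For the last assertion, take $j=1$; then $\ed_k(\Z/p^{n-1})\ge\min(n-1,2)$ by Ledet's bound $\ed_k(\Z/p^m)\ge2$ for $m\ge2$, and $\ed_k(\Z/p)=1$ for $m=1$. When $n=3$ this gives $\ed_k(\tgen\otimes K_1)\ge2$. Since $[K_1:K]=p$, the field $K_1$ cannot witness $\edd{p}\le1$.
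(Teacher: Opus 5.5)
Your proposal is correct and follows the paper's proof. The steps match: the induction lemma, then identifying $L/K_j$ as the base change of the generic $\Z/p^{n-j}$-torsor over $k(\wp z')$ to the purely transcendental extension $K_j=k(\wp z')(z_0,\dots,z_{j-1})$, then invariance of $\ed_k$ under purely transcendental base change together with Lemma~\ref{lem:versal}(c).

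The only variation is in Step~2. You get $K_j=k'(\wp z')$ from the identity $x+V^j(y)=(x_0,\dots,x_{j-1},(x_j,\dots,x_{n-1})+y)$, or from a degree count, instead of the paper's invariant-ring computation $k[z_j,\dots,z_{j+i-1}]^H=k[s'_0,\dots,s'_{i-1}]$. Two points to tighten. Your ``differ by polynomials in $z_0,\dots,z_{j-1}$'' must be read as a Witt-vector difference, since coordinatewise $s_{j+i}-s'_i$ also involves $s'_0,\dots,s'_{i-1}$. And the algebraic independence you use implicitly in Step~3 comes from $\trdeg_kK_j=n$.
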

\begin{proof}
$L\otimes_KK_j\cong\prod_{G/H}L$ with $H=\langle g^{p^j}\rangle$, i.e.\ $\tgen\otimes K_j=\Ind_H^G(L/K_j)$, and $\ed_k(\tgen\otimes K_j)=\ed_k(L/K_j)$ by Lemma~\Aref{lem:ind}. By \eqref{eq:gpj}, $H$ acts on $(z_j,\dots,z_{n-1})$ by the Witt translation of $\W_{n-j}$ and fixes $z_0,\dots,z_{j-1}$, so $L/K_j$ is the $\Z/p^{n-j}$-torsor of class $s':=\wp_{\W_{n-j}}(z_j,\dots,z_{n-1})\in\W_{n-j}(K_j)$. Its coordinates satisfy $s'_i=z_{j+i}^p-z_{j+i}+(\text{polynomial in }z_j,\dots,z_{j+i-1})$, while $s_{j+i}=z_{j+i}^p-z_{j+i}+(\text{polynomial in }z_0,\dots,z_{j+i-1})$; hence $q_i:=s_{j+i}-s'_i$ is a polynomial in $z_0,\dots,z_{j+i-1}$, and it is $H$-invariant. Now $H$ fixes $z_0,\dots,z_{j-1}$ and, through the truncation $\W_{n-j}\to\W_i$, acts on $(z_j,\dots,z_{j+i-1})$ by the Witt translation of $\W_i$, whose quotient map is the finite \'etale Galois map $\wp\colon\W_i\to\W_i$, $(z_j,\dots,z_{j+i-1})\mapsto(s'_0,\dots,s'_{i-1})$; as $\A^i$ is normal, $k[z_j,\dots,z_{j+i-1}]^H=k[s'_0,\dots,s'_{i-1}]$. Hence $q_i\in k[z_0,\dots,z_{j-1}][s'_0,\dots,s'_{i-1}]$. Consequently
\[
K_j=k'(s'_0,\dots,s'_{n-j-1}),\qquad k'=k(z_0,\dots,z_{j-1}),
\]
The $n$ elements $z_0,\dots,z_{j-1},s'_0,\dots,s'_{n-j-1}$ generate $K_j$, which has transcendence degree $n$ over $k$, so they are algebraically independent. Put $F'':=k(s'_0,\dots,s'_{n-j-1})$. Then $s'$ is the class of the generic $\Z/p^{n-j}$-torsor $\tau_{n-j}$ over $F''$, $K_j=F''(z_0,\dots,z_{j-1})$ is purely transcendental over $F''$, and $L/K_j=\tau_{n-j}\otimes_{F''}K_j$. By Lemma~\Aref{lem:constant}(i) and Lemma~\ref{lem:versal}(c), $\ed_k(L/K_j)=\ed_k(\tau_{n-j})=\ed_k(\Z/p^{n-j})\le n-j$.
\end{proof}

\subsection{Free points and curves: results recalled from the general theory}\label{ss:recalled}
The following statements are used below. They are proved here, in the form needed, so that no proof of this paper relies on material of \cite{PA} that is stated there only as a remark. (Theorem~\Aref{thm:geom} and the other numbered results of \cite{PA} are quoted as before.)

\begin{lemma}[image of a free point]\label{lem:imageB}
Let $Z$ be an irreducible $G$-variety with faithful action, $\tau\colon T\to\Spec F$ a torsor with $T$ connected, and $f\colon T\to Z$ equivariant. Then the closure of $f(T)$ is an irreducible $G$-stable closed subset $Y$, and the point $f(T)$ is free iff no $1\ne h\in G$ acts trivially on $Y$; for $G=\Z/p^n$ this holds iff $G$ acts faithfully on $Y$. If $\dim Z=1$, $G\ne1$ and $f(T)$ is free, then $f$ is dominant if $k$ is algebraically closed; for arbitrary $k$ it is dominant unless $\ed_k(\tau)=0$.
\end{lemma}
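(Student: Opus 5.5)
We would argue directly from the definitions, treating the statement in four parts. Recall that a point given by an equivariant $f\colon T\to Z$ is \emph{free} when the stabiliser of $f(T)$ is trivial. Equivalently, for every $1\ne h\in G$ we have $f\circ h\ne f$; after base change to a splitting, no nontrivial $h$ fixes the image.

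\textbf{Step 1 (the closure $Y$).} Since $T$ is connected and $T\to\Spec F$ is a torsor, $G$ permutes the components of $T$ transitively. The closure $Y$ of $f(T)$ is the closure of the image of the underlying topological space of $T$. If $T$ is irreducible, $Y$ is irreducible. In general $T$ connected and finite \'etale over a field means $T=\Spec P$ with $P$ a field, so $T$ is a single point. Thus $f(T)$ is a single scheme-theoretic point of $Z$, its closure $Y$ is irreducible, and $Y$ is $G$-stable by equivariance.

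\textbf{Step 2 (freeness versus faithfulness on $Y$).} If $h$ acts trivially on $Y$, then $h\circ f=f$, because $f$ factors through $Y$ (scheme-theoretically, through the reduced closed subscheme $Y$, whose generic point is $f(T)$). Hence $h$ stabilises the point, and the point is not free. Conversely, suppose $h\circ f=f$. Then $h$ fixes the generic point of $Y$ together with its residue field pointwise, since $f$ induces $\kappa(f(T))\hookrightarrow P$ and $h$ acts compatibly. An automorphism of the integral variety $Y$ acting trivially on its function field is trivial on $Y$. So the point fails to be free iff some $1\ne h$ acts trivially on $Y$.

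For $G=\Z/p^n$ every nontrivial subgroup contains the unique subgroup of order $p$, namely $\langle g^{p^{n-1}}\rangle$. So "no $h\ne1$ acts trivially" is equivalent to "the kernel of $G\to\Aut(Y)$ is trivial", i.e.\ to faithfulness. (For a general group the first condition is already faithfulness; for cyclic $p$-groups it reduces to checking the single element $g^{p^{n-1}}$, which is the form the later sections use.)

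\textbf{Step 3 (dominance for curves).} Let $\dim Z=1$. Then $Y$ is either $Z$ or a closed point of $Z$. In the first case $f$ is dominant. In the second, $Y=\{y\}$ is a $G$-stable closed point on which $G$ must act faithfully. When $k$ is algebraically closed, $y$ is a $k$-point (a point of the $k$-variety $Z$ with residue field $k$), so its underlying scheme $\Spec k$ admits no nontrivial $k$-automorphism. Hence $G\ne1$ acts trivially on $Y$, contradicting Step~2, and $f$ is dominant. For arbitrary $k$, $\kappa(y)$ is a finite extension of $k$ on which $G$ acts faithfully. Then $f$ factors as $T\to\Spec\kappa(y)$, equivariantly, so $\tau$ is pulled back from the torsor $\Spec\kappa(y)\to\Spec\kappa(y)^G$, which is defined over a finite extension of $k$. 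Therefore $\ed_k(\tau)=0$, using that the torsor descends to a field of transcendence degree $0$ over $k$ (P1, P4).

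\textbf{Main obstacle.} The main obstacle is Step~2's converse, namely making precise that $h\circ f=f$ forces $h$ to be trivial on $Y$ at the level of schemes (function fields) rather than merely on sets of points. We would handle this via the residue field of the generic point of $Y$, as outlined there.
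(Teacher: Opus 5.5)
Your proposal is correct and follows the paper's proof essentially step for step. $T$ is a single point, so $f(T)$ is one $G$-stable point $\xi$ with irreducible $G$-stable closure $Y$. Freeness of $\xi$ is equivalent to no $h\ne1$ acting trivially on $Y$; the paper says this as $\xi\in Z^h\iff Y\subseteq Z^h$ by closedness of the fixed locus, and you say it via the generic point and function field of $Y$, which is the same fact. For a curve, a non-dominant free point would be a $G$-stable closed point, which is impossible over algebraically closed $k$ and otherwise makes $\tau$ the base change of $\Spec\kappa\to\Spec\kappa^G$.

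The one thing to tidy is your opening gloss. ``The stabiliser of $f(T)$ is trivial'' must be read scheme-theoretically, as $\{h: h\circ f=f\}$ (equivalently $\xi\notin Z^h$), since set-theoretically $f(T)$ is fixed by every element of $G$.
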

\begin{proof}
As $T$ is connected and $Z$ separated, $f(T)$ is a single point $\xi$ (the image of the generic point of $T$), and $h\xi=\xi$ for all $h\in G$ by equivariance, so $Y=\overline{\{\xi\}}$ is $G$-stable and irreducible. For $h\ne1$, $\xi\in Z^h$ iff $Y\subseteq Z^h$ iff $h$ acts trivially on $Y$. For $G=\Z/p^n$ all nontrivial subgroups contain $g^{p^{n-1}}$, so $\xi$ is free iff $g^{p^{n-1}}$ is nontrivial on $Y$ iff the action on $Y$ is faithful. If $k$ is algebraically closed, a $G$-stable closed point is fixed by $G$, hence not free when $G\ne1$; so for a curve a free point has $Y=Z$. For arbitrary $k$ the closed point $\xi=Y$ can be free (e.g.\ $k=\mathbb{R}$, $\Z/2$ acting on $\PP^1$ by $z\mapsto-z$, $\xi=(z^2+1)$); then $\xi=\Spec\kappa$ is a $G$-torsor over the finite extension $\kappa^G$ of $k$, the equivariant map $T\to\xi$ identifies $\tau$ with the base change of $\xi\to\Spec\kappa^G$ by (T2), and $\ed_k(\tau)=0$.
\end{proof}

\begin{definition}\label{def:DB}
For a quasi-projective generically free primitive $G$-variety $Z$ and a torsor $\tau$ over $F$, let $D(Z,\tau)\subseteq\Z_{>0}$ be the set of degrees of closed points of $\tw{\tau}{Z}$ lying in the free locus, and $\delta_\free(Z,\tau):=\min D(Z,\tau)$ ($=\infty$ if empty). Write $D(Z)$, $\delta_\free(Z)$ for $\tau=\tgen$. For an affine curve $U$ with smooth projective model $\Ubar$ such that $\Ubar^{\free}=U$ we write $D(U)=D(\Ubar)$.
\end{definition}

\begin{corollary}\label{cor:jumpsB}
For every $G$-torsor $\tau$ and $0\le j\le\ed_k(\tau)$,
\[
d_j(\tau)=\min\{\delta_\free(Z,\tau):Z\text{ quasi-projective generically free primitive},\ \dim Z\le j\}.
\]
For $k$ perfect, $j=1$ and $\tau$ with connected total space $T$ (e.g.\ $\tau=\tgen$) the minimum may be restricted to smooth projective irreducible curves with faithful $G$-action provided $d_1(\tau)$ is attained by an $F'$ with $T\times_FF'$ connected; and then the free point is dominant (Lemma~\ref{lem:imageB}, since $\ed_k(\tau\otimes F')=1\ne0$). More precisely, for every finite $F'/F$ with $T\times_FF'$ connected and $\ed_k(\tau\otimes_FF')=1$ there is a smooth projective irreducible curve with faithful $G$-action whose twist by $\tau\otimes F'$ has a free, dominant $F'$-point. For $\tau=\tgen$ with $G=\Z/p^n$ this holds whenever $F'\not\supseteq K_1=K(z_0)$, in particular whenever $p\nmid[F':K]$.
\end{corollary}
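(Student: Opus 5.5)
The first formula will be read off from Theorem~\Aref{thm:geom} (item (P4)). By that theorem, $\edd d_k(\tau)\le j$ holds iff some generically free primitive $G$-variety $Z$ with $\dim Z\le j$ has a free closed point of degree $\le d$ on $\tw{\tau}{Z}$. Hence
\[
d_j(\tau)=\min_Z\ \delta_\free(Z,\tau).
\]
To restrict to quasi-projective $Z$, I will replace a witness $Z$ by a $G$-stable affine open neighbourhood of the orbit of the image point. Such a neighbourhood exists because $G$ is finite. The restriction is harmless because twisting commutes with passing to $G$-stable opens, so the free closed point survives with the same degree. The range $0\le j\le\ed_k(\tau)$ only guarantees that all the minima are finite.

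For $j=1$, I fix a finite $F'/F$ with $T\times_FF'$ connected and $\ed_k(\tau\otimes F')=1$. Applying (P4) with $d=1$ over $F'$ gives a generically free primitive $Z$ with $\dim Z\le1$ and a free $F'$-point of $\tw{\tau}{Z}$. By (P3) this is a $G$-equivariant morphism $f\colon T_{F'}\to Z$. Since $T_{F'}$ is connected, Lemma~\ref{lem:imageB} applies. It shows that $Y=\overline{f(T_{F'})}$ is an irreducible $G$-stable closed subset on which $G$ acts faithfully. If $\dim Y=0$, the last clause of Lemma~\ref{lem:imageB} gives $\ed_k(\tau\otimes F')=0$, which is a contradiction. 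So $Y$ is an irreducible curve and $f$ is dominant onto $Y$.

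Next I pass to the normalization $\tilde Y$ and its smooth projective completion $C$. Normal equals smooth here because $k$ is perfect. The $G$-action extends uniquely to $C$, since birational maps of smooth projective curves are regular, and it stays faithful. Because $f$ is dominant and $T_{F'}$ is a finite product of spectra of fields mapping to the generic point, $f$ factors through the function field. It therefore lifts to an equivariant map $T_{F'}\to C$ whose image is the generic point, which is free. This is a free, dominant $F'$-point of $\tw{\tau}{C}$, and so a free closed point of degree $\le[F':F]$. If $d_1(\tau)$ is attained by such an $F'$, we get $\delta_\free(C,\tau)\le d_1(\tau)$. The reverse inequality is the first formula, so the minimum may be restricted to such curves.

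For $\tau=\tgen$ it remains to show that $L\otimes_KF'$ is a field whenever $F'\not\supseteq K_1$. I will do this in three steps.
\begin{enumerate}[label=\textup{(\roman*)}]
\item Write $F'_s$ for the separable closure of $K$ in $F'$. Since $F'/F'_s$ is purely inseparable and $L/K$ is separable, $L\otimes_KF'=(L\otimes_KF'_s)\otimes_{F'_s}F'$ is a field iff $L\otimes_KF'_s$ is.
\item Inside a separable closure, $L\otimes_KF'_s$ is a field iff $L\cap F'_s=K$. This uses that $L/K$ is Galois.
\item By Galois theory the subfields of the cyclic extension $L/K$ form the chain $K=K_0\subset K_1\subset\dots\subset K_n=L$. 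So every subfield other than $K$ contains $K_1$. Hence $L\cap F'_s\ne K$ forces $K_1\subseteq F'$.
\end{enumerate}
Finally, if $p\nmid[F':K]$, then $F'$ cannot contain $K_1$, because $[K_1:K]=p$.

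I expect the main obstacle to be the lifting in the curve step. One has to check that replacing $Y$ by its smooth projective model keeps the point free and of the same degree, and that the possibly non-primitive or reducible $Z$ from Theorem~\Aref{thm:geom} causes no trouble. Both points rest on Lemma~\ref{lem:imageB}, which forces the image to be one irreducible faithful curve.
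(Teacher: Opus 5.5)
Your proposal is correct and follows essentially the paper's route: Theorem~\Aref{thm:geom} gives the formula, an irreducible faithful curve serves as witness and is replaced by its smooth projective model, and the chain $K\subset K_1\subset\dots\subset L$ handles $\tgen$. The main difference is that you obtain the irreducible faithful curve as the closure of the image via Lemma~\ref{lem:imageB}, using only the statement of Theorem~\Aref{thm:geom}, whereas the paper reads irreducibility off its proof. Two steps are superfluous: the affine replacement in the first part is not needed, since forming $\tw{\tau}{Z}$ already presupposes that orbits lie in affine opens (e.g.\ $Z$ quasi-projective), and finiteness of $G$ alone would not supply such a neighbourhood; and for Galois $L/K$ one has ``$L\otimes_KF'$ is a field iff $L\cap F'=K$'' for arbitrary $F'$, so the passage to the separable closure is unnecessary.
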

\begin{proof}
The first statement is Theorem~\Aref{thm:geom}. For the second (where $k$ is assumed perfect, so that the smooth projective model of an irreducible curve is a smooth curve over $k$), let $F'$ be as in the last sentence; by the proof of Theorem~\Aref{thm:geom} ((1)$\Rightarrow$(3) with $m=1$, where $T_{F'}$ connected makes $Z$ irreducible with faithful action), there is an irreducible $Z$ with faithful $G$-action, $\dim Z\le1$, whose twist has a free $F'$-point, and $\dim Z=1$ since $\ed_k(\tau\otimes F')=1$ (Lemma~\Aref{lem:freepoint}). So $Z$ is an irreducible faithful $G$-curve, its smooth projective model $\tilde Z$ carries the induced action, the free $F'$-point of $\tw{\tau'}{Z}$ is dominant (Lemma~\ref{lem:imageB}, as $\ed_k(\tau\otimes F')=1\ne0$) hence corresponds to a $G$-equivariant $T_{F'}\to\tilde Z$ whose image is the generic point, free because the action on $\tilde Z$ is faithful (an element acting trivially on the generic point acts trivially on $\tilde Z$); conversely a free point on $\tilde Z$ is dominant, so lands in the open $Z$. For $G=\Z/p^n$ the intermediate fields of $L/K$ are the chain $K\subset K_1\subset\dots\subset L$, so a finite $F'/K$ has $F'\cap L=K$ (i.e.\ $L\otimes_KF'$ a field, $L/K$ being Galois) iff $F'\not\supseteq K_1$.
\end{proof}

\begin{fact}\label{fact:rsd}
If $F$ is a large field and $X$ a generically free $G$-variety with a smooth $G$-fixed $k$-point, then $\ed_k(\tau)\le\ed_k(X)\le\dim X$ for every $G$-torsor $\tau$ over $F$; and if $G$ is a finite $p$-group and $\operatorname{char}k=p$, then $\ed_k(\tau)\le1$ for every $G$-torsor over a large field \cite[Lemma~9.1]{RS22}.
\end{fact}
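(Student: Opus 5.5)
The statement to prove is Fact~\ref{fact:rsd}. It has two parts: a general bound over large fields from a smooth fixed point, and the specialisation to $p$-groups in characteristic $p$. I treat them in that order.

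\emph{First part.} Let $\tau\colon T\to\Spec F$ be a torsor and $x\in X(k)$ a smooth $G$-fixed point. The plan is to show that $X$ is $\tau$-weakly versal, with the point produced on a dense open $U\subseteq X$. The inequality $\ed_k(\tau)\le\ed_k(X)$ then follows from Fact~\Aref{fact:rs}(c), and $\ed_k(X)\le\dim X$ is immediate.

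Since $x$ is fixed, $\tw{\tau}{x}$ is an $F$-rational point of $\tw{\tau}{X}$, and it is smooth because smoothness descends along the étale torsor. So $\tw{\tau}{X}$ is an $F$-variety with a smooth $F$-point. Because $F$ is large, its $F$-points are Zariski dense in the component through that point. Every dense $G$-stable open $U$ twists to a dense open $\tw{\tau}{U}$ \cite[Cor.~3.4]{DR15}, so $\tw{\tau}{U}(F)\ne\emptyset$. By Fact~\Aref{fact:rs}(a) this means $U$ is $\tau$-weakly versal.

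Irreducibility needs care. $X$ is irreducible, being primitive and generically free; if it were not, I would first replace it by the component through $x$. I also take $U$ inside the free locus. A point of $\tw{\tau}{U}(F)$ in the free locus is then a free point, and Lemma~\Aref{lem:freepoint} gives $\ed_k(\tau)\le\dim X$ directly.

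\emph{Second part.} The task is to produce a curve $X$ with faithful $G$-action and a smooth $G$-fixed $k$-point. The natural choice is $X=\A^1$ or $\PP^1$ with a faithful action fixing $\infty$.

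This step is the obstacle. For $p$-groups in characteristic $p$, faithful actions on $\PP^1$ fixing a point are translations $x\mapsto x+c$, and so realise only elementary abelian groups. For a general $p$-group I would therefore not insist on $\PP^1$, and would look for any smooth projective curve with faithful $G$-action and a totally ramified fixed point. I would build it by embedding $G$ as the Galois group of a totally ramified extension of $k((t))$; this is possible because $\mathrm{cd}_p(k((t)))\le1$ makes every $p$-group a quotient of the absolute Galois group. Katz--Gabber then globalises this to a $G$-cover of $\PP^1$ branched only at $\infty$. Its fibre over $\infty$ is a single point, which is therefore $G$-fixed and smooth, and the action is faithful.

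For the cyclic case this is exactly the Artin--Schreier--Witt curve $\Ubar_n\to\PP^1_t$ of class $[t]$, whose point at $\infty$ is fixed. The first part applied to this curve gives $\ed_k(\tau)\le1$. Alternatively I would simply cite \cite[Lemma~9.1]{RS22}, as the statement does.
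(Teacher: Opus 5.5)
Your argument is correct and is the one the paper relies on (via \cite[Lemma~9.1]{RS22}, Corollary~\ref{cor:large} and Remark~\ref{rem:large}(i)): the fixed point twists to a smooth $F$-point, largeness makes the $F$-points dense, so every dense $G$-stable open twist has an $F$-point (versality, hence $\ed_k(\tau)\le\ed_k(X)\le\dim X$), and for $p$-groups the curve with a fixed point comes from Harbater/Katz--Gabber, which in the cyclic case is explicitly $(\Ubar_n,\infty)$. Two small corrections. First, primitive plus generically free does not by itself give irreducibility; what does is that the smooth fixed point lies on a unique component, which $G$ therefore stabilises, so primitivity forces $X$ to be irreducible. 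Second, in the local step you need that the maximal pro-$p$ quotient of the absolute Galois group of $k((t))$ is free of infinite rank, since $k((t))/\wp k((t))$ is infinite; $\mathrm{cd}_p\le1$ alone is not enough.
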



\section{The truncation tower and a remark on global minima}\label{sec:witt}

This paper treats the cyclic group $G=\Z/p^n=\langle g\rangle$ over an algebraically closed field $k$ of characteristic $p>0$; this is fixed from now on. Recall from Section~\ref{ss:gen} that $G$ acts freely on $\W_n=\A^n_k$ by the Witt translation $z\mapsto z+1$, that $\wp\colon\W_n\to\W_n$ is the quotient map, and that the generic torsor is $\tgen\colon\Spec L\to\Spec K$ with
\[
L=k(z_0,\dots,z_{n-1}),\qquad K=L^G=k(s_0,\dots,s_{n-1}),\qquad s=\wp(z);
\]
its Artin--Schreier--Witt class is $s\in\W_n(K)/\wp\W_n(K)$, and the intermediate fields of $L/K$ are the $K_j=L^{\langle g^{p^j}\rangle}$, $[K_j:K]=p^j$. By Lemma~\ref{lem:versal}, $\ed_k(\tgen)=\ed_k(G)\le n$, and by Proposition~\Aref{prop:vers}, which applies to $V=\W_n$ because $\tw{\tau}{\W_n}\cong\A^n_F$ (Lemma~\ref{lem:versal}(a)), the profile of $\tgen$ dominates the profile of every $G$-torsor over every infinite field containing $k$. By Proposition~\Aref{prop:d0}(c), $d_0(\tgen)=p^n$ and $\edd d_k(\tgen)\ge1$ for $d<p^n$. Since $\ed_{k,q}(G)=0$ for primes $q\ne p$ and $\ed_{k,p}(G)\le1$ (Proposition~\ref{prop:tail}), Sylow divisibility (Theorem~\Aref{thm:div}) says only that $p\mid d_0(\tgen)$; it gives no information on $d_1,\dots,d_{n-1}$. The $p$-power part of the profile is governed instead by the Artin--Schreier--Witt layers of the class $s$. This section develops the truncation tower and a remark on global minima; rigidity of curve witnesses, the prime-to-$p$ tail and the quantitative Reichstein--Vistoli theorem follow in Sections~\ref{ss:rigid} and~\ref{ss:tail}. Throughout this paper, $\edd d$ and $d_j$ are taken relative to $k$, and the prime $q$ of Definition~\Aref{def:profile} is $p$; thus $\edd d_k{}'$ is the prime-to-$p$ profile and $d_j^{(p')}$ its jump degrees.

\subsection{The truncation tower}

\begin{lemma}[truncation tower]\label{lem:tower}
Let $F$ be a field of characteristic $p$, $a\in\W_n(F)$ and $0\le r\le n$. There is a tower $F=F^{(0)}\subseteq F^{(1)}\subseteq\dots\subseteq F^{(n-r)}$ of Artin--Schreier extensions (each of degree $1$ or $p$), hence $[F^{(n-r)}:F]\le p^{n-r}$, such that
\[
a\equiv(a_0,\dots,a_{r-1},0,\dots,0)\bmod\wp\W_n(F^{(n-r)}).
\]
The $i$-th layer is $F^{(i)}=F^{(i-1)}(v_i)$ with $v_i^p-v_i=b_i$, where $b_1=a_r$ and $b_i$ (for $i\ge2$) is the coordinate of index $r+i-1$ of the class obtained at the previous step; in general $b_i\ne a_{r+i-1}$ because of the Witt carries.
\end{lemma}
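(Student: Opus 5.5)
I argue by induction on $m=n-r$, killing the Witt coordinates one at a time from index $r$ upwards. The two tools are the splitting identity \eqref{eq:Vsplit}, $x+V^j(y)=(x_0,\dots,x_{j-1},(x_j,\dots,x_{n-1})+y)$, and the fact that $\wp$ commutes with $V$. For $m=0$ there is nothing to prove: take the tower of length zero. It is cleaner to prove a slightly stronger claim: at step $i$ (with $0\le i\le n-r$) we have a field $F^{(i)}$ and a class $a^{(i)}\in\W_n(F^{(i)})$ with $a^{(i)}\equiv a\bmod\wp\W_n(F^{(i)})$, such that $a^{(i)}$ has first $r$ coordinates $a_0,\dots,a_{r-1}$ and coordinates $r,\dots,r+i-1$ equal to $0$.

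For the inductive step from $i-1$ to $i$, I set $b_i:=a^{(i-1)}_{r+i-1}$, the coordinate of index $r+i-1$ of the current class. I then let $v_i$ be a root of $X^p-X-b_i$ in an algebraic closure, and put $F^{(i)}=F^{(i-1)}(v_i)$. This extension has degree $1$ if $b_i\in\wp F^{(i-1)}$ and degree $p$ otherwise, since an Artin--Schreier polynomial is either split or irreducible. With $j=r+i-1$, consider $w:=V^{j}([v_i])\in\W_n(F^{(i)})$. Because $\wp$ commutes with $V$,
\[
\wp(w)=V^{j}(\wp[v_i])=V^j\bigl((v_i^p-v_i,\ \ast,\dots)\bigr).
\]
Here $\wp[v_i]=[v_i]^p-[v_i]$ computed in $\W_{n-j}$, and its coordinate $0$ is $v_i^p-v_i=b_i$. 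I then put $a^{(i)}:=a^{(i-1)}-\wp(w)$, which is congruent to $a^{(i-1)}$, and hence to $a$. By \eqref{eq:Vsplit} applied with $V^j$, the coordinates $0,\dots,j-1$ of $a^{(i)}$ are unchanged. So these are still $a_0,\dots,a_{r-1},0,\dots,0$. Coordinates $j$ onward are given by the Witt difference $(a^{(i-1)}_j,\dots)-\wp[v_i]$ in $\W_{n-j}$. Its coordinate $0$ is $b_i-b_i=0$, since Witt subtraction is coordinatewise in coordinate $0$. This completes the induction, and the higher coordinates are changed by carries. Those carries are exactly why $b_{i+1}\neq a_{r+i}$ in general, which matches the last sentence of the statement.

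At $i=n-r$ every coordinate of index $\ge r$ is $0$. This gives $a\equiv(a_0,\dots,a_{r-1},0,\dots,0)\bmod\wp\W_n(F^{(n-r)})$. Multiplying the degrees of the layers gives $[F^{(n-r)}:F]\le p^{n-r}$.

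The only delicate point is that adding $V^j$ of something does not disturb the lower coordinates, and this is exactly \eqref{eq:Vsplit}. I therefore expect no real obstacle beyond bookkeeping the index $j=r+i-1$ correctly.
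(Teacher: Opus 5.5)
Your proof is correct and follows essentially the paper's argument. Both induct on $n-r$ and kill the coordinate of index $r+i-1$ at each step by subtracting $\wp V^{r+i-1}[v_i]$, using \eqref{eq:Vsplit}, additivity of $V$ and $\wp V=V\wp$. The paper phrases the step as $[a_r]=\wp[v]+V(e)$ in $\W_{n-r}$ and then applies the induction hypothesis with $r+1$, which is the same computation.
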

\begin{proof}
Induction on $n-r$; there is nothing to do if $r=n$. Let $r<n$. By \eqref{eq:Vsplit}, $a=(a_0,\dots,a_{r-1},0,\dots,0)+V^r(a')$ with $a'=(a_r,\dots,a_{n-1})\in\W_{n-r}(F)$, and $a'=[a_r]+V(a'')$. Let $v^p-v=a_r$ in $F^{(1)}=F(v)$. In $\W_{n-r}(F^{(1)})$ the vectors $[a_r]=[v^p-v]$ and $\wp[v]=[v^p]-[v]$ have the same coordinate of index $0$, so $[a_r]=\wp[v]+V(e)$ for some $e\in\W_{n-r-1}(F^{(1)})$. Since $\wp$ commutes with $V^r$, $V^r\wp[v]\in\wp\W_n(F^{(1)})$ and
\[
a\equiv(a_0,\dots,a_{r-1},0,\dots,0)+V^{r+1}(a''+e)=(a_0,\dots,a_{r-1},0,b_2,\dots,b_{n-r})\bmod\wp\W_n(F^{(1)}),
\]
with $b_2,\dots$ the coordinates of $a''+e$ (again \eqref{eq:Vsplit}). Apply the induction hypothesis (with $r+1$) to this vector over $F^{(1)}$.
\end{proof}

\begin{proposition}[truncation bound]\label{prop:trunc}
For $0\le j\le n$: $\ed^{[p^j]}_k(\tgen)\le n-j$. In particular $d_{n-1}(\tgen)\le p$, $d_1(\tgen)\le p^{n-1}$, $d_0(\tgen)\le p^n$. More generally, for every $G$-torsor $\tau_a$ over a field $F\supseteq k$ one has $\ed^{[p^j]}_k(\tau_a)\le\ed_k(\tau_{(a_0,\dots,a_{n-j-1},0,\dots,0)})\le\trdeg_kk(a_0,\dots,a_{n-j-1})$.
\end{proposition}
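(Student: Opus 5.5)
The plan is to prove the general statement for an arbitrary torsor $\tau_a$ over $F\supseteq k$ first, and then specialise to $\tgen$.

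\emph{Step 1: reduce the torsor over a small extension.} Apply Lemma~\ref{lem:tower} with $r=n-j$. This gives a tower of Artin--Schreier extensions $F^{(j)}/F$ with $[F^{(j)}:F]\le p^{j}$, over which
\[
a\equiv a':=(a_0,\dots,a_{n-j-1},0,\dots,0)\bmod\wp\W_n(F^{(j)}).
\]
By the Artin--Schreier--Witt isomorphism, $\tau_a\otimes F^{(j)}\cong\tau_{a'}\otimes F^{(j)}$.

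\emph{Step 2: bound the essential dimension.} By (P1), $\ed_k(\tau_{a'}\otimes F^{(j)})\le\ed_k(\tau_{a'})$. The definition of $\edd{p^j}_k$ as a minimum over extensions of degree $\le p^j$ then gives
\[
\edd{p^j}_k(\tau_a)\le\ed_k(\tau_{a'}).
\]

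\emph{Step 3: bound by transcendence degree.} The torsor $\tau_{a'}$ is defined over $F_0:=k(a_0,\dots,a_{n-j-1})\subseteq F$, since $a'\in\W_n(F_0)$. So $\tau_{a'}$ is the base change of the $F_0$-torsor of the same class, and by the definition of essential dimension
\[
\ed_k(\tau_{a'})\le\trdeg_kF_0\le n-j.
\]

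\emph{Step 4: specialise.} For $\tau=\tgen$ we have $a=s$ and $\trdeg_k k(s_0,\dots,s_{n-j-1})=n-j$, which gives $\edd{p^j}_k(\tgen)\le n-j$. The named special cases are $j=1,\,n-1,\,n$, read through the definition $d_m=\min\{d:\edd d\le m\}$: for instance $j=n-1$ gives $\edd{p^{n-1}}\le1$, hence $d_1\le p^{n-1}$. The case $j=n$ recovers $\edd{p^n}=0$, i.e.\ $d_0\le p^n$, consistent with (P2).

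The only point needing care is Step 1. One must check that the congruence mod $\wp\W_n(F^{(j)})$ really identifies the base-changed torsors; this is exactly functoriality of the Artin--Schreier--Witt isomorphism, as already used in Remark~\ref{rem:insepwitt}. Everything else is formal.
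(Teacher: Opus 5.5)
Your proposal is correct and follows the paper's proof: apply Lemma~\ref{lem:tower} with $r=n-j$ to reduce the class to $(a_0,\dots,a_{n-j-1},0,\dots,0)$ over a field extension of degree $\le p^j$, then descend the torsor to $k(a_0,\dots,a_{n-j-1})$. You spell out two steps the paper leaves implicit, namely monotonicity under base change (P1) and the identification of the base-changed torsors by functoriality of the Artin--Schreier--Witt isomorphism.
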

\begin{proof}
Lemma~\ref{lem:tower} with $r=n-j$: over $F^{(j)}$, of degree $\le p^j$, the class is $(a_0,\dots,a_{n-j-1},0,\dots,0)$, a Witt vector with coordinates in $k(a_0,\dots,a_{n-j-1})$, so the torsor descends to that field. For $\tgen$, $\trdeg_kk(s_0,\dots,s_{n-j-1})=n-j$.
\end{proof}

\begin{remark}\label{rem:towerp2}
For $n=2$ no carry occurs: $s=[s_0]+V(s_1)$ and $V(s_1)=V(\wp v)=\wp V(v)$ over $K(v)$, $v^p-v=s_1$, so $s\equiv[s_0]$ over $K(v)$. For $(p,n)=(2,3)$, with $v_i^2+v_i=s_i$, the formulas \eqref{eq:add2}--\eqref{eq:wp2} give $\wp(0,v_1,0)=(0,s_1,v_1^3+v_1^2)$ and hence
\begin{gather*}
s\equiv(s_0,0,\ c)\ \text{over }K(v_1),\ \ c=s_2+s_1^2+s_1v_1;\qquad s\equiv(s_0,0,\ s_1^2+s_1v_1)\ \text{over }K(v_1,v_2);\\
s\equiv(s_0,0,0)\ \text{over }K(v_1,v'),
\end{gather*}
where $v'^2+v'=c$ (these are the specialisations $x=(0,v_1,0)$, $(0,v_1,v_2)$, $(0,v_1,v')$ of the general translate of Appendix~\ref{app:witt}). The middle class is not $(s_0,0,0)$: $s_1^2+s_1v_1=v_1^4+v_1^3\equiv v_1^3+v_1\bmod\wp K(v_1,v_2)$, and $v_1^3+v_1\notin\wp\,k(s_0,v_2)(v_1)$, since it has a pole of odd order $3$ at $v_1=\infty$ while an element $w^2+w$ has a pole of even order or none there. So the quartic extension $K(v_1,v_2)$ does not kill the two upper layers of the class, although both $s_1$ and $s_2$ become $\wp$-exact over it; in fact $\ed_k(\tgen\otimes K(v_1,v_2))=2$ (Theorem~\ref{thm:fam}(c)), so $K(v_1,v_2)$ is not a witness for $\edd4_k(\tgen)\le1$, while the tower $K(v_1,v')$ of Lemma~\ref{lem:tower} is.
\end{remark}

The truncation tower removes the top Witt coordinates by field extensions of $p$-power degree. Conversely, a descent of the truncation over any extension lifts to a descent of the full class at the cost of one parameter, and this bounds both profiles.

\begin{lemma}[lifting one Witt layer]\label{lem:lift}
Let $F\supseteq k$ be a field, $n\ge2$, $a\in\W_n(F)$, and $\bar a\in\W_{n-1}(F)$ its truncation, the class of the $\Z/p^{n-1}$-torsor $\tau_{\bar a}$ induced by $\Z/p^n\to\Z/p^{n-1}$. For every field extension $F'/F$,
\[
\ed_k(\tau_a\otimes F')\le\ed_k(\tau_{\bar a}\otimes F')+1 .
\]
Consequently $\edd d_k(\tau_a)\le\edd d_k(\tau_{\bar a})+1$ and $\ed^{[d]'}_k(\tau_a)\le\ed^{[d]'}_k(\tau_{\bar a})+1$ for every $d\ge1$. For the generic torsors $\tau_n$ over $K=k(s_0,\dots,s_{n-1})$ and $\tau_{n-1}$ over $k(s_0,\dots,s_{n-2})$ this gives $\edd d_k(\tau_n)\le\edd d_k(\tau_{n-1})+1$ and $\ed^{[d]'}_k(\tau_n)\le\ed^{[d]'}_k(\tau_{n-1})+1$, hence
\[
d_{j+1}(\tau_n)\le d_j(\tau_{n-1}),\qquad d_{j+1}^{(p')}(\tau_n)\le d_j^{(p')}(\tau_{n-1})\qquad(0\le j\le n-1).
\]
\end{lemma}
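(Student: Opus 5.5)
The plan is to prove the basic inequality $\ed_k(\tau_a\otimes F')\le\ed_k(\tau_{\bar a}\otimes F')+1$ directly on Witt classes, and then to derive the profile consequences formally. Put $m=\ed_k(\tau_{\bar a}\otimes F')$. By definition of essential dimension, $\tau_{\bar a}\otimes F'$ descends to a subfield $k\subseteq E_0\subseteq F'$ with $\trdeg_kE_0=m$. By Artin--Schreier--Witt theory this means there is $\bar b\in\W_{n-1}(E_0)$ with $\bar a\equiv\bar b\bmod\wp\W_{n-1}(F')$, that is, $\bar a=\bar b+\wp(\bar x)$ for some $\bar x\in\W_{n-1}(F')$.

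Next we lift this to length $n$. Choose any lifts $b\in\W_n(E_0)$ of $\bar b$ (for instance append a coordinate $0$) and $x\in\W_n(F')$ of $\bar x$ (append $0$). Truncation $\W_n\to\W_{n-1}$ is a ring map commuting with $\wp$, so $a-b-\wp(x)$ has vanishing first $n-1$ coordinates. By \eqref{eq:Vsplit} it therefore equals $V^{n-1}(c)=(0,\dots,0,c)$ for a single element $c\in F'$. Now set $E=E_0(c)\subseteq F'$, so that $\trdeg_kE\le m+1$. Then $b+V^{n-1}(c)\in\W_n(E)$ and $a\equiv b+V^{n-1}(c)\bmod\wp\W_n(F')$. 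Hence $\tau_a\otimes F'$ is the base change of the torsor $\tau_{b+V^{n-1}(c)}$ over $E$, which gives $\ed_k(\tau_a\otimes F')\le m+1$. The only care needed is to check that subtracting the lifts leaves exactly one top coordinate; \eqref{eq:Vsplit} gives this, because a Witt vector whose first $n-1$ coordinates vanish is $V^{n-1}$ of its last coordinate.

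The profile inequalities then follow by taking the minimum over $F'$ with $[F':F]\le d$ (respectively with degree prime to $p$ and $\le d$). The same $F'$ that realises $\edd d_k(\tau_{\bar a})$ gives a bound for $\tau_a$.

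For the generic torsors, note that the truncation of $s=\wp(z)$ is $\wp(z_0,\dots,z_{n-2})$, the generic class for $\Z/p^{n-1}$. It lives over $K'=k(s_0,\dots,s_{n-2})$, and $K=K'(s_{n-1})$ is purely transcendental over $K'$. So $\tau_{\bar s}=\tau_{n-1}\otimes_{K'}K$. We need $\edd d_k(\tau_{n-1}\otimes_{K'}K)\le\edd d_k(\tau_{n-1})$, which we get by base-changing a witness $F''/K'$ to $F''(s_{n-1})/K$. This has the same degree, and it does not increase $\ed_k$ by (P1). Finally, if $\edd d_k(\tau_{n-1})\le j$ then $\edd d_k(\tau_n)\le j+1$, which gives $d_{j+1}(\tau_n)\le d_j(\tau_{n-1})$, and likewise in the prime-to-$p$ case.

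I expect no serious obstacle. The step most in need of attention is the compatibility in the generic case, namely that the purely transcendental extension $K/K'$ preserves degrees and the prime-to-$p$ condition. It does, because $F''\otimes_{K'}K=F''(s_{n-1})$ is a field of the same degree.
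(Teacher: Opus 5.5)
Your proposal is correct and follows essentially the same route as the paper: lift the descent of the truncated class by appending a zero coordinate, note that truncation commutes with $\wp$, so the discrepancy is confined to the single top Witt coordinate $c$, and adjoin $c$ to the descent field. The generic-torsor step is also the paper's: base-change a witness $F''/k(s_0,\dots,s_{n-2})$ to $F''(s_{n-1})/K$, which has the same degree.
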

\begin{proof}
Let $e=\ed_k(\tau_{\bar a}\otimes F')$: there are a field $k\subseteq F_0\subseteq F'$ with $\trdeg_kF_0=e$, $b\in\W_{n-1}(F_0)$ and $y\in\W_{n-1}(F')$ with $\bar a=b+\wp y$. Let $\tilde y=(y,0)\in\W_n(F')$. Truncation $\W_n\to\W_{n-1}$ is a ring map commuting with $\wp$, so $a-\wp\tilde y$ has truncation $b$, i.e.\ $a-\wp\tilde y=(b_0,\dots,b_{n-2},c)$ for some $c\in F'$. This vector lies in $\W_n(F_0(c))$ and $\trdeg_kF_0(c)\le e+1$, so $\ed_k(\tau_a\otimes F')\le e+1$. Taking the minimum over $F'/F$ of degree $\le d$ (and prime to $p$) gives the statements on profiles. For the generic torsors, the truncation of $\tau_n$ is $\tau_{n-1}\otimes K$, and an extension $F'_0$ of degree $d$ of $k(s_0,\dots,s_{n-2})$ gives the extension $F'_0\otimes K=F'_0(s_{n-1})$ of $K$ of the same degree, over which a descent of $\tau_{n-1}\otimes F'_0$ remains a descent; so $\edd d_k(\tau_{n-1}\otimes K)\le\edd d_k(\tau_{n-1})$, and likewise for the prime-to-$p$ profiles. The inequalities for the jump degrees follow.
\end{proof}

\begin{remark}\label{rem:lift}
(i) At $(2,3)$, Lemma~\ref{lem:lift} with $j=1$ and Theorem~\ref{thm:n2}(d) give $d_2^{(p')}(\tgen)\le d_1^{(p')}(\tau_2)=3$; this is the upper bound in Theorem~\ref{thm:23main}(d), made explicit there. With $j=0$ it gives $d_1(\tgen)\le d_0(\tau_2)=4$, the bound of Proposition~\ref{prop:trunc}.
(ii) At $(3,3)$ it gives $d_2^{(p')}(\tgen)\le d_1^{(p')}(\tau_2)\le p^2-p+1=7$ (Proposition~\ref{prop:n2p}); as the prime-to-$p$ profile can only change at degrees prime to $3$, $d_2^{(p')}(\tgen)\le7$.
(iii) The lemma does not interact with Ledet's conjecture: it bounds $\ed_k(\tau_n)$ by $\ed_k(\tau_{n-1})+1$, which is the bound $\ed_k(\Z/p^n)\le n$ again.
\end{remark}

\subsection{A remark on global minima}\label{ss:collapse}

Any invariant of $\tgen$ of the form ``minimum, over all generically free $G$-varieties $Z$ of dimension $\le n-1$, of the degree of a free point of $\tw{\tgen}{Z}$'' is dominated by its value on the following hypersurface.

\begin{definition}\label{def:S0}
For $n\ge2$ let $S_0:=\{z\in\W_n:(\wp z)_{n-1}=0\}=\wp^{-1}(\{s_{n-1}=0\})\subset\W_n$.
\end{definition}

\begin{proposition}\label{prop:S0}
Let $n\ge2$.
\begin{enumerate}[label=\textup{(\alph*)}]
\item $S_0$ is an irreducible $G$-stable hypersurface of dimension $n-1$ on which $G$ acts freely. Under the identification $\tw{\tgen}{\W_n}\cong\A^n_K$, $y\mapsto h=y-z$, of Lemma~\ref{lem:versal},
\[
\tw{\tgen}{S_0}=\{h\in\A^n_K:(s+\wp h)_{n-1}=0\}.
\]
\item $\tw{\tgen}{S_0}$ has a free closed point of degree $p$, namely $h=(0,\dots,0,v)$ with $v^p-v=-s_{n-1}$, and it has no $K$-point unless $\ed_k(\tgen)\le n-1$. Hence $d_{n-1}(\tgen)=\min_Z\delta_\free(Z)\le p$, the minimum taken over quasi-projective generically free primitive $G$-varieties $Z$ of dimension $\le n-1$; at $p=2$ this minimum is $1$ if $\ed_k(\Z/2^n)\le n-1$ and $2$ otherwise.
\end{enumerate}
\end{proposition}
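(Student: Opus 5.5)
For (a), I would first record that $S_0=\wp^{-1}(\{s_{n-1}=0\})$ is the preimage of a coordinate hyperplane under the finite étale Galois quotient map $\wp\colon\W_n\to\W_n$. It is therefore $G$-stable, of pure dimension $n-1$, and $G$ acts freely on it because $G$ acts freely on $\W_n$ (Section~\ref{ss:gen}).

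For irreducibility I would use the explicit shape $(\wp z)_{n-1}=z_{n-1}^p-z_{n-1}+P(z_0,\dots,z_{n-2})$ from Section~\ref{ss:witt}. With this, $S_0$ is the graph-like hypersurface $\{z_{n-1}^p-z_{n-1}=-P\}$ over $\A^{n-1}$. The polynomial $T^p-T+P$ is irreducible over $k(z_0,\dots,z_{n-2})$ unless $-P\in\wp\,k(z_0,\dots,z_{n-2})$. To exclude that, I would argue that otherwise $S_0\to\A^{n-1}$ splits into $p$ components. Those components would be permuted transitively by $g^{p^{n-1}}$, and the stabiliser of one of them would give a section $\A^{n-1}\to S_0$ of the truncation torsor. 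Comparing with the fact that $\wp\colon\W_{n-1}\to\W_{n-1}$ is a nontrivial connected cover rules this out.

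A cleaner route to irreducibility is the following. $S_0\cap\{z_0=\dots=z_{n-2}=0\}$ is irreducible over each point, and $S_0$ is the $G$-saturation of a connected family. Alternatively, one can note that the image of $S_0$ in $\W_n/G$ is the irreducible hyperplane $\{s_{n-1}=0\}$, so that $G$ permutes the components transitively. Any proper stabiliser would contain $g^{p^{n-1}}$, and the quotient by it would then give a section of the connected torsor $\wp_{\W_{n-1}}$ restricted to a hyperplane, which contradicts irreducibility of $\W_{n-1}$ as a torsor total space.

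The twist description in (a) is then immediate. By Lemma~\ref{lem:versal}(a), a point $y$ corresponds to $h=y-z$. Since $\wp$ is additive, $\wp y=\wp z+\wp h=s+\wp h$, so the equation $(\wp y)_{n-1}=0$ becomes $(s+\wp h)_{n-1}=0$.

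For (b), I would put $h=(0,\dots,0,v)=V^{n-1}(v)$. By \eqref{eq:Vsplit}, $s+\wp V^{n-1}(v)=s+V^{n-1}(v^p-v)=(s_0,\dots,s_{n-2},s_{n-1}+v^p-v)$. Its last coordinate vanishes when $v^p-v=-s_{n-1}$, which is an irreducible equation over $K$ because $s_{n-1}$ is a variable, so the point has degree $p$. Freeness is automatic since $G$ acts freely on $S_0$.

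Next, a $K$-point of $\tw{\tgen}{S_0}$ is in particular a free $K$-point of an $(n-1)$-dimensional twist, so (P4) gives $\ed_k(\tgen)\le n-1$. The equality $d_{n-1}=\min_Z\delta_\free(Z)$ is Corollary~\ref{cor:jumpsB}, and evaluating at $Z=S_0$ gives $d_{n-1}\le p$. At $p=2$ the value lies in $\{1,2\}$, and it equals $1$ iff $\ed_k(\tgen)\le n-1$ by (P2), since $\edd1=\ed$.

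The main obstacle is the irreducibility of $S_0$; the rest is formal.
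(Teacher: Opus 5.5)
Outside the irreducibility of $S_0$, your argument is the paper's: freeness and dimension from the free quotient map $\wp$, the equation $(s+\wp h)_{n-1}=0$ from $h=y-z$ and additivity of $\wp$, the point $h=V^{n-1}(v)$ via \eqref{eq:Vsplit}, and the deductions via (P4) and Corollary~\ref{cor:jumpsB}.

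\textbf{The irreducibility step is not yet a proof.} You rightly single it out, and each of your three sketches has a specific defect.
\begin{enumerate}
\item In your first attempt the reduction to $-P\notin\wp\,k(z_0,\dots,z_{n-2})$ is correct. The announced contradiction, however, does not come from connectedness of $\wp\colon\W_{n-1}\to\W_{n-1}$. A rational section of the $\langle\sigma\rangle$-torsor $S_0\to\W_{n-1}$, $\sigma=g^{p^{n-1}}$, is compatible with that connectedness: both happen for the $\Z/p^{n-1}\times\Z/p$-torsor $\W_{n-1}\times\Z/p$. What excludes the section is that $G$ is cyclic.
\item Your ``cleaner route'' starts from a false claim. $S_0\cap\{z_0=\dots=z_{n-2}=0\}$ is a fibre of the \'etale degree-$p$ map $S_0\to\W_{n-1}$, so it consists of $p$ reduced points, not an irreducible set.
\item In the ``alternatively'' version, ``any proper stabiliser would contain $g^{p^{n-1}}$'' fails for the trivial stabiliser. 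That is exactly the split case, so it cannot be skipped.
\end{enumerate}

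\textbf{Repair using your own ingredients.} $S_0$ is a $G$-torsor over the irreducible hyperplane $\{s_{n-1}=0\}$, so $G$ permutes its components transitively and they have a common stabiliser $H$. By \eqref{eq:gpj}, the truncation $S_0\to\W_{n-1}$, $z\mapsto(z_0,\dots,z_{n-2})$, is the quotient by $\langle\sigma\rangle$ and has degree $p$. Hence there are at most $p$ components, and $H\ne1$. Since $n\ge2$, every nontrivial subgroup of $G$ contains $\sigma$, so each component is $\sigma$-stable. A $\sigma$-stable component contains, with each of its points, the whole fibre of $S_0\to\W_{n-1}$ through it, because that fibre is a single $\langle\sigma\rangle$-orbit. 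A component is $(n-1)$-dimensional and the map to $\W_{n-1}$ is finite, so the component maps onto $\W_{n-1}$. It therefore contains every fibre, and so it is all of $S_0$.

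\textbf{Comparison with the paper.} Once repaired, your route is genuinely different from the paper's. The paper reduces irreducibility to the generic fibre of $S_0\to\{s_{n-1}=0\}$ being a field. That means the class $a=(s_0,\dots,s_{n-2},0)$ must have order $p^n$ in $\W_n(K_{(n-1)})/\wp\W_n(K_{(n-1)})$. From $p^{n-1}a=V^{n-1}F^{n-1}a=(0,\dots,0,s_0^{p^{n-1}})$ and \eqref{eq:Vker}, this becomes $s_0\notin\wp K_{(n-1)}$, which the paper checks by a pole-order argument.

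In effect both arguments test connectedness after dividing by a subgroup of $\langle g^p\rangle$, which suffices because $G$ is cyclic. The paper divides by $\langle g^p\rangle$ itself and reads the answer off the Witt class ($a_0=s_0$). You divide by $\langle\sigma\rangle$, where the quotient is visibly $\W_{n-1}$. Your version needs no valuation computation. The paper's version yields the general criterion that a $\Z/p^n$-torsor of class $a$ is a field iff $a_0\notin\wp F$, which applies even when no quotient can be recognised geometrically.
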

\begin{proof}
(a) $S_0$ is $G$-stable because $\wp$ is $G$-invariant. The restriction of the $G$-torsor $\wp\colon\W_n\to\W_n$ to the hyperplane $\{s_{n-1}=0\}\cong\A^{n-1}$ is a $G$-torsor $S_0\to\A^{n-1}$ whose generic fibre is the Artin--Schreier--Witt torsor of class $a:=(s_0,\dots,s_{n-2},0)\in\W_n(K_{(n-1)})$, $K_{(n-1)}=k(s_0,\dots,s_{n-2})$. So $S_0$ has dimension $n-1$, $G$ acts freely on it, and it is irreducible iff this generic fibre is a field, i.e.\ iff $a$ has order $p^n$ in $\W_n(K_{(n-1)})/\wp$. Now $p^{n-1}a=V^{n-1}F^{n-1}a=(0,\dots,0,s_0^{p^{n-1}})$, which by \eqref{eq:Vker} is $\equiv0$ iff $s_0^{p^{n-1}}\equiv s_0$ lies in $\wp K_{(n-1)}$; this fails, since over $k(s_1,\dots,s_{n-2})$ the element $s_0$ has a simple pole at $s_0=\infty$, whereas an element $w^p-w$ has a pole of order divisible by $p$ there or none. By Lemma~\ref{lem:versal}(a), a $K'$-point of $\tw{\tgen}{\W_n}$ ($K'\supseteq K$) is a $y\in\W_n(L\otimes_KK')$ with $g(y)-y=1$, and $h:=y-z$ is then $G$-invariant, i.e.\ $h\in\W_n(K')$; the point lies in $\tw{\tgen}{S_0}$ iff $(\wp(z+h))_{n-1}=(s+\wp h)_{n-1}=0$.

(b) By \eqref{eq:Vsplit} and $\wp V^{n-1}=V^{n-1}\wp$: $s+\wp V^{n-1}(v)=s+V^{n-1}(v^p-v)=(s_0,\dots,s_{n-2},0)$. The residue field $K(v)$ has degree $p$, because $-s_{n-1}\notin\wp K$ (simple pole at $s_{n-1}=\infty$). As $G$ acts freely on $S_0$, every point of $\tw{\tgen}{S_0}$ is free. A free $K$-point on the $(n-1)$-dimensional $S_0$ gives $\ed_k(\tgen)\le n-1$ by Lemma~\Aref{lem:freepoint}. The formula $d_{n-1}=\min_Z\delta_\free(Z)$ is Corollary~\ref{cor:jumpsB}, and the point of degree $p$ gives $\le p$; at $p=2$, $d_{n-1}=1$ iff $\ed_k(\tgen)\le n-1$.
\end{proof}

\begin{remark}
By Proposition~\ref{prop:S0}(b), the minimum over all $G$-varieties of dimension $\le n-1$ of the degree of a free point on the twist equals $d_{n-1}(\tgen)\le p$: it carries at most one bit when $p=2$. Graded information on the profile therefore has to come from the degree of the base change, via curves (Section~\ref{ss:rigid}), or from varieties one at a time (the sets $D(Z)$, Proposition~\ref{prop:DX}).
\end{remark}


\section{Curve witnesses and the first jump degree}\label{ss:rigid}

By Proposition~\ref{prop:S0}(b), minima over all $G$-varieties carry at most one bit. The profile escapes it because its slots are indexed by the degree of the base change, and at $j=1$ that degree is the degree of a divisor on a curve. This subsection turns that remark into a lower bound valid for every $(p,n)$. The mechanism is short: a hypothetical witness for $\ed_k(\tgen\otimes F')\le1$ is a point of degree $[F':K]$ on the twist of a faithful $G$-curve $C$; over $L$ the twist becomes $C_L$, the point becomes a divisor whose class is an $L$-point of $\Pic_{C/k}$, and since $L$ is purely transcendental over $k$ this class is constant. The constant class is $G$-invariant, and a $G$-invariant pencil of small degree is impossible because $\PGL_2(k)$ contains no element of order $p^2$. Throughout this subsection $n\ge2$.

\begin{fact}[curves and Picard schemes]\label{fact:pic}
Let $C$ be a smooth projective curve over $k$ of genus $g(C)$, and $D$ a divisor on $C$; $\mathcal L(D)=H^0(C,\mathcal O_C(D))$, $\ell(D)=\dim_k\mathcal L(D)$.
\begin{enumerate}[label=\textup{(\alph*)}]
\item $\Pic_{C/k}$ is represented by a $k$-group scheme whose components $\Pic^d_{C/k}$ are torsors under the Jacobian $J=\Pic^0_{C/k}$, an abelian variety of dimension $g(C)$; since $C(k)\ne\emptyset$, $\Pic(C_F)=\Pic_{C/k}(F)$ for every field $F\supseteq k$, compatibly with degrees and with pullback by automorphisms of $C$ \cite[8.1, Prop.~4; 8.2, Thm.~3; 8.4, Thm.~3]{BLR90}, \cite[Thm.~9.2.5]{Kl05}; $\Pic^0_{C/k}$ is an abelian variety of dimension $g(C)$ by \cite[8.4, Thm.~1]{BLR90}.
\item For a field $F\supseteq k$: $\mathcal L(D_F)=\mathcal L(D)\otimes_kF$ (flat base change), so $\ell(D_F)=\ell(D)$.
\item (Riemann--Roch) $\ell(D)-\ell(K_C-D)=\deg D+1-g(C)$ \cite[IV.1.3]{Hart77}. If $g(C)\ge1$ and $\deg D\ge1$ then $\ell(D)\le\deg D$ \cite[IV, Ex.~1.5]{Hart77}; if $g(C)=1$ and $\deg D\ge1$ then $\ell(D)=\deg D$.
\item (Clifford) If $D$ is effective and special ($\ell(K_C-D)>0$) then $\ell(D)-1\le\frac12\deg D$ \cite[IV.5.4]{Hart77}.
\end{enumerate}
\end{fact}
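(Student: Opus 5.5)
Most of this Fact is quoted from the literature, so the plan is to check that each cited result applies in our setting. The only genuine argument needed is for the two supplementary inequalities in (c).

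For (a), I would note that $C$ is smooth, projective and geometrically integral over the algebraically closed field $k$, so $C(k)\ne\emptyset$. Then $\Pic_{C/k}$ is representable by \cite[8.2, Thm.~3]{BLR90} or \cite[Thm.~9.2.5]{Kl05}. Because $C$ has a rational point, the natural map $\Pic(C_F)\to\Pic_{C/k}(F)$ is bijective for every field $F\supseteq k$ \cite[8.1, Prop.~4]{BLR90}. This identification respects degrees, since degree is locally constant on $\Pic_{C/k}$ and can be computed after base change to $\overline F$. It respects pullback by an automorphism $\sigma$ of $C$, since $\sigma^*$ acts on the Picard functor and the identification is natural in $C$. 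The statements that $\Pic^0_{C/k}$ is an abelian variety of dimension $g(C)$ and that each $\Pic^d_{C/k}$ is a torsor under it, via translation by a degree-$d$ divisor supported on $C(k)$, are \cite[8.4, Thm.~1 and Thm.~3]{BLR90}.

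For (b), I would apply flat base change for coherent cohomology along $\Spec F\to\Spec k$ to the sheaf $\mathcal O_C(D)$. This gives $H^0(C_F,\mathcal O_{C_F}(D_F))=H^0(C,\mathcal O_C(D))\otimes_kF$, and taking dimensions gives $\ell(D_F)=\ell(D)$.

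For (c), the Riemann--Roch formula itself is \cite[IV.1.3]{Hart77}. For the inequality $\ell(D)\le\deg D$ when $g(C)\ge1$ and $\deg D\ge1$, I would argue by contradiction, as in \cite[IV, Ex.~1.5]{Hart77}. Suppose $\ell(D)\ge\deg D+1$. Write $e=\deg D-1\ge0$ and choose points $P_1,\dots,P_e\in C(k)$. Each condition of vanishing at a point drops $\ell$ by at most one, so $D'=D-P_1-\dots-P_e$ satisfies $\deg D'=1$ and $\ell(D')\ge2$. Then $|D'|$ contains a base-point-free pencil of degree $1$ after removing base points; if there are base points, the moving part has degree $0$ and still satisfies $\ell\ge2$, which is impossible. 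This pencil defines a degree-one morphism $C\to\PP^1$, so $C\cong\PP^1$, contradicting $g(C)\ge1$. For $g(C)=1$, we have $K_C\sim0$, so $\deg(K_C-D)<0$ and $\ell(K_C-D)=0$; Riemann--Roch then gives $\ell(D)=\deg D$.

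For (d), Clifford's theorem is \cite[IV.5.4]{Hart77}, stated for effective special divisors on a smooth projective curve over an algebraically closed field, which is exactly our situation.

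The only step needing care is the base-point reduction in (c). Even so, it is standard and is precisely the cited exercise, so I do not expect any real obstacle.
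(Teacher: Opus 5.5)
Your proposal is correct and matches the paper, which gives no argument beyond the cited references. Your one addition is a self-contained proof of the bound $\ell(D)\le\deg D$: subtract $\deg D-1$ points to reach a degree-one divisor with $\ell\ge2$, which yields a degree-one map $C\to\PP^1$. This is the standard solution of the cited Hartshorne exercise, and it also handles non-effective $D$, since $\ell(D)\ge2$ forces $D$ to be linearly equivalent to an effective divisor.
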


\begin{lemma}[two facts on curves]\label{lem:curvesB}
\textup{(i)} $\PGL_2(\bar k)$ has no element of order $p^2$ if $\operatorname{char}k=p$; in particular $\PGL_2(k)$ has none. \textup{(ii)} Let $p=2$ and $E$ a supersingular elliptic curve over $k$ algebraically closed. Then $\Aut(E)$ (all automorphisms of the curve) has no element of order $8$. \textup{(iii)} An irreducible curve dominated by $\A^m_k$ has smooth projective model $\PP^1$. \textup{(iv)} A nonconstant morphism of smooth projective curves $E\to C$ over a perfect field satisfies $g(C)\le g(E)$.
\end{lemma}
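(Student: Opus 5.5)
The lemma collects four independent facts, and I would prove each one separately by a standard argument.

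\textup{(i)} I would argue with Jordan forms. Let $\bar A\in\PGL_2(\bar k)$ satisfy $\bar A^{p^2}=1$, and lift it to $A\in\operatorname{GL}_2(\bar k)$. Then $A^{p^2}=c\cdot I$ for some scalar $c$. If $\lambda,\mu$ are the eigenvalues of $A$, this gives $(\lambda/\mu)^{p^2}=1$. In characteristic $p$ the only $p$-power root of unity is $1$, so $\lambda=\mu$. Hence $A=\lambda(I+N)$ with $N^2=0$. Then $A^p=\lambda^p(I+pN)=\lambda^pI$, so $\bar A^p=1$. Thus every element of $p$-power order in $\PGL_2(\bar k)$ has order dividing $p$. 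The statement for $\PGL_2(k)$ follows because $\PGL_2(k)\subseteq\PGL_2(\bar k)$.

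\textup{(ii)} I would use the structure $\Aut(E)=E(k)\rtimes\Aut(E,O)$. For a supersingular curve in characteristic $2$, the group $\Aut(E,O)$ has order $24$ and is isomorphic to $\operatorname{SL}_2(\F_3)$, so its elements have orders in $\{1,2,3,4,6\}$. Because $E$ is supersingular, $E[2^m](k)=0$ for every $m$; hence every translation of $2$-power order is the identity. Write $\sigma=t_P\circ\alpha$ and suppose $\sigma^8=1$. Then $\alpha^8=1$, so $\alpha$ has order $r\in\{1,2,4\}$. Moreover $\sigma^r=t_Q$ with $Q=(1+\alpha+\dots+\alpha^{r-1})P$. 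The point $Q$ has order dividing $8/r$, which is a power of $2$, so $Q=0$. Therefore $\sigma$ has order $r\le4$, and no element of order $8$ exists.

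\textup{(iii)} Let $f\colon\A^m_k\dashrightarrow C$ be dominant. I would choose two $k$-points of $\A^m$ in the domain of $f$ with distinct images, and let $\Lambda$ be the line through them. The restriction $f|_\Lambda$ is then nonconstant. This gives an inclusion $k(C)\hookrightarrow k(\Lambda)=k(t)$, and $k(C)$ has transcendence degree $1$. Lüroth's theorem then says $k(C)$ is purely transcendental. Hence the smooth projective model of $C$ is $\PP^1$.

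\textup{(iv)} Over a perfect field I would factor the morphism as a separable map followed by a power of Frobenius: $E\to E^{(p^r)}\to C$ with the second map separable. The Frobenius twist $E^{(p^r)}$ has the same genus as $E$, since it is a base change by an automorphism of $k$. So it suffices to treat a separable map of degree $m$. If $g(C)=0$ there is nothing to prove. Otherwise Riemann--Hurwitz gives $2g(E)-2\ge m(2g(C)-2)\ge 2g(C)-2$, hence $g(C)\le g(E)$.

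Most of this is routine. I expect the main obstacle to be (ii): one must quote correctly that $\Aut(E,O)\cong\operatorname{SL}_2(\F_3)$ for the supersingular curve in characteristic $2$, and one must not overlook translations. The key point there is that a supersingular curve has no nontrivial $2$-power torsion over $k$.
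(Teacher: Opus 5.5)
Your proposal is correct and follows the paper's proof in all four parts: unipotence of a scaled lift in (i), the decomposition $\Aut(E)=E(k)\rtimes\Aut(E,O)$ with $\sigma^r=t_Q$ in (ii), restriction to a rational curve in $\A^m$ followed by L\"uroth in (iii), and the Frobenius factorisation followed by Riemann--Hurwitz in (iv). The only differences are cosmetic. In (ii) you force $Q=0$ uniformly from $(8/r)Q=0$ and the absence of $2$-power torsion, where the paper uses that $\mathrm{End}(E)$ is a domain (and the odd order of $P$ when $r=1$); in (iii) you use a line through two points with distinct images instead of a monomial curve.
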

\begin{proof}
(i) A $p$-element of $\PGL_2(\bar k)$ lifts to $A\in\mathrm{GL}_2(\bar k)$ with $A^{p^m}=1$ after scaling, so $(A-1)^{p^m}=0$, $A=1+N$ with $N^2=0$, and $A^p=1+pN=1$. (ii) $\Aut(E)=E(k)\rtimes\Aut(E,O)$, and $\Aut(E,O)$ is a group of order $24$ with Sylow $2$-subgroup $Q_8$ \cite[Thm.~III.10.1 and App.~A, Prop.~1.2]{Silverman09}, so its elements of $2$-power order have order $\le4$. Let $\phi=t_P\circ\alpha$ with $\alpha\in\Aut(E,O)$, $t_P$ translation by $P$, and $\phi^8=1$; then $\alpha^8=1$, so $m:=\ord\alpha\in\{1,2,4\}$. If $m=1$, $\ord\phi=\ord P$, which is odd because $E(k)[2]=0$ for a supersingular curve in characteristic $2$. If $m\in\{2,4\}$: $\phi^m=t_Q$ with $Q=(1+\alpha+\dots+\alpha^{m-1})P$, and $(\alpha-1)(1+\alpha+\dots+\alpha^{m-1})=\alpha^m-1=0$ in the domain $\mathrm{End}(E)$ with $\alpha-1\ne0$, so $Q=0$ and $\ord\phi\mid m\le4$. (iii) is L\"uroth's theorem, valid in every characteristic (restrict the dominant map to the curve $t\mapsto(t,t^N,t^{N^2},\dots,t^{N^{m-1}})$ with $N$ large, which is not contained in a fibre, to get a nonconstant $\A^1\to C$). (iv) Factor $E\to C$ as a purely inseparable map (a power of Frobenius composed with an isomorphism, genus preserving) followed by a separable map, to which Riemann--Hurwitz applies.
\end{proof}

\begin{lemma}[the twist is trivialised by $L$]\label{lem:twistL}
Let $C$ be a quasi-projective $G$-variety over $k$ and $X:=\tw{\tgen}{C}$. There is an $L$-isomorphism $\psi\colon X\otimes_KL\xrightarrow{\sim}C\otimes_kL$ such that
\begin{enumerate}[label=\textup{(\alph*)}]
\item for every $h\in G$, $\psi$ intertwines the automorphism $\id_X\times\Spec(h)$ of $X\otimes_KL$ with $\phi_h:=h_C\times\Spec(h)$ ($\psi$ is $L$-linear, while $\phi_h$ is semilinear over $\Spec(h)$);
\item for every field $F'\supseteq K$ and $P\in X(F')$, the composite $\mathrm{pr}_1\circ\psi\circ(P\times_K\id_{\Spec L})\colon\Spec(F'\otimes_KL)\to C$ is the $G$-equivariant morphism attached to $P$ by Fact~\Aref{fact:rs}(a).
\end{enumerate}
\end{lemma}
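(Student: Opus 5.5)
The plan is to write down the twist explicitly and read off $\psi$ from the torsor trivialisation $T\times_K T\cong G\times T$. Since $\tgen\colon\Spec L\to\Spec K$ is a $G$-torsor and $C$ is quasi-projective, the twist $X=\tw{\tgen}{C}$ exists as the quotient $(C\times_k\Spec L)/G$ of the diagonal action; $G$ acts on $C$ via $h_C$ and on $\Spec L$ via $\Spec(h)$. Descent along $\Spec L\to\Spec K$ then identifies $X\otimes_KL$ with $(C\times_k\Spec L)\times_{\Spec K}\Spec L$ modulo $G$ acting on the first two factors. The isomorphism $L\otimes_KL\cong\prod_{h\in G}L$, $a\otimes b\mapsto(h(a)b)_h$, turns the fibre product $(C_L)\times_K\Spec L$ into $\coprod_{h\in G}C_L$. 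The diagonal $G$-action permutes these copies simply transitively, so the quotient is identified with the copy indexed by $1$, namely $C\otimes_kL$ with the second $L$ as structure field. This gives $\psi$, and it is $L$-linear because the second factor $\Spec L$ is untouched.

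For (a), transport the automorphism $\id_X\times\Spec(h)$ of $X\otimes_KL$ through this chain. It acts only on the second factor $\Spec L$. Under the product decomposition it permutes the copies and applies $h$ to the coordinate. To return to the distinguished copy one uses the diagonal action by $h$, which applies $h_C$ to $C$. The net effect on $C\otimes_kL$ is therefore $h_C\times\Spec(h)$, which is exactly $\phi_h$.

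A cleaner way to organise (a), and the one I would actually write, is to check it on functors of points. For an $L$-algebra $R$, a point of $X\otimes_KL$ over $R$ is a $G$-equivariant map $\Spec(L\otimes_KR)\to C$, by the description in Fact~\Aref{fact:rs}(a) extended to $K$-algebras. Define $\psi$ by composing with $\Spec$ of the multiplication-twisted section $L\otimes_KR\to R$, $a\otimes r\mapsto ar$; this is evaluation at the identity coset. Equivariance shows this is a bijection, since a map is determined by its value on one orbit point. Replacing the $L$-structure of $R$ by its twist under $h$ then composes the equivariant map with $h_C$, which gives (a).

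Part (b) is immediate from this description. For $P\in X(F')$, the point $P\times\id_{\Spec L}$ is the $F'\otimes_KL$-point whose equivariant map is the map $f_P\colon\Spec(L\otimes_KF')\to C$ attached to $P$. Applying $\psi$ and then $\mathrm{pr}_1$ recovers $f_P$, after the canonical identification $F'\otimes_KL\cong L\otimes_KF'$.

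The main obstacle is bookkeeping rather than mathematics. One must keep straight which copy of $L$ is the base field and which carries the $G$-action, and be consistent about left versus right actions. Since $G$ is abelian the latter is harmless, as the paper notes in Section~\ref{ss:gen}. Representability of the twist for quasi-projective $C$ is quoted from \cite[Cor.~3.4]{DR15}.
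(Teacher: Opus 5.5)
Your proposal is correct and is essentially the paper's proof: trivialise $T\times_KT\cong G\times T$ (equivalently $L\otimes_KL\cong\prod_{G}L$), identify the quotient by the diagonal action with the copy indexed by the identity, and observe that $\Spec(h)$ on the second factor is undone by the diagonal action, which contributes $h_C$; this gives $\phi_h=h_C\times\Spec(h)$. Your functor-of-points variant, evaluation at the identity coset via $a\otimes r\mapsto ar$, is the same identification phrased through Yoneda, and it is also how the paper argues (b).
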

\begin{proof}
Write $T=\Spec L$. Since $L/K$ is Galois with group $G$, $T\times_KT\cong G\times T$ via $(t_1,t_2)\mapsto(\gamma,t_2)$ with $t_1=\gamma t_2$. Hence $X\times_KT=((T\times_kC)/G)\times_KT=(T\times_KT\times_kC)/G\cong(G\times T\times_kC)/G\cong T\times_kC$, the quotient being by the diagonal action on the first and last factors (the twist commutes with base change, \cite[\S3]{DR15}). For (a), the automorphism $\id_X\times\Spec(h)$ of $X\times_KT$ acts on the second tensor factor $T$ only. Under $T\times_KT\cong G\times T$, $(t_1,t_2)\mapsto(\gamma,t_2)$ with $t_1=\gamma t_2$, replacing $t_2$ by $ht_2$ changes $\gamma$ to $\gamma h^{-1}$ (as $t_1=\gamma h^{-1}(ht_2)$). On $(G\times T\times_kC)/G$ the class of $(\gamma,t,c)$ is therefore sent to that of $(\gamma h^{-1},ht,c)=(\gamma,ht,hc)$, using the diagonal $G$-action; this is $h_C\times\Spec(h)$ on $T\times_kC$ under the last identification, which is $\phi_h$. For (b), an $F'$-point of $X$ is by Fact~\Aref{fact:rs}(a) a $G$-equivariant $x\colon T_{F'}\to C$, and its base change to $T$ is, under the displayed isomorphism, the section $(x,\id)$ of $T_{F'}\times_kC\to T_{F'}$; its first component is $x$.
\end{proof}

\begin{setup}\label{setup:witness}
This is the setting of \Aref{prop:witness}; nothing in it, in Lemma~\ref{lem:twistL}, or in parts (b),(c) of Proposition~\ref{prop:rigid} uses that $G=\Z/p^n$ or that the characteristic is $p$: it holds for any finite group $G$ and any $G$-Galois extension $L/K$ with $L$ purely transcendental over $k$, with $\tau=\Spec L$. We write it in the notation of this paper, with $\tgen=\Spec L$. Let $F'/K$ be a finite field extension of degree $d$ such that $L\otimes_KF'$ is a field and $\ed_k(\tgen\otimes_KF')\le1$. By \Bref{3}, $\ed_k(\tgen\otimes F')=1$ (a torsor whose total space is a field is not split), so by Theorem~\Aref{thm:geom} and Corollary~\ref{cor:jumpsB} there are a smooth projective curve $C$ over $k$ with a faithful $G$-action and a free point $P\in\tw{\tgen}{C}(F')$; the attached equivariant morphism $x\colon\Spec(L\otimes_KF')\to C$ is dominant (Lemma~\ref{lem:imageB}). Let $\xi$ be the closed point of $X=\tw{\tgen}{C}$ underlying $P$, of degree $e=[\kappa(\xi):K]$, which divides $d$ as $\kappa(\xi)\subseteq F'$. Put
\[
D_P:=\tfrac de\,\xi\quad(\text{an effective $K$-rational divisor of degree }d\text{ on }X),\qquad \Delta:=\psi(D_P\otimes_KL)\ \text{on }C_L .
\]
Since $\kappa(\xi)\otimes_KL\subseteq F'\otimes_KL$ is a field, $\xi\otimes_KL$ is a single closed point $\eta$ of $X_L$, and $\Delta=\frac de\,\psi(\eta)$.
\end{setup}

\begin{proposition}[rigidity lemma]\label{prop:rigid}
In Setup~\ref{setup:witness}:
\begin{enumerate}[label=\textup{(\alph*)}]
\item $g(C)\ge1$.
\item There is an effective divisor $D_0$ on $C$ of degree $d$ with $\Delta\sim D_0\otimes_kL$ on $C_L$ and $h^*D_0\sim D_0$ for every $h\in G$.
\item $\ell(D_0)\ge2$.
\end{enumerate}
\end{proposition}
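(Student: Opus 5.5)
For (a), I would argue by contradiction. Suppose $g(C)=0$, so $C\cong\PP^1$ and $G\subseteq\PGL_2(k)$ acts faithfully. Since $n\ge2$, $G$ contains an element of order $p^2$, which Lemma~\ref{lem:curvesB}(i) forbids. This uses $G=\Z/p^n$, consistent with the Setup's remark that only (b) and (c) are general. For a general finite $G$ one would instead note that a dominant map from the rational total space to $\PP^1$ gives no contradiction by itself, so (a) is genuinely group-specific. I would add the remark that $\ed_k(\tgen\otimes F')=1$ is not in tension with $g(C)=0$ through rationality alone.

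For (b), I would follow the Abel--Jacobi outline. The divisor $\Delta$ on $C_L$ is effective of degree $d$. Its class is an element of $\Pic(C_L)=\Pic_{C/k}(L)$ by Fact~\ref{fact:pic}(a), and it lies in $\Pic^d_{C/k}(L)$. Choose a base point $c_0\in C(k)$ and subtract $d\,c_0$ to land in $J(L)$. Since $L=k(z_0,\dots,z_{n-1})$ is purely transcendental over $k$, (P1) (Lemma~\Aref{lem:constant}) gives $J(L)=J(k)$. Hence $[\Delta]$ is the base change of a class $\lambda\in\Pic^d(C)$.

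To obtain an effective representative over $k$, I would use Fact~\ref{fact:pic}(b): $\mathcal L(\lambda)\otimes_kL=\mathcal L(\lambda_L)\ne0$ because $\Delta$ is effective. So $\mathcal L(\lambda)\ne0$ and some effective $D_0\in\lambda$ exists, with $\Delta\sim D_0\otimes_kL$.

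For the $G$-invariance, apply Lemma~\ref{lem:twistL}(a). The divisor $D_P\otimes_KL$ is stable under $\id_X\times\Spec(h)$, since $D_P$ is defined over $K$. Hence $\Delta$ is stable under $\phi_h=h_C\times\Spec(h)$, that is, $\phi_h^*\Delta=\Delta$. On classes, $\phi_h^*$ acts on $\Pic_{C/k}(L)$ as $h^*$ composed with the Galois action of $h$ on $L$-points. That Galois action is trivial on constant classes. Therefore $h^*\lambda=\lambda$, i.e.\ $h^*D_0\sim D_0$.

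The main obstacle is keeping the semilinear bookkeeping honest. One must check that the identification $\Pic(C_L)=\Pic_{C/k}(L)$ is functorial for the semilinear automorphism $\phi_h$, and that $J(L)=J(k)$ indeed forces the $\Spec(h)$-twist to act trivially.

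For (c), I would argue by contradiction. Suppose $\ell(D_0)=1$. Then $\ell(D_0\otimes L)=1$, so $\Delta$ is the unique effective divisor in its class and hence $\Delta=D_0\otimes_kL$ is constant. Its support consists of $k$-points of $C$ base-changed to $L$. But $\Delta=\frac de\psi(\eta)$, where $\eta$ is a single closed point whose image under the equivariant map of Lemma~\ref{lem:twistL}(b) is dominant onto $C$, i.e.\ it maps to the generic point of $C$. A constant point $c\otimes L$ maps to the closed point $c$ under $\mathrm{pr}_1$. This contradicts dominance from Lemma~\ref{lem:imageB}, so $\ell(D_0)\ge2$.
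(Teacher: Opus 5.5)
Your proposal is correct and follows the paper's proof essentially step by step: (a) uses the absence of elements of order $p^2$ in $\PGL_2(k)$; (b) uses $J(L)=J(k)$, flat base change of $\mathcal L(\cdot)$ to get an effective $D_0$, and the intertwining $\phi_h$ of Lemma~\ref{lem:twistL}(a); and (c) uses the uniqueness of the effective divisor in its class when $\ell(D_0)=1$, which contradicts the dominance of the equivariant morphism. The one step you leave implicit in (b) is the passage from $(h^*D_0)\otimes L\sim D_0\otimes L$ to $h^*D_0\sim D_0$, which needs the injectivity of $\Pic(C)=\Pic_{C/k}(k)\to\Pic_{C/k}(L)$; the paper states this explicitly.
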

\begin{proof}
(a) If $C\cong\PP^1$ then $G\cong\Z/p^n$ embeds in $\Aut(\PP^1)=\PGL_2(k)$, which has no element of order $p^2$ (Lemma~\ref{lem:curvesB}(i)); as $n\ge2$ this is impossible.

(b) By Fact~\ref{fact:pic}(a) the class of $\Delta$ is an $L$-point of $\Pic^d_{C/k}$, a torsor under $J$; as $k$ is algebraically closed it has a $k$-point and is isomorphic to $J$ as a $k$-variety. Since $L=k(z_0,\dots,z_{n-1})$ is purely transcendental over $k$, Lemma~\Aref{lem:constant}(ii), applied to the abelian variety $J$, gives $J(L)=J(k)$; so $[\Delta]$ is the image of a class $c_0\in\Pic^d(C)$. As $\Delta$ is effective, $\ell(c_0\otimes L)\ge1$, hence $\ell(c_0)\ge1$ by Fact~\ref{fact:pic}(b), and we may choose $D_0\in c_0$ effective. The divisor $D_P\otimes_KL$ is invariant under $\id\times\Spec(g)$, so by Lemma~\ref{lem:twistL}(a) $\phi_g(\Delta)=\Delta$. Since $D_0$ is defined over $k$, $\phi_g^*(D_0\otimes L)=(g^*D_0)\otimes L$; hence $(g^*D_0)\otimes L\sim\Delta\sim D_0\otimes L$, and as $\Pic(C)=\Pic_{C/k}(k)\to\Pic_{C/k}(L)$ is injective, $g^*D_0\sim D_0$. Iterate for the powers of $g$.

(c) Suppose $\ell(D_0)=1$. Then $\ell(D_0\otimes L)=1$ by Fact~\ref{fact:pic}(b), so $D_0\otimes L$ is the only effective divisor in its class, and $\Delta=D_0\otimes L$. Thus $\psi(\eta)$ lies over a point of the finite set $\Supp(D_0)\subseteq C(k)$. By Lemma~\ref{lem:twistL}(b), $x$ factors through $\Spec(F'\otimes_KL)\to\eta\to X_L\xrightarrow{\psi}C_L\to C$, so its image is a closed point of $C$, contradicting the dominance of $x$.
\end{proof}

\begin{remark}\label{rem:rigid-d1}
For $d=1$ the proposition reproves $\ed_k(\Z/p^n)\ge2$ for $n\ge2$: by (a) and Fact~\ref{fact:pic}(c), $\ell(D_0)\le\deg D_0=1$, against (c). The same argument at $n=2$ is Proposition~\ref{prop:G1}. The only property of $\tgen$ used is that $L$ is purely transcendental over $k$; the only property of $G$ used is that it is cyclic of order $p^n$ with $n\ge2$ (through Lemma~\ref{lem:curvesB}(i)). In particular nothing about the Witt coordinates of the class $s$ enters.
\end{remark}

\begin{remark}
Parts (b) and (c) are the general case of \Aref{lem:AJ} and \Aref{prop:witness}, where the same constancy statement is proved by an embedding-theoretic argument; we use the twist formulation here because Step~2 of Theorem~\ref{thm:ptp} needs that $\Delta$ is supported on a single closed point of $C_L$ of degree $e\ge2$ over $L$. Part (a) uses $G=\Z/p^n$, $n\ge2$ (through Lemma~\ref{lem:curvesB}(i)).
\end{remark}

\begin{proposition}[rigidity lemma, $\PGL_\ell$ form]\label{prop:rigidN}
Let $m\ge2$, let $H\cong\Z/p^m$ act faithfully on a smooth projective curve $C$ over $k$, and let $D_0$ be an effective divisor on $C$ with $\ell:=\ell(D_0)\ge2$ and $h^*D_0\sim D_0$ for all $h\in H$. Then $\deg D_0\ge p^{m-1}$.
\end{proposition}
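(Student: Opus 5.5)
Consider the complete linear system $|D_0|\cong\PP(\mathcal L(D_0))\cong\PP^{\ell-1}$ and the associated rational map $\varphi\colon C\dashrightarrow\PP^{\ell-1}$. Since $C$ is a smooth curve, $\varphi$ extends to a morphism, given by the base-point-free part $D_0-B$ of $|D_0|$, where $B$ is the fixed part. Because $\ell\ge2$, $\varphi$ is nonconstant, and its image $Y$ is an irreducible curve. The key input is that $h^*D_0\sim D_0$ for all $h\in H$. Hence $h^*$ induces a linear isomorphism $\mathcal L(D_0)\to\mathcal L(h^*D_0)\cong\mathcal L(D_0)$, well defined up to a scalar (choose $f_h$ with $h^*D_0=D_0+\divv f_h$). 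This gives a homomorphism $\rho\colon H\to\PGL_\ell(k)$ for which $\varphi$ is equivariant: $\varphi\circ h=\rho(h)\circ\varphi$. Note that no linearization is needed, since we only use the projective action.

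The next step is to bound the image $\rho(H)$. This is the $\PGL_\ell$ analogue of Lemma~\ref{lem:curvesB}(i). Take an element $\rho(h)$ of $p$-power order $p^r$ and lift it to $A\in\mathrm{GL}_\ell(k)$. Then $A^{p^r}$ is scalar, so after rescaling by a $p^r$-th root (which exists since $k$ is algebraically closed) we may assume $A^{p^r}=1$. Since $k$ has characteristic $p$, this gives $(A-1)^{p^r}=0$, so $A=1+N$ with $N$ nilpotent and $N^\ell=0$. Choosing $p^s\ge\ell$ gives $A^{p^s}=1+N^{p^s}=1$. Hence the order of $\rho(h)$ is at most $p^{\lceil\log_p\ell\rceil}$. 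So the cyclic image $\rho(H)$ has order at most $p^{c}$ with $c=\lceil\log_p\ell\rceil$, and the kernel $H_0=\ker\rho$ has order at least $p^{m-c}$.

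Now we count degrees. Every element of $H_0$ acts trivially on $\PP^{\ell-1}$, so by equivariance $\varphi\circ h=\varphi$ for $h\in H_0$. Thus $\varphi$ factors through the quotient $C\to C/H_0$. Since $H_0$ acts faithfully on $C$, this quotient map has degree $|H_0|$, and so $\deg(C\to Y)\ge|H_0|\ge p^{m-c}$. The image $Y$ spans $\PP^{\ell-1}$ because the system is complete, so $Y$ is nondegenerate and $\deg Y\ge\ell-1$. Therefore
\[
\deg D_0\ \ge\ \deg(D_0-B)\ =\ \deg(C\to Y)\cdot\deg Y\ \ge\ p^{m-c}(\ell-1).
\]
It remains to check that $p^{m-c}(\ell-1)\ge p^{m-1}$, that is, $\ell-1\ge p^{c-1}$. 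By definition of $c$ we have $p^{c-1}<\ell$, hence $p^{c-1}\le\ell-1$, as required. This covers the case $c\ge1$, which always holds since $\ell\ge2$. The statement also holds in the case $c\ge m$, where $H_0$ may be trivial: there $\ell-1\ge p^{c-1}\ge p^{m-1}$ directly, using only $\deg Y\ge\ell-1$.

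The main obstacle is the step bounding $\rho(H)$, together with verifying that the degree bookkeeping is exact: $\deg(D_0-B)=\deg\varphi\cdot\deg Y$, and $\deg\varphi$ is at least the order of the kernel. The first requires the projective representation argument above, including the fact that $H$ acts on $|D_0|$ at all. That follows from $h^*$ preserving the linear equivalence class, and the induced map is well defined in $\PGL$ regardless of the choice of $f_h$. The second uses the generic-fibre count for the finite morphism $C\to Y$ together with the factorization through $C/H_0$. I would also double-check the possibly inseparable case: the factorization through $C/H_0$ still gives degree divisibility by $|H_0|$, so this case causes no problem.
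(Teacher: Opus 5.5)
Your proof is correct and follows the paper's argument step for step: the projective representation $\rho\colon H\to\PGL_\ell(k)$ on the base-point-free part of $|D_0|$, the unipotence bound $|\rho(H)|\le p^{\lceil\log_p\ell\rceil}$, the factorisation of $\varphi$ through $C/H_0$ giving $|H_0|\mid\deg\varphi$, and the same final arithmetic $|H_0|(\ell-1)\ge p^{m-1}$. The only cosmetic difference is in the bound $\deg\ge\ell-1$: the paper pulls $\mathcal O(1)$ back to the normalisation of the image and applies $\ell(E)\le\deg E+1$, whereas you quote directly that a nondegenerate curve in $\PP^{\ell-1}$ has degree at least $\ell-1$.
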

\begin{proof}
Let $B$ be the base locus of $|D_0|$ and $D_1=D_0-B$, $e'=\deg D_1\le\deg D_0$, $\mathcal L=\mathcal L(D_1)=\mathcal L(D_0)$, so $\dim\mathcal L=\ell$. For $h\in H$, $h^*$ preserves $|D_0|$, hence $h^*B=B$ and $h^*D_1=D_1+\divv(\varphi_h)$ with $\varphi_h\in k(C)^\times$. Then $A_h\colon f\mapsto\varphi_hh^*f$ is a linear automorphism of $\mathcal L$, well defined up to a scalar; this gives a homomorphism $\rho\colon H\to\PGL(\mathcal L)=\PGL_\ell(k)$ ($H$ is abelian, so the contravariance of $h\mapsto h^*$ is harmless).

The image. Let $a=\lceil\log_p\ell\rceil$, so $p^{a-1}<\ell\le p^a$. If $\rho(h)$ has order $p^r$, lift it to $A\in\operatorname{GL}(\mathcal L)$ with $A^{p^r}=\lambda\cdot\id$ and choose $\mu\in k$ with $\mu^{p^r}=\lambda$ ($k$ algebraically closed). Then $B_1=\mu^{-1}A$ satisfies $B_1^{p^r}=\id$, so $(B_1-1)^{p^r}=0$ in characteristic $p$ and $B_1$ is unipotent; hence $(B_1-1)^\ell=0$ and $B_1^{p^a}=1+(B_1-1)^{p^a}=1$. So the cyclic group $\rho(H)$ has order $\le p^a$, and $H_0:=\ker\rho$ has $|H_0|\ge\max(1,p^{m-a})$.

The kernel. Let $f_0,\dots,f_{\ell-1}$ be a basis of $\mathcal L$. Elements of $H_0$ act on $\mathcal L$ as scalars up to the factor $\varphi_h$, so they fix every ratio $f_i/f_0$; thus $H_0$ fixes pointwise the subfield $k(\Gamma)\subseteq k(C)$ generated by these ratios, where $\varphi\colon C\to\Gamma\subseteq\PP^{\ell-1}$ is the morphism defined by the base-point-free system $|D_1|$. Let $\nu\colon\tilde\Gamma\to\Gamma$ be the normalisation and $E=\nu^*\mathcal O(1)$ (so $\deg E=\deg\Gamma$); $\varphi$ factors as $C\xrightarrow{\varphi'}\tilde\Gamma$ of degree $\delta=[k(C):k(\Gamma)]$ and $D_1\sim\varphi'^*E$, whence $e'=\delta\deg E$. The $\ell$ coordinate sections are linearly independent in $H^0(\tilde\Gamma,\mathcal O(E))$ since $\Gamma$ is non-degenerate, so $\ell\le\ell(E)\le\deg E+1$ (Fact~\ref{fact:pic}(c) and its genus-$0$ analogue). Since $\ell\ge2$ we have $\deg E\ge1$ (as $\ell(E)\ge\ell\ge2$). As $H_0$ acts faithfully on $k(C)$ and fixes $k(\Gamma)$, Artin's theorem gives $|H_0|=[k(C):k(C)^{H_0}]$, which divides $\delta=[k(C):k(\Gamma)]$. Therefore
\[
e'=\delta\deg E\ \ge\ |H_0|(\ell-1)\ \ge\ \max(1,p^{m-a})\,p^{a-1}\ \ge\ p^{m-1}
\]
(for $m\ge a$ this is $p^{m-a}p^{a-1}$; for $m<a$ it is $\ell-1\ge p^{a-1}\ge p^m$). Hence $\deg D_0\ge e'\ge p^{m-1}$.
\end{proof}

\begin{theorem}[the first jump degree of $\Z/p^n$]\label{thm:N1}
Let $n\ge2$ and let $F'/K$ be a finite field extension with $\ed_k(\tgen\otimes_KF')\le1$. Then $[F':K]\ge p^{n-1}$. Consequently:
\begin{enumerate}[label=\textup{(\alph*)}]
\item $\edd d_k(\tgen)\ge2$ for every $d<p^{n-1}$; with Proposition~\ref{prop:trunc}, $d_1(\tgen)=p^{n-1}$ for all $p$ and all $n\ge2$.
\item $d_1^{(p')}(\tgen)\ge p^{n-1}+1$.
\item $\edd{p^j}_k(\tgen)=n-j$ for $j=n-2$ \textup{(}all $n\ge2$\textup{)}; in particular for $j=1$ and $n=3$, for every $p$.
\item $\edd d_k(\tau_3)=1$ for $p^2\le d<p^3$ and $=2$ for $p\le d<p^2$, and $\edd d_k(\tau_3)\in\{2,3\}$ for $d<p$.
\end{enumerate}
\end{theorem}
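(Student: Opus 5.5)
The plan is to reduce to the case where $L\otimes_KF'$ is a field, apply the rigidity machinery there, and handle the non-field case by passing to the stabiliser of a factor. Put $d=[F':K]$. If $\ed_k(\tgen\otimes F')=0$, then (P2) gives $p^n\mid d$, so $d\ge p^n$. From now on assume $\ed_k(\tgen\otimes F')=1$.

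\emph{The field case.} Suppose $L\otimes_KF'$ is a field, i.e.\ $F'\not\supseteq K_1$. We are then exactly in Setup~\ref{setup:witness}. Proposition~\ref{prop:rigid}(b),(c) produces an effective divisor $D_0$ on a smooth projective curve $C$ with faithful $G$-action, with $\deg D_0=d$, $\ell(D_0)\ge2$ and $h^*D_0\sim D_0$ for all $h\in G$. Proposition~\ref{prop:rigidN} with $H=G$ and $m=n\ge2$ then gives $d\ge p^{n-1}$.

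\emph{The non-field case.} Otherwise, since the intermediate fields of $L/K$ form the chain $K\subset K_1\subset\dots\subset K_n=L$, let $j\ge1$ be maximal with $K_j\subseteq F'$, and put $H=\langle g^{p^j}\rangle\cong\Z/p^{n-j}$. As in the proof of Proposition~\ref{prop:intermediate}, $\tgen\otimes_KF'=\Ind_H^G\bigl((L/K_j)\otimes_{K_j}F'\bigr)$. By Lemma~\Aref{lem:ind}, the $H$-torsor $L\otimes_{K_j}F'$ has essential dimension $1$; it is a field by maximality of $j$, so it is not split. If $j=n$ then $d\ge p^n$. If $j=n-1$ then $d\ge[K_{n-1}:K]=p^{n-1}$. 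If $n-j\ge2$, we apply Setup~\ref{setup:witness} to the $H$-Galois extension $L/K_j$ and the extension $F'/K_j$ of degree $d/p^j$. As Setup~\ref{setup:witness} itself remarks, parts (b),(c) of Proposition~\ref{prop:rigid} need only that $L$ is purely transcendental over $k$, which holds. Proposition~\ref{prop:rigidN} with $m=n-j\ge2$ then gives $d/p^j\ge p^{n-j-1}$, i.e.\ $d\ge p^{n-1}$.

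The main point to check is this reduction: the setup must genuinely apply to $H$, $K_j$ and $F'$. In particular the curve produced must carry a faithful $H$-action (it comes from Corollary~\ref{cor:jumpsB} applied to the $H$-torsor with connected total space), and the hypothesis $m\ge2$ of Proposition~\ref{prop:rigidN} must hold; the cases $n-j\le1$ are exactly those handled by the crude degree bound.

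\emph{The consequences.}
\begin{enumerate}[label=\textup{(\alph*)}]
\item For $d<p^{n-1}$ no extension of degree $\le d$ gives $\ed\le1$, so $\edd d_k(\tgen)\ge2$. Combined with $\edd{p^{n-1}}_k\le1$ from Proposition~\ref{prop:trunc}, this gives $d_1(\tgen)=p^{n-1}$.
\item A degree prime to $p$ that is $\ge p^{n-1}$ cannot equal $p^{n-1}$ since $n\ge2$, so $d_1^{(p')}(\tgen)\ge p^{n-1}+1$.
\item Proposition~\ref{prop:trunc} gives $\edd{p^{n-2}}_k\le2$, and (a) with $p^{n-2}<p^{n-1}$ gives $\ge2$.
\item For $n=3$ and $p^2\le d<p^3$: we get $\le1$ from the truncation bound at $p^2$, and $\ge1$ from $d_0=p^3$ (P2). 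For $p\le d<p^2$: $\le2$ from the truncation bound at $p$, and $\ge2$ from (a). For $d<p$: $\ge2$ from (a), and $\le\ed_k(\tgen)\le3$ by Lemma~\ref{lem:versal}(c).
\end{enumerate}
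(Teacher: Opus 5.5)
Your proposal is correct and is essentially the paper's proof. The paper likewise writes $L\cap F'=K_i$ and $\tgen\otimes F'=\Ind_H^G(N_0/F')$ with $H=\langle g^{p^i}\rangle$ and $N_0=L\otimes_{K_i}F'$ a field, and applies Setup~\ref{setup:witness}, Proposition~\ref{prop:rigid}(b),(c) and Proposition~\ref{prop:rigidN} to $(H,K_i,F')$; your field case is $i=0$, and the cases $i\ge n-1$ are settled by the same crude degree bound. The only difference is that the paper first replaces $F'$ by its separable part over $K$ (the reduction to separable extensions from Part~I), which your argument can skip, since neither the setup nor the fact that $L\otimes_{K_j}F'$ is a field uses separability.
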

\begin{proof}
By Lemma~\Aref{lem:pi} we may replace $F'$ by its separable closure of $K$, which only decreases the degree. So let $F'/K$ be separable. Choose an embedding of $F'$ into an algebraic closure containing $L$ and let $N_0=LF'$, $H=\Gal(N_0/F')\subseteq G$. The intermediate fields of $L/K$ are the $K_i=L^{\langle g^{p^i}\rangle}$, so $H=\langle g^{p^i}\rangle$ for some $0\le i\le n$, $L\cap F'=K_i$, $F'\supseteq K_i$ and $[F':K]=p^i[F':K_i]$. If $i\ge n-1$ then $[F':K]\ge p^{n-1}$ and we are done. Let $i\le n-2$, $m=n-i\ge2$, so $H\cong\Z/p^m$.

Now $L\otimes_KF'=\prod_{G/H}N_0$ with $N_0=L\otimes_{K_i}F'$ a field, and $\tgen\otimes F'=\Ind_H^G(N_0/F')$, where $N_0/F'$ is the base change of the $H$-torsor $L/K_i$. By Lemma~\Aref{lem:ind}, $\ed_k(N_0/F')=\ed_k(\tgen\otimes F')\le1$, and it equals $1$ because $N_0$ is a field. The statements of Setup~\ref{setup:witness}, Lemma~\ref{lem:twistL} and Proposition~\ref{prop:rigid}(b),(c) use only that the torsor is a Galois extension $L/K_i$ with cyclic group $H\cong\Z/p^m$ acting faithfully and that $L$ is purely transcendental over $k$ \textup{(}the genus statement~\textup{(a)} is not needed\textup{)}. So they apply with $(H,K_i,F')$ in place of $(G,K,F')$ and give a smooth projective curve $C$ with a faithful $H$-action and an effective divisor $D_0$ of degree $d'':=[F':K_i]$ with $\ell(D_0)\ge2$ and $h^*D_0\sim D_0$ for all $h\in H$. Proposition~\ref{prop:rigidN} gives $d''\ge p^{m-1}$, hence $[F':K]=p^id''\ge p^{n-1}$.

(a) follows from this and Proposition~\ref{prop:trunc} ($\edd{p^{n-1}}\le1$). (b): a degree prime to $p$ is not $p^{n-1}$ for $n\ge2$. (c): $\edd{p^{n-2}}\le2$ is Proposition~\ref{prop:trunc} and $\ge2$ is~(a). (d): the slots with $d\ge p^2$ and $p\le d<p^2$ are Proposition~\ref{prop:trunc}, Proposition~\Aref{prop:d0} and (a); for $d<p$, $\edd d\le\ed_k(\Z/p^3)\le3$ and (a) gives $\ge2$.
\end{proof}

\begin{corollary}[free points on faithful curves]\label{cor:curvepts}
Let $n\ge2$ and let $C$ be a smooth projective curve with a faithful $\Z/p^n$-action. Every closed point of the free locus of $\tw{\tgen}{C}$ has degree $\ge p^{n-1}$; that is, $D(C)\cap[1,p^{n-1}-1]=\emptyset$ (Definition~\ref{def:DB}).
\end{corollary}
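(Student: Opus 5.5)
We deduce the corollary from Theorem~\ref{thm:N1} by translating a free closed point into a field extension that witnesses $\ed\le1$.

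Let $\xi$ be a closed point of $X=\tw{\tgen}{C}$ lying in the free locus, with residue field $F'=\kappa(\xi)$ and $e=[F':K]$. The tautological $F'$-point $P\colon\Spec F'\to X$ is then a free $F'$-point of $\tw{\tgen\otimes F'}{C}$. This uses that twisting commutes with base change, so that $X\otimes_KF'=\tw{\tgen\otimes F'}{C}$; freeness is a property of the underlying point $\xi$. By (P4), i.e.\ Lemma~\Aref{lem:freepoint}, we get $\ed_k(\tgen\otimes_KF')\le\dim C=1$.

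Theorem~\ref{thm:N1} applies to any finite extension $F'/K$ with $\ed_k(\tgen\otimes F')\le1$. It makes no connectedness assumption: the case where $L\otimes_KF'$ is not a field is handled inside its proof by passing to the stabiliser $H$. It therefore gives $e=[F':K]\ge p^{n-1}$. Equivalently, $D(C)\cap[1,p^{n-1}-1]=\emptyset$ by Definition~\ref{def:DB}, after noting that $C$ is quasi-projective, generically free and primitive because the action is faithful on an irreducible curve.

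The only point needing care is the identification of a free closed point of degree $e$ with a free $\kappa(\xi)$-point of the twist over an extension of degree $e$. This is formal from Fact~\Aref{fact:rs} and the base-change compatibility of twists \cite[\S3]{DR15}. Since $k$ is algebraically closed and $G\ne1$, a free point is moreover dominant (Lemma~\ref{lem:imageB}), but we do not need this. With that identification in place, no new rigidity argument is required; everything is already contained in Theorem~\ref{thm:N1} and Proposition~\ref{prop:rigidN}.
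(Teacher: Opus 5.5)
Your proposal is correct and follows the paper's proof. The residue field $F'=\kappa(\xi)$ of a free closed point gives $\ed_k(\tgen\otimes_KF')\le1$ by the free-point lemma, and Theorem~\ref{thm:N1} then gives $[F':K]\ge p^{n-1}$; as you note, that theorem's proof already covers the case where $L\otimes_KF'$ is not a field by passing to the stabiliser.
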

\begin{proof}
A free closed point $\xi$ of degree $e$ has residue field $F'=\kappa(\xi)$ with $[F':K]=e$, and $\ed_k(\tgen\otimes F')\le1$ by Lemma~\Aref{lem:freepoint}. Apply Theorem~\ref{thm:N1}.
\end{proof}

\begin{remark}[sharpness of Theorem~\ref{thm:N1}]\label{rem:N1sharp}
The bound is sharp: $\edd{p^{n-1}}_k(\tgen)\le1$ by Proposition~\ref{prop:trunc}. For example at $(2,2)$, $\edd2_k(\tau_2)=1$ (Theorem~\ref{thm:n2}); at $(3,2)$, $\edd3_k(\tau_2)=1$ (Proposition~\ref{prop:n2p}); and at $(2,3)$, $d_1(\tgen)=4$ (Section~\ref{sec:p2n3}). Theorem~\ref{thm:N1} concerns extensions of degree $<p^{n-1}$; the prime-to-$p$ degrees above $p^{n-1}$ are treated in Theorem~\ref{thm:ptp}, which gives $d_1^{(p')}(\tgen)=l_n(p)$. Which slots remain open, and how they relate to Ledet's conjecture, is discussed in Remark~\ref{rem:N1ledet}.
\end{remark}


\section{The prime-to-\texorpdfstring{$p$}{p} tail and a quantitative Reichstein--Vistoli theorem}\label{ss:tail}

\begin{proposition}[the prime-to-$p$ tail]\label{prop:tail}
Let $G$ be a finite $p$-group and $\tau$ a $G$-torsor over a field $F\supseteq k$. Recall the prime-to-$p$ profile $\ed^{[d]'}_k(\tau)=\min\{\ed_k(\tau\otimes F'):[F':F]\le d,\ p\nmid[F':F]\}$ (Definition~\Aref{def:profile}), so that $\edd d_k\le\ed^{[d]'}_k$.
\begin{enumerate}[label=\textup{(\alph*)}]
\item $\ed_{k,p}(\tau)=\min_d\ed^{[d]'}_k(\tau)$, and $\ed_k(\tau\otimes F')\ge\ed_{k,p}(\tau)$ for every finite extension $F'/F$ of degree prime to $p$. Thus $\ed_{k,p}(\tau)$ is the terminal value of the prime-to-$p$ profile, and $\min_{p\nmid d}\edd d_k(\tau)\le\ed_{k,p}(\tau)$.
\item $\ed_{k,p}(\tau)\le1$; hence $\ed^{[d]'}_k(\tau)\le1$ for all large $d$, i.e.\ $d_1^{(p')}(\tau)<\infty$. If $G\ne1$ and $\tau=\tgen$, the prime-to-$p$ profile never reaches $0$, so $\ed^{[d]'}_k(\tgen)=1$ for every $d\ge d_1^{(p')}(\tgen)$.
\item For $G=\Z/p^n$ the witness in \textup{(b)} can be taken on the Artin--Schreier--Witt curve: there are a finite extension $F'/F$ of degree prime to $p$ and $t\in F'$ with $\tau\otimes F'\cong\tau_{[t]}$ (Teichm\"uller form).
\end{enumerate}
\end{proposition}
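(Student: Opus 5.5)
The three parts are independent, and I would prove them in order.

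\emph{Part (a).} I would start from Lemma~\Aref{lem:qcl}: $\ed_{k,p}(\tau)=\ed_k(\tau\otimes F^{(p)})$, where $F^{(p)}$ is the union of all finite prime-to-$p$ extensions. Since $\tau\otimes F^{(p)}$ is defined over a finitely generated field, a descent over $F^{(p)}$ already exists over some finite prime-to-$p$ subextension $F'$. That gives $\min_d\ed^{[d]'}_k(\tau)\le\ed_{k,p}(\tau)$. Conversely, every finite prime-to-$p$ $F'$ embeds in $F^{(p)}$, so (P1) gives $\ed_k(\tau\otimes F')\ge\ed_k(\tau\otimes F^{(p)})=\ed_{k,p}(\tau)$, and equality follows. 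The last inequality of (a) is just $\edd d_k\le\ed^{[d]'}_k$.

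\emph{Part (b).} The bound $\ed_{k,p}(\tau)\le1$ is the Reichstein--Vistoli theorem. Rather than quote \cite{RV18}, I would deduce it from Fact~\ref{fact:rsd}. The field $F^{(p)}$ is large, since it is $p$-closed: a smooth curve with a rational point acquires points over prime-to-$p$ extensions, and the standard argument shows $p$-closed fields are large. So \cite[Lemma~9.1]{RS22} gives $\ed_k(\tau\otimes F^{(p)})\le1$, and with (a) this gives $d_1^{(p')}(\tau)<\infty$.

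For the never-zero statement: if $\ed_k(\tgen\otimes F')=0$ then $|G|$ divides $[F':K]$ by (P2), which is impossible when $G\ne1$ and $p\nmid[F':K]$. The profile is non-increasing, so it equals $1$ from $d_1^{(p')}$ on.

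\emph{Part (c).} This is the main work. I would pass to a prime-to-$p$ extension over which the class $a\in\W_n(F)$ becomes Teichm\"uller. I would induct on the Witt layers, working from the top coordinate down. The key step is that over a $p$-closed field every element of $\W_n$ is congruent modulo $\wp$ to some $[t]$.

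The route I would try first is the Artin--Schreier--Witt curve $\Ubar_n\to\PP^1_t$ of class $[t]$, with its fixed point $\infty$ and the smooth $G$-fixed point there. The $G$-curve $U_n=\wp^{-1}$ of the Teichm\"uller line is an affine curve with faithful $G$-action whose compactification has a smooth $G$-fixed $k$-point. Fact~\ref{fact:rsd} then shows its twist has a point over the large field $F^{(p)}$, hence over some finite prime-to-$p$ $F'$. That point gives an equivariant map $T_{F'}\to U_n$, that is, $y$ with $\wp(y)=[t]$ for some $t\in F'$, so $\tau\otimes F'\cong\tau_{[t]}$.

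The delicate point is that Fact~\ref{fact:rsd} is stated for the projective model. I must make sure the point lands in the free locus, away from $\infty$. If $\tau\otimes F'$ is split it is trivially $\tau_{[0]}$. Otherwise Lemma~\ref{lem:imageB} forces the free point to be dominant, so it lies in $U_n$. I expect verifying the largeness/fixed-point hypothesis in this setting to be the main obstacle.
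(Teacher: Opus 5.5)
Parts (a) and (b) follow the paper's argument. The descent over $F^{(p)}$ lives on a finite, hence prime-to-$p$, subextension. Finite prime-to-$p$ extensions are separable and embed in $F^{(p)}$. Since $F^{(p)}$ is large, Fact~\ref{fact:rsd} applies. Finally, (P2) rules out $\ed=0$ in prime-to-$p$ degree. Part (c) uses the paper's curve, but the step that produces the required point does not work as you state it. Fact~\ref{fact:rsd} gives only the numerical bound $\ed_k(\tau\otimes F^{(p)})\le\ed_k(\Ubar_n)\le1$. Via Theorem~\Aref{thm:geom} that yields a free point on the twist of \emph{some} faithful curve, not on $\tw{\tau}{U_n}$. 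On the other hand, $\tw{\tau}{\Ubar_n}$ always has the $F$-point $\infty'$ (the constant map to $\infty$), which is not free, so ``the twist has a point'' says nothing. Lemma~\ref{lem:imageB} cannot close this gap: it says that a \emph{free} point is dominant, so it presupposes exactly what is missing. Your split/non-split case distinction never produces a point other than $\infty'$.

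The missing step is the definition of largeness applied to this particular curve. $X=\tw{\tau}{\Ubar_n}$ is a smooth projective geometrically integral $F$-curve with the smooth rational point $\infty'$. So $X(F^{(p)})$ is infinite, hence dense, and some $F^{(p)}$-point lies in the dense open $\tw{\tau}{U_n}=X\smallsetminus\{\infty'\}$. That point is defined over a finite subextension $F'$ of degree prime to $p$. Lemma~\ref{lem:versal}(a) writes it as $y=x+h$ with $\wp(y)=[t]$, $t\in F'$, so the class $\wp(x)$ of $\tau$ is congruent to $[t]$.

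For this you must also prove, rather than assume, the facts you list as the expected obstacle:
\begin{itemize}
\item $U_n$ is irreducible, since the class $[t]\in\W_n(k(t))$ has order $p^n$ by \eqref{eq:Vker};
\item $G$ acts freely on $U_n\subset\W_n$, so every point of $\tw{\tau}{U_n}$ is free;
\item $\Ubar_n\smallsetminus U_n$ is a single $G$-fixed point. The paper gets this from total ramification at $\infty$: the quotient by the common inertia group is a connected cover of $\PP^1$ that is \'etale everywhere, hence trivial.
\end{itemize}
Largeness of $F^{(p)}$ is not an obstacle, since you already used it in (b). Finally, the ``key step'' of your first suggested route is that over a $p$-closed field every Witt class is Teichm\"uller. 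That is the assertion of (c) itself, so it cannot serve as an inductive step.
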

\begin{proof}
(a) Put $e:=\ed_{k,p}(\tau)=\ed_k(\tau\otimes F^{(p)})$ (Lemma~\Aref{lem:qcl}). A descent of $\tau\otimes F^{(p)}$ to a subfield of transcendence degree $e$ and the isomorphism realising it involve finitely many elements of $F^{(p)}$, all lying in a finite subextension $F_1\subset F^{(p)}$, which has degree prime to $p$ (Lemma~\Aref{lem:qcl}); so $\ed_k(\tau\otimes F_1)\le e$ and $\ed^{[d]'}_k(\tau)\le e$ for $d=[F_1:F]$. Conversely, let $F'/F$ be finite with $p\nmid[F':F]$. Its inseparable degree is a power of $p$ dividing $[F':F]$, so $F'/F$ is separable, and Lemma~\Aref{lem:qcl} gives an $F$-embedding $F'\hookrightarrow F^{(p)}$ and hence $\ed_k(\tau\otimes F')\ge\ed_k(\tau\otimes F^{(p)})=e$ by \Bref{1}.

(b) The field $F^{(p)}$ is large (Fact~\Aref{fact:rs}), so $\ed_k(\tau\otimes F^{(p)})\le1$ by Fact~\ref{fact:rsd}; the first claim follows from (a). For $\tgen$ with $G\ne1$: $\ed_k(\tgen\otimes F')=0$ iff $\tgen\otimes F'$ is split (by \Bref{3}, $k$ being algebraically closed), iff $L$ embeds in $F'$ over $K$ (Proposition~\Aref{prop:d0}(a)), which forces $|G|\mid[F':K]$; so $\ed_k(\tgen\otimes F')\ge1$ whenever $p\nmid[F':K]$.

(c) Let $\Ubar_n$ be the smooth projective model of the Artin--Schreier--Witt curve $U_n=\{y\in\W_n:\wp(y)\in[\A^1]\}$. The map $U_n\to\A^1$, $y\mapsto t=y_0^p-y_0$, is a connected $G$-Galois cover: its generic fibre is the torsor of class $[t]\in\W_n(k(t))$, which has order $p^n$, since $p^{n-1}[t]=V^{n-1}F^{n-1}[t]=(0,\dots,0,t^{p^{n-1}})$ is $\not\equiv0$ by \eqref{eq:Vker} ($t^{p^{n-1}}\equiv t\notin\wp k(t)$). It is \'etale over $\A^1$ and totally ramified over $\infty$ (as $G$ is abelian, the inertia groups of the points over $\infty$ coincide, and the quotient of $\Ubar_n$ by this common inertia group is a connected cover of $\PP^1$ unramified at $\infty$ and \'etale over $\A^1$, hence trivial since $\PP^1$ is simply connected), so $\Ubar_n\smallsetminus U_n$ is a single smooth $G$-fixed $k$-point $\infty$, and $G$ acts freely on $U_n$; thus $\Ubar_n^{\free}=U_n$. The constant map to $\infty$ gives a smooth $F$-point $\infty'\in\tw{\tau}{\Ubar_n}(F)$. Over the large field $F^{(p)}$ the rational points of $\tw{\tau}{\Ubar_n}$ are dense, so one of them lies in the dense open $\tw{\tau}{U_n}$; it is defined over a finite subextension $F'\subset F^{(p)}$, of degree prime to $p$. Write $T=\Spec P$. By Lemma~\ref{lem:versal}(a), a free $F'$-point is an equivariant morphism $T_{F'}\to U_n$, i.e.\ an element $y=x+h$ of $\W_n(P\otimes_FF')$ with $h\in\W_n(F')$ and $\wp(y)=[t]$, $t\in F'$, where $x\in\W_n(P)$ is fixed with $g(x)-x=1$; so $\wp(x)+\wp(h)=[t]$, and $\wp(x)\in\W_n(F)$ is a class of $\tau$ (Section~\ref{ss:gen}).
\end{proof}

\begin{remark}\label{rem:tailcaveat}
For $p\nmid d$, $\edd d_k$ minimises over all extensions of degree $\le d$, including some of degree divisible by $p$; this is why (a) is stated with the prime-to-$p$ profile, and why $d_1^{(p')}$ is defined through it (Definition~\Aref{def:profile}). For $\tgen$ the number $d^{(p')}_1(\tgen)$, the least degree prime to $p$ of an extension over which $\tgen$ becomes one-dimensional, is the quantitative form of the theorem $\ed_{k,p}(\Z/p^n)\le1$ of \cite{RV18}, which says only that $d_1^{(p')}(\tgen)<\infty$.
\end{remark}

\subsection{How finite: a Riemann--Roch bound and the quantitative Reichstein--Vistoli theorem}\label{ss:qRV}

Proposition~\ref{prop:tail}(b) says that $d_1^{(p')}(\tgen)<\infty$; the proof, through the large field $K^{(p)}$, gives no bound. Riemann--Roch on the twisted Artin--Schreier--Witt curve does, and the optimal bound obtainable in this way is a ramification invariant. We first fix the ramification notation.

Let $C$ be a smooth projective curve over $k$ with a faithful action of $G=\Z/p^n$ fixing a point $\infty$. As $k$ is algebraically closed, the inertia group at $\infty$ is $G$, so $\infty$ is totally ramified in $C\to C/G$. Let $G=G_0\supseteq G_1\supseteq\dots$ be the lower ramification groups at $\infty$ \cite[IV, \S1]{Se2}. Since $G$ is a $p$-group, $G_1=G$; since $G_i/G_{i+1}$ embeds in the additive group of $k$ for $i\ge1$ \cite[IV, \S2, Prop.~7, Cor.~1]{Se2}
and $G$ is cyclic, each jump of the filtration has index $p$. So there are exactly $n$ lower breaks $1\le l_1<\dots<l_n$, with $G_i$ of order $p^{n-j+1}$ for $l_{j-1}<i\le l_j$ ($l_0:=-1$). The upper breaks are $u_j:=\varphi(l_j)$, $\varphi$ the Herbrand function \cite[IV, \S3]{Se2}; explicitly
\begin{equation}\label{eq:herbrand}
u_1=l_1,\qquad l_{j+1}-l_j=p^j(u_{j+1}-u_j)\quad(1\le j<n).
\end{equation}
We write $l(C,\infty):=l_n$ for the last lower break. It is the lower break of $\sigma:=g^{p^{n-1}}$: $v_\infty(\sigma\pi-\pi)=l_n+1$ for a uniformiser $\pi$ at $\infty$; equivalently, as lower numbering is compatible with subgroups \cite[IV, \S1, Prop.~2]{Se2}, it is the break at $\infty$ of the degree-$p$ cover $C\to C/\langle\sigma\rangle$. For the curve $\Ubar_n$ of Proposition~\ref{prop:tail}(c) we put $l_n=l_n(p):=l(\Ubar_n,\infty)$; here $\Ubar_n\to\Ubar_{n-1}$ (truncation of Witt vectors) is the quotient by $\langle\sigma\rangle$, \'etale over $U_{n-1}$, and $\Ubar_0:=\PP^1_t$.

\begin{lemma}[Artin--Schreier layers]\label{lem:ASlayer}
Let $C\to C'$ be a Galois cover of smooth projective curves over $k$ with group $\langle\sigma\rangle\cong\Z/p$, totally ramified at $\infty\in C$ over $\infty'\in C'$, with lower break $l$ at $\infty$ (so $l\ge1$), and let $\hat F$ be the completion of $k(C)$ at $\infty$.
\begin{enumerate}[label=\textup{(\alph*)}]
\item For every $x\in\hat F^\times$ with $p\nmid v_\infty(x)$: $v_\infty(\sigma x-x)=v_\infty(x)+l$.
\item Let $k(C)=k(C')(y)$ with $y^p-y=c\in k(C')$ and $v_{\infty'}(c)=-M<0$. Then $l\le M$, with equality iff $p\nmid M$. In particular $l$ is prime to $p$.
\item If $C\to C'$ is \'etale away from $\infty$, then $2g(C)-2=p\,(2g(C')-2)+(p-1)(l+1)$.
\end{enumerate}
\end{lemma}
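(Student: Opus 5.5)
For (a), I work in $\hat F=k((\pi))$ with $\pi$ a uniformiser at $\infty$. Since $\infty$ is totally ramified and $\langle\sigma\rangle$ is its inertia group, $\sigma$ acts continuously on $\hat F$. By the definition of the lower break, $\sigma\pi=\pi+u\pi^{l+1}$ with $u$ a unit.

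For a monomial $\pi^m$ I compute
\[
\sigma(\pi^m)=\pi^m(1+u\pi^l)^m=\pi^m+mu\pi^{m+l}+O(\pi^{m+2l}),
\]
so $v(\sigma\pi^m-\pi^m)=m+l$ when $p\nmid m$. For $p\mid m$ the difference has valuation $>m+l$. This holds because $(1+w)^m-1$ with $m=p^am'$ equals $(1+w^{p^a})^{m'}-1$, which has valuation $\ge p^al>l$.

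Now write $x=\sum_{j\ge m}c_j\pi^j$ with $m=v(x)$ prime to $p$. The leading term contributes exactly $m+l$. Each other term $c_j\pi^j$ with $j>m$ contributes valuation $\ge j+l>m+l$, whether or not $p\mid j$. Hence $v(\sigma x-x)=m+l$.

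For (b), I first reduce the case $p\mid M$. In that case, using that $k$ is algebraically closed, I modify $c$ by $\wp(b)$ with $b$ a suitable Laurent tail at $\infty'$, completing locally in $k((\pi'))$. Each such change lowers the pole order, and I repeat until the pole order $M'$ is prime to $p$ (it stays positive, since the cover is ramified). Then $M'<M$, and the strict inequality $l<M$ follows from the prime-to-$p$ case.

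So assume $p\nmid M$. From $y^p-y=c$ and total ramification ($v_\infty=p\,v_{\infty'}$ on $k(C')$), I get $v_\infty(y)=-M$, which is prime to $p$. Since $\sigma y=y+1$ (after rescaling $\sigma$ within $\langle\sigma\rangle$, which does not change the break), part (a) gives $0=v(1)=-M+l$, so $l=M$. The same reduction shows that when $p\mid M$ one gets $l=M'<M$, and it shows that $l$ is always prime to $p$.

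For (c), I apply Riemann–Hurwitz for the separable cover $C\to C'$. The cover is étale away from $\infty$, so the only contribution to the different comes from $\infty$. By Hilbert's formula that contribution is $\sum_{i\ge0}(|G_i|-1)$. Here $G_i$ has order $p$ for $0\le i\le l$ and is trivial afterwards, so the different exponent is $(p-1)(l+1)$. This gives $2g(C)-2=p(2g(C')-2)+(p-1)(l+1)$.

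The main care point is the reduction step in (b) when $p\mid M$. It is routine provided one works in the completion and notes that replacing the global $c$ by a local representative does not change the local extension, and hence does not change $l$.
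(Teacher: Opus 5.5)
Your proposal is correct and takes essentially the same route as the paper: a valuation computation in $k((\pi))$ for (a), the reduction of $c$ by $\wp$ of Laurent monomials in $k((\pi'))$ followed by (a) applied to the shifted generator for (b), and Hilbert's different formula with Riemann--Hurwitz for (c). The differences are cosmetic. You expand $x$ term by term where the paper factors $x=u\pi^r$, and rescaling $\sigma$ is unnecessary because $\sigma y-y\in\F_p^\times$ already has valuation $0$. Your ``stays positive since the cover is ramified'' is exactly the paper's Hensel step: if $v(c')\ge0$, then $Y^p-Y-c'$ would have a root in $k((\pi'))$.
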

\begin{proof}
(a) As $k$ is algebraically closed, $\hat F=k((\pi))$ for a uniformiser $\pi$, and $\sigma$ acts on $\hat F$ continuously and $k$-linearly. Write $\sigma\pi=\pi(1+\varepsilon)$ with $v(\varepsilon)=l$, and $x=u\pi^r$ with $r=v(x)$ and $u=\sum_{j\ge0}a_j\pi^j$, $a_0\ne0$. Then $\sigma u-u=\sum_{j\ge1}a_j\pi^j((1+\varepsilon)^j-1)$ has valuation $\ge l+1$, so $\sigma u/u=1+\delta$ with $v(\delta)\ge l+1$; and $(1+\varepsilon)^r=1+r\varepsilon+\varepsilon^2(\dots)$ with $v(r\varepsilon)=l$ because $p\nmid r$, and $v(\varepsilon^2)=2l>l$. Hence $\sigma x/x=(1+\delta)(1+\varepsilon)^r=1+r\varepsilon+(\text{terms of valuation}>l)$, so $v(\sigma x-x)=v(x)+l$.

(b) The roots of $Y^p-Y-c$ in $k(C)$ are $y+\F_p$, and $\sigma y\ne y$, so $\sigma y-y=j\in\F_p^\times$. Suppose first $p\nmid M$. As $v_\infty(c)=-pM$ (ramification index $p$), $v_\infty(y)=-M$, and (a) gives $0=v(j)=v(\sigma y-y)=-M+l$, so $l=M$. Now let $p\mid M$. In the completion $\hat F'=k((\pi'))$ of $k(C')$ at $\infty'$ reduce $c$: while $c=\gamma\pi'^{-pa}+(\text{higher terms})$ with $a>0$, replace $c$ by $c-\wp(\gamma^{1/p}\pi'^{-a})$, which has larger valuation. After finitely many steps $c=\wp(b)+c'$ with $b\in\hat F'$ and either $v(c')\ge0$ or $v(c')=-m$ with $p\nmid m$ and $m<M$. Put $y'=y-b\in\hat F$; then $y'^p-y'=c'$ and $\sigma y'-y'=j$, as $\sigma$ fixes $\hat F'$ (the completion of the fixed field). If $v(c')\ge0$, Hensel's lemma gives a root of $Y^p-Y-c'$ in $\hat F'$ (its reduction is separable with a root in $k$), so $y'\in\hat F'$ and $\sigma y'=y'$, a contradiction. Hence $v(c')=-m$, $v_\infty(y')=-m$, and (a) applied to $y'$ gives $l=m<M$.

(c) The different exponent at $\infty$ is $\sum_{i\ge0}(|G_i|-1)=(l+1)(p-1)$ by Hilbert's formula \cite[IV, \S1, Prop.~4]{Se2}, and $C\to C'$ is unramified elsewhere; apply the Riemann--Hurwitz formula \cite[IV.2.4]{Hart77}.
\end{proof}

\begin{lemma}\label{lem:lnformula}
Recall $\Ubar_0=\PP^1_t$, so $g(\Ubar_0)=0$.
\begin{enumerate}[label=\textup{(\alph*)}]
\item The upper ramification breaks of $\Ubar_n\to\PP^1_t$ at $\infty$ are $1,p,\dots,p^{n-1}$; consequently the lower breaks are the numbers $1+(p-1)\sum_{i=1}^{j-1}p^{2i-1}$, $1\le j\le n$, and
\begin{equation}\label{eq:ln}
\begin{gathered}
l_n(p)=1+(p-1)\sum_{j=1}^{n-1}p^{2j-1}=\frac{p^{2n-1}+1}{p+1}:\\
l_2(2)=3,\quad l_2(p)=p^2-p+1,\quad l_3(2)=11,\quad l_4(2)=43,\quad l_3(3)=61 .
\end{gathered}
\end{equation}
In particular $l_{m+1}(p)=p^2\,l_m(p)-p+1$ for $m\ge1$.
\item $2g(\Ubar_m)-2=p\,(2g(\Ubar_{m-1})-2)+(p-1)(l_m(p)+1)$ for $m\ge1$.
\item $l_m(p)=p\,(2g(\Ubar_{m-1})-1)+p^m+1$ for $m\ge1$.
\end{enumerate}
\end{lemma}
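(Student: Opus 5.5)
The plan is to prove the three parts in order: (a) from the known upper breaks plus the Herbrand relation \eqref{eq:herbrand}, (b) from Lemma~\ref{lem:ASlayer}(c) applied one layer at a time, and (c) by an induction on $m$ that combines (a) and (b).

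For (a), the only non-formal input is the upper breaks $u_j=p^{j-1}$ of the Artin--Schreier--Witt cover of class $[t]$ at $\infty$. As Remark~\ref{rem:dependsramif} says, this is quoted. It is the standard formula for the upper breaks of a cyclic $p^n$-extension of $k((x))$ given by a reduced Witt vector $(x_0,\dots,x_{n-1})$, namely $u_j=\max_{i<j}p^{j-1-i}(-v(x_i))$. For $[t]$ we have $v(t)=-1$ at $\infty$ and all higher coordinates vanish, so $u_j=p^{j-1}$. Base change from $\F_p((1/t))$ to $k((1/t))$ does not alter the lower breaks, so the finite-residue-field formula suffices. Given the $u_j$, \eqref{eq:herbrand} gives $l_1=u_1=1$ and
\[
l_{j+1}-l_j=p^j(p^j-p^{j-1})=(p-1)p^{2j-1}.
\]
Summing yields $l_j=1+(p-1)\sum_{i=1}^{j-1}p^{2i-1}$. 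The closed form follows from the geometric series $(p-1)p\,(p^{2n-2}-1)/(p^2-1)=p(p^{2n-2}-1)/(p+1)$, which gives $l_n(p)=(p^{2n-1}+1)/(p+1)$. The numerical values and the recursion $l_{m+1}=p^2l_m-p+1$ are immediate from the closed form.

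For (b), I apply Lemma~\ref{lem:ASlayer}(c) to the degree-$p$ quotient $\Ubar_m\to\Ubar_{m-1}=\Ubar_m/\langle\sigma\rangle$. This cover is étale over $U_{m-1}$, since $G$ acts freely on $U_m$. It is totally ramified at $\infty$ by Proposition~\ref{prop:tail}(c). Its break there is the last lower break $l_m(p)$, because lower numbering is compatible with subgroups, as noted before Lemma~\ref{lem:ASlayer}.

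For (c), I argue by induction on $m$. For $m=1$ the right-hand side is $p(0-1)+p+1=1=l_1$. For the inductive step, assume $p(2g(\Ubar_{m-1})-2)=l_m-p^m-p-1$, which is a rearrangement of the hypothesis. Substituting into (b) gives
\[
2g(\Ubar_m)-1=l_m-p^m-p+(p-1)(l_m+1)=p\,l_m-p^m-1.
\]
Hence $p(2g(\Ubar_m)-1)+p^{m+1}+1=p^2l_m-p+1$, which equals $l_{m+1}$ by the recursion in (a).

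The only real obstacle is the upper-break input in (a). Everything else is bookkeeping with \eqref{eq:herbrand} and Riemann--Hurwitz.
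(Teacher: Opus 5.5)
Your proposal is correct and follows the paper's proof: the quoted Schmid/Kanesaka--Sekiguchi/Thomas upper breaks plus \eqref{eq:herbrand} for (a), Lemma~\ref{lem:ASlayer}(c) on the top layer $\Ubar_m\to\Ubar_{m-1}$ for (b), and an induction combining (b) with the recursion $l_{m+1}=p^2l_m-p+1$ for (c). The only cosmetic difference is that in (c) the paper tracks the auxiliary quantity $M_m:=l_m(p)-p\,(2g(\Ubar_{m-1})-1)$ and proves $M_{m+1}=pM_m+1-p$ with $M_1=p+1$, whereas you substitute the inductive hypothesis directly; both computations are the same bookkeeping.
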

\begin{proof}
(a) The completion at $\infty$ of $\Ubar_n\to\PP^1_t$ is the Artin--Schreier--Witt extension of $k((1/t))$ of class $[t]$, a reduced Witt vector whose only nonzero coordinate has a pole of order $1$. By the formula for the upper breaks of Artin--Schreier--Witt extensions of a reduced Witt vector of Schmid, Kanesaka--Sekiguchi and Thomas for finite residue fields \cite{Sch37,KS79,Th}. Since $\Ubar_n\to\PP^1_t$ and the class $[t]$ are defined over $\F_p$, and lower breaks do not change under the constant field extension $\F_p((1/t))\subset k((1/t))$, this finite-residue-field case applies directly (see also \cite[Thm.~2.3]{EK25} for arbitrary perfect residue fields, \cite[Thm.~1]{Ga} for the top break of cyclic covers of curves over a perfect field, and \cite{Bry83})
(if the coordinates $a_i$ of a reduced vector that are not regular at $\infty$ have pole orders $m_i$, then $u_j=\max\{p^{\,j-1-i}m_i:i<j,\ a_i\text{ not regular}\}$), the upper breaks are $u_j=p^{j-1}$. Then \eqref{eq:herbrand} gives $l_1=1$ and $l_{j+1}=l_j+p^j(p^j-p^{j-1})=l_j+(p-1)p^{2j-1}$, which is \eqref{eq:ln}; the closed form and the recursion are elementary.
(b) $\Ubar_m\to\Ubar_{m-1}$ is a $\Z/p$-cover, \'etale over $U_{m-1}$ (as $U_m\to\A^1$ is \'etale), totally ramified at $\infty$ with break $l_m(p)$; apply Lemma~\ref{lem:ASlayer}(c).
(c) Put $M_m:=l_m(p)-p(2g(\Ubar_{m-1})-1)$. Then $M_1=1+p$. By (b), $p(2g(\Ubar_m)-1)=p^2(2g(\Ubar_{m-1})-1)-p^2+p(p-1)(l_m+1)+p$, and $p^2(2g(\Ubar_{m-1})-1)=p(l_m-M_m)$; together these give $p(2g(\Ubar_m)-1)=p^2l_m-pM_m$, i.e.\ $M_{m+1}=l_{m+1}-p^2l_m+pM_m=pM_m+1-p$ by (a). By induction $M_m=p^m+1$.
\end{proof}

\begin{definition}\label{def:lambda}
Let $G$ be a finite group. For a smooth projective curve $C$ over $k$ with a faithful $G$-action and a $G$-fixed $k$-point $\infty$, let $\lambda(C,\infty)$ be the smallest element prime to $p$ of the Weierstrass semigroup $H(\infty)$ (the set of pole orders at $\infty$ of functions regular on $C\smallsetminus\{\infty\}$), and let $\lambda_k(G)$ be the minimum of $\lambda(C,\infty)$ over all such pairs ($\lambda_k(G)=\infty$ if none exists).
\end{definition}

Such pairs exist for every finite $p$-group and, more generally, for every $G\cong P\rtimes\Z/m$ with $P$ a $p$-group and $p\nmid m$: by the theorem of Katz and Gabber \cite[Main Thm.~1.4.1, Cor.~1.4.12]{Ka} (the equivalence between special covers of $\mathbb G_m$ and covers of $\Spec k((1/T))$, and the local monodromy at $0$; every $P\rtimes\Z/m$ is a quotient of the local Galois group; for $p$-groups already by Harbater \cite[Section~2]{Ha}, cf.\ \cite[Thm.~2.1.4]{Ka}; see also \cite[Lemma~3]{RV18}, where the same pairs are obtained over $\F_p$)
there is a connected $G$-cover of $\PP^1$ \'etale over $\mathbb G_m$ (over $\A^1$ if $G$ is a $p$-group) with inertia group $G$ at $\infty$, and the point above $\infty$ is then $G$-fixed. Conversely a $G$-fixed point forces $G$ to be an inertia group in characteristic $p$, hence of this form. For $G=\Z/p^n$ the pair $(\Ubar_n,\infty)$ is explicit.

\begin{theorem}\label{thm:B}
Let $n\ge1$.
\begin{enumerate}[label=\textup{(\alph*)}]
\item The top layer $\Ubar_n\to\Ubar_{n-1}$ has an Artin--Schreier generator $y\in\mathcal O(U_n)$ with $y^p-y=c\in\mathcal O(U_{n-1})$ and $v_\infty(c)=-l_n$ on $\Ubar_{n-1}$; then $v_\infty(y)=-l_n$ on $\Ubar_n$.
\item $\lambda(\Ubar_n,\infty)\le l_n(p)$.
\item $D(U_n)$ contains an integer prime to $p$ and $\le l_n(p)$; hence
\[
d_1^{(p')}(\tgen)\le l_n(p).
\]
\end{enumerate}
\end{theorem}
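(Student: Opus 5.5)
The plan is to prove (a) by an explicit reduction of the top Artin--Schreier equation, deduce (b) immediately from it, and get (c) from a Riemann--Roch/fibre argument on the twisted curve $\tw{\tgen}{\Ubar_n}$.

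\emph{Part (a).} Let $U_n$ have coordinates $y=(y_0,\dots,y_{n-1})$ with $\wp(y)=[t]$. Comparing coordinates of index $n-1$, and using the additivity description of Witt addition (the coordinate of index $i$ is the sum of the $i$-th coordinates plus a polynomial in lower coordinates), gives an equation
\[
y_{n-1}^p-y_{n-1}=c_0,
\]
where $c_0$ is a polynomial in $y_0,\dots,y_{n-2}$. Hence $c_0\in\mathcal O(U_{n-1})$, and $y_{n-1}$ generates the top layer. The pole order $M=-v_\infty(c_0)$ need not be prime to $p$, so we reduce it.

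While $p\mid M$, we look for $f\in\mathcal O(U_{n-1})$ with $-v_\infty(f)=M/p$ whose $p$-th power has the same leading coefficient as $c_0$. This is possible because $k$ is perfect and algebraically closed, provided $M/p$ lies in the Weierstrass semigroup of $(\Ubar_{n-1},\infty)$. We then replace $y_{n-1}$ by $y_{n-1}-f$ and $c_0$ by $c_0-\wp(f)$, which strictly lowers the pole order.

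The key point, and the step I expect to be the main obstacle, is to guarantee that $M/p$ always lies in the semigroup. By Lemma~\ref{lem:ASlayer}(b) we always have $M\ge l_n$. By Lemma~\ref{lem:lnformula}(c),
\[
l_n=p\,(2g(\Ubar_{n-1})-1)+p^n+1>2p\,g(\Ubar_{n-1}).
\]
So whenever $p\mid M$ we get $M/p\ge 2g(\Ubar_{n-1})$, and every such integer is a pole number by Riemann--Roch. Thus the reduction never stops while $p\mid M$. It terminates because $M$ strictly decreases and stays $\ge l_n$. At termination $p\nmid M$, and Lemma~\ref{lem:ASlayer}(b) gives $M=l_n$. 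The ramification index $p$ then gives $v_\infty(y)=-l_n$ on $\Ubar_n$.

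\emph{Part (b).} The generator $y$ from (a) is regular on $U_n=\Ubar_n\smallsetminus\{\infty\}$ and has pole order $l_n$ at $\infty$. Its pole order is prime to $p$ by Lemma~\ref{lem:ASlayer}(b), so $\lambda(\Ubar_n,\infty)\le l_n$.

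\emph{Part (c).} Let $X=\tw{\tgen}{\Ubar_n}$, and let $\infty'\in X(K)$ be the point coming from the $G$-fixed point $\infty$. The $G$-invariant divisor $l_n\infty$ descends to the $K$-rational divisor $l_n\infty'$. By Galois descent along $L/K$, together with Lemma~\ref{lem:twistL} and Fact~\ref{fact:pic}(b), the space $\mathcal L(l_n\infty')$ on $X$ has $K$-dimension $\ell(l_n\infty)$. Moreover its filtration by pole order descends, so the graded piece in pole order exactly $l_n$ is one-dimensional and descends. This yields $f\in K(X)$, regular off $\infty'$, with a pole of order exactly $l_n$ there. Then $f\colon X\to\PP^1_K$ has degree $l_n$.

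Since $K$ is infinite, we choose $a\in K$ such that the fibre $f^{-1}(a)$ is a divisor of degree $l_n$ supported in $\tw{\tgen}{U_n}$. This open set is the free locus, because $G$ acts freely on $U_n$. The degree $l_n$ is prime to $p$, so some closed point of the fibre has degree prime to $p$, and that degree is $\le l_n$. This point lies in $D(U_n)$. By Lemma~\Aref{lem:freepoint}, its residue field $F'$ satisfies $\ed_k(\tgen\otimes F')\le1$, which gives $d_1^{(p')}(\tgen)\le l_n(p)$.
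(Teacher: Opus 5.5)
Your proposal is correct and follows the paper's proof step for step: the same pole-order reduction of the top Artin--Schreier equation, with $M/p\ge 2g(\Ubar_{n-1})$ guaranteed by Lemma~\ref{lem:lnformula}(c), and the same degree-sum argument on the fibres of a function with pole divisor exactly $l_n\infty'$ for (c). The paper treats $n=2$ separately via $\Ubar_1=\PP^1$, but your uniform use of the formula is also valid; the one point to add is the paper's observation that every Artin--Schreier generator $c'\in\mathcal O(U_{n-1})$ of the top layer has a genuine pole at $\infty$ (otherwise it is constant and the layer splits), which is what licenses each application of Lemma~\ref{lem:ASlayer}(b) during the reduction.
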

\begin{proof}
(a) For $n=1$ take $y=y_0$, $c=t$. Let $n\ge2$. By the definition of $U_n$, $U_n\subset U_{n-1}\times\A^1$ is the curve $y_{n-1}^p-y_{n-1}=c_0$, where $c_0\in k[y_0,\dots,y_{n-2}]\subseteq\mathcal O(U_{n-1})$ is minus the part of $(\wp y)_{n-1}$ not involving $y_{n-1}$; so $y_{n-1}$ is an Artin--Schreier generator of $k(\Ubar_n)/k(\Ubar_{n-1})$, regular on $U_n$. Every Artin--Schreier generator $c'\in\mathcal O(U_{n-1})$ of this layer has $v_\infty(c')<0$: otherwise $c'$ is regular on the projective curve $\Ubar_{n-1}$, hence constant, and the layer would split. We modify $c_0$ by elements $\wp(b)$, $b\in\mathcal O(U_{n-1})$, until its pole order is prime to $p$; by Lemma~\ref{lem:ASlayer}(b) the final pole order is then $l_n$. Suppose the current generator $c'$ has pole order $pa$. By Lemma~\ref{lem:ASlayer}(b), $l_n<pa$. It suffices to know that $a\in H(\infty)$ on $\Ubar_{n-1}$: then there is $b\in\mathcal O(U_{n-1})$ with $v_\infty(b)=-a$, which after scaling by a constant ($k$ is algebraically closed) has $b^p$ with the same leading term as $c'$ with respect to a fixed uniformiser, and $c'-\wp(b)$ has smaller pole order. Since the pole orders strictly decrease, the process stops, and $y:=y_{n-1}-\sum b$ is the required generator; $v_\infty(y)=-l_n$ because $v_\infty(c)=-pl_n$ on $\Ubar_n$.

It remains to see $a\in H(\infty)$. For $n=2$, $\Ubar_1=\PP^1_{y_0}$ and $H(\infty)=\Z_{\ge0}$. For $n\ge3$, Lemma~\ref{lem:lnformula}(c), which rests on \cite{Sch37,EK25}, gives $pa>l_n=p(2g(\Ubar_{n-1})-1)+p^n+1$, so $a\ge2g(\Ubar_{n-1})$, and every integer $\ge2g(\Ubar_{n-1})$ lies in $H(\infty)$ by Riemann--Roch.

For $n=2$ the reduction is not even needed: from the ghost component $w_1(x)=x_0^p+px_1$ one gets $(\wp y)_1=y_1^p-y_1+Q(y_0)$ with
\[
Q(y_0)=\tfrac1p\bigl(y_0^{p^2}-y_0^p-(y_0^p-y_0)^p\bigr)=y_0^{p^2-p+1}+(\text{terms of lower degree}),
\]
since the term $\binom p{p-1}y_0^{p(p-1)}(-y_0)$ of $(y_0^p-y_0)^p$ contributes $+y_0^{p^2-p+1}$ to $Q$, the term $y_0^{p^2}$ cancels, and the remaining terms have degree $(p-1)i+p<p^2-p+1$ ($0\le i\le p-2$). As $v_\infty(y_0)=-1$ on $\Ubar_1=\PP^1_{y_0}$, the generator $c_0=-Q(y_0)$ already has pole order $p^2-p+1$, prime to $p$; so $y_1$ is a regular reduced generator and $l_2(p)=p^2-p+1$, independently of \cite{Th}. At $(p,n)=(2,3)$ a single reduction step, with $b=y_0^2y_1$, gives the reduced generator $y_2+y_0^2y_1$ of pole order $11$ (Appendix~\ref{app:genus}); so $l_3(2)=11$, again independently of \cite{Th}.

(b) By (a), $y$ is regular on $U_n$ with pole order $l_n$, prime to $p$ (Lemma~\ref{lem:ASlayer}(b)).

(c) $X:=\tw{\tgen}{\Ubar_n}$ is a smooth projective geometrically integral $K$-curve with the $K$-point $\infty'$ (the twist of $\infty$), and its free locus is $X\smallsetminus\{\infty'\}$, as $\Ubar_n^{\free}=U_n$. Riemann--Roch dimensions $\ell(d\infty')$ are invariant under the base change to $L$, where $(X,\infty')$ becomes $(\Ubar_n,\infty)$ (Lemma~\ref{lem:twistL}, Fact~\ref{fact:pic}(b)); so by (a) there is $f\in K(X)$ with pole divisor exactly $l_n\infty'$. For $c\in K$ the divisor $\divv(f-c)+l_n\infty'$ is effective, $K$-rational, of degree $l_n$, and supported on the free locus. The degrees of its closed points, counted with multiplicity, sum to $l_n$, which is prime to $p$; so one of them has degree $e$ prime to $p$, $e\le l_n$, and it is free. Hence $e\in D(U_n)$, and Lemma~\Aref{lem:freepoint} gives $\ed_k(\tgen\otimes F')\le1$ for its residue field $F'$, of degree $e$; so $d_1^{(p')}(\tgen)\le e$.
\end{proof}
The specialisation to $(2,3)$ is Corollary~\ref{cor:B23}.

\begin{proposition}[curves with a fixed point]\label{thm:lambda}
Let $C$ be a smooth projective curve over $k$ with a faithful action of $G=\Z/p^n$ fixing a point $\infty$. If $f\in k(C)$ is regular on $C\smallsetminus\{\infty\}$ and $p\nmid v_\infty(f)$, then
\[
-v_\infty(f)\ \ge\ l(C,\infty)\ \ge\ l_n(p).
\]
Consequently $\lambda(C,\infty)\ge l_n(p)$ for every such pair, $\lambda(\Ubar_n,\infty)=l_n(p)$, and
\[
\lambda_k(\Z/p^n)=l_n(p)=1+(p-1)\sum_{j=1}^{n-1}p^{2j-1}.
\]
\end{proposition}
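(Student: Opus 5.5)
The first inequality, $-v_\infty(f)\ge l(C,\infty)$, should come from a valuation argument against $\sigma=g^{p^{n-1}}$. The second, $l(C,\infty)\ge l_n(p)$, should come from the Herbrand relations \eqref{eq:herbrand} combined with the lower bound $u_{j+1}\ge p\,u_j$ on upper breaks quoted in Remark~\ref{rem:dependsramif}(ii). The statements about $\lambda$ then follow from Theorem~\ref{thm:B}(b).

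\emph{Step 1: $-v_\infty(f)\ge l_n$.} Let $f$ be regular on $C\smallsetminus\{\infty\}$ with $r:=v_\infty(f)$ and $p\nmid r$. If $r\ge0$ then $f$ is constant, so $r=0$, which is excluded by $p\nmid r$. Hence $r<0$. Consider the $\Z/p$-cover $C\to C/\langle\sigma\rangle$. It is totally ramified at $\infty$, and its lower break is $l_n=l(C,\infty)$ by compatibility of lower numbering with subgroups. Lemma~\ref{lem:ASlayer}(a) gives $v_\infty(\sigma f-f)=r+l_n$. The function $\sigma f-f$ is regular away from $\infty$, because $\sigma$ fixes $\infty$ and so preserves $C\smallsetminus\{\infty\}$. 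It is nonzero: otherwise $f$ would lie in $k(C)^{\langle\sigma\rangle}$, where valuations at $\infty$ are divisible by the ramification index $p$. A nonzero function regular on $C\smallsetminus\{\infty\}$ cannot have a zero at $\infty$ unless it is constant, so $r+l_n\le0$, with equality only when $\sigma f-f$ is a nonzero constant. In either case $-r\ge l_n$.

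\emph{Step 2: $l_n\ge l_n(p)$.} We have $u_1=l_1\ge1$, and by Remark~\ref{rem:dependsramif}(ii) $u_{j+1}\ge p\,u_j$. Hence $u_j\ge p^{j-1}$, and more precisely $u_{j+1}-u_j\ge(p-1)u_j\ge(p-1)p^{j-1}$. Substituting into \eqref{eq:herbrand} gives
\[
l_n=l_1+\sum_{j=1}^{n-1}p^j(u_{j+1}-u_j)\ \ge\ 1+\sum_{j=1}^{n-1}(p-1)p^{2j-1}=l_n(p),
\]
which is \eqref{eq:ln}. This step is the main obstacle: it rests entirely on the quoted inequality $u_{j+1}\ge pu_j$ for cyclic $p^n$-extensions of $k((x))$. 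That inequality is a standard fact (it follows from Artin--Schreier--Witt theory, or from Hasse--Arf plus the norm-residue description of the upper filtration), and I would cite it rather than reprove it. If a self-contained route were wanted, I would try to induct on $n$ using the quotient $C/\langle\sigma\rangle$ and Lemma~\ref{lem:ASlayer}(b) to bound the new break by a reduced Artin--Schreier generator.

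\emph{Step 3: consequences.} Every element of $H(\infty)$ that is prime to $p$ is at least $l_n(p)$ by Steps 1--2, so $\lambda(C,\infty)\ge l_n(p)$. Theorem~\ref{thm:B}(b) gives $\lambda(\Ubar_n,\infty)\le l_n(p)$, hence equality there. Taking the minimum over all pairs gives $\lambda_k(\Z/p^n)=l_n(p)$.
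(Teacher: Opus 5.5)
Your proposal is correct and follows the paper's proof essentially step for step. The first inequality comes from the valuation argument with $\sigma=g^{p^{n-1}}$ and Lemma~\ref{lem:ASlayer}(a), the second from the quoted bound $u_{j+1}\ge p\,u_j$ substituted into \eqref{eq:herbrand}, and the consequences from Theorem~\ref{thm:B}(b); your preliminary reduction to $v_\infty(f)<0$ is harmless but not needed, since Lemma~\ref{lem:ASlayer}(a) holds for either sign of the valuation.
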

\begin{proof}
Let $\sigma=g^{p^{n-1}}$; then $C\to C/\langle\sigma\rangle$ is a $\Z/p$-cover totally ramified at $\infty$ with lower break $l(C,\infty)$. We have $\sigma f\ne f$: otherwise $f\in k(C/\langle\sigma\rangle)$ and $v_\infty(f)\in p\Z$, as $\infty$ has ramification index $p$ over its image. Since $\sigma$ preserves $C\smallsetminus\{\infty\}$, the nonzero function $\sigma f-f$ is regular on $C\smallsetminus\{\infty\}$, hence is a constant or has a pole at $\infty$: $v_\infty(\sigma f-f)\le0$. By Lemma~\ref{lem:ASlayer}(a), $v_\infty(\sigma f-f)=v_\infty(f)+l(C,\infty)$. Hence $-v_\infty(f)\ge l(C,\infty)$.

For the second inequality, the completion of $k(C)$ at $\infty$ is a totally ramified cyclic extension of degree $p^n$ of a field $k((x))$, so its upper breaks satisfy $u_{j+1}\ge p\,u_j$ (Schmid \cite{Sch37}; this inequality is the only consequence of the break formula needed here). It follows from the formula $u_{j+1}=\max\{pu_j,m_j\}$ for a reduced Witt vector of the extension (every class in $\W_n(k((x)))/\wp$ has a reduced representative, each coordinate being regular or having a pole of order prime to $p$, since the residue field $k$ is perfect; total ramification forces $u_1=m_0\ge1$).
Since $u_1=l_1\ge1$, we get $u_j\ge p^{j-1}$ and $u_{j+1}-u_j\ge(p-1)u_j\ge(p-1)p^{j-1}$, and \eqref{eq:herbrand} gives
\[
l(C,\infty)=u_1+\sum_{j=1}^{n-1}p^j(u_{j+1}-u_j)\ \ge\ 1+(p-1)\sum_{j=1}^{n-1}p^{2j-1}=l_n(p),
\]
the last equality being \eqref{eq:ln}. The consequences follow, using Theorem~\ref{thm:B}(b) for $\lambda(\Ubar_n,\infty)\le l_n(p)$.
\end{proof}

\begin{remark}
(i) The first inequality of Proposition~\ref{thm:lambda} uses none of the ramification formulas; with Theorem~\ref{thm:B}(a) it gives $\lambda(\Ubar_n,\infty)=l(\Ubar_n,\infty)$ directly. The second uses only $u_{j+1}\ge p\,u_j$. The closed formula \eqref{eq:ln} enters for the value of $l_n(p)$, for the sharpness (that $\Ubar_n$ attains the bound) and, for $n\ge3$, for $a\in H(\infty)$ in Theorem~\ref{thm:B}(a); it is proved here directly for $n\le2$ and for $(p,n)=(2,3)$.
(ii) By Proposition~\ref{thm:lambda}, $l_n(p)$ is the smallest $d$ prime to $p$ for which a twisted curve $\tw{\tgen}{C}$ with a $G$-fixed point carries a function with pole divisor exactly $d\infty'$, i.e.\ the smallest degree of a pencil $|d\infty'|$ to which the degree-sum argument in the proof of Theorem~\ref{thm:B}(c) applies. That this is nevertheless the true value of $d_1^{(p')}(\tgen)$ is Theorem~\ref{thm:ptp}, which also controls fibres of degree divisible by $p$.
(iii) The naive argument ``$\ell(d\infty')\ge2$ for $d\ge g(\Ubar_n)+1$, take $d$ prime to $p$'' does not work: the fibres of $f\in\mathcal L(d\infty')$ have degree equal to the actual pole order of $f$, which lies in the Weierstrass semigroup and may be divisible by $p$. At $(2,3)$, $g(\Ubar_3)=7$ and the semigroup of $\infty$ is $\langle4,6,11\rangle$, whose gaps are $\{1,2,3,5,7,9,13\}$ (Appendix~\ref{app:genus}): every function regular on $U_3$ with pole order $\le10$ has even pole order.
\end{remark}

\medskip
Reichstein and Vistoli \cite{RV18} prove $\ed_{k,p}(G)\le1$ for every finite $p$-group $G$ in characteristic $p$, i.e.\ every $G$-torsor becomes one-dimensional over some extension of degree prime to $p$: in the language of the profile, $d_1^{(p')}(\tau)<\infty$ for every $\tau$. The proof passes through the $p$-closure, a large field, and gives neither a bound on the degree nor uniformity in $\tau$. The Riemann--Roch argument of Theorem~\ref{thm:B} gives both. In the next theorem $G$ is an arbitrary finite group and $V$ a faithful representation of $G$ over $k$ with generic torsor $\tgen$ over $K=k(V)^G$ (for $G=\Z/p^n$ we may and do take $V=\W_n$, Proposition~\Aref{prop:vers}).

\begin{theorem}[quantitative Reichstein--Vistoli]\label{thm:qRV}
\begin{enumerate}[label=\textup{(\alph*)}]
\item Let $(C,\infty)$ be as in Definition~\ref{def:lambda}. For every $G$-torsor $\tau$ over every field $F\supseteq k$ there is a separable extension $F'/F$ with $p\nmid[F':F]\le\lambda(C,\infty)$ and $\ed_k(\tau\otimes F')\le1$. Hence
\[
d_1^{(p')}(\tau)\le d_1^{(p')}(\tgen)\le\lambda_k(G)\qquad\text{for every }\tau .
\]
\item For $G=\Z/p^n$: $\lambda_k(\Z/p^n)=l_n(p)=1+(p-1)\sum_{j=1}^{n-1}p^{2j-1}$, realised by $(\Ubar_n,\infty)$, and no pair $(C,\infty)$ gives a smaller bound in \textup{(a)} (Proposition~\ref{thm:lambda}). So every $\Z/p^n$-torsor over every $F\supseteq k$ becomes one-dimensional over a separable extension of degree prime to $p$ and at most $l_n(p)$; this bound is $1$ for $n=1$, $p^2-p+1$ for $n=2$, $11$ at $(2,3)$, $43$ at $(2,4)$, and it is attained by $\tgen$ (Theorem~\ref{thm:ptp}). Together with Proposition~\ref{prop:trunc}, $d_1(\tau)\le p^{n-1}$ for every $\Z/p^n$-torsor $\tau$ (note $p^{n-1}\le l_n(p)$).
\item For an arbitrary finite $G$ with Sylow $p$-subgroup $G_p$: $d_1^{(p')}(\tau)\le[G:G_p]\cdot\lambda_k(G_p)$ for every $G$-torsor $\tau$ over every $F\supseteq k$.
\end{enumerate}
\end{theorem}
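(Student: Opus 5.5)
Part (a) follows the Riemann--Roch argument of Theorem~\ref{thm:B}(c), now for an arbitrary pair $(C,\infty)$ and an arbitrary torsor. Given a $G$-torsor $\tau$ over $F\supseteq k$, I would form the twist $X:=\tw{\tau}{C}$. This is a smooth projective geometrically integral $F$-curve. The constant equivariant map to the fixed point $\infty$ gives an $F$-point $\infty'\in X(F)$. Next, choose $f\in k(C)$ regular on $C\smallsetminus\{\infty\}$ with pole order $\lambda:=\lambda(C,\infty)$ prime to $p$. The space $\mathcal L(\lambda\infty)$ descends to $\mathcal L(\lambda\infty')$ on $X$. Over a splitting field of $\tau$ the twist becomes $C$ (as in Lemma~\ref{lem:twistL}), so by Fact~\ref{fact:pic}(b) the dimensions $\ell(m\infty')$ equal $\ell(m\infty)$ for all $m$. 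Because $\ell(\lambda\infty')>\ell((\lambda-1)\infty')$, there is $f'\in F(X)$ with pole divisor exactly $\lambda\infty'$.

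The main obstacle is the free locus. On $\Ubar_n$ the free locus is $U_n$, but for a general $C$ there may be other points with nontrivial stabiliser. I would handle this as follows. The non-free locus of $C$ is a finite $G$-stable set $S$. Its twist is a finite closed subscheme of $X$, defined over $F$. Since $F\supseteq k$ is infinite, I can choose $c\in F$ so that the fibre $\divv(f'-c)+\lambda\infty'$ avoids the twist of $S$, because only finitely many values of $c$ are excluded. The resulting effective $F$-rational divisor of degree $\lambda$ is supported in the free locus. Its degree is prime to $p$, so some closed point in its support has degree $e$ with $p\nmid e\le\lambda$. That point is free, and Lemma~\Aref{lem:freepoint} gives $\ed_k(\tau\otimes\kappa)\le1$ for its residue field $\kappa$.

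Separability also needs checking. By Corollary~\ref{cor:insep}/Lemma~\Aref{lem:pi}, one may pass to the separable closure of $F$ in $\kappa$. This lowers the degree, keeps it prime to $p$, and does not change $\ed_k$. The inequality $d_1^{(p')}(\tau)\le d_1^{(p')}(\tgen)$ is the versality statement of Proposition~\Aref{prop:vers} applied to the prime-to-$p$ profile. Minimising over pairs $(C,\infty)$ gives the bound $\lambda_k(G)$.

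For (b), the equality $\lambda_k(\Z/p^n)=l_n(p)$ is Proposition~\ref{thm:lambda} together with Theorem~\ref{thm:B}(b). The listed values come from \eqref{eq:ln}. Attainment is deferred to Theorem~\ref{thm:ptp}. The last clause follows from Proposition~\ref{prop:trunc} and Proposition~\Aref{prop:vers}, together with the check $p^{n-1}\le l_n(p)$: the sum $l_n(p)$ contains the term $(p-1)p^{2n-3}\ge p^{n-1}$ when $n\ge2$.

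For (c), I would restrict to $G_p$. Over the fixed field $F_1$ of $G_p$ acting on the total space of $\tau$ (working componentwise, or inducing), $\tau\otimes F_1$ acquires a reduction to $G_p$, and $[F_1:F]$ divides $[G:G_p]$, so it is prime to $p$. Applying (a) to the $G_p$-torsor gives an extension of degree prime to $p$ and at most $\lambda_k(G_p)$. The $G$-torsor induced from a $G_p$-torsor has the same $\ed_k$ by Lemma~\Aref{lem:ind}. Multiplying the two degrees gives the bound $[G:G_p]\lambda_k(G_p)$.
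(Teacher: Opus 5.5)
Your proposal is correct and is essentially the paper's proof. In (a) it uses the twist $\tw{\tau}{C}$, base-change invariance of $\ell(m\infty')$ to get $f$ with pole divisor exactly $\lambda\infty'$, discards the finitely many fibres meeting the twisted non-free locus, and applies the degree-sum argument; in (b) it cites Proposition~\ref{thm:lambda} with Theorem~\ref{thm:B}(b) and defers attainment to Theorem~\ref{thm:ptp}; in (c) it reduces to $G_p$ and uses Lemma~\Aref{lem:ind}.

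Two small points. In (a) the residue field is automatically separable because its degree is prime to $p$, so no appeal to Lemma~\Aref{lem:pi} is needed (and Corollary~\ref{cor:insep} is specific to $\Z/p^n$). In (c) the paper handles a possibly disconnected total space, which your ``componentwise, or inducing'' leaves vague. It takes a field factor, of degree prime to $p$, of the degree-$[G:G_p]$ \'etale algebra $T/G_p$; such a factor exists by the same degree-sum argument.
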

\begin{proof}
(a) The field $F\supseteq k$ is infinite. Let $X=\tw{\tau}{C}$ and $\infty'\in X(F)$ the twist of $\infty$. The non-free locus $N\subset C$ is a finite $G$-stable set (fixed points of the nontrivial elements of a group acting faithfully on a curve), so $\tw{\tau}{N}$ is a finite set of closed points of $X$. The Weierstrass semigroup of $\infty'$ on $X$ equals $H(\infty)$ (Riemann--Roch dimensions are invariant under base change, and $(X,\infty')_{\bar F}\cong(C,\infty)_{\bar F}$), so there is $f\in F(X)$ with pole divisor exactly $\lambda\infty'$, $\lambda=\lambda(C,\infty)$. The fibres $f=c$, $c\in F$, are pairwise disjoint effective $F$-rational divisors of degree $\lambda$ not containing $\infty'$; all but finitely many avoid $\tw{\tau}{N}$. For such a fibre the degrees of its closed points, counted with multiplicity, sum to $\lambda$, which is prime to $p$; so one closed point has degree $e$ prime to $p$, $e\le\lambda$, and lies in the free locus. Its residue field $F'$ is separable over $F$ (its degree is prime to $p$), and Theorem~\Aref{thm:geom} with $Z=C^{\free}$ gives $\ed_k(\tau\otimes F')\le1$. The chain of inequalities is Proposition~\Aref{prop:vers}.

(b) The equality is Proposition~\ref{thm:lambda}, and the bound for every torsor is (a). Sharpness for $\tgen$ is Theorem~\ref{thm:ptp}. The last claim is Proposition~\ref{prop:trunc} applied to an arbitrary torsor: $\edd{p^{n-1}}_k(\tau_a)\le\trdeg_kk(a_0)\le1$.

(c) $T/G_p\to\Spec F$ is \'etale of degree $[G:G_p]$, so it has a field factor $F_1$ with $p\nmid[F_1:F]\le[G:G_p]$; over $F_1$ the torsor has a point of $T/G_p$, hence $\tau\otimes F_1\cong\Ind_{G_p}^G\tau_p$ for a $G_p$-torsor $\tau_p$ over $F_1$. By (a) there is $F_2/F_1$ with $p\nmid[F_2:F_1]\le\lambda_k(G_p)$ and $\ed_k(\tau_p\otimes F_2)\le1$, and $\ed_k(\tau\otimes F_2)=\ed_k(\tau_p\otimes F_2)$ by Lemma~\Aref{lem:ind}.
\end{proof}

\begin{remark}\label{rem:qRV}
(i) What is new relative to \cite{RV18} and to Proposition~\ref{prop:tail}(b),(c): the explicit bound, its uniformity in $\tau$, and the identification of the optimal bound obtainable from curves with a fixed point with the ramification invariant $l_n(p)$ (Proposition~\ref{thm:lambda}). The mechanism, a free point on a curve with a fixed point, is that of \cite[Lemma~9.1]{RS22} and of Proposition~\ref{prop:tail}(c); what replaces largeness is Riemann--Roch on the twisted curve.
(ii) A crude bound valid for every $(C,\infty)$ is $\lambda(C,\infty)\le$ the least integer prime to $p$ that is $\ge2g(C)$, since $H(\infty)\supseteq[2g(C),\infty)$; at $(2,3)$ this gives $15$ against $11$.
(iii) By Theorem~\ref{thm:ptp}, $d_1^{(p')}(\tgen)=l_n(p)$: the bound of Theorem~\ref{thm:qRV}(a) is sharp for $\tgen$, and by Proposition~\ref{thm:lambda} it cannot be improved from any pair $(C,\infty)$.
(iv) The uniform statement is a statement about $\sup_\tau$: by Proposition~\Aref{prop:vers} it is equivalent to the bound for $\tgen$ alone.
\end{remark}

\begin{theorem}[the first prime-to-$p$ jump degree]\label{thm:ptp}
Let $n\ge2$ and let $F'/K$ be a finite extension of degree prime to $p$ with $\ed_k(\tgen\otimes_KF')\le1$. Then
\[
[F':K]\ \ge\ l_n(p)=1+(p-1)\sum_{j=1}^{n-1}p^{2j-1}=\frac{p^{2n-1}+1}{p+1}.
\]
Consequently $d_1^{(p')}(\tgen)=l_n(p)$, and no smooth projective curve with a faithful $\Z/p^n$-action has a free closed point of degree $d<l_n(p)$, $p\nmid d$, on its twist.
\end{theorem}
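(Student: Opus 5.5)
The lower bound $[F':K]\ge l_n(p)$ is the real content. Once it is proved, $d_1^{(p')}(\tgen)=l_n(p)$ follows from the upper bound of Theorem~\ref{thm:B}(c). The statement about free points on faithful curves then follows exactly as Corollary~\ref{cor:curvepts} follows from Theorem~\ref{thm:N1}: pass to the residue field and apply Lemma~\Aref{lem:freepoint}.

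\emph{Reduction to a curve.} Let $d=[F':K]$ with $p\nmid d$. By Corollary~\ref{cor:insep} we may take $F'/K$ separable. Since $p\nmid d$ we have $F'\not\supseteq K_1$, so $L\otimes_KF'$ is a field. We are therefore in Setup~\ref{setup:witness} with the full group $G$. Proposition~\ref{prop:rigid} then gives:
\begin{itemize}
\item a smooth projective curve $C$ with faithful $G$-action and $g(C)\ge1$;
\item an effective divisor $D_0$ of degree $d$ with $g^*D_0\sim D_0$ and $\ell(D_0)\ge2$.
\end{itemize}

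\emph{An invariant divisor and a unipotent function.} $G$ is cyclic, so $H^2(G,k^\times)=0$ and Hilbert~90 applies. Hence the projective representation of $G$ on $\mathcal L(D_0)$ from the proof of Proposition~\ref{prop:rigidN} lifts to a linear one. This linear representation is unipotent, since $G$ is a $p$-group in characteristic $p$. A fixed line gives a $G$-invariant effective divisor $D'\sim D_0$, and $1\in\mathcal L(D')$ is then a fixed vector.

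$G$ also acts unipotently on $\mathcal L(D')/k$, which is nonzero because $\ell\ge2$. So there is a nonconstant $f\in\mathcal L(D')$ with $gf-f\in k$. Its pole divisor $(f)_\infty\le D'$ is $G$-invariant, since $gf=f+c$ has the same poles as $f$. It follows that $\sigma=g^{p^{n-1}}$ satisfies $\sigma f-f\in k$.

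\emph{Concluding at a fixed point.} Suppose $(f)_\infty$ has a $G$-fixed point $P$ at which the pole order $a$ of $f$ is prime to $p$. The argument of Proposition~\ref{thm:lambda} works at any $G$-fixed point, not only at a point whose complement carries all poles. Indeed, $\sigma f-f$ is constant, so $v_P(\sigma f-f)\ge0$. On the other hand Lemma~\ref{lem:ASlayer}(a) gives $v_P(\sigma f-f)=-a+l(C,P)$ if $\sigma f\ne f$. If $\sigma f=f$, the pole order at the totally ramified point $P$ would be divisible by $p$, which is excluded. Hence $a\ge l(C,P)\ge l_n(p)$ by the Schmid inequality $u_{j+1}\ge pu_j$, and so $d=\deg D'\ge a\ge l_n(p)$.

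\emph{The expected obstacle.} The hard step is producing such a point. A $G$-invariant divisor is a sum of $G$-orbits, each of $p$-power size. Since $\deg D'=d$ is prime to $p$, $D'$ contains a $G$-fixed point with multiplicity prime to $p$. But the pole divisor of the particular $f$ may be smaller than $D'$, of degree divisible by $p$, or carry only multiplicities divisible by $p$ at the fixed points.

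To handle this I would do two things.
\begin{itemize}
\item Choose $f$ more carefully. Filter $\mathcal L(D')$ by the unipotent flag and pick $f$ of maximal pole order at a fixed point $P$ of $D'$ with $p\nmid\operatorname{mult}_P D'$. Subtract $\wp$-type corrections $b^p-b$ as in the proof of Theorem~\ref{thm:B}(a) to make the pole order prime to $p$.
\item Use base-point-freeness of the witness system. By the remark after Proposition~\ref{prop:rigid}, $\Delta$ is supported on the single closed point $\eta$ of $C_L$, so the moving part of $|D_0|$ is base-point-free. Since the dominant fibres have degree $d$ prime to $p$, some fibre of a function in the invariant pencil must pass through a $G$-fixed point with multiplicity prime to $p$.
\end{itemize}
If this fails in general, I would first settle the case where $D'$ is supported on $G$-fixed points. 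Then I would reduce the general case to it by splitting off the free orbits, each of which contributes a multiple of $p$ to the degree.
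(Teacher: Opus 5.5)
There is a genuine gap. It sits exactly at the step you flag as the obstacle, and the function you actually construct can never reach your concluding paragraph.

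If $gf=f+c$ with $c\in k$, then $g^jf=f+jc$, so $g^pf=f$, and hence $\sigma f=f$ because $n\ge2$. Every $f$ from the first step of the unipotent flag is therefore $\sigma$-invariant. Its pole order at each $G$-fixed (hence totally ramified) point is divisible by $p$. So the case you treat is empty, and the case you call ``excluded'' is forced.

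Your concluding paragraph also has the inequality reversed: $v_P(\sigma f-f)\ge0$ gives $a\le l(C,P)$, and only $v_P=0$ for a nonzero constant would give $a=l(C,P)$. Requiring $\sigma f-f\in k$ is the wrong target. The fallback suggestions (maximal pole order along the flag, $\wp$-corrections, splitting off free orbits) are not developed into an argument. Your construction of an effective $G$-invariant $D'\sim D_0$ from a fixed line of the linearised unipotent action is fine. It is a slightly cleaner substitute for the paper's possibly non-effective invariant divisor $E$.

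Two ingredients are missing.

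First, you need that $|D_0|$ has \emph{no fixed part}. Saying the moving part is base-point-free is empty, since that holds for every linear system. The argument uses $e=[\kappa(\xi):K]\ge2$: if $e=1$, a free $K$-point would give $\ed_k(\tgen)\le1$, against Remark~\ref{rem:rigid-d1}. Since $\mathcal L(D_0\otimes L)=\mathcal L(D_0)\otimes_kL$, the fixed part $B$ satisfies $B\otimes L\le\Delta$. But $\Supp\Delta$ is the single closed point $\psi(\eta)$, of degree $e\ge2$ over $L$, while $\Supp(B\otimes L)$ consists of $L$-rational points. Hence $B=0$.

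Second, take a $G$-fixed point $P$ at which the coefficient $m$ of $D'$ is prime to $p$; your orbit count gives one. Since $P$ is not a base point of $|D'|=|D_0|$, there is $f\in\mathcal L(D')$ with $v_P(f)=-m$ exactly, and you need no condition on $gf-f$. Then $h:=\sigma f-f$ is nonzero and lies in $\mathcal L(D')$ because $D'$ is $\sigma$-invariant. By Lemma~\ref{lem:ASlayer}(a), $v_P(h)=-m+l(C,P)$. So $\divv h+D'$ is an effective divisor of degree $d$ whose coefficient at $P$ is $l(C,P)$. This gives $d\ge l(C,P)\ge l_n(p)$ by Proposition~\ref{thm:lambda}.

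The key point is that $\sigma f-f$ only has to lie in $\mathcal L(D')$, not in $k$. The degree of $D'$ then bounds its order of vanishing at $P$.
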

\begin{proof}
Since $p\nmid d:=[F':K]$, the extension $F'/K$ is separable and $F'\cap L=K$ (an intermediate field $K_i\subseteq F'$ has degree $p^i\mid d$), so $L\otimes_KF'$ is a field and Setup~\ref{setup:witness} applies. Let $C$, $P$, $\xi$, $e=[\kappa(\xi):K]$, $\Delta=\tfrac de\,\psi(\eta)$ and $D_0$ be as there; by Proposition~\ref{prop:rigid}(b),(c), $D_0$ is an effective divisor of degree $d$ on the faithful $G$-curve $C$, $\Delta\sim D_0\otimes_kL$, $\ell(D_0)\ge2$, and $h^*D_0\sim D_0$ for all $h\in G$.

Step 1: $e\ge2$. If $e=1$ then $\xi$ is a free $K$-point of $X=\tw{\tgen}{C}$, and $\ed_k(\tgen)\le1$ by Lemma~\Aref{lem:freepoint}, contradicting $\ed_k(\tgen)\ge2$ (Remark~\ref{rem:rigid-d1}).

Motivation for Step 2. Purity of $L$ over $k$ has so far only made the class of $D_0$ constant. Step 2 uses the other half of the hypothesis, that the witness is a single closed point of degree $e\ge2$ over $L$: a fixed point of $|D_0|$ would be an $L$-rational point of $\Supp\Delta$, which is impossible. Base-point-freeness is what lets Step 5 find $f$ with prescribed order at the fixed point $\infty$.

Step 2: $|D_0|$ has no base points. Let $B$ be the fixed part of $|D_0|$, the largest effective divisor with $\mathcal L(D_0-B)=\mathcal L(D_0)$. By Fact~\ref{fact:pic}(b), $\mathcal L(D_0\otimes L)=\mathcal L(D_0)\otimes L$, so the fixed part of $|D_0\otimes L|$ is $B\otimes L$. As $\Delta\in|D_0\otimes L|$ is effective, $\Delta\ge B\otimes L$. If $B\ne0$, the support of $\Delta$ would contain a point of $\Supp(B\otimes L)$, which is $L$-rational. But $\Supp\Delta=\{\psi(\eta)\}$ is a single closed point with residue field $\kappa(\xi)\otimes_KL$, of degree $e\ge2$ over $L$. So $B=0$.

Step 3: a $G$-invariant divisor in the class of $D_0$. Put $\mathcal P=k(C)^\times/k^\times$. The sequence $0\to\mathcal P\to\Div(C)\to\Pic(C)\to0$ is $G$-equivariant. From $0\to k^\times\to k(C)^\times\to\mathcal P\to0$ and $H^1(G,k(C)^\times)=0$ (Hilbert~90, $G$ acting faithfully on $k(C)$) we get an injection $H^1(G,\mathcal P)\hookrightarrow H^2(G,k^\times)=k^\times/(k^\times)^{p^n}=0$ ($G$ is cyclic and $k$ is algebraically closed). Hence $\Div(C)^G\to\Pic(C)^G$ is onto, and since $[D_0]\in\Pic(C)^G$ there is a $G$-invariant divisor $E\sim D_0$. It need not be effective.

Step 4: a fixed point. Write $E=\sum_xn_xx$. The orbits of $G$ have $p$-power size, so $d=\deg E\equiv\sum_{x\ G\text{-fixed}}n_x\pmod p$. As $p\nmid d$ there is a $G$-fixed point $\infty$ with $p\nmid m:=n_\infty$. In particular the faithful action fixes $\infty$, and $\infty$ is totally ramified in $C\to C/G$.

Step 5: the function $f$. By Step~2 and $E\sim D_0$, $\infty$ is not a base point of $|E|$, so there is $f\in\mathcal L(E)=\{f:\divv f+E\ge0\}$ whose divisor $\divv f+E$ has coefficient $0$ at $\infty$, i.e.\ $v_\infty(f)=-m$ with $p\nmid m$.

Step 6: the bound. Let $\sigma=g^{p^{n-1}}$ generate the last non-trivial ramification group at $\infty$ and let $l=l(C,\infty)$ be the last lower break, so that $C\to C/\langle\sigma\rangle$ satisfies the hypotheses of Lemma~\ref{lem:ASlayer} with lower break $l$. By Lemma~\ref{lem:ASlayer}(a), applied to $f$ (valid for either sign of $v_\infty(f)$ as $p\nmid v_\infty(f)$), $h:=\sigma f-f\ne0$ and $v_\infty(h)=-m+l$. Since $E$ is $\sigma$-invariant, $h\in\mathcal L(E)$. So $\divv h+E\ge0$ is an effective divisor of degree $d$ with coefficient $v_\infty(h)+m=l$ at $\infty$, whence $d\ge l(C,\infty)\ge l_n(p)$ by Proposition~\ref{thm:lambda}.

The consequence $d_1^{(p')}(\tgen)=l_n(p)$ follows from Theorem~\ref{thm:qRV}(b). A free closed point $\xi$ of degree $d$ on the twist of a faithful curve has residue field $F'$ with $\ed_k(\tgen\otimes F')\le1$ (Lemma~\Aref{lem:freepoint}) and $[F':K]=d$, so $d\ge l_n(p)$ if $p\nmid d$.
\end{proof}

\begin{corollary}\label{cor:ptpvalues}
$d_1^{(p')}(\tgen)=p^2-p+1$ for $n=2$ and every $p$ \textup{(}$=3$ for $p=2$, $7$ for $p=3$, $21$ for $p=5$\textup{)}; $d_1^{(p')}(\tgen)=11$ at $(2,3)$ and $61$ at $(3,3)$; $d_1^{(p')}(\tgen)=43$ at $(2,4)$. Thus the bound of Theorem~\ref{thm:qRV} is sharp for $\tgen$ in every case.
\end{corollary}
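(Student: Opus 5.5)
The statement combines Theorem~\ref{thm:ptp}, which gives $d_1^{(p')}(\tgen)=l_n(p)$ for all $p$ and $n\ge2$, with the closed formula \eqref{eq:ln} for $l_n(p)$. The plan is simply to evaluate $l_n(p)=1+(p-1)\sum_{j=1}^{n-1}p^{2j-1}$ at the listed pairs $(p,n)$.

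At $n=2$ the sum has the single term $(p-1)p$, so $l_2(p)=p^2-p+1$. This gives $3$ at $p=2$, $7$ at $p=3$ and $21$ at $p=5$. At $n=2$ the value is in fact verified independently of \cite{Th} in the proof of Theorem~\ref{thm:B}(a), via the ghost component computation of $Q(y_0)$.

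At $(2,3)$ we get $1+(2+8)=11$. This value is also checked directly by the reduction step with $b=y_0^2y_1$ (Appendix~\ref{app:genus}).

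At $(3,3)$ we get $1+2(3+27)=61$.

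At $(2,4)$ we get $1+(2+8+32)=43$.

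As a cross-check, the recursion $l_{m+1}=p^2l_m-p+1$ gives $4\cdot3-1=11$, then $4\cdot11-1=43$, and $9\cdot7-2=61$, consistent with the list.

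The sharpness clause follows because Theorem~\ref{thm:qRV}(a) gives the upper bound $d_1^{(p')}(\tgen)\le\lambda_k(\Z/p^n)=l_n(p)$, while Theorem~\ref{thm:ptp} gives the matching lower bound. So the bound is attained in each listed case.

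The only point requiring care is the dependence on the general upper-break formula of Lemma~\ref{lem:lnformula}(a) in the cases $(3,3)$ and $(2,4)$. This is needed both for the value of $l_n(p)$ and for $a\in H(\infty)$ in Theorem~\ref{thm:B}(a). It is quoted from \cite{Sch37,EK25}, so nothing beyond arithmetic remains. There is no real obstacle; the corollary is a substitution.
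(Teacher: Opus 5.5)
Your proposal is correct and takes the same route as the paper, which states this corollary without a separate proof: it is Theorem~\ref{thm:ptp} (with the upper bound from Theorem~\ref{thm:qRV}(b)) evaluated through the closed formula \eqref{eq:ln}. Your arithmetic is right, and so is your caveat that for $(3,3)$ and $(2,4)$ the numerical values rest on the quoted break formula.
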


\begin{remark}\label{rem:ptpmech}
The mechanism is different from the pencil argument of Section~\ref{ss:rigid}: for a degree $d$ prime to $p$ the base-point-freeness of the witness system and the invariance of its class (a purely cohomological fact for cyclic $G$ over an algebraically closed field) force a fixed point of the invariant divisor, where the last Artin--Schreier layer $\sigma$ produces a function in $\mathcal L(E)$ with a controlled pole. This is also why the argument needs $p\nmid d$: for $p\mid d$ the divisor $E$ need not have a $G$-fixed point of multiplicity prime to $p$, and Theorem~\ref{thm:N1} (the PGL$_\ell$ argument) takes over.
\end{remark}

Over a large field the same curve does more: it has dense rational points, and the Reichstein--Scavia lemma follows on it.

\begin{corollary}[large fields]\label{cor:large}
Let $(C,\infty)$ be as in Definition~\ref{def:lambda} (e.g.\ $G=\Z/p^n$ and $(C,\infty)=(\Ubar_n,\infty)$), let $F\supseteq k$ be a large field (Fact~\Aref{fact:rs}; e.g.\ $F_0((t))$, a henselian nontrivially valued field, a $p$-closed field), $\tau$ a $G$-torsor over $F$, $X=\tw{\tau}{C}$, $\infty'\in X(F)$ the twist of $\infty$, and $f\in F(X)$ with pole divisor $\lambda\infty'$, $\lambda=\lambda(C,\infty)$, as in the proof of Theorem~\ref{thm:qRV}(a).
\begin{enumerate}[label=\textup{(\alph*)}]
\item $X(F)$ is dense in $X$. All but finitely many fibres $f=c$, $c\in F$, are free $F$-rational divisors of degree $\lambda$ of the pencil $|\lambda\infty'|$, and the fibres through the (infinitely many) free $F$-points of $X$ contain an $F$-point.
\item $\ed_k(\tau)\le1$, i.e.\ $d_1(\tau)=1$: the profile of every $G$-torsor over a large field is $(\le1,\dots,\le1,0)$, with the terminal $0$ at $d_0(\tau)=|\im\varphi_\tau|$.
\item For $G=\Z/p^n$: $\tau\cong\tau_{[t]}$ for some $t\in F$, i.e.\ every class in $\W_n(F)/\wp$ is Teichm\"uller, with no finite extension of $F$ needed.
\end{enumerate}
Part \textup{(b)} is \cite[Lemma~9.1]{RS22} (Fact~\ref{fact:rsd}) for the groups admitting a pair $(C,\infty)$; its proof below is theirs, specialised to the curve $C$, and uses only the density of $X(F)$, not the pencil.
\end{corollary}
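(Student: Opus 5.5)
The plan follows the order of the three parts.

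For (a), I would first prove density of $X(F)$. The point $\infty'\in X(F)$ is a smooth $F$-point of the geometrically integral curve $X$: it is the twist of the smooth $G$-fixed $k$-point $\infty$, and $X_{\bar F}\cong C_{\bar F}$ by Lemma~\ref{lem:twistL}. Since $F$ is large, a smooth $F$-point on a geometrically integral variety forces $X(F)$ to be Zariski dense (Fact~\Aref{fact:rs}), so $X(F)$ is infinite. The pencil statement then copies the proof of Theorem~\ref{thm:qRV}(a). Since $\tw{\tau}{N}$ is a finite set of closed points, only finitely many fibres $f=c$ meet it, and each fibre is an $F$-rational divisor of degree $\lambda$ in $|\lambda\infty'|$. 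For an $F$-point $P\neq\infty'$ outside $\tw{\tau}{N}$, the fibre $f=f(P)$ contains $P$, with $f(P)\in F$. These are infinitely many by density, since $\tw{\tau}{N}\cup\{\infty'\}$ is finite.

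For (b), I would take a free $F$-point $P$ of $X$, i.e.\ an $F$-point of the twist of the dense open $C^{\free}$; such a point exists by density. Lemma~\Aref{lem:freepoint} then gives $\ed_k(\tau)\le\dim C=1$, so $\edd1_k(\tau)\le1$ and $d_1(\tau)=1$. The profile is non-increasing (\Aref{prop:shape}), so every slot is $\le1$. The terminal value $0$ occurs at $d_0(\tau)$, which I would quote from \Aref{prop:d0}: it equals the degree of the smallest splitting field, namely $|\im\varphi_\tau|$. This uses only density and not the pencil, as the statement remarks.

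For (c), I would run the argument of Proposition~\ref{prop:tail}(c) directly over $F$ instead of over $F^{(p)}$. Take $(C,\infty)=(\Ubar_n,\infty)$ with $\Ubar_n^{\free}=U_n$. By density there is an $F$-point of $\tw{\tau}{U_n}$. By Lemma~\ref{lem:versal}(a) this point is an equivariant $y=x+h$ with $h\in\W_n(F)$ and $\wp(y)=[t]$, where $t\in F$ is the $\A^1$-coordinate of the image. Hence $\wp(x)\equiv[t]\bmod\wp\W_n(F)$, and since $\wp(x)$ represents the class of $\tau$, we get $\tau\cong\tau_{[t]}$.

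The main obstacle is the density step in (a). One has to check that $\infty'$ is a smooth $F$-rational point of a geometrically integral $F$-curve, and that "large" in the sense of Fact~\Aref{fact:rs} applies to it. Smoothness descends from $\infty$, because twisting is étale-locally trivial. Everything after that step is bookkeeping already carried out in Theorem~\ref{thm:qRV} and Proposition~\ref{prop:tail}.
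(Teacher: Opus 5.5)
Your proposal is correct and follows the paper's proof essentially step for step: density of $X(F)$ from the smooth $F$-point $\infty'$ and largeness, the fibre bookkeeping of Theorem~\ref{thm:qRV}(a) for the pencil, a free $F$-point plus Lemma~\Aref{lem:freepoint} for (b), and the argument of Proposition~\ref{prop:tail}(c) run directly over $F$ instead of $F^{(p)}$ for (c). The only adjustment is a citation: Lemma~\ref{lem:twistL} is stated for $\tgen$, so for a general $\tau$ you should get $X_{\bar F}\cong C_{\bar F}$ from the same computation over the total space of $\tau$, or simply from the fact that $\tau$ splits over $F^{\mathrm{sep}}$.
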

\begin{proof}
(a) $X$ is a smooth projective geometrically integral $F$-curve with the smooth $F$-point $\infty'$, so $X(F)$ is dense in $X$ by the definition of a large field. The non-free locus $\tw{\tau}{N}$ is a finite set of closed points, so only finitely many fibres of $f$ meet it; every other fibre $f=c$ is an effective $F$-rational divisor of degree $\lambda$ supported in the free locus. A free $F$-point $P$ of $X$ (there are infinitely many, by density and finiteness of $\tw{\tau}{N}\cup\{\infty'\}$) has $f(P)=c\in F$ since $P\ne\infty'$, and lies on the fibre $f=c$. (b) A free $F$-point is a $G$-equivariant morphism $T\to C^{\free}$ (Fact~\Aref{fact:rs}(a)), and Lemma~\Aref{lem:freepoint} with $Z=C^{\free}$ gives $\ed_k(\tau)\le1$; hence $d_1(\tau)=1$. The terminal value and its position are Proposition~\Aref{prop:shape}(a),(e) and Proposition~\Aref{prop:d0}(a) ($k$ is algebraically closed). (c) As in the proof of Proposition~\ref{prop:tail}(c): a free $F$-point of $\tw{\tau}{\Ubar_n}$ lies in $\tw{\tau}{U_n}$ and is $y=x+h$ with $\wp(x)=a$ the class of $\tau$, $h\in\W_n(F)$ and $\wp(y)=[t]$, $t\in F$; so $a\equiv[t]$.
\end{proof}

\begin{remark}\label{rem:large}
(i) The proof of \cite[Lemma~9.1]{RS22} is the same mechanism: over a large field, weak versality (the fixed point $\infty$, i.e.\ the $F$-point $\infty'$) upgrades to versality (Fact~\Aref{fact:rs}(b)), which is the density statement in (a). Theorem~\ref{thm:qRV} is what survives over a non-large field: the pencil $|\lambda\infty'|$ still consists of free $F$-rational divisors, but they need not contain $F$-points, and the profile records how they split, into closed points of degree $\le\lambda$ prime to $p$ at worst.
(ii) The instance $(p,n)=(2,2)$: over $K_2=k(s_0,s_1)$ the twisted curve $E'\colon w^2+w=u(u+1)(u+s_0)+s_1$ has $E'(K_2)=\{\infty'\}$, and the pencil $|3\infty'|=\langle1,u,w\rangle$ ($\lambda=l_2(2)=3$) has as free members the line sections, each a single cubic point; so $d_1^{(p')}(\tau_2)=3=l_2(2)$ and $d_1(\tau_2)=2$ (Theorem~\ref{thm:n2}). Over the large field $F=K_2((t))$ the same curve acquires the points near $\infty'$: with the local parameter $u/w$ at $\infty'$ put $u/w=t$, i.e.\ $w=u/t$, and $u=t^{-2}v$; the equation becomes
\[
v^3+v^2+(1+s_0)t^2v^2+t^3v+s_0t^4v+s_1t^6=0,
\]
which has the simple root $v\equiv1\bmod t$ (the derivative is $v^2+t^3+s_0t^4\equiv1$), so Hensel's lemma gives a unique $v\in K_2[[t]]$ with $v(0)=1$, in fact $v\in\F_2[s_0,s_1][[t]]$:
\begin{align*}
u&=t^{-2}+(1+s_0)+t+s_0t^2+(1+s_0)t^3+(1+s_0+s_0^2+s_1)t^4+(1+s_0^2)t^5+O(t^6),\\
w&=u/t=t^{-3}+(1+s_0)t^{-1}+1+s_0t+(1+s_0)t^2+O(t^3)
\end{align*}
(the coefficients are obtained by the Hensel recursion, and one checks directly that substituting $v=t^2u$ into the cubic gives $0$ modulo $t^8$; the identities hold in $\F_2[s_0,s_1]$); note that $u$ has no $t^{-1}$ term ($v\equiv1\bmod t^2$). This is a free $F$-point of $E'^\circ=\tw{\tau_2}{U_2}$, and as in Theorem~\ref{thm:n2}(b) (the identity $s+\wp h=[(s+\wp h)_0]$ for $h=(u,w)$ with $(s+\wp h)_1=0$ holds over every extension of $K_2$) the generic $\Z/4$-torsor is Teichm\"uller over $K_2((t))$:
\[
(s_0,s_1)\equiv[x],\qquad x=s_0+u^2+u=t^{-4}+t^{-2}+s_0^2+t+(1+s_0)t^2+(1+s_0)t^3+O(t^4).
\]
So $d_1(\tau_2\otimes K_2((t)))=1$, against $d_1(\tau_2)=2$ and $d_1^{(p')}(\tau_2)=3$ over $K_2$; and $d_0(\tau_2\otimes K_2((t)))=4$ still, by Proposition~\Aref{prop:d0}(b), since $K_2$ is algebraically closed in $K_2((t))$.
(iii) In general, over a large field every class is Teichm\"uller (part (c)), which is the ``local form'' of Proposition~\ref{prop:tail}(c) without the finite prime-to-$p$ extension; in profile language, $d_1=1$. The profile is the invariant of what happens before the field is large.
\end{remark}

The degree-sum argument of Theorem~\ref{thm:B}(c) produces a free point of degree prime to $p$ but does not control its exact degree. When the exact degree matters we use Hilbert irreducibility over $K$.

\begin{lemma}[Hilbert irreducibility on a twisted curve]\label{lem:hilbertfree}
Let $C$ be a smooth projective curve over $k$ with a faithful action of $G=\Z/p^n$ and a $G$-fixed point $\infty$, and let $d\ge1$. Suppose some $f_0\in\mathcal L(d\infty)$ has pole order exactly $d$ and $df_0\ne0$. Then $\tw{\tgen}{C}$ has a free closed point of degree $d$; that is, $d\in D(C)$.
\end{lemma}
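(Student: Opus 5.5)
We work on the twist $X=\tw{\tgen}{C}$ with the $K$-point $\infty'$, as in the proof of Theorem~\ref{thm:B}(c). Riemann--Roch dimensions are invariant under the base change to $L$, which identifies $(X_L,\infty')$ with $(C_L,\infty)$ (Lemma~\ref{lem:twistL}, Fact~\ref{fact:pic}(b)). Hence $\mathcal L(d\infty')$ is a $K$-form of $\mathcal L(d\infty)$, and the functions of exact pole order $d$ form a nonempty Zariski-open subset of the affine space $\mathcal L(d\infty')$, defined over $K$. Since $K$ is infinite, we pick $f\in K(X)$ with pole divisor exactly $d\infty'$. We choose it generically, so that its image in $k(C)\otimes L$ is a general element of $\mathcal L(d\infty)\otimes L$; this will let us use the hypothesis $df_0\ne0$ (a nonzero differential of some $f_0$).

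The morphism $f\colon X\to\PP^1_K$ then has degree $d$, and we claim it is generically étale. Over $L$ it is $C_L\to\PP^1_L$ given by a general member of the linear system. A nonzero differential is an open condition on $\mathcal L(d\infty)$, and $f_0$ lies in that open set, so a generic $f$ has $df\ne0$. For a curve over a field of characteristic $p$, $df\ne0$ means that $f$ is a separating element, i.e.\ $K(X)/K(f)$ is separable. Hilbert irreducibility applies because $K=k(s_0,\dots,s_{n-1})$ is finitely generated and transcendental over $k$, hence Hilbertian. So for $c$ in a Hilbert subset of $K$ the fibre $f^{-1}(c)$ is a single closed point of degree $d$: the generic fibre is $\Spec K(X)$, a field of degree $d$ over $K(f)$, and specialisation preserves irreducibility. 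Separability is what the Hilbertian property needs.

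It remains to avoid non-free points. The non-free locus of $X$ is the twist of the finite non-free set $N\subset C$, hence finitely many closed points, together with $\infty'$, which never lies on a finite fibre. These points meet only finitely many fibres $f=c$, and a Hilbert set minus finitely many elements is still nonempty. We obtain a free closed point of degree exactly $d$, so $d\in D(C)$.

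The main obstacle is the separability step: we must check that genericity over $K$ really transfers the condition $df_0\ne0$, which is stated over $k$, to the chosen $K$-rational $f$. I would handle it by viewing $\{f: df=0\}$ as a $K$-linear subspace (the $K$-form of the kernel of $d$ on $\mathcal L(d\infty)\otimes L$, which is $G$-stable and hence descends). That subspace is proper because of $f_0$, and $f$ must avoid both it and the proper subspace $\mathcal L((d-1)\infty')$. Since $K$ is infinite, a vector avoiding both exists.
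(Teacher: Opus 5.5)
Your proposal is correct and follows the paper's proof. You choose $f\in\mathcal L(d\infty')$ outside the two proper $K$-subspaces $\mathcal L((d-1)\infty')$ and $\ker d$ (proper because of $f_0$ after base change to $L$), deduce that $K(X)/K(f)$ is separable of degree $d$, apply Hilbert irreducibility over the Hilbertian field $K$, and discard the finitely many fibres meeting the non-free locus.

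The differences are only in detail. The paper defines $\ker d$ directly over $K$, as the kernel of $d\colon V\to\Omega_{K(X)/K}$, and uses that kernels commute with flat base change, where you use Galois descent. The paper also makes the specialisation step explicit with a primitive element $\theta$ integral over $K[f]$: its order agrees with the integral closure away from finitely many $c$, so those fibres are $\operatorname{Spec} K[Y]/(P(c,Y))$. You only sketch that step.
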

\begin{proof}
Let $X=\tw{\tgen}{C}$ and $\infty'\in X(K)$ the twist of $\infty$. Let $V=\mathcal L_X(d\infty')$, $V_1=\mathcal L_X((d-1)\infty')$ and $V_2=\{f\in V:df=0\}$, the kernel of the $K$-linear map $d\colon V\to\Omega_{K(X)/K}$. Under the base change to $L$ and the isomorphism $\psi$ of Lemma~\ref{lem:twistL}, these become $\mathcal L(d\infty)\otimes_kL$, $\mathcal L((d-1)\infty)\otimes_kL$ and the kernel of $d$ on $\mathcal L(d\infty)\otimes_kL$ (Fact~\ref{fact:pic}(b); kernels commute with flat base change). As $f_0\otimes1$ lies in neither of the last two, $V_1$ and $V_2$ are proper subspaces of $V$, and since $K$ is infinite there is $f\in V\smallsetminus(V_1\cup V_2)$. Then $f$ has pole divisor exactly $d\infty'$, so $[K(X):K(f)]=d$ \cite[Thm.~1.4.11]{St}. As $X$ is geometrically integral, $K(X)/K$ is separably generated and $\Omega_{K(X)/K}$ is one-dimensional, spanned by $df\ne0$; hence $\Omega_{K(X)/K(f)}=0$ and $K(X)/K(f)$ is separable \cite[II.8.6A]{Hart77}. Choose $\theta\in K(X)$ integral over $K[f]$ with $K(X)=K(f)(\theta)$, with minimal polynomial $P(f,Y)\in K[f][Y]$. For all $c\in K$ outside a finite set, the fibre of $f\colon X\to\PP^1_K$ over $c$ is $\Spec K[Y]/(P(c,Y))$ (the order $K[f][\theta]$ agrees with the integral closure of $K[f]$ in $K(X)$ away from finitely many points) and avoids the finite non-free locus of $X$ (the fibres are pairwise disjoint). The field $K=k(s_0,\dots,s_{n-1})$ is Hilbertian \cite[Thm.~13.4.2]{FJ}, so $P(c,Y)$ is irreducible over $K$ for all $c$ in a separable Hilbert subset of $K$, which is nonempty and remains so after removing finitely many elements \cite[\S12.1]{FJ}. For such $c$ the fibre is a single free closed point of degree $d$.
\end{proof}

A function $f_0$ whose pole order is prime to $p$ automatically has $df_0\ne0$: in the completion $k((\pi))$ at $\infty$ its formal derivative has valuation $v(f_0)-1$. So Lemma~\ref{lem:hilbertfree} gives $\{d\in H(\infty):p\nmid d\}\smallsetminus\{0\}\subseteq D(C)$ for every pair $(C,\infty)$; for $d$ divisible by $p$ the hypothesis $df_0\ne0$ has to be checked.


\section{The case \texorpdfstring{$n=2$}{n=2}: the complete profile}\label{sec:n2}

Let $n=2$, $G_2=\Z/p^2$, $K_2=k(s_0,s_1)$, $L_2=k(z_0,z_1)$, and $\tau_2$ the generic torsor.

\begin{proposition}[rational points of twisted $\Z/p^2$-curves]\label{prop:G1}
Let $C$ be a smooth projective curve with faithful $G_2$-action. Then $\tw{\tau_2}{C}$ has no free $K_2$-point; in fact $\tw{\tau_2}{C}(K_2)$ is in bijection with $C^{G_2}(k)$. Consequently $\ed_k(\Z/p^2)=2$ and $d_1(\tau_2)\ge2$.
\end{proposition}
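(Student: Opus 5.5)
The plan is to work directly with the description of rational points of twists in Fact~\Aref{fact:rs}(a). A $K_2$-point of $X=\tw{\tau_2}{C}$ is the same as a $G_2$-equivariant morphism $x\colon\Spec L_2\to C$. Since $\Spec L_2$ is a point, $x$ is determined by a scheme-theoretic point $\xi\in C$ together with a $k$-embedding $\kappa(\xi)\hookrightarrow L_2$. By Lemma~\ref{lem:imageB}, $\xi$ is $G_2$-stable. I will split into two cases according to whether $\xi$ is a closed point or the generic point of $C$.

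\emph{Closed case.} If $\xi$ is closed, then $\kappa(\xi)=k$ because $k$ is algebraically closed. A $G_2$-stable closed point is therefore $G_2$-fixed. The map $x$ is then the constant map to $\xi$, and it is automatically equivariant since $G_2$ fixes $\xi$. Conversely, every $c\in C^{G_2}(k)$ gives such a constant equivariant map. So the non-dominant $K_2$-points correspond bijectively to $C^{G_2}(k)$, and none of them is free, since their image is a fixed point.

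\emph{Dominant case (the main step).} If $\xi$ is the generic point, we obtain a $k$-embedding $k(C)\hookrightarrow L_2=k(z_0,z_1)$. Hence $C$ is an irreducible curve dominated by $\A^2_k$. By Lemma~\ref{lem:curvesB}(iii) (L\"uroth), $C\cong\PP^1$. Then the faithful action embeds $\Z/p^2$ into $\Aut(\PP^1)=\PGL_2(k)$, which is impossible by Lemma~\ref{lem:curvesB}(i). So the dominant case never occurs, and this gives the bijection $\tw{\tau_2}{C}(K_2)\leftrightarrow C^{G_2}(k)$ together with the absence of free $K_2$-points. Alternatively, the dominant case can be excluded by Proposition~\ref{prop:rigid} with $d=1$, exactly as in Remark~\ref{rem:rigid-d1}: part (a) gives $g(C)\ge1$, so $\ell(D_0)\le1$, which contradicts part (c). I prefer the L\"uroth argument because it is self-contained. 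The only point to check carefully is that equivariance plus connectedness of $\Spec L_2$ really forces $\xi$ to be a single $G_2$-stable point; this is exactly Lemma~\ref{lem:imageB}.

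\emph{Consequences.} Lemma~\ref{lem:versal}(c) gives $\ed_k(\Z/p^2)=\ed_k(\tau_2)\le2$. Suppose $\ed_k(\tau_2)\le1$. Then $\ed_k(\tau_2)=1$, because $L_2$ is a field, so $\tau_2$ is not split. Corollary~\ref{cor:jumpsB} with $F'=K_2$ (here $T$ is connected) then produces a smooth projective curve with a faithful $G_2$-action whose twist has a free $K_2$-point. This contradicts what we have just shown. Hence $\ed_k(\Z/p^2)=2$, i.e.\ $\edd1_k(\tau_2)=2>1$, and therefore $d_1(\tau_2)\ge2$.
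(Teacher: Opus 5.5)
Your proof is correct and follows the paper's argument essentially verbatim. Lemma~\ref{lem:imageB} splits $K_2$-points into constant maps to $G_2$-fixed points and dominant maps; the dominant case is excluded by L\"uroth together with the absence of elements of order $p^2$ in $\PGL_2(k)$ (Lemma~\ref{lem:curvesB}(iii),(i)); and the consequences follow from Corollary~\ref{cor:jumpsB} with $F'=K_2$ (your alternative via Proposition~\ref{prop:rigid} at $d=1$ is also valid, and is Remark~\ref{rem:rigid-d1}).
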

\begin{proof}
A $K_2$-point is an equivariant $f\colon\Spec L_2\to C$; by Lemma~\ref{lem:imageB} the closure $Y$ of its image is a $G_2$-stable point (then a fixed $k$-point, and $f$ is constant) or all of $C$. In the latter case $k(C)\subseteq L_2=k(z_0,z_1)$, so $C\cong\PP^1$ by Lemma~\ref{lem:curvesB}(iii) and $G_2\hookrightarrow\PGL_2(k)$, contradicting Lemma~\ref{lem:curvesB}(i). So all $K_2$-points are the constant maps to fixed points, none free. Then $d_1(\tau_2)\ge2$ by Corollary~\ref{cor:jumpsB} (for $d=1$, $L_2\otimes K_2=L_2$ is a field), and $\ed_k(\Z/p^2)=\ed_k(\tau_2)\ge2$ by Lemma~\ref{lem:versal}; $\le2$ is Proposition~\ref{prop:trunc} with $j=0$.
\end{proof}

\begin{theorem}[$n=2$, $p=2$: the complete profile]\label{thm:n2}
Let $p=2$.
\begin{enumerate}[label=\textup{(\alph*)}]
\item The curve $\Ubar_2$ is the supersingular elliptic curve $E_0\colon y_1^2+y_1=y_0^3+y_0^2$ with $G_2$ acting by $(y_0,y_1)\mapsto(y_0+1,y_1+y_0)$, fixing only $\infty$. Its twist is the elliptic curve
\[
E':=\tw{\tau_2}{E_0}\colon\quad w^2+w=u(u+1)(u+s_0)+s_1\qquad\text{over }K_2=k(s_0,s_1),
\]
whose origin is the point $\infty'$ (the twist of $\infty$; we write $\infty'_2$ when the curve $\tw{\tgen}{\Ubar_3}$ is also present). Moreover $E'(K_2)=\{\infty'\}$: the Mordell--Weil group of the quartic twist of $E_0$ by the generic $\Z/4$-torsor over $k(s_0,s_1)$ is trivial.
\item Every fibre of $u\colon E'\to\PP^1_{K_2}$ over a point $c\in K_2$ is a free closed point of degree $2$, with residue field $K_2(w)$, $w^2+w=c(c+1)(c+s_0)+s_1$, and over it $(s_0,s_1)\equiv[s_0+c^2+c]$.
\item Let $u$ be a root of $P(u):=u^3+(1+s_0)u^2+s_0u+s_1=u(u+1)(u+s_0)+s_1$. Then $F'=K_2(u)$ is a separable cubic extension of $K_2$ and $(s_0,s_1)\equiv[s_0+u^2+u]\bmod\wp\W_2(F')$, so $\ed_k(\tau_2\otimes F')=1$. More precisely, the cubic closed points of $E'$ are exactly the sections by the lines $w=\alpha u+\beta$, $\alpha,\beta\in K_2$; each is a single free point of degree $3$, and over its residue field $(s_0,s_1)$ is Teichm\"uller.
\item $D(U_2)=\Z_{\ge2}$, and the profile of $\tau_2$ is
\[
\edd1=2,\quad\edd2=\edd3=1,\quad\edd d=0\ (d\ge4);\qquad d_2=1,\ d_1=2,\ d_1^{(p')}=3,\ d_0=4 .
\]
\end{enumerate}
\end{theorem}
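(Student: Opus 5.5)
The plan is to compute everything on the explicit model of $\Ubar_2$ supplied by the Witt formulas \eqref{eq:add2}--\eqref{eq:wp2} (truncated to $n=2$), and then to read off the profile from the earlier general results.

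\emph{Part (a).} At $p=2$, $n=2$ we have $(\wp y)_0=y_0^2+y_0$ and $(\wp y)_1=y_1^2+y_1+y_0^3+y_0^2$. So $U_2=\{\wp(y)\in[\A^1]\}$ is the affine curve $y_1^2+y_1=y_0^3+y_0^2$. This is a Weierstrass cubic with $a_1=a_3$-form of $j$-invariant $0$, hence the supersingular curve $E_0$. Its unique point at infinity is $\infty$, and Proposition~\ref{prop:tail}(c) already tells us that $\infty$ is the only $G_2$-fixed point. Witt addition of $1=(1,0)$ gives $(y_0+1,\,y_1+y_0)$ by $(a+b)_1=a_1+b_1+a_0b_0$.

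For the twist, Lemma~\ref{lem:versal}(a) identifies points of $\tw{\tau_2}{U_2}$ with $h=y-z\in\W_2$ such that $(s+\wp h)_1=0$. Expanding with \eqref{eq:add2} gives
\[
(s+\wp h)_1=s_1+h_1^2+h_1+h_0^3+h_0^2+s_0(h_0^2+h_0).
\]
Setting $u=h_0$, $w=h_1$ yields the stated equation for $E'$, and the smooth completion adds the single $K_2$-point $\infty'$. By Proposition~\ref{prop:G1}, $E'(K_2)$ is in bijection with $E_0^{G_2}(k)=\{\infty\}$. Hence $E'(K_2)=\{\infty'\}$, which is the Mordell--Weil statement.

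\emph{Part (b).} On the fibre $u=c$ the coordinate $w$ satisfies $w^2+w=c(c+1)(c+s_0)+s_1$. The right-hand side is not in $\wp K_2$: it has a simple pole at $s_1=\infty$ over $k(s_0)$, because $s_1$ is transcendental over $k(s_0,c)$ unless $c$ involves $s_1$, and in that case one argues with the leading $s_1$-degree, which is odd. So the fibre is a single point of degree $2$. It is free because it lies on $U_2$-twist, where $G_2$ acts freely. Over its residue field, $h=(c,w)$ gives $s+\wp h=((s+\wp h)_0,0)=[s_0+c^2+c]$, and $s\equiv s+\wp h$.

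\emph{Part (c).} A cubic closed point $D$ of $E'$ satisfies $[D-3\infty']\in\Pic^0(E')=E'(K_2)=0$. So $D\in|3\infty'|=\langle1,u,w\rangle$, and $D$ is an affine member of this pencil. A vertical section $u=c$ contains $\infty'$, so the affine members are exactly the sections $w=\alpha u+\beta$, each of pole order exactly $3$. Such a section is an effective $K_2$-divisor of degree $3$ avoiding $\infty'$. A point of degree $1$ in it would be a second $K_2$-point, which is impossible, and degrees $2+1$ are excluded for the same reason. Therefore it is a single cubic point, free, and Teichmüller over its residue field exactly as in (b). The case $\alpha=\beta=0$ is $P(u)=0$. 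Separability of this point is automatic since $3$ is odd.

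\emph{Part (d).} We have $1\notin D(U_2)$ by Proposition~\ref{prop:G1}, while $2\in D(U_2)$ by (b) and $3\in D(U_2)$ by (c). For $d\ge4$ we apply Lemma~\ref{lem:hilbertfree} with $(C,\infty)=(E_0,\infty)$; the Weierstrass semigroup is $\langle2,3\rangle$.
\begin{itemize}
\item For $d$ odd, take $f_0=wu^{(d-3)/2}$; its pole order is prime to $2$, so $df_0\ne0$ automatically.
\item For $d$ even, take $f_0=u^{d/2}+u$; then $df_0=\bigl(\tfrac d2u^{d/2-1}+1\bigr)du\ne0$, since $du\ne0$ because $u$ has odd-order behaviour in the local parameter.
\end{itemize}

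The profile then follows:
\begin{itemize}
\item $\edd1=2$ by Proposition~\ref{prop:G1}.
\item $\edd2=\edd3=1$ by (b), (c) and Lemma~\Aref{lem:freepoint}, together with the lower bound $\edd d\ge1$ for $d<4$ from Proposition~\Aref{prop:d0}.
\item $\edd d=0$ for $d\ge4$ since $d_0=4$.
\item $d_1^{(p')}=3$ by (c) together with Theorem~\ref{thm:ptp}, using $l_2(2)=3$.
\end{itemize}

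The main obstacle is the Mordell--Weil statement. It is handled entirely by Proposition~\ref{prop:G1} and is then reused to force irreducibility of the line sections in (c).
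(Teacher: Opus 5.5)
Your route is the paper's: the Witt formulas give the model, Proposition~\ref{prop:G1} gives $E'(K_2)=\{\infty'\}$, Riemann--Roch on $|3\infty'|=\langle1,u,w\rangle$ classifies the cubic points, and Lemma~\ref{lem:hilbertfree} handles $d\ge4$.

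The one step that fails as written is in (b). You argue that $c(c+1)(c+s_0)+s_1\notin\wp K_2$ because its pole order at $s_1=\infty$ (over $k(s_0)$) is odd. That holds if $c$ is regular there or has a pole of odd order $m$. It fails if $c$ has a pole of even order $m\ge2$: then the right-hand side has pole order $3m$, which is even. For example, $c=s_1^2$ gives $s_1^6+(1+s_0)s_1^4+s_0s_1^2+s_1$, so your claim that ``the leading $s_1$-degree is odd'' is false. An even pole order proves nothing here. Nor can you simply reduce leading terms as in Lemma~\ref{lem:ASlayer}(b), because the residue field $k(s_0)$ at this place is not perfect.

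The repair is immediate, and it is what the paper does. A root $w\in K_2$ would give a $K_2$-point $(c,w)$ of $E'^\circ$, contradicting $E'(K_2)=\{\infty'\}$ from (a). So the quadratic is irreducible and the fibre is a single free closed point of degree $2$. Your (c) does not depend on (b), so it is unaffected.

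A few smaller points:
\begin{itemize}
\item In (d), Lemma~\ref{lem:hilbertfree} is applied on $(E_0,\infty)$, so the test functions should be $y_1y_0^{(d-3)/2}$ and $y_0^{d/2}+y_0$, not expressions in the twisted coordinates $u,w$.
\item Justify $dy_0\ne0$ by noting that $y_0$ is a separating element of $k(E_0)$, since $k(E_0)/k(y_0)$ is the Artin--Schreier extension $y_1^2+y_1=y_0^3+y_0^2$. This is cleaner than the ``odd-order behaviour'' remark.
\item Deducing the irreducibility of $P$ from the line section $w=0$ is a valid alternative to the paper's Gauss-lemma argument.
\item Invoking Theorem~\ref{thm:ptp} for $d_1^{(p')}\ge3$ is correct but heavier than needed. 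The only prime-to-$2$ degree $\le2$ is $1$, and $\edd{1}_k(\tau_2)=2$.
\end{itemize}
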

\begin{proof}
(a) $U_2=\{y_0^2+y_0=t,\ y_1^2+y_1+y_0^3+y_0^2=0\}$ by \eqref{eq:wp2} (the $\W_2$ part), and $t$ is a function of $y_0$, so $U_2\cong E_0^\circ:=\{y_1^2+y_1=y_0^3+y_0^2\}\subset\A^2$, a smooth affine curve whose projective closure $E_0$ is an elliptic curve with one point $\infty$ at infinity ($a_1=0$, $a_3=1$, $\Delta=1$; supersingular since $a_1=0$ in characteristic $2$, $j=a_1^{12}/\Delta=0$). By \eqref{eq:add2}, $y+1=(y_0+1,y_1+y_0)$, an automorphism of $E_0$ without fixed points on $E_0^\circ$, so $\infty$ is the unique fixed point and $E_0^{\free}=E_0^\circ=U_2$. By Proposition~\ref{prop:S0}(a) (with $n=2$), $\tw{\tau_2}{U_2}=\{h=(u,w)\in\A^2_{K_2}:(s+\wp h)_1=0\}$, and by \eqref{eq:add2}--\eqref{eq:wp2}, $(s+\wp h)_1=s_1+w^2+w+u^3+u^2+s_0(u^2+u)$; so $\tw{\tau_2}{U_2}=E'^\circ:=\{w^2+w=u(u+1)(u+s_0)+s_1\}$, and $\tw{\tau_2}{E_0}$ is its smooth projective closure $E'$, with the single point $\infty'$ at infinity. By Proposition~\ref{prop:G1}, $E'(K_2)=\tw{\tau_2}{E_0}(K_2)$ is in bijection with $E_0^{G_2}(k)=\{\infty\}$.

(b) The fibre of $u$ over $c\in K_2$ is $\Spec K_2[w]/(w^2+w-c(c+1)(c+s_0)-s_1)$; it has no $K_2$-point, so it is a field of degree $2$, and its point is free (it lies in $E'^\circ$). Over $K'=K_2(w)$ the vector $h=(c,w)$ satisfies $(s+\wp h)_1=0$, so $s+\wp h=[(s+\wp h)_0]=[s_0+c^2+c]$.

(c) $P\in k(s_0)[u][s_1]$ has degree $1$ in $s_1$ with coprime coefficients $1$ and $u(u+1)(u+s_0)$, hence is irreducible in $k(s_0)[u,s_1]$, hence (Gauss, $P$ monic in $u$) irreducible in $K_2[u]$; an irreducible cubic in characteristic $2$ is separable. By \eqref{eq:add2}--\eqref{eq:wp2}, for $h=(u,0)\in\W_2(F')$: $(s+\wp h)_0=s_0+u^2+u$ and $(s+\wp h)_1=s_1+u^3+u^2+s_0u^2+s_0u=P(u)=0$, so $s+\wp h=[s_0+u^2+u]$; equivalently $(u,0)$ is a free $F'$-point of $E'^\circ=\tw{\tau_2}{U_2}$, and Lemma~\Aref{lem:freepoint} gives $\ed_k(\tau_2\otimes F')\le1$; $\ge1$ since $3<4=d_0$ (Proposition~\Aref{prop:d0}). Cubic points: $E'(K_2)=\{\infty'\}$, so $\Pic^0(E')(K_2)=0$ and every effective $K_2$-divisor of degree $3$ is linearly equivalent to $3\infty'$, i.e.\ is the divisor of zeros of an element of $\mathcal L(3\infty')=\langle1,u,w\rangle$; the divisor of zeros of $a+bu+cw$ contains $\infty'$ iff $c=0$, and then it is $\infty'$ plus a fibre of $u$ (of degree $2$, by (b)); a closed point of degree $3$ is a divisor not containing $\infty'$, so $c\ne0$ and it is a section by a line $w=\alpha u+\beta$ with $\alpha,\beta\in K_2$. Such a section is a single closed point of degree $3$ because a $K_2$-rational component would be a $K_2$-point of $E'^\circ$. It lies in $E'^\circ$, hence is free.

(d) Profile: $\edd1=\ed_k(\Z/4)=2$; $\edd2\le1$ by the degree-$2$ free points of (b) (Lemma~\Aref{lem:freepoint}), equivalently by Remark~\ref{rem:towerp2}; $\edd d\ge1$ for $d<4$ and $\edd4=0$ by Proposition~\Aref{prop:d0}(c). As $d_1^{(p')}$ is odd and $\ge d_1(\tau_2)=2$ (Proposition~\ref{prop:G1}), (c) gives $d_1^{(p')}(\tau_2)=3$. Finally $D(U_2)\cap\{1\}=\emptyset$ by Proposition~\ref{prop:G1}, $2,3\in D(U_2)$ by (b), (c), and for $d\ge4$ we apply Lemma~\ref{lem:hilbertfree}: the semigroup of $\infty$ on the elliptic curve $E_0$ is $\Z_{\ge0}\smallsetminus\{1\}$; for $d$ odd every function of pole order $d$ has nonzero differential, and for $d\ge4$ even, if a function $f$ of pole order $d$ has $df=0$, then $f+y_1$ has pole order $d$ and $d(f+y_1)=dy_1\ne0$ ($y_1$ has odd pole order $3$). So $D(U_2)=\Z_{\ge2}$.
\end{proof}

\begin{remark}
(i) The obstruction $E'(K_2)=\{\infty'\}$ kills degree-$1$ points and produces the cubic ones. (ii) For odd $p$ the same mechanism gives $d_1^{(p')}(\tau_2)\le p^2-p+1$ (Theorem~\ref{thm:qRV}(b)); the first jump degree $d_1(\tau_2)=p$ is Corollary~\ref{cor:n2odd}.
\end{remark}

\begin{proposition}[$n=2$, all $p$]\label{prop:n2p}
For every $p$: $2\le d_1(\tau_2)\le p$, $d_0(\tau_2)=p^2$, $\edd d_k(\tau_2)=1$ for $p\le d<p^2$, and $d_1^{(p')}(\tau_2)\le p^2-p+1$. There is a free point of degree $p$ on $\tw{\tau_2}{\Ubar_2}$ lying over $t=s_0$: the fibre of $\tw{\tau_2}{U_2}\to\A^1_{K_2}$ over $t=c\in K_2$ is the $G_2$-torsor of class $[c]-s$, which for $c=s_0$ equals $V(-s_1)$, so this fibre is $\Ind_{\Z/p}^{\Z/p^2}$ of the Artin--Schreier torsor $v^p-v=-s_1$ and consists of $p$ closed points of degree $p$.
\end{proposition}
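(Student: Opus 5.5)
I will assemble the proposition from results already proved; the only new ingredient is the explicit computation of the fibre over $t=s_0$.

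\emph{Collecting the bounds.} The lower bound $d_1(\tau_2)\ge2$ is Proposition~\ref{prop:G1}. The upper bound $d_1(\tau_2)\le p$ and $\edd{p}_k(\tau_2)\le1$ come from Proposition~\ref{prop:trunc} with $j=1$. Monotonicity (P2) then gives $\edd d_k\le1$ for all $d\ge p$. For $d<p^2$ we have $\edd d_k\ge1$ by Proposition~\Aref{prop:d0}(c), which also gives $d_0(\tau_2)=p^2$. The bound $d_1^{(p')}(\tau_2)\le p^2-p+1$ is Theorem~\ref{thm:B}(c) together with $l_2(p)=p^2-p+1$, which is checked by hand in the proof of Theorem~\ref{thm:B}(a) and is independent of \cite{Th}.

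\emph{The explicit fibre.} First I describe the twist. By Lemma~\ref{lem:versal}(a), a point of $\tw{\tau_2}{U_2}$ over $K'\supseteq K_2$ is an element $h\in\W_2(K')$ with $\wp(z+h)\in[\A^1]$, that is, $s+\wp h=[c]$ for some $c$. The map to $\A^1_{K_2}$ sends such a point to $c=(s+\wp h)_0$, so the fibre over $c\in K_2$ is $\{h:\wp h=[c]-s\}$, the $G_2$-torsor of class $[c]-s$. Next I compute this class at $c=s_0$. Using \eqref{eq:Vsplit}, $s=[s_0]+V(s_1)$, so $[s_0]-s=-V(s_1)=V(-s_1)$, since $V$ is additive.

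\emph{Identifying the torsor.} Under the inclusion $\Z/p\hookrightarrow\Z/p^2$, $V$ corresponds to induction on cohomology, so this torsor is $\Ind_{\Z/p}^{\Z/p^2}$ of the Artin--Schreier torsor $v^p-v=-s_1$. That torsor is a field of degree $p$ because $-s_1\notin\wp K_2$ (it has a simple pole at $s_1=\infty$). Hence the fibre is $\coprod_{G_2/(\Z/p)}\Spec K_2(v)$: it consists of $p$ closed points of degree $p$. To see this concretely, the points are $h=(a,v)$ with $a^p-a=0$, $a\in\F_p$, and the $p$ choices of $a$ are permuted by $g$. These points lie in $U_2=\Ubar_2^{\free}$, so they are free.

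The only point needing care is that the fibre description via Lemma~\ref{lem:versal}(a) is compatible with the coordinate $t=y_0^p-y_0=(\wp y)_0$. This is immediate, because $y=z+h$ and $\wp$ is additive.
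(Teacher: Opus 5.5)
Your proof is correct and follows the paper's argument. You identify the fibre over $t=c$ via Lemma~\ref{lem:versal}(a) as the torsor of class $[c]-s$, and $[s_0]-s=V(-s_1)$ by \eqref{eq:Vsplit} then gives the induced torsor with $p$ closed points of degree $p$. The differences are only in which results you cite: you take $d_1(\tau_2)\le p$ from Proposition~\ref{prop:trunc} rather than from the explicit free point (its degree-$p$ extension $K_2(v)$ is the same field as the residue field of those points), and you cite Theorem~\ref{thm:B}(c) instead of Theorem~\ref{thm:qRV}(b) for $d_1^{(p')}(\tau_2)\le p^2-p+1$.
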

\begin{proof}
$U_2\to\A^1$, $y\mapsto t$, is $G_2$-equivariant for the trivial action on $\A^1$, and twists to $\tw{\tau_2}{U_2}\to\A^1_{K_2}$. Its fibre over $c$ is the twist of the fibre $\wp^{-1}([c])$, and its $K'$-points are the $y=z+h$ with $\wp(z+h)=s+\wp h=[c]$, i.e.\ $h\in\W_2(K')$ with $\wp h=[c]-s$: the fibre is the torsor of class $[c]-s$. For $c=s_0$, $[s_0]-s=-V(s_1)$ by \eqref{eq:Vsplit}, the image under $V$ of the nonzero $\Z/p$-class $-s_1\in K_2/\wp K_2$; the corresponding torsor is induced from $\Z/p$, i.e.\ $\Spec$ of $p$ copies of $K_2(v)$. These points are free (they lie in $\tw{\tau_2}{U_2}$). So $d_1\le p$; $d_1\ge2$ is Proposition~\ref{prop:G1}, $d_0=p^2$ is Proposition~\Aref{prop:d0}(c). The bound on $d_1^{(p')}$ is Theorem~\ref{thm:qRV}(b) with $l_2(p)=p^2-p+1$.
\end{proof}

\begin{corollary}[$n=2$, odd $p$]\label{cor:n2odd}
Let $p$ be odd. Then $\edd d_k(\tau_2)=2$ for $d<p$ and $\edd p_k(\tau_2)=1$, so $d_1(\tau_2)=p$. For every $p$, $d_1^{(p')}(\tau_2)=p^2-p+1$.
\end{corollary}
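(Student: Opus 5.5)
The plan is to assemble the corollary from results already proved, with no new argument needed. For odd $p$ and $d<p$, I would apply Theorem~\ref{thm:N1}(a) with $n=2$. It gives $\edd d_k(\tau_2)\ge2$ for $d<p^{n-1}=p$. The matching upper bound is $\edd d_k(\tau_2)\le\edd1_k(\tau_2)=\ed_k(\Z/p^2)=2$, which follows from Proposition~\ref{prop:G1} together with the monotonicity (P2). Hence $\edd d_k(\tau_2)=2$ for all $d<p$.

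At $d=p$, Proposition~\ref{prop:n2p} supplies the free closed point of degree $p$ on $\tw{\tau_2}{\Ubar_2}$ lying over $t=s_0$. By Lemma~\Aref{lem:freepoint} this point gives $\edd p_k(\tau_2)\le1$. Equivalently, one can use Proposition~\ref{prop:trunc} with $j=1$. The lower bound $\edd p_k(\tau_2)\ge1$ holds because $p<p^2=d_0$ (Proposition~\Aref{prop:d0}(c)). So $\edd p_k(\tau_2)=1$, and with the previous paragraph this gives $d_1(\tau_2)=p$. This is simply the case $n=2$ of Theorem~\ref{thm:N1}(a).

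For the prime-to-$p$ statement, valid for every $p$, I would combine two results. Theorem~\ref{thm:ptp} with $n=2$ gives the lower bound $d_1^{(p')}(\tau_2)\ge l_2(p)$. Theorem~\ref{thm:qRV}(b), equivalently Theorem~\ref{thm:B}(c), gives the upper bound $\le l_2(p)$. Lemma~\ref{lem:lnformula}(a) identifies $l_2(p)=p^2-p+1$.

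The only point that needs care is that the value of $l_2(p)$ does not rest on the quoted ramification formulas. The computation in the proof of Theorem~\ref{thm:B}(a) shows directly that the generator $c_0=-Q(y_0)$ has pole order $p^2-p+1$, which is prime to $p$. Lemma~\ref{lem:ASlayer}(b) then gives $l_2(p)=p^2-p+1$ independently. In the lower bound, Proposition~\ref{thm:lambda} at $n=2$ needs only $u_2\ge pu_1$ and $u_1\ge1$. For $p=2$ the statement is consistent with Theorem~\ref{thm:n2}(d), which gives the value $3$. I expect no real obstacle: every step is a citation, and the only work is checking that the hypotheses $n\ge2$ and $p\nmid[F':K]$ of the cited theorems hold.
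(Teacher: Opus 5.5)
Your proposal is correct and is essentially the paper's own proof. The lower bound $\edd d_k(\tau_2)\ge2$ for $d<p$ comes from Theorem~\ref{thm:N1}. The upper bounds come from Proposition~\ref{prop:n2p} (or the truncation bound) together with $\ed_k(\Z/p^2)=2$ from Proposition~\ref{prop:G1}. Finally, $d_1^{(p')}(\tau_2)=p^2-p+1$ follows from Theorem~\ref{thm:ptp} combined with the upper bound of Theorem~\ref{thm:qRV}(b); your observation that $l_2(p)$ can be checked directly from the pole order of $Q(y_0)$ via Lemma~\ref{lem:ASlayer}(b) matches what the paper does in the proof of Theorem~\ref{thm:B}(a).
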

\begin{proof}
The lower bounds $\edd d\ge2$ for $d<p$ are Theorem~\ref{thm:N1}; the upper bounds are Proposition~\ref{prop:n2p}, and $\ed_k(\Z/p^2)=2$ is Proposition~\ref{prop:G1}. For $d_1^{(p')}$ see Theorem~\ref{thm:ptp}.
\end{proof}


\section{The case \texorpdfstring{$(p,n)=(2,3)$}{(p,n)=(2,3)}}\label{sec:p2n3}

Let $p=2$, $n=3$, $G=\Z/8$, $K=k(s_0,s_1,s_2)$, $L=k(z_0,z_1,z_2)$, $v_i^2+v_i=s_i$, $z_0^2+z_0=s_0$, $K_2=k(s_0,s_1)$, $E'$ the curve of Theorem~\ref{thm:n2} over $K_2$ (base-changed to $K$ when needed), with origin $\infty'_2$, and $\infty'=\infty'_3$ the $K$-point of $\tw{\tgen}{\Ubar_3}$; $\Phi_2$ is the polynomial \eqref{eq:Phi2}, and $(s_0,s_1)_K$ denotes the $\Z/4$-class $\tau_2\otimes_{K_2}K$, which has $\ed_k((s_0,s_1)_K)=\ed_k(\tau_2)=2$ by Lemma~\Aref{lem:constant}(i). Every separable quadratic extension of $K$ is $K(v_f)$, $v_f^2+v_f=f$, $f\in K\smallsetminus\wp K$ (Corollary~\ref{cor:insep}).

\subsection{What the constructions give}\label{ss:known23}

\begin{proposition}[the slots at $(2,3)$ and their witnesses]\label{prop:known23}
\begin{enumerate}[label=\textup{(\alph*)}]
\item $\edd1(\tgen)=\ed_k(\Z/8)\in\{2,3\}$ (lower bound: Remark~\ref{rem:rigid-d1} and Lemma~\ref{lem:versal}; upper bound: Proposition~\ref{prop:trunc} with $j=0$).
\item $\edd2(\tgen)\le2$, witnessed by $K(v_2)$ (class $(s_0,s_1,0)$), $K(v_1)$ (class $(s_0,0,s_2+s_1^2+s_1v_1)$, Remark~\ref{rem:towerp2}) and $K(z_0)$ (Proposition~\ref{prop:intermediate}); hence $\edd3\le2$ by monotonicity.
\item $\edd3{}'(\tgen)\le2$: the cubic $K(u)$, $u^3+(1+s_0)u^2+s_0u+s_1=0$ (Theorem~\ref{thm:n2}(c) lifted by Lemma~\ref{lem:lift}; explicit class in the proof of Theorem~\ref{thm:23main}(d)).
\item $\edd4(\tgen)=\dots=\edd7(\tgen)=1$: the tower $K(v_1,v')$, $v'^2+v'=s_2+s_1^2+s_1v_1$ (Lemma~\ref{lem:tower}) gives $\le1$, and $\ge1$ for $d<8$ is Proposition~\Aref{prop:d0}(c). Finally $\edd8(\tgen)=0$ with $F'=L$.
\end{enumerate}
Hence $d_2\in\{1,2\}$, with $d_2=1$ iff $\ed_k(\Z/8)=2$; $d_2^{(p')}\in\{1,3\}$, likewise with $d_2^{(p')}=1$ iff $\ed_k(\Z/8)=2$; $d_1\in\{2,3,4\}$; $d_0=8$; and $d_1^{(p')}=11$ (Theorem~\ref{thm:ptp}). The explicit constructions and Proposition~\Aref{prop:d0} leave $\edd2(\tgen),\edd3(\tgen)\in\{1,2\}$; the lower bounds come from the rigidity lemma (Theorem~\ref{thm:23main}).
\end{proposition}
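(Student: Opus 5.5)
The statement is Proposition~\ref{prop:known23}, and nearly all of it is bookkeeping of results already proved. I would go through the four parts in order and then read off the jump degrees.

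\emph{Part (a).} Here $\edd1(\tgen)=\ed_k(\tgen)=\ed_k(\Z/8)$ by (P2) and Lemma~\ref{lem:versal}(c). The lower bound $\ge2$ is Remark~\ref{rem:rigid-d1}, and the upper bound $\le3$ is Proposition~\ref{prop:trunc} with $j=0$.

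\emph{Part (b).} For $K(v_2)$, I would use that $V^2(s_2)=\wp V^2(v_2)$. Then \eqref{eq:Vsplit} gives $s\equiv(s_0,s_1,0)$, a class with coordinates in $k(s_0,s_1)$, so $\ed\le2$. For $K(v_1)$, the class is $(s_0,0,c)$ by Remark~\ref{rem:towerp2}, with coordinates in $k(s_0,c)$, again of transcendence degree $\le2$. For $K(z_0)=K_1$, Proposition~\ref{prop:intermediate} gives $\ed_k(\Z/4)=2$. Monotonicity (P2) then yields $\edd3\le2$.

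\emph{Part (c).} I take the cubic $F_0=K_2(u)$ of Theorem~\ref{thm:n2}(c), over which $\ed_k(\tau_2\otimes F_0)\le1$. I then pass to $F'=F_0(s_2)=F_0\otimes_{K_2}K$, which is still cubic over $K$ and prime to $p$. Lemma~\ref{lem:lift} gives $\ed_k(\tgen\otimes F')\le1+1=2$, so $\edd3{}'\le2$.

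\emph{Part (d).} Lemma~\ref{lem:tower} with $r=1$ gives a tower of degree $\le4$ over which $s\equiv(s_0,0,0)$, so $\ed\le1$. Remark~\ref{rem:towerp2} identifies this tower as $K(v_1,v')$. The lower bound $\ge1$ for $d<8$ and the value $0$ at $d=8$ are Proposition~\Aref{prop:d0}(c), using $F'=L$.

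\emph{Jump degrees.} These follow from the slot values.
\begin{itemize}
\item $d_2\in\{1,2\}$, with $d_2=1$ iff $\edd1\le2$, i.e.\ iff $\ed_k(\Z/8)=2$.
\item $d_2^{(p')}\in\{1,3\}$: an odd degree $\le3$ with the value $1$ again iff $\ed_k(\Z/8)=2$. Here I use that degree-$2$ extensions do not count in the prime-to-$p$ profile.
\item $d_1\le4$ by (d), and $d_1\ge2$ because $\edd1\ge2$.
\item $d_0=8$, and $d_1^{(p')}=11$ by Theorem~\ref{thm:ptp}.
\item With only these constructions, $\edd2,\edd3\in\{1,2\}$, since they are $\le2$ and $\ge1$.
\end{itemize}

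The main obstacle is only the transcendence-degree check in (b) for $K(v_1)$. There one must confirm that $c=s_2+s_1^2+s_1v_1$ makes $k(s_0,c)$ a field of transcendence degree $2$ containing all coordinates of the class, which is immediate. Everything else is citation.
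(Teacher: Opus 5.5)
Your proposal is correct and follows the paper's proof, which also reduces everything to the cited results and writes out only the two explicit classes: $s\equiv(s_0,s_1,0)$ over $K(v_2)$ via \eqref{eq:Vsplit}, and $(s_0,0,s_2+s_1^2+s_1v_1)$ over $K(v_1)$, with coordinates in a field of transcendence degree $2$. Your treatment of (c), applying Lemma~\ref{lem:lift} to the cubic $K_2(u)(s_2)$, is the mechanism the paper cites and makes explicit in Theorem~\ref{thm:23main}(d).
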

\begin{proof}
Over $K(v_2)$, $s=(s_0,s_1,0)+V^2(\wp v_2)\equiv(s_0,s_1,0)$ by \eqref{eq:Vsplit}, a class defined over $k(s_0,s_1)$; the class over $K(v_1)$ is defined over the transcendence-degree-$2$ field $k(s_0,s_2+s_1^2+s_1v_1)$; the remaining items are as indicated.
\end{proof}

The rest of this section determines the second and third slots and records what remains: Section~\ref{ss:23main} proves $\edd2=\edd3=2$ from Theorem~\ref{thm:N1}; Section~\ref{ss:ed2} treats explicit extensions, in particular the quartic tower; Section~\ref{ss:tail23} bounds the prime-to-$2$ tail; Section~\ref{ss:status} collects the results and the remaining questions.

\subsection{The quadratic and cubic slots}\label{ss:23main}

At $(2,3)$ one has $p^{n-1}=4$, so Theorem~\ref{thm:N1} excludes every extension of degree $\le3$, and together with the truncation tower it determines the profile in every slot except the first.

\begin{theorem}[the profile at $(2,3)$ away from the Ledet slot]\label{thm:23main}
Let $k$ be algebraically closed of characteristic $2$ and $\tgen$ the generic $\Z/8$-torsor over $K=k(s_0,s_1,s_2)$.
\begin{enumerate}[label=\textup{(\alph*)}]
\item $\ed_k(\tgen\otimes_KF')\ge2$ for every field extension $F'/K$ of degree $\le3$.
\item $\edd2_k(\tgen)=\edd3_k(\tgen)=2$, hence $d_1(\tgen)=4$; the profile is
\[
\edd1\in\{2,3\},\qquad\edd2=\edd3=2,\qquad\edd4=\dots=\edd7=1,\qquad\edd8=0 .
\]
\item For every smooth projective curve $C$ with a faithful $\Z/8$-action, $D(C)\cap\{1,2,3\}=\emptyset$. In particular $D(U_3)\cap\{1,2,3\}=\emptyset$, $4,6\in D(U_3)$, and $\delta_\free(U_3)=4$.
\item No extension of degree $\le3$ realises the first drop of the prime-to-$2$ profile: $\edd3_k{}'(\tgen)=2$ and $d_1^{(p')}(\tgen)\ge5$. The value $2$ is attained by the cubic extension $K(u)$, $u^3+(1+s_0)u^2+s_0u+s_1=0$; hence $d_2^{(p')}(\tgen)\in\{1,3\}$, with $d_2^{(p')}(\tgen)=1$ iff $\ed_k(\Z/8)=2$.
\end{enumerate}
\end{theorem}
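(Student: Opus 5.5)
Parts (a)--(c) are the specialisation $p=2$, $n=3$ of Theorem~\ref{thm:N1} and Corollary~\ref{cor:curvepts}, since $p^{n-1}=4$. For (a) we apply Theorem~\ref{thm:N1} directly: any $F'/K$ with $\ed_k(\tgen\otimes F')\le1$ has $[F':K]\ge4$. Hence $\ed_k(\tgen\otimes F')\ge2$ whenever $[F':K]\le3$.

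For (b) the lower bound $\edd2,\edd3\ge2$ is (a). The upper bound $\edd2\le2$ is Proposition~\ref{prop:known23}(b), and monotonicity gives $\edd3\le2$. The remaining slots are Proposition~\ref{prop:known23}(a),(d). In particular $d_1=4$, which is Theorem~\ref{thm:N1}(a).

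For (c), $D(C)\cap\{1,2,3\}=\emptyset$ is Corollary~\ref{cor:curvepts}. We then need $4,6\in D(U_3)$. Here I would use Lemma~\ref{lem:hilbertfree} on $(\Ubar_3,\infty)$, whose Weierstrass semigroup is $\langle4,6,11\rangle$ (Appendix~\ref{app:genus}). So there are functions $f_4,f_6\in\mathcal O(U_3)$ with pole orders $4$ and $6$, and it remains to check $df_4\ne0$ and $df_6\ne0$. Even pole order does not guarantee a nonzero differential, so this is the step I expect to be the main obstacle. I would take explicit generators: $f_4=y_0^4$ fails since its differential vanishes, but $y_1$ has pole order $2\cdot3=6$ on $\Ubar_3$ (pole order $3$ on $\Ubar_2$, ramification index $2$) and $dy_1\ne0$, because $y_1$ is a separating coordinate on the elliptic curve $\Ubar_2$ and $\Ubar_3\to\Ubar_2$ is separable. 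This gives $6\in D(U_3)$.

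For pole order $4$, every function of pole order $4$ has the form $a y_0^4+b y_0^3+\dots$ modulo lower terms. The pole order of $y_0$ on $\Ubar_3$ is $4$, so $f=y_0$ works, and $dy_0\ne0$ because $y_0$ is a separating transcendental for $k(\Ubar_3)$ over $k$. Hence $4\in D(U_3)$, and $\delta_\free(U_3)=4$ follows.

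For (d), $\edd3{}'\ge2$ follows from (a), because prime-to-$p$ extensions of degree $\le3$ have degree $\le3$. The cubic $K(u)$ attains $\le2$: Theorem~\ref{thm:n2}(c) makes $(s_0,s_1)_K$ Teichm\"uller over $K(u)$, and Lemma~\ref{lem:lift} then lifts this to a class with coordinates in a field of transcendence degree $\le2$. That $d_1^{(p')}\ge5$ follows since prime-to-$2$ degrees below $5$ are $1$ and $3$, both excluded by (a); indeed Theorem~\ref{thm:ptp} already gives $11$. Finally, $d_2^{(p')}\le3$ by the cubic, and it equals $1$ iff $\edd1=\ed_k(\Z/8)=2$. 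Since the prime-to-$2$ profile only changes at odd degrees, $d_2^{(p')}\in\{1,3\}$.
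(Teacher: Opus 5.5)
Your proof is correct. For (a), (b) and the bookkeeping in (d) it is the paper's argument. The one genuinely different step is $4,6\in D(U_3)$ in (c).

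\textbf{The step $4,6\in D(U_3)$.} You invoke Lemma~\ref{lem:hilbertfree}, which obliges you to exhibit functions of pole order exactly $4$ and $6$ with nonzero differential. Your choices $y_0$ and $y_1$ work, since both are separating:
\begin{itemize}
\item $y_0$: $k(\Ubar_3)/k(t)$ is Galois and $k(t)\subset k(y_0)$, so $k(\Ubar_3)/k(y_0)$ is separable.
\item $y_1$: $y_0$ is separable over $k(y_1)$ because $\partial_{y_0}(y_0^3+y_0^2)=y_0^2\ne0$, and $\Ubar_3\to\Ubar_2$ is an Artin--Schreier cover, hence separable.
\end{itemize}

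The paper takes any $f\in K(X)$ with pole divisor exactly $d\infty'$ and uses the first half of (c) instead. Every fibre $f=c$ is an effective $K$-rational divisor of degree $d\in\{4,6\}$ in the free locus, and all its points have degree $\ge4$. Hence it is a single reduced free point of degree $d$.

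The two routes buy different things.
\begin{itemize}
\item The paper's route avoids both Hilbert irreducibility and the differential check, and shows that \emph{every} fibre works. But it only applies because a divisor of degree $4$ or $6$ built from points of degree $\ge4$ must be one reduced point. From $d=8$ on ($4+4$) this fails.
\item Your route is the uniform mechanism that the paper itself switches to in Theorem~\ref{thm:status} for the remaining elements of $\langle4,6,11\rangle$.
\end{itemize}

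\textbf{Two small points.}
\begin{itemize}
\item Your aside that ``$f_4=y_0^4$ fails'' is confused, as is the claim that functions of pole order $4$ have the form $ay_0^4+by_0^3+\cdots$. In fact $y_0^4$ has pole order $16$ on $\Ubar_3$, and $\mathcal L(4\infty)=\langle1,y_0\rangle$. Your final choice $f=y_0$ is nonetheless right.
\item In (d) you should say why $K(u)$ is a cubic extension of $K$ and not just of $K_2$. The polynomial $P$ is irreducible over $K_2$ by Theorem~\ref{thm:n2}(c), and it stays irreducible over the purely transcendental extension $K=K_2(s_2)$.
\end{itemize}

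Finally, the paper writes the lifted class out explicitly as $s+\wp(u,0,0)=(s_0+u^2+u,\,0,\,s_2+\varphi)$, whereas you cite Lemma~\ref{lem:lift}. This is the same argument, as Proposition~\ref{prop:known23}(c) already records.
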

\begin{proof}
(a) is Theorem~\ref{thm:N1} with $p^{n-1}=4$.
(b) $\edd2,\edd3\ge2$ by (a), and $\le2$ by Proposition~\ref{prop:known23}; $\edd4=1$ by Proposition~\ref{prop:known23}, so $d_1=4$.
(c) The first statement is Corollary~\ref{cor:curvepts}. Let $X=\tw{\tgen}{\Ubar_3}$ and $d\in\{4,6\}$. Since $d$ lies in the Weierstrass semigroup $\langle4,6,11\rangle$ of $\infty$ on $\Ubar_3$ (Appendix~\ref{app:genus}) and $\ell(d\infty')-\ell((d-1)\infty')$ is invariant under base change, there is $f\in K(X)$ with pole divisor exactly $d\infty'$. Every fibre $f=c$, $c\in K$, is an effective $K$-rational divisor of degree $d$ in the free locus $X\smallsetminus\{\infty'\}$. Every free closed point has degree $\ge4$ by the first statement, so such a fibre is a single free point of degree $d$ (a point of degree $\ge4$ of multiplicity $\ge2$ would contribute $\ge8$). Hence $4,6\in D(U_3)$ and $\delta_\free(U_3)=4$. No Hilbert irreducibility is needed here.

(d) Lower bounds. By Definition~\Aref{def:profile}, $\edd3{}'$ minimises over extensions of degree $1$ or $3$, and $\edd4{}'=\edd3{}'$; so $\edd3{}'(\tgen)\ge2$ and $d_1^{(p')}(\tgen)\ge5$ by (a).

Upper bound. Let $u$ be a root of $P(u)=u^3+(1+s_0)u^2+s_0u+s_1=u(u+1)(u+s_0)+s_1$. By Theorem~\ref{thm:n2}(c), $P$ is irreducible and separable over $K_2$. Since $K=K_2(s_2)$ is purely transcendental over $K_2$, $K_2$ is algebraically closed in $K$ and $P$ stays irreducible over $K$ (Gauss's lemma in $K_2[s_2][u]$). So $F':=K(u)=k(s_0,u,s_2)$ is a separable cubic extension of $K$. Put $h=(u,0,0)\in\W_3(F')$. By the general translate of Appendix~\ref{app:witt},
\[
s+\wp h=\bigl(s_0+u^2+u,\ P(u),\ s_2+\Phi_2(s_0,s_1,u,0)\bigr)=\bigl(s_0+u^2+u,\ 0,\ s_2+\varphi\bigr),
\]
\[
\varphi=u^7+u^6+s_0(u^6+u^4)+s_0^2(u^4+u^2)+s_0^3(u^2+u)\in k(s_0,u),
\]
where $s_1=u(u+1)(u+s_0)$ has been substituted in $\Phi_2(s_0,s_1,u,0)$. The class $s+\wp h$ lies in $\W_3(F_0)$ with $F_0=k(s_0+u^2+u,\ s_2+\varphi)$, $\trdeg_kF_0=2$, so $\ed_k(\tgen\otimes F')\le2$. This is Lemma~\ref{lem:lift} applied to the cubic point of Theorem~\ref{thm:n2}(c), made explicit. Hence $\edd3{}'(\tgen)=2$.

Jump degree. $\edd1{}'=\edd2{}'=\edd1=\ed_k(\Z/8)$ and $\edd3{}'=2$, so $d_2^{(p')}(\tgen)=1$ if $\ed_k(\Z/8)=2$ and $=3$ otherwise.
\end{proof}

\begin{remark}\label{rem:23main}
What is proved for an individual extension of degree $\le3$ is the lower bound $\ed_k(\tgen\otimes F')\in\{2,3\}$: the value is $2$ for the extensions of Proposition~\ref{prop:known23}, of Theorem~\ref{thm:fam} and for the cubic $K(u)$ of the proof of (d), and for all of them as soon as $\ed_k(\Z/8)=2$. The values of the profile in (b) are exact. A faithful curve $C$ with the full group, with Proposition~\ref{prop:rigidN} for $\ell=2$, already gives the degree bound $\deg D_0\ge4$ in the pencil case (its proof shows $|\ker\rho|\ge4$), which excludes every quadratic witness curve of every genus; Theorem~\ref{thm:N1} extends this to linear systems of every dimension.
\end{remark}

\subsection{Explicit quadratic extensions and the quartic tower}\label{ss:ed2}

By Theorem~\ref{thm:N1}, $\ed_k(\tgen\otimes K')\ge2$ for every extension $K'/K$ of degree $\le3$; so for an explicit quadratic or cubic extension the question is the upper bound. The following extensions have $\ed=2$ exactly. The quartic tower is the first degree at which the profile drops to $1$, and the theorem below says that the naive tower $K(v_1,v_2)$ is not a witness, whereas $K(v_1,v')$ is (Lemma~\ref{lem:tower}).

\begin{lemma}[curves dominated by a fibration]\label{lem:fibration}
Let $\pi\colon V\to B$ be a dominant morphism of integral $k$-varieties whose generic fibre $V_\eta$ is geometrically integral, and let $\phi\colon V\dashrightarrow C$ be a dominant rational map to a smooth projective curve. Then either
\begin{enumerate}[label=\textup{(\roman*)}]
\item there is a dense open subset $B^\circ\subseteq B$ such that for every closed point $b\in B^\circ$ the fibre $V_b$ is integral, $\phi$ restricts to a rational map $V_b\dashrightarrow C$, and this map is dominant; or
\item $k(C)\subseteq k(B)$, so that $C$ is dominated by $B$.
\end{enumerate}
\end{lemma}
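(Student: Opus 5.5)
We argue at the level of function fields. Write $K(V)=k(V)$, $k(B)\subseteq k(V)$ via $\pi^*$, and $k(C)\subseteq k(V)$ via $\phi^*$; the latter is a finitely generated subfield of transcendence degree $1$ over $k$, since $\phi$ is dominant. The dichotomy is whether $k(C)\subseteq k(B)$ or not. If it is, we are in case (ii): the inclusion of function fields $k(C)\subseteq k(B)$ gives a dominant rational map $B\dashrightarrow C$, and since $C$ is a smooth projective curve this is the meaning of ``$C$ is dominated by $B$''.

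Assume then $k(C)\not\subseteq k(B)$, and choose a nonconstant $x\in k(C)$ with $x\notin k(B)$; this is possible since $k(C)$ is generated over $k$ by finitely many elements, at least one outside $k(B)$. Consider the compositum $k(B)(x)\subseteq k(V)=k(V_\eta)$. First step: show $x$ is transcendental over $k(B)$. Since $V_\eta$ is geometrically integral over $k(B)$, the field $k(B)$ is algebraically closed in $k(V_\eta)$, so an element of $k(V)$ algebraic over $k(B)$ lies in $k(B)$; as $x\notin k(B)$, it is transcendental. Hence the restriction of $x$ to the generic fibre, $V_\eta\dashrightarrow\A^1_{k(B)}$, is dominant, i.e.\ nonconstant on $V_\eta$.

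Second step: spread out. Choose a dense open $V^\circ\subseteq V$ on which $\phi$ and $x$ are regular, and a dense open $B^\circ\subseteq B$ over which (by generic flatness and constructibility, EGA~IV~9.7.7 for geometric integrality) the fibres $V_b$ are integral of dimension $\dim V-\dim B$ and meet $V^\circ$ in a dense open. The map $x|_{V^\circ}\colon V^\circ\to\A^1_B$ is dominant. By Chevalley the image contains a dense open of $\A^1_B$, and after shrinking $B^\circ$ its fibre over every $b\in B^\circ$ is a dense open of $\A^1_{\kappa(b)}$. Thus for closed $b\in B^\circ$ the restriction $x|_{V_b\cap V^\circ}$ is nonconstant, and so is $\phi|_{V_b}$, because $x$ factors through $\phi$. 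A nonconstant rational map from an integral variety to a curve is dominant, and this gives (i).

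The main obstacle is the first step, where geometric integrality of the generic fibre is used to get algebraic closedness of $k(B)$ in $k(V)$. Without it, $x$ could be algebraic but not rational over $k(B)$, and then $x$ would be constant on the fibres, a genuine failure of the dichotomy, as in a Stein-factorisation situation. The spreading-out in the second step is routine once the first step is in place; the only care needed is to ensure that $V_b$ is not contained in the indeterminacy locus of $\phi$, which the choice of $B^\circ$ handles.
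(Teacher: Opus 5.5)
Your proof is correct and is essentially the paper's argument. The paper splits according to whether $\phi_\eta\colon V_\eta\dashrightarrow C_{k(B)}$ is dominant. In the non-dominant case it uses that $k(B)$ is algebraically closed in $k(V)$ to get $k(C)\subseteq k(B)$; in the dominant case it applies EGA~IV~9.7 and Chevalley to $(\pi,\phi)\colon U\to B\times C$. You use the same ingredients, organised around one function $x\in k(C)\smallsetminus k(B)$ and the map $(\pi,x)\colon V^\circ\to\A^1_B$, which is only a cosmetic difference.
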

\begin{proof}
Consider $\phi_\eta\colon V_\eta\dashrightarrow C_{k(B)}$ over $k(B)$. Suppose first that $\phi_\eta$ is dominant. Since $V_\eta$ is geometrically integral, the fibres $V_b$ are integral for $b$ in a dense open subset of $B$ \cite[IV$_3$, \S9.7]{EGA};
and the domain of definition $U$ of $\phi$, a dense open subset of $V$, meets $V_b$ in a dense open subset for $b$ in a dense open subset of $B$. The map $\Phi=(\pi,\phi)\colon U\to B\times C$ is dominant (its generic fibre over $B$ is $\phi_\eta$), so by Chevalley's theorem $\Phi(U)$ contains a dense open $W\subseteq B\times C$; for $b$ in the dense open image of $W$ in $B$, $\phi(U\cap V_b)\supseteq W_b$ is infinite, hence $\phi|_{V_b}$ is dominant. This is (i). Otherwise $\phi_\eta$ has image a closed point $x$ of $C_{k(B)}$, and $k(C)\subseteq\kappa(x)\subseteq k(V_\eta)=k(V)$ with $\kappa(x)$ finite over $k(B)$. As $V_\eta$ is geometrically integral, $k(V)/k(B)$ is a regular extension, so $k(B)$ is algebraically closed in $k(V)$ and $\kappa(x)=k(B)$; hence $k(C)\subseteq k(B)$.
\end{proof}

\begin{theorem}[explicit extensions with $\ed$ exactly $2$]\label{thm:fam}
$\ed_k(\tgen\otimes K')=2$ for each of the following fields $K'$:
\begin{enumerate}[label=\textup{(\alph*)}]
\item $K'=K(v_f)$ with $f\in s_2+K_2+\wp K$ \textup{(}this contains $K(v_2)$\textup{)};
\item $K'=K(v_1)$ and $K'=K(z_0)=K(v_0)$;
\item the quartic tower $K'=K(v_1,v_2)$.
\end{enumerate}
In particular the quartic tower $K(v_1,v_2)$ is not a witness for $\edd4_k(\tgen)=1$.
\end{theorem}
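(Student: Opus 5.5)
The lower bound is uniform. Each $K'$ listed has degree $2$ or $4$ over $K$, so by Theorem~\ref{thm:N1} (degree $<p^{n-1}=4$) the quadratic cases have $\ed_k\ge2$. For the quartic tower $K(v_1,v_2)$, Theorem~\ref{thm:N1} gives nothing, and this is where the work is. The upper bounds $\le2$ are explicit descents.

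\emph{Upper bounds.} In (a), write $f=s_2+a+\wp b$ with $a\in K_2$ and $b\in K$. Over $K(v_f)$ we have
\[
s\equiv(s_0,s_1,s_2)-V^2\wp(v_f-b)=(s_0,s_1,-a),
\]
and this class is defined over $K_2$, which has transcendence degree $2$. For (b), $K(v_1)$ and $K(z_0)$ are handled in Proposition~\ref{prop:known23}(b). For (c), Remark~\ref{rem:towerp2} gives
\[
s\equiv(s_0,0,v_1^4+v_1^3)\quad\text{over }K(v_1,v_2),
\]
and this class is defined over $k(s_0,v_1)$, again of transcendence degree $2$.

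\emph{Lower bound for the quartic tower.} Suppose $\ed_k(\tgen\otimes K')\le1$ with $K'=K(v_1,v_2)$. Since $z_0\notin K'$ (the tower is disjoint from $L$ because $s_0$ is untouched), $L\otimes_KK'$ is a field. Setup~\ref{setup:witness} then yields a faithful $\Z/8$-curve $C$ and a dominant equivariant map $\Spec(LK')\to C$, equivalently an inclusion
\[
k(C)\hookrightarrow LK'=k(z_0,z_1,z_2,v_1,v_2),\qquad v_i^2+v_i=s_i .
\]
This field is the function field of a variety $V$ that fibres over $B=\A^1_{z_0}$, or over the $(z_0,v_1)$-plane. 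I would apply Lemma~\ref{lem:fibration}. In case (ii), $C$ is dominated by a rational base, so $C\cong\PP^1$ by Lemma~\ref{lem:curvesB}(iii), which is impossible by Proposition~\ref{prop:rigid}(a). In case (i), specialise $z_0$ (or $s_0$) to a general constant. The class then becomes $(c,0,v_1^4+v_1^3)$, which is equivalent to a class whose descent has the form $V^2(\cdot)$ plus a constant Teichm\"uller part. The curve $C$ is then dominated by the fibres equivariantly for the residual group, and the aim is to get a contradiction from the genus or from the group action, in the spirit of Proposition~\ref{prop:G1}.

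\emph{Expected main obstacle.} Controlling the fibre case (i) is the hard step, because the specialised torsor is no longer generic and the $G$-action must be tracked on fibres. What I would try first is more direct. I would show that a one-dimensional descent over $K'$ would push forward to give $\ed_k((s_0,0,v_1^3+v_1))\le1$ over $k(s_0,v_1)$, a field that is purely transcendental over $k$. Because $v_1^3+v_1$ has an odd pole, this class is a generic-type $\Z/8$-class, and the rigidity argument (Remark~\ref{rem:rigid-d1}) applied over $k(s_0,v_1)$ would then give the contradiction. The delicate point is justifying that passage down from $K'$, since $s_1$ and $s_2$ are not purely transcendental parameters over $k(s_0,v_1)$ inside $K'$. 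I would try to show that $K'=k(s_0,v_1,v_2)$ is purely transcendental over $k(s_0,v_1)$ via $v_2$, and then use Lemma~\Aref{lem:constant}(i).
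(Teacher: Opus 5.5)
Your upper bounds, and your lower bounds for (a) and (b) via Theorem~\ref{thm:N1}, are correct and agree with the paper. For the upper bound in (c) the paper uses monotonicity from $K(v_1)$; your explicit class $(s_0,0,v_1^4+v_1^3)$ also works. The gap is the lower bound for the quartic tower, and neither of your two routes closes it.

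\emph{The direct route.} The passage down from $K'$ is harmless. Indeed $K'=k(s_0,v_1,v_2)$ with $s_0,v_1,v_2$ algebraically independent, and $b=(s_0,0,v_1^4+v_1^3)$ has coordinates in $F_0=k(s_0,v_1)$. So $\ed_k(\tgen\otimes K')=\ed_k(\tau_b)$ by \Aref{lem:constant}(i).

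What fails is the next step. Proposition~\ref{prop:rigid}(b), and with it Remark~\ref{rem:rigid-d1}, needs the total space of the torsor to be purely transcendental over $k$, so that $J(M)=J(k)$. The total space $M$ of $\tau_b$ is not purely transcendental. Its $\Z/4$-quotient has class $(s_0,0)$, which is defined over $k(s_0)$. Hence $M$ contains $k(w_0,w_1)$, where $w_0^2+w_0=s_0$ and $w_1^2+w_1=w_0^3+w_0^2$; this is the function field of the supersingular elliptic curve $E_0$.

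Worse, for a degree-one witness the equivariant map $\Spec M\to C$ is dominant. Then $k(C)\subseteq M$ and $C(M)\ne C(k)$, so Abel--Jacobi constancy fails for every actual witness curve of positive genus. Rigidity can therefore give nothing here. The odd pole of $v_1^3+v_1$ only shows that $b$ has order $8$; it does not make $\tau_b$ ``generic-type''.

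\emph{The fibration route.} Your bases do not satisfy the hypothesis of Lemma~\ref{lem:fibration}. The element $y:=v_1+z_1\in LK'$ satisfies $y^2+y=z_0^3+z_0^2$, so it is algebraic over $k(z_0)$ but does not lie in $k(z_0,v_1)$. Hence the generic fibre over $\A^1_{z_0}$, or over the $(z_0,v_1)$-plane, is not geometrically integral. The algebraic closure of $k(z_0)$ in $LK'$ is $k(E_0)$. So alternative (ii) only gives a curve $C$ dominated by $E_0$, of genus $\le1$, not $C\cong\PP^1$. Case (i) you leave open.

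The missing idea is supersingularity. The paper writes $LK'=k(\Sigma\times\A^1_{z_2})$, where $\Sigma\to E_0$, $(z_0,y)$, has fibres $y'^2+y'=C(z_0,z_1)$ with $y'=v_2+z_2$ (here $C(\cdot,\cdot)$ is the polynomial of \eqref{eq:wp2}). All these fibres are smooth supersingular curves of genus one. Lemma~\ref{lem:fibration} is applied twice, to $\mathrm{pr}_2$ and to $\Sigma\to E_0$. It shows that the witness curve is dominated by $\A^1$, by $E_0$, or by a supersingular fibre. Hence it is $\PP^1$ or a supersingular elliptic curve (Lemma~\ref{lem:curvesB}(iii),(iv)), and neither has an automorphism of order $8$ (Lemma~\ref{lem:curvesB}(i),(ii)).

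Your reduction to $\tau_b$ can be finished the same way. The field $M$ is the function field of a surface over $E_0$ whose fibres $w_2^2+w_2=v_1^4+v_1^3+C(w_0,w_1)$ are again supersingular of genus one. But the decisive input is Lemma~\ref{lem:curvesB}(ii), not rigidity.
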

\begin{proof}
The lower bound is Theorem~\ref{thm:N1} for (a), (b) (degree $\le2$), and for (c) it is proved below.

(a) Write $f=s_2+e+\wp b$, $e\in K_2$, $b\in K$. Over $K'=K(v_f)$, $s_2=\wp(v_f+b)+e$, so $s=(s_0,s_1,e)+\wp V^2(v_f+b)\equiv(s_0,s_1,e)$ by \eqref{eq:Vsplit}; this class is defined over $K_2$, so $\ed_k(\tgen\otimes K')\le2$.

(b) $\edd2\le2$ for these fields by Proposition~\ref{prop:known23} (the class over $K(v_1)$ is defined over the transcendence-degree-$2$ field $k(s_0,s_2+s_1^2+s_1v_1)$, and $K(z_0)$ is Proposition~\ref{prop:intermediate}).

(c) The upper bound is \Bref{1} applied to $K(v_1)\subset K(v_1,v_2)$ and (b). For the lower bound: $L\otimes_KK'$ is a field: $K'/K$ is a $(\Z/2)^2$-extension whose quadratic subfields are $K(v_1)$, $K(v_2)$, $K(v_1+v_2)$, of classes $s_1$, $s_2$, $s_1+s_2$, none $\equiv s_0$ (each has a simple pole at $s_1=\infty$ or $s_2=\infty$, where $s_0$ is regular), so $K'\cap L=K$. Put $y:=v_1+z_1$ and $y':=v_2+z_2$; then $y^2+y=s_1+z_1^2+z_1=z_0^3+z_0^2$ and $y'^2+y'=s_2+z_2^2+z_2=C(z_0,z_1)$ by \eqref{eq:wp2}, so $L(v_1,v_2)=k(z_0,y,z_1,y',z_2)=k(\Sigma\times\A^1_{z_2})$, where
\[
\Sigma:=\{(z_0,y,z_1,y'):\ y^2+y=z_0^3+z_0^2,\ \ y'^2+y'=C(z_0,z_1)\},
\]\[
C(z_0,z_1)=z_1^3+(z_0^3+z_0^2+1)z_1^2+(z_0^3+z_0^2)z_1+z_0^7+z_0^4
\]
($y$ is algebraic over $k(z_0)$, and $[L(v_1):L]=2$ since $z_0^3+z_0^2\notin\wp L$, having odd pole order in $z_0$). $\Sigma$ is an integral surface, fibred over $E_0\colon y^2+y=z_0^3+z_0^2$ via $(z_0,y)$, with generic fibre the affine curve $y'^2+y'=(\text{monic cubic in }z_1)$, which is smooth and geometrically integral of genus $1$ with $j=0$ (Weierstrass form with $a_1=0$, $a_3=1$), hence supersingular. Suppose $\ed_k(\tgen\otimes K')\le1$. By Theorem~\Aref{thm:geom} and Corollary~\ref{cor:jumpsB} there is a smooth projective curve $C$ with faithful $G$-action and a dominant equivariant rational map $\Sigma\times\A^1\dashrightarrow C$ (the descent field $F_0$ has $\trdeg_kF_0=1$; $\trdeg0$ is impossible as the torsor over $K'$ is a field, hence non-split). Lemma~\ref{lem:fibration} for $\mathrm{pr}_2\colon\Sigma\times\A^1\to\A^1$ gives: $C$ is dominated by $\A^1$ (rational), or by $\Sigma$. In the latter case apply Lemma~\ref{lem:fibration} to $\Sigma\to E_0$: either the general fibre maps dominantly to $C$, or $E_0$ does. (Every closed fibre of $\Sigma\to E_0$ is $y'^2+y'=C(b,z_1)$, a Weierstrass equation with $a_1=0$, $a_3=1$ and monic cubic right-hand side, hence smooth of genus $1$ and supersingular, not only the generic fibre.) Either way $C$ is $\PP^1$ (Lemma~\ref{lem:curvesB}(iii)) or, by Lemma~\ref{lem:curvesB}(iv), an elliptic curve isogenous to a supersingular one (a nonconstant morphism of elliptic curves is an isogeny up to translation), hence supersingular; neither carries an automorphism of order $8$ (Lemma~\ref{lem:curvesB}(i),(ii)), contradicting faithfulness.
\end{proof}

\noindent Comment. Over $K(v_1,v_2)$ both $s_1$ and $s_2$ have become $\wp$-exact and still $\ed=2$, the class being $(s_0,0,s_1^2+s_1v_1)$ (Remark~\ref{rem:towerp2}): killing the coordinates $s_1,s_2$ is not the same as killing the upper layers of the class, because of the Witt carry; the layers must be killed in order (Lemma~\ref{lem:tower}).

\subsection{The prime-to-\texorpdfstring{$2$}{2} tail}\label{ss:tail23}

Cubic extensions have degree prime to $p$, so $\edd3{}'(\tgen)$ is the first nontrivial slot of the prime-to-$p$ profile, and Theorem~\ref{thm:23main}(d) says that it equals $2$. The Riemann--Roch bound of Theorem~\ref{thm:B} bounds the first drop from above, and Theorem~\ref{thm:ptp} shows that it is attained.

\begin{corollary}\label{cor:B23}
At $(p,n)=(2,3)$, $d_1^{(p')}(\tgen)=l_3(2)=11$. No faithful $\Z/8$-curve has a free closed point of odd degree $<11$ on its twist, so in particular $5,7,9\notin D(C)$ for every such $C$; and $11\in D(U_3)$, so $\min\bigl(D(U_3)\smallsetminus2\Z\bigr)=11$.
\end{corollary}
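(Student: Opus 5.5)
The plan is to assemble Corollary~\ref{cor:B23} from three results already proved, specialised to $(p,n)=(2,3)$. First, the value $l_3(2)=11$ comes from \eqref{eq:ln}; it is also obtained directly, independently of \cite{Th}, from the reduced generator $y_2+y_0^2y_1$ of pole order $11$ in the proof of Theorem~\ref{thm:B}(a). Then Theorem~\ref{thm:ptp} with $n=3$, $p=2$ gives $d_1^{(p')}(\tgen)=l_3(2)=11$. The same theorem gives the second claim: no faithful $\Z/8$-curve has a free closed point of odd degree $d<11$ on its twist. In particular $5,7,9\notin D(C)$, and together with Theorem~\ref{thm:23main}(c) this also excludes $1$ and $3$.

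For $11\in D(U_3)$ I would use Lemma~\ref{lem:hilbertfree} with $C=\Ubar_3$, the fixed point $\infty$ and $d=11$. By Theorem~\ref{thm:B}(a),(b) there is a function $y$, regular on $U_3$, with pole order exactly $11$ at $\infty$, so $y\in\mathcal L(11\infty)$. Since $11$ is odd, the remark after Lemma~\ref{lem:hilbertfree} gives $dy\ne0$: the formal derivative in $k((\pi))$ has valuation $-12$. The lemma then yields a free closed point of degree exactly $11$ on $\tw{\tgen}{\Ubar_3}$. It lies in the free locus $\tw{\tgen}{U_3}$, so $11\in D(U_3)$.

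Finally, $\min\bigl(D(U_3)\smallsetminus2\Z\bigr)=11$, because the odd elements below $11$ are excluded by the second claim applied to $C=\Ubar_3$.

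The only step needing care is that Lemma~\ref{lem:hilbertfree} gives the exact degree $11$. The degree-sum argument of Theorem~\ref{thm:B}(c) would only give some odd degree $\le11$, and here that suffices as well: combined with the lower bound, any odd free degree $\le11$ must equal $11$. So either route works, and I would mention the degree-sum one as a check that avoids Hilbertianity.
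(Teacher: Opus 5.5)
Your proposal is correct and is essentially the paper's proof: Theorem~\ref{thm:ptp} gives the first two claims, and Lemma~\ref{lem:hilbertfree} applied to the odd-pole-order function $y_2+y_0^2y_1$ gives $11\in D(U_3)$; the paper also mentions the degree-$11$ fibres of the pencil $|11\infty'|$. Your degree-sum alternative is also valid. A fibre of odd degree $11$ contains a free point of odd degree $\le11$, which has degree $\ge11$ by Theorem~\ref{thm:ptp}, so the fibre is a single free point of degree $11$. This removes the need for Hilbert irreducibility here, as in Theorem~\ref{thm:23main}(c).
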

\begin{proof}
The first two statements are Theorem~\ref{thm:ptp}. By Theorem~\ref{thm:B}(a) and $l_3(2)=11$ there is a function on $\tw{\tgen}{\Ubar_3}$ with pole divisor exactly $11\infty'$ (proof of Theorem~\ref{thm:B}(c)); its fibres lie in the free locus and have degree $11$. Moreover $11\in D(U_3)$ by Lemma~\ref{lem:hilbertfree}, applied to $y_2+y_0^2y_1$, which has odd pole order $11$.
\end{proof}

Degrees $5$, $7$, $9$ are thus excluded for every faithful curve; what remains open in $D(U_3)$ is whether $13\in D(U_3)$ (Question~\ref{Q4}).

\subsection{Summary at \texorpdfstring{$(2,3)$}{(2,3)} and the remaining questions}\label{ss:status}

\begin{theorem}[the profile at $(2,3)$: summary]\label{thm:status}
\[
\edd1\in\{2,3\},\quad\edd2=\edd3=2,\quad
\edd4=\dots=\edd7=1,\quad\edd8=0;
\]
$d_2\in\{1,2\}$, $d_1=4$, $d_0=8$; for the prime-to-$2$ profile, $\edd1{}'=\edd2{}'=\edd1$, $\edd3{}'=\edd4{}'=2$, $d_2^{(p')}\in\{1,3\}$, $d_1^{(p')}=11$, and $d_0^{(p')}=\infty$. Every faithful $\Z/8$-curve $C$ has $D(C)\cap\{1,2,3\}=\emptyset$; $D(U_3)\supseteq\langle4,6,11\rangle\smallsetminus\{0\}$ and $\delta_\free(U_3)=4$, $5,7,9\notin D(C)$ for every faithful $\Z/8$-curve $C$, while it is not known whether $13\in D(U_3)$. The quartic tower $K(v_1,v_2)$ is not a witness for $\edd4=1$ (Theorem~\ref{thm:fam}(c)), while $K(v_1,v')$ is (Lemma~\ref{lem:tower}).
\end{theorem}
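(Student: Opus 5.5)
This theorem is a summary, so the plan is to assemble it from results already proved in this section and in Sections~\ref{ss:rigid}--\ref{ss:tail}. Only the bookkeeping of the prime-to-$2$ profile needs care.

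\emph{Ordinary profile.} The slot $\edd1\in\{2,3\}$ is Proposition~\ref{prop:known23}(a). The values $\edd2=\edd3=2$ are Theorem~\ref{thm:23main}(b). The values $\edd4=\dots=\edd7=1$ and $\edd8=0$ are Proposition~\ref{prop:known23}(d). Reading off the jump degrees then gives:
\begin{itemize}
\item[] $d_0=8$, from Proposition~\Aref{prop:d0};
\item[] $d_1=4$, from Theorem~\ref{thm:N1};
\item[] $d_2=1$ if $\ed_k(\Z/8)=2$ and $d_2=2$ otherwise, since $\edd2=2$.
\end{itemize}

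\emph{Prime-to-$2$ profile.} By Definition~\Aref{def:profile}, the slot $\edd d{}'$ minimises over extensions of odd degree at most $d$. Hence $\edd1{}'=\edd2{}'=\ed_k(\tgen)=\edd1$, and $\edd4{}'=\edd3{}'$. Theorem~\ref{thm:23main}(d) gives $\edd3{}'=2$ and $d_2^{(p')}\in\{1,3\}$. Theorem~\ref{thm:ptp}, specialised in Corollary~\ref{cor:B23}, gives $d_1^{(p')}=11$. Finally $d_0^{(p')}=\infty$ because the prime-to-$p$ profile of $\tgen$ never reaches $0$ (Proposition~\ref{prop:tail}(b)); equivalently, splitting forces $8\mid[F':K]$.

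\emph{Free-point sets and the quartic tower.} The claim $D(C)\cap\{1,2,3\}=\emptyset$ for every faithful $C$ is Theorem~\ref{thm:23main}(c), and the exclusion of $5,7,9$ is Corollary~\ref{cor:B23}. For $D(U_3)\supseteq\langle4,6,11\rangle\smallsetminus\{0\}$ I would use the following steps.
\begin{itemize}
\item[] Odd elements of the semigroup lie in $D(U_3)$ by Lemma~\ref{lem:hilbertfree}, since a function of odd pole order has nonzero differential.
\item[] For even elements $d\ge4$ of the semigroup I would copy the trick in Theorem~\ref{thm:n2}(d). If $f$ has pole order $d$ and $df=0$, then replace $f$ by $f+\lambda$, where $\lambda$ is a function with nonzero differential and pole order less than $d$. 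Candidates are $y_0$-type functions with pole order $4$ or $6$ and nonzero differential, or $y_2+y_0^2y_1$ when $d>11$; the orders $4,6,11$ come from Appendix~\ref{app:genus}.
\item[] For $d=4,6$ the fibre argument of Theorem~\ref{thm:23main}(c) already works directly.
\end{itemize}
The equality $\delta_\free(U_3)=4$ is Theorem~\ref{thm:23main}(c). The statements about $K(v_1,v_2)$ and $K(v_1,v')$ are Theorem~\ref{thm:fam}(c) and Lemma~\ref{lem:tower}.

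\emph{Main obstacle.} The only step that is not pure citation is the even-degree part of $D(U_3)\supseteq\langle4,6,11\rangle$. There one needs, for $d\in\{8,10,12,\dots\}$, a function with pole order exactly $d$ and nonzero differential. I would first check whether $y_0$, $y_1$ and their products have nonzero differentials. In characteristic $2$ this holds for $dy_0$ ($y_0$ separates $\Ubar_3\to\PP^1$ in the first layer) and so also for $d(y_0^{a}y_1^{b})$ with $a$ odd. If that route fails, I would fall back on the degree-sum argument of Theorem~\ref{thm:23main}(c), combined with $D(C)\cap\{1,2,3,5,7\}=\emptyset$, for the small even degrees $8,10$.
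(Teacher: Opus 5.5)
Your proposal is correct and follows the paper's proof. The profile values are cited from Proposition~\ref{prop:known23}, Theorem~\ref{thm:23main} and Corollary~\ref{cor:B23}, and $d_0^{(p')}=\infty$ comes from the fact that splitting forces $8\mid[F':K]$. You handle the even degrees $d\ge8$ of $\langle4,6,11\rangle$ exactly as the paper does: replace $f$ by $f+y_0$ when $df=0$, using that $y_0$ has pole order $4<d$ and $dy_0\ne0$ because $k(\Ubar_3)/k(y_0)$ is a tower of Artin--Schreier extensions. Your fallback for $d=8,10$ is unnecessary, and it would not work on its own: a degree-$8$ fibre could a priori split as two points of degree $4$. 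So keep the primary route with $\lambda=y_0$.
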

\begin{proof}
Proposition~\ref{prop:known23}, Theorem~\ref{thm:23main}, Corollary~\ref{cor:B23}, Remark~\ref{rem:towerp2} and Lemma~\ref{lem:lift}; $d_0^{(p')}=\infty$ because $\tgen\otimes F'$ is split only if $L$ embeds in $F'$ (Proposition~\Aref{prop:d0}(a)), which forces $8\mid[F':K]$. For $D(U_3)$: $4,6\in D(U_3)$ by Theorem~\ref{thm:23main}(c); every other $d\in\langle4,6,11\rangle\smallsetminus\{0\}$ lies in $D(U_3)$ by Lemma~\ref{lem:hilbertfree}, applied to any function of pole order $d$ if $d$ is odd, and, if $d\ge8$ is even, to a function $f$ of pole order $d$, replaced by $f+y_0$ if $df=0$ (here $y_0$ has pole order $4<d$, and $dy_0=dt\ne0$ as $t=y_0^2+y_0$ is a separating element of $k(\Ubar_3)$).
\end{proof}


\section{Ledet's conjecture and the profile}\label{sec:ledet}

\subsection{What the profile sees}\label{ss:intermediate}
Over every large field $F\supseteq K$ one has $d_1(\tgen\otimes F)=1$ (Fact~\ref{fact:rsd}, Corollary~\ref{cor:large}): the profile of $\tgen\otimes F$ carries nothing beyond its last jump. Over $K$ itself, $\edd1(\tgen)$ is the Ledet indicator, but the slots $\edd d$ for $1<d<p^{n-1}$ are neither the indicator nor determined by any large-field or cohomological invariant: $\mathrm{cd}_p(K)\le1$, and the index and the elementary obstruction of $\tw{\tgen}{X}$ are trivial for every $X$ with a $G$-fixed $k$-point, since then $\tw{\tgen}{X}(K)\ne\emptyset$ (Proposition~\ref{prop:DX}(b)). Theorem~\ref{thm:N1} shows that these slots nevertheless carry information, and the input that makes it work is exactly what a large field destroys: the total space $L$ of $\tgen$ is purely transcendental over $k$, so divisor classes on $C_L$ are constant (Proposition~\ref{prop:rigid}(b)); after base change to a large $F$ the total space $L\otimes_KF$ is no longer of this form. At $(2,3)$ the large-field invariants, $\ed_{k,p}(\Z/8)=1$ and its quantitative form $d_1^{(p')}\le11$, are compatible with both $d_1=2$ and $d_1=4$; Theorem~\ref{thm:23main} shows that $d_1=4$ (see the remark below). In characteristic prime to $|G|$ \cite{PA} the situation is reversed: Sylow divisibility fixes the prime-to-$q$ tail, and the interesting part is the $q$-power part of the profile.

\begin{remark}[what this says about Ledet's conjecture]\label{rem:23ledet}
Theorem~\ref{thm:23main} does not decide $\ed_k(\Z/8)\in\{2,3\}$, and cannot: by monotonicity the slots $\edd d$ with $d>1$ bound $\edd1$ only from below, and $\edd1\ge\edd2=2$ was known. Both $\edd1=2$ and $\edd1=3$ are compatible with $d_1=4$. What the theorem does is fix the part of the profile that does not depend on the conjecture. The truncation tower of Proposition~\ref{prop:trunc} is optimal in degree $2$ (the case $j=1$ of Question~\ref{Q4} at this $(p,n)$); no cubic extension beats it; and, unconditionally, no faithful $\Z/8$-curve has a free point of degree $\le3$ on its twist (Theorem~\ref{thm:23main}(c)). So, whatever the value of $\ed_k(\Z/8)$, faithful curves detect nothing in degrees $\le3$, and the remaining uncertainty sits entirely in dimension $2$: $\edd1\le2$ iff some faithful $\Z/8$-surface has a free $K$-point on its twist, and $d_2\in\{1,2\}$ is the same bit (Proposition~\ref{prop:DX}). A proof of the conjecture must therefore be a statement about surfaces at $d=1$: every slot of the profile other than $\edd1$ is determined, and none of them carries information about $\edd1$.
\end{remark}

\subsection{How the slots are related}\label{ss:logic}

Write $f(n,d):=\edd d_k(\tau_n)$, let $\mathrm{Led}(m)$ be the statement $\ed_k(\Z/p^m)=m$, and let $P(n,1)$ be the statement $f(n,p)=n-1$, the subgroup formula at $j=1$ (the inequality $f(n,p)\le n-1$ is Proposition~\ref{prop:trunc}). Throughout $n\ge2$, $F:=k(s_0,\dots,s_{n-2})$ is the base field of $\tau_{n-1}$, $t:=s_{n-1}$ and $K=F(t)$; $\bar\tau$ denotes the truncation $\tau_{n-1}\otimes_FK$ of $\tau_n$.

\begin{lemma}[purely transcendental base change of the profile]\label{lem:profileconst}
Let $m\ge1$, $F=k(s_0,\dots,s_{m-1})$, $a\in\W_m(F)$ and $t$ transcendental over $F$. Then $\edd d_k(\tau_a\otimes_FF(t))=\edd d_k(\tau_a)$ for all $d\ge1$.
\end{lemma}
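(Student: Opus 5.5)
The inequality $\edd d_k(\tau_a\otimes_FF(t))\le\edd d_k(\tau_a)$ is the easy direction. Given $F'/F$ with $[F':F]\le d$ and $\ed_k(\tau_a\otimes F')=e$, the field $F'(t)=F'\otimes_FF(t)$ is an extension of $F(t)$ of the same degree. Since $F$ is algebraically closed in $F(t)$, the tensor product is a field. By (P1), $\ed_k(\tau_a\otimes F'(t))\le\ed_k(\tau_a\otimes F')$: a descent over $F'$ remains a descent over $F'(t)$. In fact equality holds, by the purely transcendental invariance in (P1), but only $\le$ is needed.

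The reverse inequality is the substance. Let $M/F(t)$ be an extension of degree $\le d$ with $\ed_k(\tau_a\otimes M)=e$; by (P5) we may take it separable. I would specialise $t$. The field $M$ is the function field of a curve over $F$, finite over $\PP^1_F$, and a descent of $\tau_a\otimes M$ to a subfield $M_0$ with $\trdeg_kM_0=e$ involves finitely many elements: the generators of $M_0$, a Witt vector $b\in\W_m(M_0)$, and $y\in\W_m(M)$ with $a=b+\wp y$. All of these are regular over a dense open $U\subseteq\A^1_F$. Choose $c\in F$, with $F$ infinite, such that $t=c$ lies in $U$ and its fibre is étale over $F$. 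Then a residue field $F'$ of some point of the fibre satisfies $[F':F]\le[M:F(t)]\le d$, and specialising gives $a=\bar b+\wp\bar y$ in $\W_m(F')$. Here $\bar b$ has coordinates in the image $\bar M_0$ of the specialised subring. To make $\trdeg_k\bar M_0\le e$ I would work with the finitely generated $k$-algebra $R\subseteq M_0$ generated by the coordinates of $b$, so that $R$ has Krull dimension $e$. Its image under specialisation is a quotient of $R$, hence has transcendence degree $\le e$. This gives $\ed_k(\tau_a\otimes F')\le e$.

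The main obstacle is making the specialisation legitimate. The coordinates of $b$ and $y$, which lie in $M$, must be integral at the chosen point, and a good $c$ must exist. I would handle this by taking the integral closure $A$ of $F[t]$ in $M$, inverting a nonzero element $g\in F[t]$ so that all finitely many relevant elements lie in $A[1/g]$, and choosing $c\in F$ with $g(c)\ne0$ and with the discriminant nonvanishing at $c$. This is possible because $F$ is infinite. Any maximal ideal of $A[1/g]$ over $t-c$ then has residue field of degree $\le[M:F(t)]$ over $F$. An alternative is to quote the Hilbertian or specialisation lemma of \cite{PA} in the spirit of Lemma~\Aref{lem:constant}, but the direct argument seems self-contained. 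Combining the two inequalities and minimising over extensions of degree $\le d$ gives the equality for every $d$.
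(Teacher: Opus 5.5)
Your proof is correct. The ``$\le$'' half matches the paper: the paper quotes the equality $\ed_k(\tau_a\otimes F''(t))=\ed_k(\tau_a\otimes F'')$ from Part~I, but monotonicity suffices, as you note. For ``$\ge$'', you and the paper share the same skeleton. Both spread the descent $a=b+\wp y$ over a finite $F[t]$-algebra with one element inverted, specialise $t\mapsto c$, and bound the transcendence degree of the specialised coordinates of $b$ by the dimension of the $k$-algebra they generate.

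The difference is how the specialised field is produced. The paper works with $R=F[t][\theta]$ and uses that $F$ is Hilbertian to choose $c$ with $P(c,Y)$ irreducible, so that $R/(t-c)$ is itself a field of degree exactly $d'=[M:F(t)]$. You instead take an arbitrary closed point of the fibre. Since $A$ is finite and torsion-free over the principal ideal domain $F[t]$, it is free of rank $d'$. So whenever $g(c)\ne0$, the ring $A[1/g]/(t-c)=A/(t-c)A$ is a nonzero $F$-algebra of dimension $d'$, and each of its residue fields has degree $\le d'$ over $F$.

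Since the profile only asks for extensions of degree $\le d$, this is enough. Your route is more elementary: it needs only that $F$ is infinite, so the lemma holds with any field $F\supseteq k$ in place of $k(s_0,\dots,s_{m-1})$. The condition that the discriminant not vanish at $c$ (étaleness of the fibre) is superfluous.

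What the Hilbertian route buys is exact preservation of the degree. For a variant of the profile with a divisibility condition on the degree, such as the prime-to-$p$ profile, your route would still work: the lengths times residue degrees of the points of the fibre sum to $d'$, so if $p\nmid d'$ some point has residue degree prime to $p$.
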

\begin{proof}
``$\le$'': an extension $F''/F$ of degree $d'\le d$ gives $F''(t)/F(t)$ of degree $d'$, and $\ed_k(\tau_a\otimes F''(t))=\ed_k(\tau_a\otimes F'')$ by Lemma~\Aref{lem:constant}(i).

``$\ge$'': let $F'/F(t)$ have degree $d'\le d$ and $\ed_k(\tau_a\otimes F')\le e$. By Lemma~\Aref{lem:pi} we may assume $F'/F(t)$ separable, so $F'=F(t)(\theta)$ with $\theta$ integral over $F[t]$ and minimal polynomial $P(t,Y)\in F[t][Y]$ monic in $Y$, irreducible in $F[t,Y]$. Put $R=F[t][\theta]\cong F[t][Y]/(P)$, free of rank $d'$ over $F[t]$. By hypothesis $a=b+\wp y$ in $\W_m(F')$ with $b\in\W_m(F_0)$, $F_0=k(f_1,\dots,f_r)\subseteq F'$, $\trdeg_kF_0=e$, and $y\in\W_m(F')$. Let $A\subseteq F_0$ be the $k$-subalgebra generated by the coordinates of $b$; it is a domain of dimension $\le\trdeg_kF_0=e$. Choose $0\ne h\in R$ such that $A\subseteq R[1/h]$ and the coordinates of $y$ lie in $R[1/h]$. The identity $a=b+\wp y$ holds in $\W_m(R[1/h])$. For $c\in F$ with $N_{R/F[t]}(h)(c)\ne0$, evaluation $t\mapsto c$ is an $F$-algebra map $\mathrm{ev}_c\colon R[1/h]\to R/(t-c)=F[Y]/(P(c,Y))$. The field $F$ is Hilbertian \cite[Thm.~13.4.2]{FJ}, so $P(c,Y)$ is irreducible for $c$ in a Hilbert subset of $F$, which stays non-empty after removing the finitely many roots of $N_{R/F[t]}(h)$; for such $c$, $F'_c:=R/(t-c)$ is a field of degree $d'$ over $F$. Applying $\mathrm{ev}_c$ gives $a=\mathrm{ev}_c(b)+\wp\,\mathrm{ev}_c(y)$ in $\W_m(F'_c)$, because $a$ has coordinates in $F$. The coordinates of $\mathrm{ev}_c(b)$ lie in the image $\mathrm{ev}_c(A)$, a quotient of $A$, so the subfield of $F'_c$ they generate over $k$ has transcendence degree $\le\dim A\le e$. Hence $\ed_k(\tau_a\otimes F'_c)\le e$ and $\edd d_k(\tau_a)\le\edd d_k(\tau_a\otimes F(t))$.
\end{proof}

\begin{proposition}[the Witt layers of the profile]\label{prop:layers}
For $n\ge2$ and $d\ge1$:
\begin{enumerate}[label=\textup{(\alph*)}]
\item $f(n-1,d)\le f(n,d)\le f(n-1,d)+1$;
\item $f(n,pd)\le f(n-1,d)$.
\end{enumerate}
Consequently, with $\delta_m(d):=f(m,d)-f(m-1,d)$ \textup{(}$f(0,d):=0$; $\delta_1(d)=f(1,d)\in\{0,1\}$ since $\ed_k(\Z/p)=1$, and $\delta_m(d)\in\{0,1\}$ for $m\ge2$ by \textup{(a)}\textup{)}, we have $f(n,p^j)=\sum_{m=j+1}^n\delta_m(p^j)$, and $P(n,j)$, the statement $f(n,p^j)=n-j$, holds iff $\delta_m(p^j)=1$ for all $j<m\le n$: every Witt layer is strict.
\end{proposition}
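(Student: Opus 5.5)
Part (a), upper bound: apply Lemma~\ref{lem:lift} with $\tau_a=\tau_n$ over $K=F(t)$, where $F=k(s_0,\dots,s_{n-2})$ and $t=s_{n-1}$. This gives $f(n,d)\le\edd d_k(\bar\tau)+1$. By Lemma~\ref{lem:profileconst}, $\edd d_k(\bar\tau)=\edd d_k(\tau_{n-1}\otimes_FF(t))=f(n-1,d)$. (The final assertion of Lemma~\ref{lem:lift} already states this upper bound directly.)

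Part (a), lower bound: let $F'/K$ have degree $\le d$ and $\ed_k(\tau_n\otimes F')=e$. Choose a descent $s=b+\wp y$ in $\W_n(F')$ with $b$ defined over a field of transcendence degree $e$. Truncation $\W_n\to\W_{n-1}$ is a ring map commuting with $\wp$, so it yields a descent of $\bar\tau\otimes F'$ over the same field. Hence $\ed_k(\bar\tau\otimes F')\le e$. This matches the fact that the image of a torsor under $\Z/p^n\to\Z/p^{n-1}$ does not increase $\ed$, by (P1)-style functoriality. Minimizing over $F'$ gives $\edd d_k(\bar\tau)\le f(n,d)$, and Lemma~\ref{lem:profileconst} converts the left side to $f(n-1,d)$.

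Part (b): take $F'=K(v)$ with $v^p-v=t$, the first step of the truncation tower (Lemma~\ref{lem:tower} with $r=n-1$). By \eqref{eq:Vsplit}, over $K(v)$ one has $s\equiv(s_0,\dots,s_{n-2},0)$. This class is induced from $\tau_{n-1}$ over $F$, in the sense that it has coordinates in $F$, and $K(v)=F(v)$ is purely transcendental over $F$ because $t=v^p-v$. Next, take an extension $F''/F$ of degree $\le d$ achieving $f(n-1,d)$, so $\tau_{n-1}\otimes F''$ descends to transcendence degree $f(n-1,d)$. The composite $F''(v)$ has degree $\le pd$ over $K$. Over it, $s\equiv(\bar a,0)$, where $\bar a\equiv b+\wp y$ with $b,y$ over $F''$. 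Lifting via $\tilde y=(y,0)$, the difference $(\bar a,0)-\wp\tilde y$ has truncation $b$; its last coordinate $c$ lies in $F''$ but need not lie in the descent field. This is the main obstacle. To handle it, I would instead work with the class $(s_0,\dots,s_{n-2},0)=V^0$-extension of $\bar a$ by zero. Use that $\W_{n-1}\to\W_n$, $x\mapsto(x,0)$, is not additive, so I would argue via torsors: the $\Z/p^n$-torsor of class $(a',0)$ for $a'\in\W_{n-1}(F)$ depends only on $a'$ through a fixed polynomial map. Thus $\tau_{(a',0)}$ is defined over $k(a')$, and $\ed_k\tau_{(a',0)}\le\ed_k$ of the $\Z/p^{n-1}$-torsor $\tau_{a'}$ computed via the versal map. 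If that fails, I would fall back to a statement only about the class up to $\wp$, choosing $y$ so that $c$ lies in $k(b)$ after a further adjustment in the top coordinate.

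For the consequences: (a) gives $\delta_m(d)\in\{0,1\}$ for $m\ge2$. By telescoping, $f(n,p^j)=\sum_{m=1}^n\delta_m(p^j)$. Proposition~\ref{prop:trunc} applied to each $\tau_m$ gives $f(m,p^j)=0$ for $m\le j$, so the terms with $m\le j$ vanish. Since there are $n-j$ remaining terms, each in $\{0,1\}$, $f(n,p^j)=n-j$ holds iff each of them equals $1$.
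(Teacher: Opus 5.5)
Your part (a) and your derivation of the consequences are correct and agree with the paper. Part (b), however, is not proved, and the route through $K(v)$ with $v^p-v=s_{n-1}$ cannot be completed. Over $K(v)$ the class becomes $(s_0,\dots,s_{n-2},0)$. This is a \emph{lift} of $\tau_{n-1}$ along the quotient $\Z/p^n\to\Z/p^{n-1}$, not a torsor induced from the subgroup $\Z/p^{n-1}$.

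The map $a'\mapsto(a',0)$ does not respect congruence modulo $\wp$, so the class of $(a',0)$ depends on the representative $a'$ and not only on the torsor $\tau_{a'}$. Consequently your fallback inequality $\ed_k\tau_{(a',0)}\le\ed_k\tau_{a'}$ is false. Take $a'=(s_0,\dots,s_{n-2})$ over the total space $L_{(n-1)}$ of $\tau_{n-1}$. Then $\tau_{a'}$ is split. But $\tau_{(a',0)}$ is not split, because $(s_0,\dots,s_{n-2},0)$ has order $p^n$ (proof of Proposition~\ref{prop:S0}(a)) while $[L_{(n-1)}:F]=p^{n-1}$.

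The failure is structural, not a matter of a better choice of $y$. Since $K(v)\ne K_1$, we have $K(v)\cap L=K$, so $L\otimes_KK(v)$ is a field of degree $p^n$ over $K(v)$. Hence no extension of $K(v)$ of degree $\le p^{n-1}$ splits $\tau_n$. At $d=p^{n-1}$ your route can therefore never yield $f(n,p^n)\le f(n-1,p^{n-1})=0$.

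The missing idea is to pass to the subgroup rather than the quotient. Use the degree-$p$ subfield $K_1=K(z_0)=L^{\langle g^p\rangle}$ of $L$, where $z_0^p-z_0=s_0$.
\begin{itemize}
\item For every extension $F'/K_1$ one has $\tau_n\otimes F'=\Ind_H^G\bigl((L/K_1)\otimes F'\bigr)$ with $H=\langle g^p\rangle\cong\Z/p^{n-1}$. So Lemma~\Aref{lem:ind} gives $\edd d_k(\tau_n\otimes K_1)=\edd d_k(L/K_1)$.
\item By the proof of Proposition~\ref{prop:intermediate}, $L/K_1=\tau_{n-1}\otimes_{F''}F''(z_0)$ with $F''=k(s'_0,\dots,s'_{n-2})$. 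This is a purely transcendental base change of the generic $\Z/p^{n-1}$-torsor, so Lemma~\ref{lem:profileconst} gives $\edd d_k(L/K_1)=f(n-1,d)$.
\item An extension of $K_1$ of degree $\le d$ has degree $\le pd$ over $K$. Therefore $f(n,pd)\le f(n-1,d)$.
\end{itemize}
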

\begin{proof}
(a) By Lemma~\Aref{lem:pi} it suffices to consider separable $F'/K$ of degree $\le d$. Truncation is a ring map commuting with $\wp$, so a descent of $\tau_n\otimes F'$ truncates to a descent of $\bar\tau\otimes F'$ of no larger transcendence degree, and $\ed_k(\tau_n\otimes F')\ge\ed_k(\bar\tau\otimes F')\ge\edd d_k(\bar\tau)=f(n-1,d)$ by Lemma~\ref{lem:profileconst}. Conversely $f(n,d)\le\edd d_k(\bar\tau)+1=f(n-1,d)+1$ by Lemma~\ref{lem:lift} and Lemma~\ref{lem:profileconst}.

(b) By Proposition~\Aref{prop:shape}(b) with $F'=K_1$, $f(n,pd)\le\edd d_k(\tau_n\otimes K_1)$. By Lemma~\Aref{lem:ind} applied to each extension of $K_1$, $\edd d_k(\tau_n\otimes K_1)=\edd d_k(L/K_1)$, and $L/K_1=\tau_{n-1}\otimes_{F''}F''(z_0)$ with $F''=k(s'_0,\dots,s'_{n-2})$ (proof of Proposition~\ref{prop:intermediate}), whose profile is $f(n-1,d)$ by Lemma~\ref{lem:profileconst}.

The last statement: $f(j,p^j)=0$ since $\tau_j$ is split by an extension of degree $p^j$, so $f(n,p^j)=\sum_{m=j+1}^n\delta_m(p^j)\le n-j$, with equality iff all $\delta_m(p^j)=1$.
\end{proof}

\begin{remark}\label{rem:layersN1}
$\delta_{j+1}(p^j)=1$ trivially ($f(j+1,p^j)\ge1$ by Proposition~\Aref{prop:d0}(c) and $\le1$ by Proposition~\ref{prop:trunc}), and $\delta_{j+2}(p^j)=1$ is exactly Theorem~\ref{thm:N1} ($f(j+2,p^j)=2$). The first layer not covered is $\delta_{j+3}(p^j)$.
\end{remark}

\begin{theorem}[$P(n,1)$ and Ledet's conjecture]\label{thm:Pn1}
Let $n\ge2$, and let $D'(n)$ be the statement: $\ed_k(\tau_n\otimes_KF')\ge n-1$ for every separable extension $F'/K$ with $2\le[F':K]\le p$ and $L\otimes_KF'$ a field. Then
\[
P(n,1)\iff\mathrm{Led}(n-1)\ \text{and}\ D'(n).
\]
\end{theorem}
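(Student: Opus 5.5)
We prove both directions by sorting the separable extensions $F'/K$ of degree $\le p$ into three classes: the trivial one $F'=K$; those with $F'\supseteq K_1$, which by degree must be $F'=K_1$; and those with $2\le[F':K]\le p$ and $L\otimes_KF'$ a field. These are all the cases. By Lemma~\Aref{lem:pi}, $f(n,p)$ is the minimum of $\ed_k(\tau_n\otimes F')$ over these three classes. Recall that $L\otimes_KF'$ is a field iff $F'\not\supseteq K_1$, by Corollary~\ref{cor:jumpsB}.

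First we compute the two distinguished values. For $F'=K$ the value is $\ed_k(\tau_n)=\ed_k(\Z/p^n)$. For $F'=K_1$, Proposition~\ref{prop:intermediate} gives $\ed_k(\tau_n\otimes K_1)=\ed_k(\Z/p^{n-1})$. Since Lemma~\ref{lem:lift} (with Lemma~\ref{lem:versal}) gives $\ed_k(\Z/p^n)\ge\ed_k(\Z/p^{n-1})$, hence monotonicity, the trivial extension never gives a smaller value than $K_1$. So
\[
f(n,p)=\min\bigl(\ed_k(\Z/p^{n-1}),\ \min\{\ed_k(\tau_n\otimes F'):F'\text{ in the third class}\}\bigr).
\]
Truncation also gives $\ed_k(\Z/p^n)\ge\ed_k(\Z/p^{n-1})$ directly: descents of $\tau_n$ truncate to descents of $\tau_{n-1}\otimes K$, and Lemma~\Aref{lem:constant}(i) handles the purely transcendental variable $s_{n-1}$.

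($\Leftarrow$) Assume $\mathrm{Led}(n-1)$ and $D'(n)$. Then the $K_1$ term equals $n-1$, and every term in the third class is $\ge n-1$ by $D'(n)$. The trivial term is $\ge n-1$ by the monotonicity above. Hence $f(n,p)\ge n-1$. Since Proposition~\ref{prop:trunc} gives $f(n,p)\le n-1$, we get $P(n,1)$.

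($\Rightarrow$) Assume $f(n,p)=n-1$. Then every term of the minimum is $\ge n-1$. The third class gives $D'(n)$ immediately. The $K_1$ term gives $\ed_k(\Z/p^{n-1})\ge n-1$, and Lemma~\ref{lem:versal}(c) gives $\le n-1$, so $\mathrm{Led}(n-1)$ holds.

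The only delicate point is the case analysis. We must make sure the intermediate fields of $L/K$ form the chain $K_i$, and that a separable $F'$ of degree $\le p$ containing $K_1$ equals $K_1$. We also need the correct treatment of inseparable $F'$, which Lemma~\Aref{lem:pi} disposes of. For $p=2$ the third class is just the separable quadratic extensions different from $K_1$, and the same argument applies verbatim.
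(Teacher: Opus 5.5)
Your proof is correct and follows the paper's own argument. Both use the same sorting of separable $F'$ of degree $\le p$ into $K$, $K_1$, and extensions disjoint from $L$; both take $\ed_k(\tau_n\otimes K_1)=\ed_k(\Z/p^{n-1})$ from Proposition~\ref{prop:intermediate}, and both handle $F'=K$ by truncation together with Lemma~\Aref{lem:constant}(i). One small slip: Lemma~\ref{lem:lift} gives only $\ed_k(\Z/p^n)\le\ed_k(\Z/p^{n-1})+1$, not monotonicity, but your truncation argument supplies the needed inequality (as does $\ed_k(\tau_n)\ge\ed_k(\tau_n\otimes K_1)$ from (P1)).
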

\begin{proof}
Classification. Let $F'/K$ be separable with $[F':K]\le p$. Fix an embedding of $F'$ in an algebraic closure containing $L$. Since $L/K$ is Galois with cyclic group $G$, $L\cap F'=K_i$ for some $0\le i\le n$; $K_i\subseteq F'$ gives $p^i\le[F':K]\le p$. If $i=1$ then $F'=K_1$; if $i=0$ then $L\otimes_KF'$ is a field, since $[LF':F']=[L:L\cap F']$. So $F'=K_1$, or $L\otimes_KF'$ is a field (and $F'=K$ is the case of degree $1$).

The field $K_1$. By Lemma~\Aref{lem:ind} and the proof of Proposition~\ref{prop:intermediate}, $\ed_k(\tau_n\otimes K_1)=\ed_k(L/K_1)$, and $L/K_1=\tau_{n-1}\otimes_{F''}F''(z_0)$ with $z_0$ transcendental over $F''$; so $\ed_k(\tau_n\otimes K_1)=\ed_k(\tau_{n-1})=\ed_k(\Z/p^{n-1})$ by Lemma~\Aref{lem:constant}(i) and Lemma~\ref{lem:versal}(c).

($\Rightarrow$) $D'(n)$ holds because $f(n,p)\ge n-1$. Moreover $n-1=f(n,p)\le\ed_k(\tau_n\otimes K_1)=\ed_k(\Z/p^{n-1})\le n-1$, so $\mathrm{Led}(n-1)$.

($\Leftarrow$) Let $F'/K$ have degree $\le p$; by Lemma~\Aref{lem:pi} we may assume it separable. If $F'=K_1$, then $\ed_k(\tau_n\otimes K_1)=\ed_k(\Z/p^{n-1})=n-1$ by $\mathrm{Led}(n-1)$. If $F'=K$, then $\ed_k(\tau_n)\ge\ed_k(\bar\tau)=\ed_k(\tau_{n-1})=n-1$ by truncation, Lemma~\Aref{lem:constant}(i) and $\mathrm{Led}(n-1)$. In the remaining cases $D'(n)$ applies. Hence $f(n,p)\ge n-1$, and $f(n,p)\le n-1$ is Proposition~\ref{prop:trunc}.
\end{proof}

\begin{corollary}\label{cor:Pn1}
\begin{enumerate}[label=\textup{(\alph*)}]
\item $P(n,1)$ implies $\mathrm{Led}(n-1)$ and $P(n-1,1)$. In particular $P(n,1)$ for all $n$ implies Ledet's conjecture for all $n$, and the subgroup formula at $j=1$ for $\Z/p^4$ contains $\ed_k(\Z/p^3)=3$.
\item $D'(2)$ is trivial and $D'(3)$ holds for every $p$ \textup{(}Theorem~\ref{thm:N1}\textup{)}; hence $P(3,1)$ holds for every $p$. For $n\ge4$, $D'(n)$ is open.
\end{enumerate}
\end{corollary}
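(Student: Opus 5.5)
Both parts of the corollary follow from Theorem~\ref{thm:Pn1}, used once at level $n$ and once at level $n-1$, together with Theorem~\ref{thm:N1} for the base case.

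For (a), assume $P(n,1)$. The forward direction of Theorem~\ref{thm:Pn1} gives $\mathrm{Led}(n-1)$ at once. To get $P(n-1,1)$ (for $n-1\ge2$), I will not attempt to deduce $D'(n-1)$ from $D'(n)$ by tracking extensions. Instead I use the layer inequality Proposition~\ref{prop:layers}(a) with $d=p$:
\[
f(n-1,p)\ge f(n,p)-1=n-2.
\]
The reverse inequality $f(n-1,p)\le n-2$ is Proposition~\ref{prop:trunc}, so $P(n-1,1)$ holds.

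The consequences then follow. If $P(n,1)$ holds for all $n$, then $\mathrm{Led}(n-1)$ holds for all $n\ge2$, which is Ledet's conjecture in every level; level $m$ is obtained from $P(m+1,1)$. For $n=4$ we get $\mathrm{Led}(3)$, that is, $\ed_k(\Z/p^3)=3$.

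For (b), consider first $n=2$. Any separable $F'$ with $[F':K]\ge2$ satisfies $\ed_k(\tau_2\otimes F')\ge1=n-1$, because $L\otimes F'$ is a field and hence the torsor is not split (\Bref{3}). So $D'(2)$ is trivial.

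For $n=3$, let $F'$ have degree $\le p<p^{2}=p^{n-1}$. Theorem~\ref{thm:N1} gives $\ed_k(\tau_3\otimes F')\ge2=n-1$, which is $D'(3)$. Since $\mathrm{Led}(2)$ holds by Proposition~\ref{prop:G1}, the backward direction of Theorem~\ref{thm:Pn1} yields $P(3,1)$. This is also Theorem~\ref{thm:N1}(c) with $j=1$.

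The claim that $D'(n)$ is open for $n\ge4$ is a status remark, not something to prove. I would justify it by noting that Theorem~\ref{thm:N1} only gives the lower bound $2$, whereas $D'(n)$ requires $n-1\ge3$.

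The main subtle step is the deduction of $P(n-1,1)$. Using Proposition~\ref{prop:layers}(a) avoids having to transport extensions between different base fields, so no real obstacle is expected.
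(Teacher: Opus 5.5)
Your proposal is correct and follows the paper's proof essentially line by line: $\mathrm{Led}(n-1)$ comes from the forward direction of Theorem~\ref{thm:Pn1}, $P(n-1,1)$ from Proposition~\ref{prop:layers}(a) with $d=p$ together with Proposition~\ref{prop:trunc}, and (b) from non-splitness at $n=2$ and from Theorem~\ref{thm:N1} plus $\mathrm{Led}(2)$ at $n=3$. Your restriction to $n-1\ge2$ is unnecessary: Proposition~\ref{prop:layers} holds for all $n\ge2$, and $P(1,1)$ is trivial.
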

\begin{proof}
(a) The first claim and $\mathrm{Led}(n-1)$ are Theorem~\ref{thm:Pn1}. For $P(n-1,1)$: $n-1=f(n,p)\le f(n-1,p)+1$ by Proposition~\ref{prop:layers}(a), and $f(n-1,p)\le n-2$ by Proposition~\ref{prop:trunc}. (b) For $n=2$, $L\otimes_KF'$ is a field, so $\tau_2\otimes F'$ is a non-split torsor and $\ed\ge1$. For $n=3$, Theorem~\ref{thm:N1} gives $\ed_k(\tau_3\otimes F')\ge2$ for every $F'$ of degree $<p^2$; and $\mathrm{Led}(2)$ holds ($\ed_k(\Z/p^2)\le2$ by Lemma~\ref{lem:versal}(c) and $\ge2$ by Remark~\ref{rem:rigid-d1}).
\end{proof}

\begin{remark}\label{rem:Dn}
$D'(n)$ is not known to be easier than $\mathrm{Led}(n-1)$ for $n\ge4$, nor to imply it. Partial and conditional results: if $F'\supseteq K$ is of the form $F_1(w)$, purely transcendental over a finite extension $F_1/F$, with $\ed_k(\tau_{n-1}\otimes_FF_1)=n-1$, then $\ed_k(\tau_n\otimes F')\ge\ed_k(\tau_{n-1}\otimes_FF_1(w))=\ed_k(\tau_{n-1}\otimes_FF_1)=n-1$ by truncation and Lemma~\Aref{lem:constant}(i); and for $2\le[F':K]<p$, $D'(n)$ would follow from the plateau $f(n-1,p-1)=n-1$ by Proposition~\ref{prop:layers}(a); but that plateau implies $\mathrm{Led}(n-1)$, which is open for $n-1\ge3$, so this is conditional. What is missing is the strictness of the top layer, $\delta_n(p)=1$, for extensions of degree $\le p$ disjoint from $L$.
\end{remark}

\begin{proposition}[Ledet's conjecture as a lifting statement]\label{prop:Led3lift}
Let $n\ge3$, $F=k(s_0,\dots,s_{n-2})$, and let $K_1'=F(z_0)\subset L$ with $z_0^p-z_0=s_0$, so that $K_1=K(z_0)=K_1'(t)$. Then
\[
\ed_k(\tau_n\otimes K_1)=\ed_k(\Z/p^{n-1}),\qquad\ed_k(\bar\tau\otimes K_1)=\ed_k(\tau_{n-1}\otimes_FK_1')=\ed_k(\Z/p^{n-2}).
\]
Consequently $\ed_k(\Z/p^{n-1})=\ed_k(\Z/p^{n-2})+1$ holds if and only if passing from the truncation $\bar\tau$ to $\tau_n$ over $K_1$ raises the essential dimension by exactly one: $\ed_k(\tau_n\otimes K_1)=\ed_k(\bar\tau\otimes K_1)+1$. In particular $\mathrm{Led}(3)$, i.e.\ $\ed_k(\Z/8)=3$, is equivalent to $\ed_k(\tau_4\otimes K_1)=\ed_k(\bar\tau\otimes K_1)+1$ at $(p,n)=(2,4)$, and, given $\mathrm{Led}(n-2)$, $\mathrm{Led}(n-1)$ is equivalent to the same statement for $n$.
\end{proposition}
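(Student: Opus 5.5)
The plan is to derive both displayed equalities from what is already known about the intermediate field $K_1$ and about purely transcendental base change. The consequences will then follow by substitution.

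\emph{First equality.} This is essentially Proposition~\ref{prop:intermediate} with $j=1$. Lemma~\Aref{lem:ind} identifies $\tau_n\otimes K_1=\Ind_H^G(L/K_1)$ with $H=\langle g^p\rangle$. The proof of that proposition shows $L/K_1=\tau_{n-1}\otimes_{F''}F''(z_0)$, where $F''=k(s'_0,\dots,s'_{n-2})$. Lemma~\Aref{lem:constant}(i) together with Lemma~\ref{lem:versal}(c) then gives $\ed_k(\tau_n\otimes K_1)=\ed_k(\Z/p^{n-1})$. This is exactly the computation already carried out in the proof of Theorem~\ref{thm:Pn1}, so it will only be quoted.

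\emph{Second equality.} Since $\bar\tau=\tau_{n-1}\otimes_FK$ and $K_1=K(z_0)=K_1'(t)$ with $t=s_{n-1}$ transcendental over $K_1'$, we have $\bar\tau\otimes_KK_1=(\tau_{n-1}\otimes_FK_1')\otimes_{K_1'}K_1'(t)$. Lemma~\Aref{lem:constant}(i) then yields $\ed_k(\bar\tau\otimes K_1)=\ed_k(\tau_{n-1}\otimes_FK_1')$. Next we apply Proposition~\ref{prop:intermediate} with $n-1$ in place of $n$ and $j=1$. Here $K_1'=F(z_0)$ is exactly the first intermediate field of the generic $\Z/p^{n-1}$-torsor over $F$, because $z_0^p-z_0=s_0$ is the first Witt layer of the class $(s_0,\dots,s_{n-2})$. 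That proposition gives $\ed_k(\tau_{n-1}\otimes_FK_1')=\ed_k(\Z/p^{n-2})$. Its hypothesis $1\le j\le n-2$ requires $n\ge3$, which is why the statement assumes it.

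The one point needing care is this identification of $K_1'$ with the field $K_1$ for level $n-1$. Concretely, the subfield $F(z_0)$ of $L_{(n-1)}$ should coincide with $L_{(n-1)}^{\langle g^p\rangle}$. This follows from~\eqref{eq:gpj}, since $g^p$ fixes $z_0$ and the degrees match. Once this is in place the rest is formal.

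\emph{Consequences.} Substituting the two equalities, the condition $\ed_k(\tau_n\otimes K_1)=\ed_k(\bar\tau\otimes K_1)+1$ reads literally $\ed_k(\Z/p^{n-1})=\ed_k(\Z/p^{n-2})+1$. For $(p,n)=(2,4)$ we use $\ed_k(\Z/4)=2$ (Proposition~\ref{prop:G1}), so the condition becomes $\ed_k(\Z/8)=3$, i.e.\ $\mathrm{Led}(3)$. More generally, assuming $\mathrm{Led}(n-2)$, the condition says $\ed_k(\Z/p^{n-1})=n-1$, i.e.\ $\mathrm{Led}(n-1)$.
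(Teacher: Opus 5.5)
Your proposal is correct and follows the paper's own proof step for step. The first equality is quoted from the proof of Theorem~\ref{thm:Pn1}, and the second comes from $\bar\tau\otimes_KK_1=(\tau_{n-1}\otimes_FK_1')\otimes_{K_1'}K_1'(t)$, Lemma~\Aref{lem:constant}(i), and Proposition~\ref{prop:intermediate} at level $n-1$ with $j=1$ (where $n\ge3$ enters); your extra details, namely $F(z_0)=L_{(n-1)}^{\langle g^p\rangle}$ and the substitution $\ed_k(\Z/p^2)=2$ for the \emph{in particular} clauses, are correct and only implicit in the paper.
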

\begin{proof}
The first identity is proved in the proof of Theorem~\ref{thm:Pn1}. For the second, $\bar\tau\otimes_KK_1=(\tau_{n-1}\otimes_FK_1')\otimes_{K_1'}K_1'(t)$, so $\ed_k(\bar\tau\otimes K_1)=\ed_k(\tau_{n-1}\otimes_FK_1')$ by Lemma~\Aref{lem:constant}(i), and $K_1'$ is the degree-$p$ subfield $F(z_0)$ of the total space of $\tau_{n-1}$, so $\ed_k(\tau_{n-1}\otimes_FK_1')=\ed_k(\Z/p^{n-2})$ by the same argument applied to $\tau_{n-1}$ (Proposition~\ref{prop:intermediate} and Lemma~\Aref{lem:constant}(i)); this needs $n-1\ge2$.
\end{proof}

\begin{remark}[what Theorem~\ref{thm:N1} does not decide]\label{rem:N1ledet}
(i) By Proposition~\ref{prop:intermediate} and Lemma~\Aref{lem:constant}(i), $\ed_k(\tgen\otimes K_1)=\ed_k(\tau_{n-1})$ ($K_1$ is purely transcendental over the base field of the generic $\Z/p^{n-1}$-torsor). Hence $\edd p_k(\tau_n)=n-1$ would force $\ed_k(\Z/p^{n-1})=n-1$: the subgroup formula at $j=1$ for $\Z/p^4$ contains Ledet's conjecture for $\Z/p^3$, and the formula at $j=1$ for all $n$ contains Ledet's conjecture for all $n$ (it gives $\ed_k(\Z/p^n)\ge n-1$ only). Theorem~\ref{thm:N1} therefore settles exactly the slots with $n-j\le2$; the slots with $n-j\ge3$ remain open, the first being $\ed_k(\Z/8)$ at $(2,3)$. (ii) The proof is a statement about curve witnesses, i.e.\ $\ed\le1$. For witnesses of dimension $\ge2$ the first two steps survive in the form of constancy of the class of the point cycle in $\mathrm{CH}_0$, but there is no complete linear system on which $G$ acts, which is what Proposition~\ref{prop:rigidN} uses. In dimension two the class of the zero-cycle $\Delta$ on $X_L$ is still constant in $\mathrm{CH}_0$ (the map $\mathrm{CH}_0(X)\to\mathrm{CH}_0(X_L)$ is surjective for $L/k$ purely transcendental, by homotopy invariance and localisation), hence $G$-invariant. But for $X$ with $A_0(X)=0$, for instance a rational surface, this statement is empty, and for $A_0(X)\ne0$ the class moves in an infinite-dimensional family. We therefore see no analogue of Proposition~\ref{prop:rigidN}, and a lower bound for $\edd d$ in the range $\ed\le2$ would need new input.
\end{remark}

\subsection{Which surfaces decide Ledet's conjecture at $n=3$}\label{ss:surfaces}

\begin{proposition}\label{prop:DX}
\begin{enumerate}[label=\textup{(\alph*)}]
\item Let $X$ be an irreducible quasi-projective surface with a faithful $G$-action. The following are equivalent: \textup{(i)} $1\in D(X)$; \textup{(ii)} $X$ is $\tgen$-versal; \textup{(iii)} there is a $G$-compression $\W_3\dashrightarrow X$. Moreover, $\delta_\free(X)\ge2$ for every smooth projective surface $X$ with a faithful $G$-action iff $\ed_k(\Z/p^3)=3$.
\item Let $X$ be a smooth projective surface with a faithful $G$-action and a $G$-fixed $k$-point. Then $\tw{\tgen}{X}(K)\ne\emptyset$ (the constant map to the fixed point, Fact~\Aref{fact:rs}(a)); in particular the index and elementary obstruction of $\tw{\tgen}{X}$ are trivial.
\item $d_2(\tgen)\le p$, and $d_2(\tgen)=\min_Z\delta_\free(Z)$ over quasi-projective generically free primitive $G$-varieties $Z$ of dimension $\le2$; the minimum over smooth projective surfaces with faithful $G$-action equals $1$ iff $\ed_k(\Z/p^3)\le2$.
\end{enumerate}
\end{proposition}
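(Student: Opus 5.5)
The plan is to reduce everything to the dictionary between $K$-points of twists and $G$-equivariant rational maps out of $\W_3$. Throughout, $G=\Z/p^3$ (this is the case $n=3$), and $\Spec L$ is the generic point of $\W_3$.

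\emph{Part (a), the equivalence.} By Fact~\Aref{fact:rs}(a), a $K$-point of $\tw{\tgen}{X}$ is a $G$-equivariant morphism $x\colon\Spec L\to X$, that is, an equivariant rational map $\W_3\dashrightarrow X$.

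For (i)$\Rightarrow$(iii), let $x$ be free and let $Y$ be the closure of its image, which is $G$-stable and irreducible with faithful action by Lemma~\ref{lem:imageB}. I rule out $\dim Y\le1$:
\begin{itemize}
\item if $Y$ is a point, it is a $G$-fixed $k$-point, hence not free;
\item if $Y$ is a curve, then $Y$ is a faithful $G$-curve whose twist has a free $K$-point, so $\ed_k(\tgen)\le1$ by Lemma~\Aref{lem:freepoint}, contradicting $\ed_k(\Z/p^3)\ge2$ (Remark~\ref{rem:rigid-d1}).
\end{itemize}
Hence $x$ is dominant, i.e.\ a $G$-compression.

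For (iii)$\Rightarrow$(ii), let $U\subseteq X$ be a dense $G$-open and take a compression $\phi$. Then $\phi^{-1}(U)$ contains a dense $G$-open $W\subseteq\W_3$. By Lemma~\ref{lem:versal}(a),(b) its twist is a nonempty open subset of $\A^3_K$, so it has a $K$-point, and composing with $\phi$ gives a point of $\tw{\tgen}{U}(K)$.

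For (ii)$\Rightarrow$(i), apply versality to the free locus $U=X^{\free}$, which is dense because the action is faithful.

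\emph{Part (a), the last assertion.} If some smooth projective $X$ has $1\in D(X)$, then $\ed_k(\Z/p^3)=\ed_k(\tgen)\le2$ by Lemma~\Aref{lem:freepoint} and Lemma~\ref{lem:versal}(c). Conversely, suppose $\ed_k(\Z/p^3)\le2$. Theorem~\Aref{thm:geom} with $d=1$ gives a generically free primitive $Z$ with $\dim Z\le2$ and a free $K$-point; since $L$ is a field, $Z$ can be taken irreducible with faithful action. We have $\dim Z=2$, because $\dim Z\le1$ contradicts $\ed\ge2$. By the first part the point is dominant. Replace $Z$ by a $G$-equivariant smooth projective model: take an equivariant projective compactification, then equivariant resolution of surface singularities (normalisation followed by Lipman's canonical desingularisation, which commutes with automorphisms). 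The dominant point then factors through the generic point, which is free, and stays free. This step is the main obstacle, namely making sure equivariant resolution is available in characteristic $p$ for surfaces; I would cite Lipman and the functoriality of the minimal resolution.

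\emph{Parts (b) and (c).} Part (b) is immediate: the constant map $\Spec L\to\{x_0\}$ to a $G$-fixed point is equivariant, so by Fact~\Aref{fact:rs}(a) it is a $K$-point. A $K$-rational point makes the index equal to $1$ and splits the elementary obstruction. For (c), $d_2(\tgen)\le p$ is Proposition~\ref{prop:trunc} with $j=1$, and the min formula is Corollary~\ref{cor:jumpsB} with $j=2$. The final equivalence follows from the last assertion of (a) together with $\ed_k(\Z/p^3)\ge2$.
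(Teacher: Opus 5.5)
Your proof is correct and follows the paper's argument essentially step for step: the same dichotomy on $\dim Y$ via Lemma~\ref{lem:imageB} and Remark~\ref{rem:rigid-d1}, and the same equivariant projective closure plus Lipman resolution for the converse. The differences are cosmetic: the paper proves (iii)$\Rightarrow$(ii) by restricting the compression to the generic point $\Spec L$ of $\W_3$ rather than taking a $K$-point on the twist of a dense open, and it obtains the surface $Z$ from a compression rather than from Theorem~\Aref{thm:geom}. One small slip: to pass between ``$\ed_k(\Z/p^3)\ne3$'' and ``$\ed_k(\Z/p^3)\le2$'' in (a) and (c) you need the upper bound $\ed_k(\Z/p^3)\le3$ of Lemma~\ref{lem:versal}(c), not $\ed_k(\Z/p^3)\ge2$.
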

\begin{proof}
(a) (iii)$\Rightarrow$(ii): let $\phi\colon\W_3\dashrightarrow X$ be a $G$-compression and $U\subseteq X$ a dense $G$-stable open subset. The generic point of $\W_3$ maps to the generic point of $X$, which lies in $U$; so $\phi$ restricted to the generic point is a $G$-equivariant morphism $\Spec L\to U$, i.e.\ a $K$-point of $\tw{\tgen}{U}$ (Fact~\Aref{fact:rs}(a)), and $U$ is $\tgen$-weakly versal. (ii)$\Rightarrow$(i): as $G$ acts faithfully on the irreducible $X$, $X^{\free}$ is a dense $G$-stable open subset; its weak versality gives a $K$-point of $\tw{\tgen}{(X^{\free})}$, i.e.\ a free $K$-point. (i)$\Rightarrow$(iii): a free $K$-point is a $G$-equivariant $f\colon\Spec L\to X^{\free}$, i.e.\ (as $L=k(\W_3)$) a $G$-equivariant rational map $\W_3\dashrightarrow X$; the closure $Y$ of its image is irreducible and $G$-stable with faithful $G$-action (Lemma~\ref{lem:imageB}). If $\dim Y\le1$, then $\tw{\tgen}{Y}$ has a free $K$-point and $\ed_k(\tgen)\le1$ by Lemma~\Aref{lem:freepoint}, contradicting Remark~\ref{rem:rigid-d1}; so $Y=X$ and $f$ is a compression.

For the last statement: if $1\in D(X)$ for some $X$ then $\ed_k(\Z/p^3)=\ed_k(\tgen)\le2$ by Lemma~\Aref{lem:freepoint} and Lemma~\ref{lem:versal}. Conversely let $\ed_k(\Z/p^3)\le2$. By Lemma~\ref{lem:versal} and the identity $\ed_k(Y)=\ed_k(\tau_Y)$ of \cite[Section~\ref{A-ss:ed}]{PA} there is a $G$-compression $\W_3\dashrightarrow Z$ onto an irreducible generically free $G$-variety of dimension $\le2$, and $\dim Z=2$ since $\ed_k(\tgen)\ge2$. Replace $Z$ by a dense $G$-stable affine open subset, embed it $G$-equivariantly in a finite-dimensional representation (a $G$-stable finite-dimensional subspace of $k[Z]$ generating it) and take the closure in the projective completion: a projective $G$-surface $\bar Z$ birational to $Z$. The resolution of $\bar Z$ obtained by alternately normalising and blowing up the singular locus \cite{Lip78}
terminates in a regular surface by \cite{Lip78}; each step (normalisation, blow-up of the reduced singular locus) is canonical, i.e.\ functorial for isomorphisms of surfaces, so every automorphism of $\bar Z$ lifts to a unique automorphism of the resolution and the resolution carries a $G$-action; it is a smooth projective surface $X$ with faithful $G$-action, $G$-equivariantly birational to $Z$. The compression $\W_3\dashrightarrow X$ gives $1\in D(X)$ by (iii)$\Rightarrow$(i).

(b) The fixed point gives the constant map, a $K$-point of $\tw{\tgen}{X}$ (Fact~\Aref{fact:rs}(a)).

(c) is Corollary~\ref{cor:jumpsB} and Proposition~\ref{prop:S0}(b) (with $n=3$, $\dim S_0=2$), together with (a) for the value $1$.
\end{proof}

\subsection{The characteristic-$p$ mirror of the subgroup formula}

\begin{remark}[a characteristic-$p$ analogue of the subgroup formula]\label{rem:mirror}
In characteristic $\ne p$ the subgroup formula of \cite[Section~\ref{A-sec:sylow}]{PA} (Florence \cite{Fl08}, Karpenko--Merkurjev) bounds the profile by the minimal faithful representations of subgroups. In characteristic $p$ representations are the wrong envelope: $\rdim_k(\Z/p^n)=p^{n-1}+1$ (the generator acts by a unipotent matrix, and a unipotent matrix of order $p^n$ needs a Jordan block of size $>p^{n-1}$, which is also enough), and $\rdim_k((\Z/p)^n)=2$ (a one-dimensional representation of a $p$-group is trivial, and an $\F_p$-subspace of $(k,+)$ embeds in the unipotent group $\bigl(\begin{smallmatrix}1&*\\0&1\end{smallmatrix}\bigr)$), whereas $\ed_k((\Z/p)^n)=1$ (for a finite subgroup $\Gamma\subset(k,+)$ the additive polynomial $P_\Gamma(x)=\prod_{\gamma\in\Gamma}(x-\gamma)$ is an isogeny with kernel $\Gamma$, so every $\Gamma$-torsor over $F$ has a class $a\in F$ and descends to $k(a)$) and $\ed_k(\Z/p^n)\le n$ (Lemma~\ref{lem:versal}). The natural envelope is the embedding $\Z/p^n\hookrightarrow\W_n$ by Witt translation (Section~\ref{ss:gen}), which restricts on $\langle g^{p^j}\rangle$ to the Witt translation of $\W_{n-j}$ (Proposition~\ref{prop:intermediate}). With $\dim\W_{n-j}$ in place of $\rdim$ the subgroup formula reads
\[
\edd{p^j}_k(\tgen)=\dim\W_{n-j}=n-j\qquad(0\le j\le n),
\]
which is Ledet's conjecture at every level (Questions~\ref{Q2} and~\ref{Q3}; the case $j=n-2$ is Theorem~\ref{thm:N1}): Proposition~\ref{prop:trunc} gives ``$\le$'', and by Proposition~\ref{prop:intermediate} the fixed field $K_j$ realises exactly $n-j$ iff Ledet's conjecture holds for $\Z/p^{n-j}$. One template thus covers both regimes: the profile at degree $d$ is the minimal envelope dimension over subgroups of index $\le d$. In the Karpenko--Merkurjev setting it is proved on the prime-to-$p$ plateau (\cite[Proposition~\ref{A-prop:KMplateau}]{PA}) and, for abelian $G$ of exponent $p^a$ with $\mu_{p^a}\subset k$, in all degrees (\Aref{cor:subgroupabelian}); in general it is open in $p$-power degrees in both settings.
\end{remark}


\section{Open questions}\label{sec:questions}

This section is written to be read on its own. Throughout, $k$ is algebraically closed of characteristic $p$, $G=\Z/p^n=\langle g\rangle$, $L=k(z_0,\dots,z_{n-1})$ is the function field of $\W_n$ with $g$ acting by the Witt translation, $K=L^G=k(s_0,\dots,s_{n-1})$, and $\tau_n=\tgen$ is the generic $G$-torsor $\Spec L\to\Spec K$, whose class in $\W_n(K)/\wp\W_n(K)$ is $s=(s_0,\dots,s_{n-1})$. For $d\ge1$ put
\[
f(n,d):=\edd d_k(\tau_n)=\min\{\ed_k(\tau_n\otimes_KF'):F'/K\text{ a field extension},\ [F':K]\le d\}.
\]
This is a non-increasing function of $d$, and $f(n,1)=\ed_k(\Z/p^n)$. For a faithful $G$-curve $C$, $D(C)$ denotes the set of degrees of closed points of the free locus of the twist $\tw{\tau_n}{C}$. \emph{Ledet's conjecture} is $\ed_k(\Z/p^n)=n$; we write $\mathrm{Led}(n)$ for it.

\subsection{What is known}
\begin{enumerate}[label=\textup{(K\arabic*)},leftmargin=2.4em]
\item $f(n,d)=0$ iff $d\ge p^n$, and $f(n,d)=1$ for $p^{n-1}\le d<p^n$ (the truncation tower, Proposition~\ref{prop:trunc}, and the last jump degree, Proposition~\Aref{prop:d0}).
\item $f(n,d)\ge2$ for every $d<p^{n-1}$ (Theorem~\ref{thm:N1}). Together with (K1): the first jump degree is $d_1(\tau_n)=p^{n-1}$.
\item $f(n,p^j)\le n-j$ (Proposition~\ref{prop:trunc}), with equality for $j=n-1$ and $j=n-2$. Hence $f(n,d)=2$ for $p^{n-2}\le d<p^{n-1}$.
\item At $d=1$: $2\le\ed_k(\Z/p^n)\le n$ for $n\ge2$ (Remark~\ref{rem:rigid-d1} and Lemma~\ref{lem:versal}); $\mathrm{Led}(n)$ is known for $n\le2$ and open for $n\ge3$.
\item For degrees prime to $p$ the first jump is $d_1^{(p')}(\tau_n)=l_n(p)=1+(p-1)\sum_{j=1}^{n-1}p^{2j-1}$ (Theorem~\ref{thm:ptp}).
\item The profile is completely known for $(p,n)=(2,1),(2,2),(3,1),(3,2)$; at $(2,3)$ and $(3,3)$ it is known in every slot except the first one (at $(2,3)$) or the first two (at $(3,3)$), see Table~\ref{tab:preview}.
\end{enumerate}
So what is unknown is the part of the profile with $d<p^{n-2}$, for $n\ge3$, and in particular the value at $d=1$, which is Ledet's conjecture. The questions below are ordered from the most specific to the most general.

\subsection{Questions}

\begin{question}[the smallest open case of Ledet's conjecture]\label{Q1}
Is $\ed_k(\Z/8)=2$ or $3$, for $k$ algebraically closed of characteristic $2$?
\end{question}
At $(p,n)=(2,3)$ every slot of the profile is determined except $f(3,1)=\ed_k(\Z/8)\in\{2,3\}$ (Theorem~\ref{thm:status}); so the profile reduces Ledet's conjecture in this case to a single bit. Equivalent forms: $d_2(\tau_3)=1$ iff $\ed_k(\Z/8)=2$; $\ed_k(\Z/8)\le2$ iff some smooth projective surface with a faithful $\Z/8$-action has a free $K$-point on its twist (Proposition~\ref{prop:DX}); and $\ed_k(\Z/8)=3$ iff, over the field $K_1=K(z_0)$ of $\tau_4$ at $(2,4)$, passing from the truncation to $\tau_4$ raises the essential dimension by exactly one (Proposition~\ref{prop:Led3lift}). Faithful curves cannot decide it: no faithful $\Z/8$-curve has a free point of degree $\le3$ on its twist (Corollary~\ref{cor:curvepts}), so a proof must handle surfaces, where the Abel--Jacobi rigidity of Section~\ref{ss:rigid} is not available.

\begin{question}[the subgroup formula at $j=1$]\label{Q2}
Is $f(n,p)=n-1$ for every $n$, i.e.\ is the truncation tower optimal among extensions of degree $p$?
\end{question}
The inequality $f(n,p)\le n-1$ is Proposition~\ref{prop:trunc}; equality is known for $n\le3$ (for $n=3$ by Theorem~\ref{thm:N1}). By Theorem~\ref{thm:Pn1}, for $n\ge2$ the equality $f(n,p)=n-1$ is equivalent to $\mathrm{Led}(n-1)$ together with the statement $D'(n)$: for every separable extension $F'/K$ with $2\le[F':K]\le p$ and $L\otimes_KF'$ a field, $\ed_k(\tau_n\otimes F')\ge n-1$. Thus a positive answer for $n=4$ contains $\mathrm{Led}(3)$ (Question~\ref{Q1}), and a positive answer for all $n$ contains Ledet's conjecture. The first case not covered by anything proved here is $D'(4)$ for $p=2$: for every quadratic extension $K(v)$ of $K=k(s_0,\dots,s_3)$, $v^2+v=f$, disjoint from $L$ (that is $f\notin\wp K+\F_2s_0$), is $\ed_k(\tau_4\otimes K(v))\ge3$? Partial results are in Remark~\ref{rem:Dn}.

\begin{question}[the whole profile]\label{Q3}
Is $f(n,d)=n-\lfloor\log_pd\rfloor$ for all $1\le d<p^n$?
\end{question}
This says that each Witt layer costs exactly one parameter, and it is the conjecture suggested by the cases computed so far: it holds for $d\ge p^{n-2}$ by (K1)--(K3), at $(2,2)$, $(3,2)$, at $(2,3)$ except at $d=1$, and at $(3,3)$ except at $d=1,2$. By Proposition~\ref{prop:layers}, $f(n,p^j)=n-j$ for all $j$ iff every Witt layer is strict, and the case $d=1$ is Ledet's conjecture while $d=p$ is Question~\ref{Q2}. Its first open instance is again $f(3,1)$, i.e.\ Question~\ref{Q1}.

\begin{question}[degrees of free points on twisted curves]\label{Q4}
At $(p,n)=(2,3)$, is $13\in D(U_3)$, where $U_3$ is the Artin--Schreier--Witt curve with its $\Z/8$-action? More generally, determine the set $D(C)$ for each faithful $\Z/8$-curve $C$.
\end{question}
The Weierstrass semigroup of the fixed point on $\Ubar_3$ is $\langle4,6,11\rangle$, with gaps $\{1,2,3,5,7,9,13\}$, and $D(U_3)\supseteq\langle4,6,11\rangle\smallsetminus\{0\}$ (Appendix~\ref{app:genus}, Theorem~\ref{thm:status}). On the other side, no $D(C)$ contains $1$, $2$, $3$, $5$, $7$ or $9$ (Corollary~\ref{cor:curvepts}, Theorem~\ref{thm:ptp}). Of the gaps, only $13$ is undecided. This is a finite question, but the argument that excludes $5,7,9$ needs $p\nmid d$ and an invariant divisor of degree $<l_3(2)=11$, and it says nothing about degrees above $11$.

\subsection{Directions without results here}
Two natural extensions are not treated in this paper. Other groups: for a finite $p$-group other than $\Z/p^n$ in characteristic $p$ (for instance $\Z/p^a\times\Z/p^b$, or non-abelian $p$-groups), the profile of the generic torsor is unknown beyond $\ed_{k,p}\le1$; the elementary abelian case is trivial since $\ed_k((\Z/p)^n)=1$. Other base fields: we assume $k$ algebraically closed throughout; the profile of $\tau_n$ over a non-closed field $k$ (for example a finite field), and how its jump degrees depend on $k$, is not studied.


\appendix

\section{Witt-vector formulas at \texorpdfstring{$p=2$}{p=2}}\label{app:witt}

The addition and $\wp$ on $\W_3$ for $p=2$ are \eqref{eq:add2}--\eqref{eq:wp2} of Section~\ref{ss:witt}. All formulas of this appendix are polynomial identities over $\F_2$, derived from the ghost components $w_0=x_0$, $w_1=x_0^2+2x_1$, $w_2=x_0^4+2x_1^2+4x_2$ over $\Z$ and reduced modulo $2$; each can be checked by expanding both sides. As an independent check, a short sympy script verifying all of them, together with the reductions of Appendix~\ref{app:genus} and the series of Remark~\ref{rem:large}, is provided as ancillary material (\texttt{addendum/check\_paperB.py}); it is not needed for any proof.

Negation. $-(a_0,a_1,a_2)=(a_0,\ a_1+a_0^2,\ a_2+a_1^2+a_0^2a_1+a_0^4)$; in particular $-x\ne x$ in general, but $\{s-\wp x\}=\{s+\wp x\}$ as $x$ ranges over $\W_3(K)$, since $\wp(-x)=-\wp(x)$.

The general translate. For $s=(s_0,s_1,s_2)$ and $x=(x_0,x_1,x_2)\in\W_3(K)$, $s+\wp x=(t,r_1,r_2)$ with
\begin{align*}
t&=s_0+x_0^2+x_0,\\
r_1&=s_1+x_0(x_0+1)(x_0+s_0)+\wp(x_1),\\
r_2&=s_2+\Phi_2(s_0,s_1,x_0,x_1)+\wp(x_2),
\end{align*}
where (substituting $b=\wp x$, $b_0=x_0^2+x_0$, $b_1=x_1^2+x_1+x_0^3+x_0^2$, into \eqref{eq:add2} gives $\Phi_2=C(x_0,x_1)+s_1b_1+s_0b_0(s_1+b_1)+s_0^3b_0+s_0b_0^3$, and expanding gives the displayed formula)
\begin{equation}\label{eq:Phi2}
\begin{aligned}
\Phi_2={}&s_0^3(x_0^2+x_0)+s_0s_1(x_0^2+x_0)+s_0(x_0^6+x_0^4)+s_0(x_0^2+x_0)(x_1^2+x_1)\\
&+s_1(x_0^3+x_0^2)+s_1(x_1^2+x_1)+x_0^7+x_0^4+(x_0^3+x_0^2)(x_1^2+x_1)+x_1^3+x_1^2 .
\end{aligned}
\end{equation}
Thus the family of fibres of $u$ in Theorem~\ref{thm:n2}(b) is the $x_1$-independent part of $r_1$:
$(s+\wp(c,x_1))_1-\bigl(c(c+1)(c+s_0)+s_1\bigr)=\wp(x_1)$; and
$(s-\wp x)_1-(s+\wp x)_1=x_0^4+x_0^2\in\wp K$. The classes of Remark~\ref{rem:towerp2} (the class $s$ over $K(v_1)$, $K(v_1,v_2)$ and $K(v_1,v')$) are the specialisations $x=(0,v_1,0)$, $x=(0,v_1,v_2)$, $x=(0,v_1,v')$ of the general translate. For instance $\wp(0,v_1,0)=(0,s_1,v_1^3+v_1^2)$ by \eqref{eq:wp2}, whence $(s+\wp(0,v_1,0))_1=0$ and $(s+\wp(0,v_1,0))_2=s_2+s_1^2+v_1(v_1^2+v_1)=s_2+s_1^2+s_1v_1$; and $\Phi_2(s_0,s_1,0,v_1)=s_1^2+s_1v_1$. Over $K(v_1,v_2)$, $(s+\wp(0,v_1,v_2))_2=s_2+\Phi_2(s_0,s_1,0,v_1)+\wp(v_2)=s_1^2+s_1v_1$, and over $K(v_1,v')$ the same coordinate is $s_2+s_1^2+s_1v_1+\wp(v')=0$.

\section{Genus, ramification and Weierstrass semigroup of \texorpdfstring{$\Ubar_n$}{U\_n}}
\label{app:genus}

Throughout, $\Ubar_n$ is the smooth projective model of the Artin--Schreier--Witt curve $U_n=\{y\in\W_n:\wp(y)\in[\A^1]\}$, $\infty\in\Ubar_n$ its unique $G$-fixed point, $X:=\tw{\tgen}{\Ubar_n}$ and $\infty'\in X(K)$ the twist of $\infty$ (Proposition~\ref{prop:tail}(c)).

\subsection*{Ramification at $\infty$}
$\Ubar_n\to\PP^1_t$, $y\mapsto t=y_0^p-y_0$, is a $\Z/p^n$-cover, \'etale over $\A^1$ and totally ramified at $\infty$, of Witt class $[t]$, and $v_\infty(t)=-1$. For a cyclic extension of $k((1/t))$ given by a Witt vector $x=(x_0,\dots,x_{n-1})$ whose coordinates are reduced (each $x_i$ is regular at $\infty$ or has a pole of order $m_i$ prime to $p$), the upper ramification breaks are $u_j=\max\{p^{\,j-1-i}m_i:0\le i\le j-1,\ x_i\text{ not regular}\}$, $1\le j\le n$ (the formula of Schmid, Kanesaka--Sekiguchi and Thomas \cite{Sch37,KS79,Th} for finite residue fields, and of Elder--Keating \cite[Thm.~2.3]{EK25} for any perfect residue field, applied to the degree-$p^j$ quotient, whose Witt vector $(x_0,\dots,x_{j-1})$ is again reduced; the top break is also \cite[Thm.~1]{Ga}).
For $x=[t]$ only $x_0=t$ is non-regular, with $m_0=1$, so $u_j=p^{\,j-1}$ (the cover and the class $[t]$ are defined over $\F_p$, so the finite-residue-field case suffices). Herbrand's function \cite[IV, \S3]{Se2} converts these into the lower breaks

\[
l_1=u_1=1,\qquad l_{j+1}=l_j+p^j(u_{j+1}-u_j)=l_j+(p-1)p^{2j-1},
\]
which gives \eqref{eq:ln}. For $n=2$ the value $l_2(p)=p^2-p+1$ is confirmed by the direct computation in the proof of Theorem~\ref{thm:B}; for $(p,n)=(2,3)$ it is confirmed by the following computation by hand.

The tower $U_3\to U_2=E_0\to\PP^1_{y_0}\to\PP^1_t$ has layers $y_0^2+y_0=t$, $y_1^2+y_1=y_0^3+y_0^2$ and $y_2^2+y_2=C(y_0,y_1)$, with $C(a_0,a_1)$ the polynomial in \eqref{eq:wp2}. On $E_0$, $y_0$ and $y_1$ have pole orders $2$ and $3$ at $\infty$, and using $y_1^2=y_1+y_0^3+y_0^2$,
\[
C(y_0,y_1)=y_0^7+y_0^6+(y_0^3+y_0^2)y_1,
\]
of pole order $14$. One reduction step gives
\[
C(y_0,y_1)+\wp(y_0^2y_1)=y_0^4y_1+y_0^3y_1,
\]
of odd pole order $8+3=11$. Hence the Artin--Schreier extension $U_3/U_2$ has break $11$; as lower numbering passes to subgroups, $l_3=11$.

\subsection*{Genus}
Let $l_0:=-1$. The ramification group $G_i$ at $\infty$ has order $p^{\,n-j+1}$ for $l_{j-1}<i\le l_j$, so Hilbert's different formula \cite[IV, Prop.~4]{Se2} and Riemann--Hurwitz give
\[
2g(\Ubar_n)-2=-2p^n+\sum_{j=1}^n(p^{\,n-j+1}-1)(l_j-l_{j-1}),
\]
whence
\[
g(\Ubar_2)=\tfrac12p(p-1)^2,\qquad g(\Ubar_3)=7\ (p=2),\qquad g(\Ubar_4)=35\ (p=2),
\qquad g(\Ubar_3)=78\ (p=3).
\]

\subsection*{Weierstrass semigroup at $(2,3)$}
The functions $y_0$, $y_1$ and $y_2+y_0^2y_1$ are regular on $U_3$, with pole orders $4$, $6$ and $11$ at $\infty$: $\infty$ is totally ramified in $U_3/U_2$, so pole orders on $E_0$ double, and $(y_2+y_0^2y_1)^2+(y_2+y_0^2y_1)=y_0^4y_1+y_0^3y_1$ has pole order $11$ on $E_0$. The numerical semigroup $\langle4,6,11\rangle$ has exactly $7=g(\Ubar_3)$ gaps, namely $\{1,2,3,5,7,9,13\}$; since it is contained in the Weierstrass semigroup $H(\infty)$, which also has exactly $g(\Ubar_3)$ gaps, $H(\infty)=\langle4,6,11\rangle$. In particular the smallest odd pole order at $\infty$ of a function regular on $U_3$ is $11$.

\subsection*{Degrees realised on $\Ubar_n$}
Let $H=H(\infty)$ be the Weierstrass semigroup of $\infty$ on $\Ubar_n$. Riemann--Roch dimensions are invariant under base change, and $(X,\infty')\otimes_KL\cong(\Ubar_n,\infty)\otimes_kL$ (Lemma~\ref{lem:twistL}), so $H$ is also the Weierstrass semigroup of $\infty'$ on $X$. Every closed point of $X$ other than $\infty'$ is free (Proposition~\ref{prop:tail}(c)). Lemma~\ref{lem:hilbertfree}, which combines this with Hilbert irreducibility over the Hilbertian field $K$ \cite[Thm.~13.4.2]{FJ}, gives $D(U_n)\supseteq\{d\in H\smallsetminus\{0\}:p\nmid d\}$, since a function whose pole order is prime to $p$ has nonzero differential. At $(2,3)$ this is $D(U_3)\supseteq\langle4,6,11\rangle\smallsetminus\{0\}$ (Theorem~\ref{thm:status}); a closed point of degree $e\notin H$ is not a fibre of a function with poles only at $\infty'$, so the argument does not apply to the gaps $5,7,9,13$ of $H$. The degrees $5,7,9$ are excluded by Corollary~\ref{cor:B23} (no faithful $\Z/8$-curve has a free closed point of odd degree $<11$ on its twist); whether $13\in D(U_3)$ is Question~\ref{Q4}.





\begin{thebibliography}{99}

\bibitem{BLR90} S.~Bosch, W.~L\"utkebohmert and M.~Raynaud, N\'eron models, Ergeb. Math. Grenzgeb. (3) 21, Springer, 1990.

\bibitem{DR15} A.~Duncan and Z.~Reichstein, Versality of algebraic group actions and rational points on twisted varieties, J. Algebraic Geom. 24 (2015), 499--530.

\bibitem{FJ} M.~D.~Fried and M.~Jarden, Field arithmetic, 3rd ed., Ergeb. Math. Grenzgeb. (3) 11, Springer, 2008.

\bibitem{Ga} M.~Garuti, Linear systems attached to cyclic inertia, in: Arithmetic fundamental groups and noncommutative algebra, Proc. Sympos. Pure Math. 70, Amer. Math. Soc., 2002, 377--386.

\bibitem{EGA} A.~Grothendieck and J.~Dieudonn\'e, \'El\'ements de g\'eom\'etrie alg\'ebrique IV: \'Etude locale des sch\'emas et des morphismes de sch\'emas, troisi\`eme partie, Publ. Math. IH\'ES 28 (1966).

\bibitem{Ha} D.~Harbater, Moduli of $p$-covers of curves, Comm. Algebra 8 (1980), 1095--1122.

\bibitem{Hart77} R.~Hartshorne, Algebraic geometry, Grad. Texts in Math. 52, Springer, 1977.

\bibitem{Jou83} J.-P.~Jouanolou, Th\'eor\`emes de Bertini et applications, Progr. Math. 42, Birkh\"auser, Boston, 1983.

\bibitem{Ka} N.~M.~Katz, Local-to-global extensions of representations of fundamental groups, Ann. Inst. Fourier (Grenoble) 36 (1986), 69--106.

\bibitem{Kl05} S.~L.~Kleiman, The Picard scheme, in: Fundamental algebraic geometry, Math. Surveys Monogr. 123, Amer. Math. Soc., 2005, 235--321.

\bibitem{CHKZ} H.~Chu, S.-J.~Hu, M.~Kang and J.~Zhang, Groups with essential dimension one, Asian J. Math. 12 (2008), 177--192.

\bibitem{Ledet07} A.~Ledet, Finite groups of essential dimension one, J. Algebra 311 (2007), 31--37.

\bibitem{Sch37} H.~L.~Schmid, Zur Arithmetik der zyklischen $p$-K\"orper, J. reine angew. Math. 176 (1937), 161--167.

\bibitem{TV} D.~Tossici and A.~Vistoli, On the essential dimension of infinitesimal group schemes, Amer. J. Math. 135 (2013), no.~1, 103--114.

\bibitem{To} D.~Tossici, Essential dimension of infinitesimal commutative unipotent group schemes, Boll. Unione Mat. Ital. 12 (2019), no.~4, 575--581.

\bibitem{Fl08} M.~Florence, On the essential dimension of cyclic $p$-groups, Invent. Math. 171 (2008), 175--189.

\bibitem{BRV18} P.~Brosnan, Z.~Reichstein and A.~Vistoli, Essential dimension in mixed characteristic, Doc. Math. 23 (2018), 1587--1600.

\bibitem{Bry83} J.-L.~Brylinski, Th\'eorie du corps de classes de Kato et rev\^etements ab\'eliens de surfaces, Ann. Inst. Fourier (Grenoble) 33 (1983), no.~3, 23--38.

\bibitem{KS79} K.~Kanesaka and K.~Sekiguchi, Representation of Witt vectors by formal power series and its applications, Tokyo J. Math. 2 (1979), no.~2, 349--370.

\bibitem{EK25} G.~G.~Elder and K.~Keating, Artin--Schreier--Witt extensions and ramification breaks, arXiv:2503.16830 (2025).

\bibitem{Ledet04} A.~Ledet, On the essential dimension of $p$-groups, in: Galois theory and modular forms, Dev. Math. 11, Kluwer, Boston, 2004, 159--172.

\bibitem{Lip78} J.~Lipman, Desingularization of two-dimensional schemes, Ann. of Math. (2) 107 (1978), 151--207.

\bibitem{RS22} Z.~Reichstein and F.~Scavia, The behavior of essential dimension under specialization, \'Epijournal G\'eom. Alg\'ebrique 6 (2022), Art.~21.

\bibitem{RV18} Z.~Reichstein and A.~Vistoli, Essential dimension of finite groups in prime characteristic, C. R. Math. Acad. Sci. Paris 356 (2018), 463--467.

\bibitem{Se2} J.-P.~Serre, Local fields, Grad. Texts in Math. 67, Springer, 1979.

\bibitem{Silverman09} J.~H.~Silverman, The arithmetic of elliptic curves, 2nd ed., Grad. Texts in Math. 106, Springer, 2009.

\bibitem{St} H.~Stichtenoth, Algebraic function fields and codes, 2nd ed., Grad. Texts in Math. 254, Springer, 2009.

\bibitem{Th} L.~Thomas, Ramification groups in Artin--Schreier--Witt extensions, J. Th\'eor. Nombres Bordeaux 17 (2005), 689--720.

\bibitem{Witt37} E.~Witt, Zyklische K\"orper und Algebren der Charakteristik $p$ vom Grad $p^n$. Struktur diskret bewerteter perfekter K\"orper mit vollkommenem Restklassenk\"orper der Charakteristik $p$, J. Reine Angew. Math. 176 (1937), 126--140.
\bibitem{PA} A.~Shukla, The finite-degree profile of essential dimension I: general theory and characteristic prime to $|G|$, arXiv:2610.07508 (2026).

\end{thebibliography}
\end{document}